\documentclass{mcom-l}

\usepackage{color}
\usepackage{amssymb}
\usepackage{bbm}
\usepackage{mathrsfs}
\usepackage{mathtools}
\usepackage{graphicx}
\usepackage[inline]{enumitem}
\usepackage{stmaryrd}
\usepackage{scalerel}
\usepackage{multicol}
\usepackage{multirow}
\usepackage{adjustbox}
\usepackage{nicefrac}

\usepackage[unicode,colorlinks=true,pagebackref=false]{hyperref}
\usepackage[backgroundcolor=white,textsize=tiny]{todonotes}

\newtheorem{theorem}{Theorem}[section]

\newtheorem{lemma}[theorem]{Lemma}

\newtheorem{proposition}[theorem]{Proposition}

\theoremstyle{definition}

\newtheorem{remark}[theorem]{Remark}

\newtheorem{assumption}[theorem]{Assumption}

\numberwithin{equation}{section}
\numberwithin{theorem}{section}
\numberwithin{table}{section}
\numberwithin{figure}{section}

\newcommand{\tcf}[1]{#1}

\newcommand{\rom}[1]{\uppercase\expandafter{\romannumeral #1}}
\newcommand{\romnum}[1]{\mathrm{\rom{#1}}}

\renewcommand{\Re}{\operatorname{Re}}
\renewcommand{\Im}{\operatorname{Im}}

\newcommand\restr[2]{{
  \left.\kern-\nulldelimiterspace
  #1
  \vphantom{\big|}
  \right|_{#2}
  }}



\newcommand{\eremk} {\hbox{}\hfill\rule{0.8ex}{0.8ex}}


\newcommand\AnaClass[3]{\mathfrak{A}(#1,#2,#3)}
\newcommand\AnaClassbdy[4]{\mathfrak{A}(#1,#2,#3,#4)}
\newcommand\AnaClassmelenk[3]{\mathfrak{A}(#1,#2,#3)}
\newcommand{\HOneT}{H^{1,t}(\Omega, \Gamma)}

\newcommand{\CsolveM}{C_{\mathrm{sol},k}^{-}}

\newcommand{\Can}{C_{\mathrm{ana},k}^{-}}

\newcommand{\smax}{s_{\max}}
\newcommand{\CshiftP}{C_{\mathrm{shift},s}^{+}}

\newcommand{\CcontM}{C_{\mathrm{cont},k}^{-}}

\newcommand{\CcoerP}{C_{\mathrm{coer}}^+}

\newcommand{\TkOmegaM}{T_{k, \Omega}^{-}}
\newcommand{\TkGammaM}{T_{k, \Gamma}^{-}}

\newcommand{\AOmegaM}{A_{\Omega}^{-}}
\newcommand{\AGammaM}{A_{\Gamma}^{-}}

\newcommand{\WP}{{\bf (WP)}}
\newcommand{\WPs}{{\bf (WP) }}

\newcommand{\TkOmegaP}{T_{k, \Omega}^{+}}
\newcommand{\TkGammaP}{T_{k, \Gamma}^{+}}

\newcommand{\bkm}{b_{k}^{-}}

\newcommand{\Skm}{S_{k}^{-}}
\newcommand{\Lkm}{L_{k}^{-}}

\newcommand{\bkp}{b_{k}^{+}}
\newcommand{\Skp}{S_{k}^{+}}
\newcommand{\Lkp}{L_{k}^{+}}

\newcommand\highPassFilterOmega{H^\Omega_{\eta k}}

\newcommand\LNegGamma{L^{neg}_{\Gamma,q}}
\newcommand\HNegGamma{H^{neg}_{\Gamma,q}}

\newcommand{\partition}{\mathscr P}

\newcommand{\anashift}[1]{\vartheta(#1)}

\newcommand{\AP}{{\bf (AP)}}
\newcommand{\APs}{{\bf (AP) }}

\newcommand{\TOM}{T_{k,\Omega}^-}

\newcommand{\TOD}{T_{k,\Omega}^\Delta}

\newcommand{\TGM}{T_{k,\Gamma}^-}

\newcommand{\TGD}{T_{k,\Gamma}^\Delta}

\newcommand{\ROM}{R_{k,\Omega}^-}

\newcommand{\ROD}{R_{k,\Omega}^\Delta}

\newcommand{\RGM}{R_{k,\Gamma}^-}

\newcommand{\RGD}{R_{k,\Gamma}^\Delta}

\newcommand{\AOM}{A_{k,\Omega}^-}

\newcommand{\AOD}{A_{k,\Omega}^\Delta}

\newcommand{\AGM}{A_{k,\Gamma}^-}

\newcommand{\AGD}{A_{k,\Gamma}^\Delta}

\newcommand{\bM}{b_k^-}
\newcommand{\bP}{b_k^+}
\newcommand{\bD}{b_k^\Delta}

\newcommand{\CAM}{C_{{\rm A},k}^-}
\newcommand{\gAM}{\gamma_{\rm A}^-}

\newcommand{\CAD}{C_{{\rm A},k}^\Delta}
\newcommand{\gAD}{\gamma_{\rm A}^\Delta}

\newcommand{\LO}{L^\eta_{k,\Omega}}
\newcommand{\HO}{H^\eta_{k,\Omega}}

\newcommand{\LG}{L^\eta_{k,\Gamma}}
\newcommand{\HG}{H^\eta_{k,\Gamma}}

\newcommand{\id}{\operatorname{id}}

\newcommand{\tf}{\widetilde f}
\newcommand{\tg}{\widetilde g}

\newcommand{\etaF}[1]{\eta^{(#1)}}
\newcommand{\etaA}[1]{\eta^{(\rm exp)}_{#1}}

\newcommand{\CT}{\mathcal T}

\newcommand{\APOL}{\alpha}
\newcommand{\APOLA}{{\alpha_{\rm a}}}
\newcommand{\APOLS}{{\alpha_{\rm s}}}
\newcommand{\APOLC}{{\alpha_{\rm c}}}

\newcommand{\LP}{\mathscr P}

\newcommand{\interface}{\Gamma_{\operatorname{interf}}}

\newcommand{\SIA}[1]{\sigma_{#1}}

\newcommand{\anasplit}[1]{\gamma^\star_{#1}}

\newcommand{\DtN}{\mathrm{DtN}_k}
\newcommand{\DtNz}{\mathrm{DtN}_0}

\newcommand{\normal}{\pmb{n}}

\setlength\parindent{0pt}

\usepackage{xcolor}

\begin{document}

\title%
[
Wavenumber-explicit analysis of heterogeneous Helmholtz problems
]
{
Wavenumber-explicit stability and convergence analysis of $hp$ finite element
discretizations of Helmholtz problems in piecewise smooth media
}

\author{M. Bernkopf}
\address{Institute for Analysis and Scientific Computing, TU Wien, A-1040 Vienna}
\email{maximilian.bernkopf@asc.tuwien.ac.at}

\author{T. Chaumont-Frelet}
\address{Inria Univ. Lille and Laboratoire Paul Painlev\'e, 59655 Villeneuve-d'Ascq, France}
\email{theophile.chaumont@inria.fr}

\author{J.M. Melenk}
\address{Institute for Analysis and Scientific Computing, TU Wien, A-1040 Vienna}
\email{melenk@tuwien.ac.at}

\subjclass[2020]{Primary 65N30, 35J05}

\keywords{convergence;
finite element methods;
Helmholtz problems;
high-frequency problems;
high-order methods;
stability}

\date{}

\begin{abstract}
We present a wavenumber-explicit convergence analysis of the $hp$ Finite Element Method
applied to a class of heterogeneous Helmholtz problems with piecewise analytic coefficients at large wavenumber $k$.
Our analysis covers the heterogeneous Helmholtz equation with Robin,
exact Dirichlet-to-Neumann, and second order absorbing boundary conditions, as well as perfectly matched layers.
\end{abstract}

\maketitle

\section{Introduction}
\label{section:introduction}

Time-harmonic wave propagation problems play a major role in
a wide range of physical and industrial applications. For the numerical
treatment of such problems in heterogeneous media, Galerkin discretization
methods of finite element type constitute one of the most efficient
approaches currently available.  
Still, in the high-wavenumber regime where the computational domain
spans many wavelengths, the problem becomes highly indefinite, which
leads to substantially increased computational costs due to increased 
dispersion errors. As a result, significant research efforts have focused 
in the past decades on understanding these dispersion errors 
and the question of how to design finite element methods (FEM) able to cope with them. 

In the present work, as in \cite{bernkopf21}, we focus on the scalar Helmholtz equation,
which is arguably the simplest, yet interesting, PDE model to analyze
phenomena appearing in the high-wavenumber regime. Specifically,
given a domain $\Omega$ with analytic boundary $\Gamma := \partial \Omega$
and $f: \Omega \to \mathbb C$ our model problem is to find $u: \Omega \to \mathbb C$
such that
\begin{equation*}
\left \{
\begin{array}{rcll}
-k^2 \mu u - \nabla \cdot(A\nabla u) &=& f & \text{ in } \Omega, 
\\
A\nabla u \cdot n -T_k u &=& 0 & \text{ on } \Gamma,
\end{array}
\right .
\end{equation*}
where $\mu,A$ are given piecewise analytic coefficients (with analytic interfaces where $\mu$,
$A$ may jump), and $T_k$ is an operator acting on functions on the boundary. For instance,
$T_k$ could be the Dirichlet-to-Neumann (DtN) map of the exterior Helmholtz problem
in $\mathbb R^d \setminus \overline{\Omega}$ or $T_k = ik$ in the simpler case of
impedance boundary conditions. The ``homogeneous'' Helmholtz
equation where $\mu = 1$ and $A = I$ is already largely covered
in the literature, and the present work focuses on the ``heterogeneous''
case where these coefficients are allowed to vary on $\Omega$.

A first milestone towards a better understanding of the behavior of
finite element methods was achieved in the 90s in the seminal works
\cite{ihlenburg-babuska95,ihlenburg-babuska97}.
There, the authors focus on homogeneous one-dimensional media, and present error estimates
that are explicit in terms of the wavenumber $k$, the mesh size $h$, and the polynomial degree
$p$. They also coined the term ``pollution effect'' for the dispersion errors observed in the 
numerics, which expresses the observation 
that in the high-wavenumber regime, quasi-optimality of the finite element
solution is lost, unless the number of degrees of freedom per wavelength is increased
as $k$ is increased. Similar estimates were obtained for two and three-dimensional
homogeneous media in the same period, but limited to the lowest-order case where
$p=1$ \cite{melenk95}.

The pollution effect has been further analyzed in the 2000s, in particular using
dispersion analysis \cite{ainsworth04}. While dispersion analysis is extremely
insightful from a practical viewpoint, it is limited to homogeneous media
discretized with translation-invariant grids, and  right-hand sides as well as boundary conditions
are not taken into account. 

The direct extension of the first-order result of \cite{melenk95} to higher-order
elements is non-trivial. Indeed, the error analysis follows a duality technique
commonly known as ``Schatz argument'' \cite{schatz74}, where the finite element error is
abstractly used as a right-hand side of an adjoint problem. One is then led to
study the regularity of the solution to the adjoint problem with an $L^2$ right-hand
side, even if the actual right-hand side to the physical problem is more regular.
This is an issue when analyzing high-order methods, since their high convergence
rate do not apply to solutions with low regularity. The situation was unlocked
in the 2010s thanks to the key concept of regularity splitting
\cite{melenk-sauter10,melenk-sauter11}. The idea is to identify two distinct contributions
in the solution, namely, to write $u = u_{\rm F} + u_{\rm A}$, where $u_{\rm F}$ only possesses
finite regularity, but behaves well in the high-wavenumber regime, whereas
$u_{\rm A}$ is oscillatory but analytic.

The splitting proposed in \cite{melenk-sauter10,melenk-sauter11} is based
on low- and high-pass filters defined via the Fourier transformation. For
homogeneous media with analytic boundaries, it permits to establish the quasi-optimality
of the finite element solution under the assumption that
\begin{equation}
\label{eq_stability_h}
\frac{kh}{p} + k^{\theta}\left (\frac{kh}{\sigma p}\right )^p \leq c,
\end{equation}
for some problem-dependent $\theta$  and $\sigma$ and a generic constant $c$ that does not depend on $k$, $h$, or $p$.
We can also rewrite \eqref{eq_stability_h} as
\begin{equation}
\label{eq_stability_hp}
\frac{kh}{p} \leq c_1 \quad \text{ and } p \geq c_2 \log k,
\end{equation}
where $c_1$ and $c_2$ are again independent of $k$, $h$, and $p$.
Informally, \eqref{eq_stability_hp} suggests that the FEM 
is stable if the number of degrees of freedom per wavelength is kept constant
($kh/p \leq c_1$) and the polynomial degree is mildly increased
in the high-wavenumber regime ($p \geq c_2 \log k$). It also
means that $hp$-FEMs are pollution free,
provided the polynomial degree $p$ is increased logarithmically
with the wavenumber. Together with the dispersion analysis \cite{ainsworth04}, the 
stability condition \eqref{eq_stability_hp} strongly encourages the use of high-order
methods to solve high-wavenumber problems to reduce the pollution
effect. In fact, the performance of high-order methods is also
clearly observed numerically, and therefore, they are now widely used
in industry \cite{beriot-prinn-gabard16,taus-zepedanunez-hewett-demanet17}.

The idea of regularity splitting is actually not limited to finite
element methods, and has been used repeatedly in the literature. Important
examples include the sharp treatment of corner singularities \cite{chaumontfrelet-nicaise18}
and unfitted meshes \cite{chaumontfrelet16}, as well as other discretization techniques
such as discontinuous Galerkin \cite{melenk-parsania-sauter13}, continuous interior penalty
\cite{du-wu15,zhu-wu13} and multiscale \cite{chaumontfrelet-valentin20} methods.
Besides, the technique is not only useful to derive {\it a priori} error estimates,
but is also important in the context of {\sl a posteriori} error estimation
\cite{chaumontfrelet-ern-vohralik21,dorfler-sauter13}. We finally mention that
the idea can also be extended to more complicated wave propagation problems
\cite{melenk-sauter21,chaumontfrelet-vega21}.

Regularity splittings are thus an important ingredient of wavenumber-explicit analysis of 
FEMs. Yet, the technique proposed in \cite{melenk-sauter10,melenk-sauter11} heavily
relies on the Green's functions of the Helmholtz operator, and, as a
result, it appears to be limited to homogeneous media. So far, two alternative
procedures to build a regularity splitting in heterogeneous media have
been proposed. On the one hand, an iterative argument based on elliptic
regularity is proposed in \cite{chaumontfrelet-nicaise19}. This technique
can handle general wave operators in piecewise smooth media, but unfortunately,
it only provides \eqref{eq_stability_h} with a constant $c$ depending on $p$,
which prevents the analysis of $hp$-FEMs, and does not lead to
\eqref{eq_stability_hp}.  On the other hand, novel ideas based on semi-classical
analysis have been recently introduced in
\cite{lafontaine-spence-wunsch_2020,lafontaine-spence-wunsch_2021,galkowski2022helmholtz}.
While this last splitting provides \eqref{eq_stability_h} and \eqref{eq_stability_hp}
with $p$-robust constants, it requires \emph{globally} smooth parameters.
In particular, the important case of piecewise constant parameters, modelling
a medium consisting of different materials, is not covered. An advantage
of \cite{lafontaine-spence-wunsch_2020,lafontaine-spence-wunsch_2021,galkowski2022helmholtz}
over the present splitting, however, is that only finite (although global) regularity of the 
coefficients is required, as compared to (only piecewise) analyticity here. The two approaches
are thus complementary.

It should be stressed that both approaches, like in fact all $k$-explicit analyses of numerical methods for Helmholtz problems, 
rely on $k$-explicit bounds $\CsolveM$ for the solution operator of the continuous problem. If $\CsolveM$ 
grows only polynomially in $k$, then exponential approximability of high order methods can be exploited, as is done 
in both approaches. We refer to a more detailed discussion of $\CsolveM$ in Remark~\ref{remark_polynomial_stability}. 

In this work, we modify the initial Fourier filtering approach of
\cite{melenk-sauter10,melenk-sauter11} and adapt it to be able to develop
a regularity splitting of solutions of wave propagation problems in piecewise smooth
media and subsequently apply a duality argument (``Schatz argument'') to Galerkin discretizations. 
We cover several classes of scalar Helmholtz problems in
heterogeneous media with different types of boundary conditions, providing quasi-optimality 
under the resolution conditions \eqref{eq_stability_h} and \eqref{eq_stability_hp} with $p$-robust
constants. The approach taken in the present work is quite general and thus not limited to the 
scalar Helmholtz problems under consideration. 

The remainder of this work is organized as follows. 
Section~\ref{section:assumptions_and_problem_specific_notation} presents
the settings and key notations for our work. We devise our regularity
splitting in Section~\ref{section:abstract_contraction_argument}
and subsequently apply it to Galerkin discretizations in 
Section~\ref{section:galerkin}. The analysis of 
Sections~\ref{section:abstract_contraction_argument}
and \ref{section:galerkin} is performed in an abstract setting under
general assumptions. In Section~\ref{section:covered_problems}, we verify 
that these abstract assumptions indeed hold true for several classes of relevant 
scalar Helmholtz problems, namely, 
the heterogeneous Helmholtz equation with 
a) the classical impedance boundary conditions and 
perfectly matched matched layers on a circular/spherical domain; 
b) exact boundary conditions expressed in terms of the Dirichlet-to-Neumann operator; 
c) second order absorbing boundary conditions. 
We also refer to \cite{bernkopf21} where in addition to the present examples the case 
of the elasticity equation on circular domains with exact Dirichlet-to-Neumann boundary 
conditions is analyzed with the present techniques. 
Appendix~\ref{section:dtn_via_bios}
presents technical results about Dirichlet-to-Neumann operators which 
are of independent interest. 

\section{Assumptions and Problem Specific Notation}\label{section:assumptions_and_problem_specific_notation}

\subsection{Wavenumber}

We use the letter $k$ for the wavenumber. Since we are especially
interested in the high-wavenumber regime, we assume for the sake
of simplicity that $k$ is bounded away from zero, i.e., $k \geq k_0 > 0$ for some $k_0 > 0$ that is fixed throughout the present work. 

\subsection{Generic constants}

Throughout this work, $C$ will denote a generic constant that
is independent of the wavenumber $k$ and other critical parameters such as 
the mesh size $h$ and the polynomial degree $p$. 
The constant may, however, depend on the operators
appearing in assumptions \WPs and \APs
introduced in Sections \ref{subsection_WP} and \ref{subsection_AP}
below. It may also depend
on the constant $C_\mathrm{affine}$ and $C_\mathrm{metric}$ introduced
in Assumption~\ref{assumption:quasi_uniform_regular_meshes} that describes
the ``shape regularity'' of the elements of the mesh. 
If $a$, $b,\dots$
are other quantities appearing in the analysis, we employ the notation
$C_{a,b,\dots}$ for a constant that is additionally allowed to depend
on $a$, $b,\dots$. Finally, we will employ explicit names together with
a subscript $k$ for constants allowed to depend on the wavenumber.

\subsection{Geometry} 

We consider a Lipschitz domain $\Omega \subset \mathbb R^d$,
$d\in \{2,3\}$, with boundary $\Gamma \coloneqq \partial \Omega$.
We set $\Omega^+ := {\mathbb R}^d \setminus \overline{\Omega}$. 
We introduce the interior trace $\gamma^{int}_0 u:= (u|_{\Omega})|_\Gamma$ 
and the exterior trace $\gamma^{ext}_0 u := (u|_{\Omega+})|_\Gamma$.
We assume that $\Omega$ is partitioned into a set $\LP$ of non-overlapping
Lipschitz subdomains such that $\cup_{P \in \LP} \overline{P} = \overline{\Omega}$.
The internal interface $(\cup_{P \in \LP} \partial P )\setminus \Gamma$ will be denoted
$\interface$. While the abstract results in Sections~\ref{section:abstract_contraction_argument}
and \ref{section:galerkin} do not require much regularity of $\Gamma$ and $\interface$, we flag 
at this point already that in the applications discussed in Section~\ref{section:covered_problems} 
the assumptions on the geometry are such that 
$\Gamma$ and the pieces $\partial P$, $P \in \LP$, of $\interface$ are analytic 
(Assumption~\ref{assumption:smoothness_domain_boundary_and_interface}).
Notice that this assumption prevents the presence of corners and edges,
and therefore intersection points in particular.


\subsection{Function spaces}
\label{sec:function-spaces}

The notations $L^2(\Omega)$ and $L^2(\Gamma)$ stand for the usual Lebesgue spaces
of square-integrable complex-valued functions defined on $\Omega$
and $\Gamma$, respectively. The notations $\|{\cdot}\|_{0,\Omega}$
and $\|{\cdot}\|_{0,\Gamma}$ stand for the usual norms of these spaces.
We also write $(\cdot,\cdot)$ and $\langle \cdot,\cdot \rangle$
for the inner product of $L^2(\Omega)$ and $L^2(\Gamma)$ and (with a standard
abuse of notation) also for the usual (anti-)duality pairings. 
For $s$, $t \geq 0$, $H^s(\Omega)$
and $H^t(\Gamma)$ are (possibly) fractional Sobolev spaces of functions defined
on $\Omega$ and $\Gamma$. $\widetilde H^{-s}(\Omega)$ and $H^{-t}(\Gamma)$ are
then the (topological) antidual of $H^s(\Omega)$ and $H^t(\Gamma)$.
For all $s$, we denote by $\|\cdot\|_{s,\Omega}$ the norm of $H^s(\Omega)$,
and if $s > 0$, by $|\cdot|_{s,\Omega}$ its semi-norm. We also use similar notations
for $H^s(\Gamma)$. For $0 \leq t \leq 1$, we introduce the ``energy'' space
$\HOneT \coloneqq \{ u \in H^1(\Omega) \colon u \in H^t(\Gamma) \}$
with $k$-dependent norm
\begin{equation}
\label{eq:t-k-norm}
\|u\|_{1,t,k}^2
\coloneqq
k^2 \|u\|_{0,\Omega}^2 + k^{1-2t} |u|_{t,\Gamma}^2 + |u|_{1,\Omega}^2.
\end{equation}
We introduce this space with $t=1$ to handle second-order absorbing boundary conditions
that involve surface differential operators. One easily sees that for $t \leq 1/2$ the space $\HOneT$
coincides with $H^1(\Omega)$, but we still employ the notation $\HOneT$ to unify our presentation.

For $r \geq 0$, we will also use the broken Sobolev spaces
\begin{equation*}
H^r(\LP)
:=
\left \{
v \in L^2(\Omega)
\; | \;
v|_P \in H^r(P) \; \forall P \in \LP
\right \}
\end{equation*}
with their norm and and semi-norm given by
\begin{equation*}
\|v\|_{s,\LP}^2 := \sum_{P \in \LP} \|v\|_{s,P}^2, 
\qquad
|v|_{s,\LP}^2 := \sum_{P \in \LP} |v|_{s,P}^2.
\end{equation*}

In addition to Sobolev spaces, we will employ spaces of (piecewise) analytic functions.
For open sets $\omega\subset {\mathbb R}^d$ and $M$, $\gamma\ge 0$, we set  
\begin{equation}
\label{eq:analyticity}
\AnaClass{M}{\gamma}{\omega} \coloneqq \{v \in L^2(\omega)\,|\, 
\|\nabla^n v\|_{0,\omega}
\leq M \gamma^n \max\{n, k\}^n
\quad \forall n \in \mathbb{N}_0
\}, 
\end{equation}
where $\mathbb N_0 := \{1,2,\dots\}$ and
$|\nabla^n v(x)|^2 = \sum_{|\alpha| = n} \frac{n!}{\alpha!} |D^\alpha v(x)|^2$. 
The analyticity class 
\begin{equation}
\label{eq:analyticity-class}
\AnaClass{M}{\gamma}{\partition}\coloneqq \{v \in L^2(\Omega)\,|\, v|_P \in \AnaClass{M}{\gamma}{P} \quad \forall P \in \partition\}
\end{equation}
is a class of piecewise analytic functions on $\Omega$. 
We will also employ analytic functions defined on the boundary $\Gamma$.
For the sake of simplicity, analyticity classes on $\Gamma$ are defined as 
traces of analytic functions. That is, for an open neighborhood $T$ of $\Gamma$ and 
$\gamma > 0$, we set
\begin{equation*}
\AnaClassbdy{M}{\gamma}{T}{\Gamma}
:=
\{
\gamma^{int}_0 V \,|\, V \in \AnaClass{M}{\gamma}{T \cap \Omega}
\}. 
\end{equation*}
By a slight abuse of notation, for a function $g \in L^2(\Gamma)$ we will write $g \in \AnaClassbdy{M}{\gamma}{T}{\Gamma}$ 
to indicate the existence of a function $G \in \AnaClassbdy{M}{\gamma}{T \cap \Omega}{\Gamma}$ with $\gamma^{int}_0 G = g$. 

\begin{remark}
\label{remk:bdy-fcts}
\begin{enumerate}[nosep, start=1, label=(\roman*),leftmargin=*]
\item 
\label{item:remk:bdy-fcts-i}
Functions $G \in \AnaClass{M}{\gamma}{T \cap \Omega}$ are analytic on $\overline{T \cap \Omega}$
and therefore have a trace on $\Gamma$. Indeed, for $T\cap \Omega$ Lipschitz we have by the Sobolev
embedding theorem for $t \in {\mathbb N}$ with $t > d/2$ that
$H^t(T \cap \Omega)\subset C(\overline{T\cap \Omega})$. Hence,
$\|\nabla^n G\|_{L^\infty(T \cap \Omega)} \leq C \gamma^{n+t} \max\tcf{\{}n+2,k\}^{n+t}$ for all 
$n \in {\mathbb N}_0$ for suitable $C$. 
In turn, this implies that the Taylor series of $G$ about each point $x_0 \in T \cap\Omega$ converges with radius of convergence $1/(e \gamma)$.  
\item 
\label{item:remk:bdy-fcts-ii}
The choice of interior traces in $\AnaClassbdy{M}{\gamma}{T}{\Gamma}$ is arbitrary. One could equivalently define
them as traces of functions analytic in subsets of $\Omega^+$: 
It is shown in \cite[Lemma~{B.5}]{melenk12} that if 
$G \in \AnaClass{M}{\gamma}{T \cap \Omega}$, then there exist a neighborhood $T'$ of $\Gamma$ (depending only on $\gamma$), constants $M'$, $\gamma'$, and 
a function $G' \in \AnaClass{M'}{\gamma'}{T' \cap \Omega^+}$ with $\gamma^{int}_0 G = \gamma^{ext}_0 G'$. 
\item
\label{item:remk:bdy-fcts-iii}
Let $T$ be an open neighborhood of $\Gamma$ and $G \in \AnaClass{M}{\gamma}{T\setminus\Gamma}$.  
By \ref{item:remk:bdy-fcts-i}, the traces $\gamma^{int}_0 G$ and $\gamma^{ext}_0 G$ exist so that the 
jump $\llbracket G\rrbracket := \gamma^{ext}_0 G - \gamma^{int}_0 G $ exists. In view of the arguments given in 
\ref{item:remk:bdy-fcts-ii}, we have $\llbracket G\rrbracket \in \AnaClassbdy{M'}{\gamma'}{T'}{\Gamma}$ for some $\gamma'$, $T'$ depending only
on $\gamma$ and $T$ and some $M'$ depending additionally on $M$. 
\eremk
\end{enumerate}
\end{remark}

We note that if $\gamma \leq \gamma'$ and $M \leq M'$, then
\begin{equation*}
\AnaClass{M}{\gamma}{\LP} \subset \AnaClass{M'}{\gamma'}{\LP}, 
\qquad
\AnaClassbdy{M}{\gamma}{T}{\Gamma} \subset \AnaClassbdy{M'}{\gamma'}{T}{\Gamma}. 
\end{equation*}

We also notice that the trace inequality
\begin{equation}
\label{eq_weighted_trace}
k^{1/2} \|v\|_{s,\Gamma} + \|v\|_{s+1/2,\Gamma}
\leq
C_s
\left (
k \|v\|_{s,\LP} + \|v\|_{s+1,\LP}
\right )
\end{equation}
holds true for every $s \geq 0$.  
The analyticity classes are invariant under analytic changes of variables 
and multiplication by analytic functions: 
\begin{lemma}[\protect{\cite[Lemma~{2.6}]{melenk-sauter21}}]
\label{lemma:lemma2.6}
Let $\omega_1$, $\omega_2 \subset {\mathbb R}^d$ be bounded open and $g: \omega_1 \rightarrow \omega_2$ 
be a bijection and analytic on the closed set $\overline{\omega}_1$. Let $f_1$ be analytic on the closed
set $\overline{\omega}_2$ and $f_2 \in \AnaClass{M_f}{\gamma_f}{\omega_2}$. Then there are constants 
$C_1$, $\gamma_1 > 0$ depending only on $f_1$, $g$, $\omega_1$, $\omega_2$, and $\gamma_f$ such that 
$f_1 (f_2 \circ g) \in \AnaClass{C_1 M_f}{\gamma_1}{\omega_1}$. 
\end{lemma}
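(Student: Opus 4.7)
The plan is to combine standard analytic bounds for $f_1$ and $g$ with a Faà di Bruno composition estimate and the Leibniz product rule, carefully tracking the $\max\{n,k\}^n$ growth factor instead of the usual $n!$.

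First I would record the initial derivative bounds. Since $f_1$ is analytic on the compact set $\overline{\omega}_2$ and $g$ is analytic on $\overline{\omega}_1$, there exist constants $M_{f_1},\gamma_{f_1},M_g,\gamma_g>0$ such that
\begin{equation*}
\norm{\nabla^n f_1}_{L^\infty(\omega_2)}\le M_{f_1}\gamma_{f_1}^n n!,\qquad \norm{\nabla^n g}_{L^\infty(\omega_1)}\le M_g\gamma_g^n n!.
\end{equation*}
Since $g$ is an analytic bijection of $\overline{\omega}_1$ onto $\overline{\omega}_2$, $|\det Dg|$ is bounded below by a positive constant, so pull-back by $g$ is bounded $L^2(\omega_2)\to L^2(\omega_1)$. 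This allows any $L^2(\omega_2)$-bound on $\nabla^j f_2$ to be transferred to an $L^2(\omega_1)$-bound on $(\nabla^j f_2)\circ g$.

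Second, I would apply Faà di Bruno's formula to $f_2\circ g$. Schematically,
\begin{equation*}
\nabla^n(f_2\circ g)=\sum_{j=1}^n\sum_{\substack{m_1+\cdots+m_j=n\\ m_i\ge 1}}c_{n,j,\boldsymbol m}\,(\nabla^j f_2)\circ g\;\otimes\;\bigotimes_{i=1}^j\nabla^{m_i}g,
\end{equation*}
with combinatorial weights $c_{n,j,\boldsymbol m}$ of the Bell type (so $\sum_{j,\boldsymbol m} c_{n,j,\boldsymbol m}\le C^n n!/\prod m_i!$). Taking $L^2$ norms, using the pull-back bound, the $L^\infty$ estimate for $g$, and $f_2\in\AnaClass{M_f}{\gamma_f}{\omega_2}$, each term is bounded by a constant multiple of
\begin{equation*}
M_f\,\gamma_f^j\max\{j,k\}^j\prod_{i=1}^j M_g\gamma_g^{m_i}m_i!.
\end{equation*}
The key arithmetic observation is the inequality $\prod_i\max\{m_i,k\}^{m_i}\le\max\{n,k\}^n$ for any composition $m_1+\cdots+m_j=n$ (checked by splitting into the cases $m_i\le k$ and $m_i>k$ and using $m_i\le n$), together with $\max\{j,k\}^j\le\max\{n,k\}^n$. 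Summing the combinatorial weights against the geometric factors yields
\begin{equation*}
\norm{\nabla^n(f_2\circ g)}_{0,\omega_1}\le C\,M_f\,\tilde\gamma^n\max\{n,k\}^n,
\end{equation*}
for some $\tilde\gamma$ depending only on $\gamma_f,\gamma_g,M_g$, and $\omega_1,\omega_2$.

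Third, the Leibniz rule together with the analytic bound on $f_1$ gives
\begin{equation*}
\norm{\nabla^n(f_1(f_2\circ g))}_{0,\omega_1}\le\sum_{j=0}^n\binom{n}{j}M_{f_1}\gamma_{f_1}^j j!\cdot C M_f\tilde\gamma^{n-j}\max\{n-j,k\}^{n-j}.
\end{equation*}
Using $\binom{n}{j}j!\le n!$ and absorbing $n!$ into $\max\{n,k\}^n$ times a geometric factor (since $n!\le n^n\le\max\{n,k\}^n$), and $\max\{n-j,k\}^{n-j}\le\max\{n,k\}^n$, one concludes
\begin{equation*}
\norm{\nabla^n(f_1(f_2\circ g))}_{0,\omega_1}\le C_1 M_f\gamma_1^n\max\{n,k\}^n
\end{equation*}
with $\gamma_1$ depending only on $f_1$, $g$, $\omega_1$, $\omega_2$, and $\gamma_f$, as claimed.

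The main obstacle is the combinatorial composition step: the nonstandard factor $\max\{n,k\}^n$ (in place of $n!$) must be shown stable under the Faà di Bruno sum, where derivatives of $f_2$ of order $j\le n$ appear composed with products of derivatives of $g$ summing to $n$. The stability hinges on the elementary but crucial inequality $\prod_i\max\{m_i,k\}^{m_i}\le\max\{n,k\}^n$, which lets the $k$-weighted bound behave multiplicatively just as $n!$ would in the classical setting.
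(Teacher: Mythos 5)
The paper itself offers no proof of this lemma---it is quoted verbatim from \cite[Lemma~2.6]{melenk-sauter21}---so your argument has to stand on its own. Its architecture (pull-back of $L^2$ bounds under the change of variables, Fa\`a di Bruno for $f_2\circ g$, Leibniz for the multiplication by the analytic factor) is the standard and correct one, but the quantitative bookkeeping as written does not close, and it fails precisely at the point you single out as the crux.

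In the composition step, your per-term bound is $c_{n,j,(m_1,\dots,m_j)}\cdot M_f\gamma_f^j\max\{j,k\}^j\prod_i M_g\gamma_g^{m_i}m_i!$ with $c_{n,j,(m_1,\dots,m_j)}\lesssim C^n n!/\prod_i m_i!$. Multiplying out, the $\prod_i m_i!$ cancel and each term is of size $C^n\,n!\,M_f(\gamma_fM_g)^j\gamma_g^n\max\{j,k\}^j$; summing the $O(2^n)$ compositions leaves an unabsorbed factor $n!$, i.e.\ you land at $n!\max\{n,k\}^n$, not $\max\{n,k\}^n$ (and $n!$ cannot be traded for $\max\{n,k\}^n$ here without double counting). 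Your ``key arithmetic observation'' $\prod_i\max\{m_i,k\}^{m_i}\le\max\{n,k\}^n$ is true but never actually enters the chain of estimates, because the derivatives of $g$ carry factorials $m_i!$, not factors $\max\{m_i,k\}^{m_i}$. The mechanism that makes the class stable is different: the Fa\`a di Bruno weight is $n!/(j!\,\prod_i m_i!)$ (the $j$ blocks are unordered), so after cancellation the leftover is $n!/j!$, and one closes with $(n!/j!)\,\max\{j,k\}^j\le n^{n-j}\max\{n,k\}^j\le\max\{n,k\}^n$ --- the loss of derivative order on $f_2$ is exactly compensated by the surviving factorial ratio. The same slip recurs in the Leibniz step: bounding $\binom{n}{j}j!\le n!$, replacing that $n!$ by $\max\{n,k\}^n$, \emph{and} using $\max\{n-j,k\}^{n-j}\le\max\{n,k\}^n$ yields $\max\{n,k\}^{2n}$; the correct pairing is $\binom{n}{j}j!=n!/(n-j)!\le n^j\le\max\{n,k\}^j$ against $\max\{n-j,k\}^{n-j}\le\max\{n,k\}^{n-j}$. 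Finally, bijectivity plus analyticity of $g$ on $\overline\omega_1$ does not by itself give $|\det Dg|$ bounded below (consider $x\mapsto x^3$ on $(-1,1)$); the $L^2$-boundedness of the pull-back needs $g$ to be bi-analytic with nonvanishing Jacobian, which is the intended hypothesis and should be invoked explicitly.
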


\subsection{Informal explanation of the proof of the main result and the assumptions}


We perform a general analysis under abstract assumptions
given in Sections~\ref{subsection_WP} and \ref{subsection_AP}
below. Before rigorously presenting the proof and stating the requirements,
let us rather informally present the key arguments and motivate
the main assumptions.

We consider a time-harmonic wave propagation problems of the form
\begin{equation}
\label{eq_wp_intuitive}
\left \{
\begin{array}{rcll}
-\TkOmegaM u - \nabla \cdot (A\nabla u) &=& f & \text{ in } \Omega,
\\
\partial_n u + \TkGammaM u &=& g & \text{ on } \Gamma,
\end{array}
\right .
\end{equation}
where $A: \Omega: \to \mathbb R^{d \times d}$
is a piecewise analytic coefficient. The protoypical example is
the heterogeneous Helmholtz equation with impedance boundary conditions
with
\begin{equation*}
\TkOmegaM u = k^2 \mu u, \qquad \TkGammaM u = ik u,
\end{equation*}
where $\mu: \Omega \to \mathbb R$ is a piecewise analytic coefficient,
and the superscript $^{-}$ reminds us that, in general,
there is negative zero-order term in the PDE. In what follows,
we will take $g=0$ for simplicity.

Our goal is, given $f \in L^2(\Omega)$, to split the solution
$u$ as $u = u_{\rm F} + u_{\rm A}$ where $u_{\rm F} \in H^2(\LP)$
with $|u_{\rm F}|_{2,\LP} \leq C \|f\|_{0,\Omega}$ and
$u_{\rm A}$ is (piecewise) analytic, but with a norm growing (in a controlled way) with $k$.
An intuitive (but slightly inexact) explanation of how
this splitting is obtained is as follows.

{\bf Step 1.} We introduce the following stable decomposition
of the right-hand side:
\begin{equation*}
f = f_{\rm F} + f_{\rm A},
\end{equation*}
where $f_{\rm A}$ is (piecewise) analytic, and $f_{\rm F} \in L^2(\Omega)$. 
This splitting is obtained
through filter operators defined using cutoffs in
the Fourier domain. These are introduced in Section
\ref{subsection_filtering_operators} below.
Crucially, $f_{\rm A}$ contains the ``low frequencies''
of $f$ so that it is analytic, and $f_{\rm F}$ consists
of the remainding ``high frequencies''.

{\bf Step 2.} The analytic part $u_{\rm A}$ of the splitting
is essentially obtained by solving \eqref{eq_wp_intuitive}
with $f$ replaced by $f_{\rm A}$. Assuming that
\eqref{eq_wp_intuitive} possesses appropriate regularity
shifts, $u_{\rm A}$ will indeed be (piecewise) analytic, but include
oscillations with frequency $O(k)$ that have to be expected for Helmholtz problems. 

{\bf Step 3.} The subtle part of the analysis is to show
that the $H^2(\LP)$ norm of $u_{\rm F} := u-u_{\rm A}$ 
does not grow with $k$. The reason why this can 
happen is that $u_{\rm F}$ solves \eqref{eq_wp_intuitive}
with a right-hand side $f_{\rm F}$ that does not 
contain the natural frequency $k$ of
the equation (it is made up of the higher-frequency
content of $f$). To rigorously quantify this 
in our analysis we consider an auxiliary coercive
problem that acts as a parametrix for high-frequency data.
Specifically, we show that, in an appropriate sense,
$u_{\rm F} \sim \widetilde u_{\rm F}$, where $\widetilde u_{\rm F}$
solves the auxiliary problem
\begin{equation}
\label{eq_ap_intuitive}
\left \{
\begin{array}{rcll}
-\TkOmegaP \widetilde u_F
-
\nabla \cdot(A\nabla \widetilde u_F)
&=&
f_F & \text{ in } \Omega,
\\
\partial_n \widetilde u_F + \TkGammaP \widetilde u_F
&=& 0
& \text{ on } \Gamma.
\end{array}
\right .
\end{equation}
The operators $\TkOmegaP$ and $\TkGammaP$ appear only in the analysis, to which they
are tuned,  but not in the numerical realization. 
For the Helmholtz equation with Robin boundary conditions,
we can select $\TkOmegaP = -\TkOmegaM = -k^2\mu$ and
$\TkGammaP = 0$. The superscript $^{+}$ reminds us that,
in general, the PDE in  \eqref{eq_ap_intuitive} will
contain a positive zero-order term. Since $f_F \in L^2(\Omega)$,
we cannot expect more than
$H^2(\LP)$ regularity for $\widetilde u_F$.
However, since \eqref{eq_ap_intuitive} is a coercive
problem, we have by Lax-Milgram and elliptic regularity 
$\|\widetilde u_{\rm F}\|_{2,\LP}
\leq
C \|f_{\rm F}\|_{0,\Omega}
\leq
C \|f\|_{0,\Omega}$
which is the desired scaling. A nice way of summarizing 
this last step is that for high-frequency data, we are
allowed to flip the ``bad sign'' in the Helmholtz problem.

We now summarize the key properties that makes such a proof
work for the case $\TkOmegaM = -\TkOmegaP = k^2\mu$, $\TkGammaM = ik$
and $\TkGammaP = 0$. We then expand on how these properties may be
generalized, leading to our main assumptions below.

We formulate a first set of assumptions on the wave propagation
problem itself. These are given in \WPs in Section \ref{subsection_WP}
below. Consider first the case $\TkOmegaM = -k^2\mu$ and $\TkGammaM = ik$, for which 
the following properties are essential: 
\begin{enumerate*}[label=(\arabic*) ]
\item The sesquilinar form associated with the problem is uniformly continuous w.r.t.\ the
$\|\cdot\|_{1,t,k}$ norm (with $t=1/2$ in this case).
\item 
For $f \in L^2(\Omega)$, \eqref{eq_wp_intuitive}
admits a unique solution $u \in H^1(\Omega)$. In general,
$\|u\|_{1,t,k}$ is not bounded by $ C \|f\|_{0,\Omega}$ with a $C > 0$ independent of 
in $k$, but we will require that the growth be merely polynomial in $k$. 
\item 
If the right-hand side $f$ in \eqref{eq_wp_intuitive} is
(piecewise) analytic, then so is the solution $u$.
\end{enumerate*}

These are exactly the assumptions given in \ref{WP1}, \ref{WP2}, 
and \ref{WP3}, up to one possible generalization. Namely, 
in \ref{WP1}, we allow for the continuity constant of the sesquilinear form to
grow (polynomially) in $k$. In exchange, we require in \ref{WP4} that
the operators $\TkOmegaM$ and $\TkOmegaP$ may be split into operators
that are uniformly bounded in $k$ and smoothening operators that maps
into sets of analytic functions. Roughly speaking, we allow polynomial
growth of the continuity constant, if this additional growth is compensated
by analytic regularity. Such considerations are crucial when dealing with
DtN operators on general surfaces, since the DtN operator does not have
to be uniformly bounded in $k$. We also allow for more
general second-order terms than $-\nabla(A\nabla \cdot)$ in \ref{WP5}
as long as the corresponding sesquilinear form $a$ is uniformly continuous
in $k$.

We then need a set of assumptions on the auxiliary problem 
\eqref{eq_ap_intuitive}. First, as seen from the above discussion,
it is key that \eqref{eq_ap_intuitive} is coercive and that it
possess a $k$-uniform $L^2(\Omega)$ to $H^2(\LP)$ regularity shift.
These requirements are stated in \ref{AP1} and \ref{AP2}. The final
assumption is that the wave propagation problem \eqref{eq_wp_intuitive}
and the auxiliary problem \eqref{eq_ap_intuitive} are suitably
linked to one another, in such a way that $u_{\rm F} \sim \widetilde u_{\rm F}$.
These assumptions are expressed by considering the differences
$\TOD := \TkOmegaP-\TkOmegaM$ and $\TGD := \TkGammaP-\TkGammaM$.
Notice that in the case of the impedance boundary condition considered
above, we have $\TOD = 2k^2\mu$ and $\TGD = ik$. The crucial point
that enables our analysis is then that $\TOD$ and $\TGD$ are compact
operator (considering functions in $H^1(\Omega)$ and $H^{1/2}(\Gamma)$)
with norm suitably controlled in $k$. Specifically, the powers of $k$
in the operator norms are compensated by the smoothening degree of
the operators. \ref{AP3} essentially states this requirement, while
allowing for a generalization similar to \ref{WP4}. More precisely,
we allow the difference operator $\TOD$ and $\TGD$ to have norms
polynomially bounded in $k$, if this is compensated by smoothening
properties, i.e., if the parts of the operators responsible for
the growth in $k$ map into sets of analytic functions.
Again, such a generalization is required to handle, e.g., DtN operators.

We finally mention that in the discussion above, we have
considered regularity shifts from $L^2(\Omega)$ to $H^2(\LP)$.
However, our assumptions are slightly more general,
as we also consider shifts between $H^r(\LP)$ and $H^{r+2}(\LP)$.
The goal of this last generalization is to derive regularity splittings
where the regular part lies in $H^{r+2}(\LP)$ with $r > 0$, provided that
$f \in H^r(\LP)$.

\subsection{Time-harmonic wave propagation problem}
\label{subsection_WP}

Given $f \in L^2(\Omega)$ and $g \in L^2(\Gamma)$, our model problem
consists in finding $u \in \HOneT$ such that
\begin{equation}
\label{eq:S_k_minus_problem}
\bkm(u,v) = (f,v) + \langle g , v \rangle \qquad \forall v \in  \HOneT
\end{equation}
with
\begin{equation}
\bkm(u, v)
\coloneqq
a(u,v)
-
(\TkOmegaM u,v)
-
\langle \TkGammaM u,v \rangle,
\end{equation}
where $a(\cdot,\cdot)$ is a sesquilinear form on $\HOneT$ and
the operators $\TkOmegaM$ and $\TkGammaM$ are differential operators acting
on ``volume'' and ``surface'' functions. Before providing a detailed, formal description
of these operators, we first explain why $\bkm$ is decomposed in the form given. 

The sesquilinear form $a(\cdot,\cdot)$ is meant to represent the ``elliptic part''
of the wave operator. A typical example is the ``grad-grad'' form in the scalar Helmholtz case, but one may
imagine other situation such as the elasticity operator. The operator
$\TOM$ then accounts for the ``frequency part'' of the operator, i.e., the
time-derivative transformed into $ik$ in the frequency domain. The most basic
situation is then $\TOM u \coloneqq k^2 u$, but we allow for more generality.
Finally, $\TGM$ is an operator on $\Gamma$; relevant 
examples are the  DtN map or some approximation of it.

We now summarize our assumptions 
\WPs on the {\bf w}ave propagation {\bf p}roblem.
\begin{enumerate}[nosep, start=1, label={\bf (WP\arabic*)},leftmargin=*]
\item 
\label{WP1}
The sesquilinear form $\bkm \colon \HOneT \times \HOneT \to \mathbb{C}$
is continuous, i.e., there exists a constant $\CcontM \geq 0$, possibly
depending on $k$, such that
\begin{equation*}
|\bkm(u,v)| \leq \CcontM \|u\|_{1,t,k} \|v\|_{1,t,k} \qquad \forall u,v \in \HOneT.
\end{equation*}

\item 
\label{WP2}
Problem \eqref{eq:S_k_minus_problem} is well posed,
i.e., for every $f \in L^2(\Omega)$ and $g \in L^2(\Gamma)$
it admits a unique weak solution $\Skm(f,g) \coloneqq u \in \HOneT$, 
and the stability estimate
\begin{equation}\label{eq:stability_estimate}
\|\Skm(f,g)\|_{1,t,k}
\leq
\CsolveM ( \| f \|_{0, \Omega} + k^{1/2} \| g \|_{0, \Gamma} )
\end{equation}
holds true with a constant $\CsolveM$ that may depend on $k$,
but is independent of $f$ and $g$ (See also Remark~\ref{rem:infsup}). 
\end{enumerate}
Assumptions~\ref{WP1} and \ref{WP2} simply state that the problem is well-posed
and that the direct and inverse operators are bounded with continuity
constants $\CcontM$ and $\CsolveM$. For our quasi-optimality result
we also need the solution to be (piecewise) analytic
when the right-hand sides are (piecewise) analytic.

\begin{enumerate}[nosep, start=3, label={\bf (WP\arabic*)},leftmargin=*]
\item
\label{WP3} 
For each tubular neighborhood $T$ of $\Gamma$ 
there exists a constant $\gamma_0$ and there exists a non-decreasing function
$\vartheta: [\gamma_0,+\infty) \to \mathbb R$, both independent of $k$,
and there exists a possibly $k$-dependent constant $\Can$ 
such that for all $\gamma \geq \gamma_0$ 
the following holds: for all piecewise analytic data
$f \in \AnaClass{M_f}{\gamma}{\LP}$
and all boundary data 
$g \in \AnaClassbdy{M_g}{\gamma}{T}{\Gamma}$
the solution $\Skm(f,g)$ 
to Problem~\eqref{eq:S_k_minus_problem} satisfies
$\Skm(f,g) \in \AnaClass{M_u}{\anashift{\gamma}}{\LP}$, i.e., it is 
again piecewise analytic, and  
\begin{equation}
\label{eq:Mf_Mg}
M_u \leq \Can \CsolveM k^{-1} (M_f + k M_g).
\end{equation}
\end{enumerate}

\begin{remark} \ref{WP3} permits the data $f$, $g$ to depend on $k$. In particular, the 
constants $M_f$, $M_g$ may depend on $k$ but \ref{WP3} requires the constant $M_u$ 
to be bounded as given in \eqref{eq:Mf_Mg}. 
\eremk
\end{remark}

For many model problems, the continuity constant $\CcontM$ of $\bkm$ does not depend on $k$, and the three above
assumptions are sufficient to proceed with our analysis. However,
$\CcontM$ may depend on $k$, for example, when considering DtN operators on general
surfaces $\Gamma$, or when dealing with 
Maxwell's equations \cite{melenk-sauter21}. As a result,
we include an additional assumption to treat these cases.

\begin{enumerate}[nosep, start=4, label={\bf (WP\arabic*)},leftmargin=*]
\item
\label{WP4}
The linear operators $\TOM \colon \HOneT \to \HOneT^\prime$ and
$\TGM \colon H^{t}(\Gamma) \to H^{-t}( \Gamma)$ admit splittings into linear operators
\begin{equation}
\label{eq_splitting_TOM_TGM}
\TOM = \ROM + \AOmegaM, \qquad \TGM = \RGM + \AGammaM
\end{equation}
such that the ``$R$ part'' is uniformly bounded in $k$, while the ``$A$ part''
maps into analytic functions: We assume 
\begin{equation}
\label{eq_continuity_ROM_RGM}
|(\ROM u, v)| + |\langle \RGM u, v \rangle| \leq C \|u\|_{1,t,k} \|v\|_{1,t,k}
\qquad \forall u,v \in \HOneT,
\end{equation}
and we assume 
the existence of a constant $\CAM$ (possibly depending on $k$)
and a constant $\gAM$ and a tubular neighborhood $T$ (both independent of $k$) 
such that for all $u \in \HOneT$
we have $\AOmegaM u \in \AnaClass{M_u}{\gAM}{\LP}$
and $\AGammaM (u|_\Gamma) \in \AnaClassbdy{M_v}{\gAM}{T}{\Gamma}$ with
\begin{equation*}
M_u \leq \CAM \|u\|_{1,t,k},
\qquad
k M_v \leq \CAM \|u\|_{1,t,k}.
\end{equation*}
\end{enumerate} 
\begin{enumerate}[nosep, start=5, label={\bf (WP\arabic*)},leftmargin=*]
\item
We assume that $a(\cdot,\cdot)$ is uniformly-in-$k$ continuous, i.e.,
\label{WP5}
\begin{equation}
\label{eq_continuity_a}
|a(u,v)| \leq C \|u\|_{1,t,k}\|v\|_{1,t,k} \quad \forall u,v \in \HOneT.
\end{equation}
\end{enumerate}

\subsection{Auxiliary positive problem}
\label{subsection_AP}

Our analysis hinges on an auxiliary problem that is meant to be a ``positive''
version of the time-harmonic problem. We thus introduce the sesquilinear form
\begin{equation}\label{eq:S_k_plus_problem}
\bkp(u, v)
\coloneqq
a(u,v)
-
(\TkOmegaP u, v)
-
\langle \TkGammaP u , v \rangle
\quad
\forall u,v \in \HOneT,
\end{equation}
where $\TkOmegaP: H^{1,t}(\Omega) \to \left (H^{1,t}(\Omega) \right )'$
and $\TkGammaP: H^t(\Omega) \to H^{-t}(\Omega)$ are problem specific
(continuous linear) operators chosen to conduct
the analysis.  In the simplest case when $\TkOmegaM = +k^2$ and $\TkGammaM = -ik$,
we can simply select $\TkOmegaP = -k^2$ and $\TkGammaP = 0$. However,
allowing for more generality enables us to treat a much wider class
of problems.

The assumptions \APs on the {\bf a}uxiliary {\bf p}roblem are listed as follows: 


\begin{enumerate}[nosep, start=1, label={\bf (AP\arabic*)},leftmargin=*]
\item
\label{AP1} 
$\bkp$ is coercive: there exist $\CcoerP > 0$ (independent of $k$) and   $\sigma \in \mathbb{C}$ with
$|\sigma| = 1$ such that
\begin{equation}
\label{eq_coercivity_bkp}
\Re ( \sigma  \bkp(u, u) ) \geq \CcoerP \| u \|_{1,t,k}^2 \qquad \forall u \in \HOneT.
\end{equation}
\end{enumerate}
Thus, for $f \in \widetilde H^{-1}(\Omega)$ and
$g \in H^{-t}(\Gamma)$, we may define $\Skp(f,g)$ as the unique element of $\HOneT$ such that
\begin{equation}
\label{eq_definition_Skp}
\bkp(\Skp(f,g),v) = (f,v) + \langle g,v \rangle \quad \forall v \in \HOneT.
\end{equation}
\begin{enumerate}[nosep, start=2, label={\bf (AP\arabic*)},leftmargin=*]
\item 
\label{AP2} 
There exists 
an integer $\smax \geq 0$ and for each $0 \leq s \leq \smax$  there exists a 
$\CshiftP>0$ independent of $k$ such that 
if $f \in H^s(\LP)$
and $g \in H^{s+1/2}(\Gamma)$ we have $\Skp(f,g) \in H^{s+2}(\LP)$ with
\begin{equation}
\label{eq_shift_Skp}
\!\!
\|\Skp(f,g)\|_{s+2,\LP}
\leq
\CshiftP 
\!\!
\left [
k^s \|f\|_{0,\Omega}
+
\|f\|_{s,\LP}
+
k^{s+1/2} \|g\|_{0,\Gamma}
+
\|g\|_{s+1/2, \Gamma}
\right ]\!\!.
\hspace*{-5mm}
\end{equation}
\end{enumerate}
We further need the auxiliary problem to have similar Sobolev regularity properties as the original wave propagation problem.
This is measured by the sesquilinear form
\begin{equation*}
\bD(u,v)
\coloneqq
\bkm(u,v) - \bkp(u,v)
=
(T_{k,\Omega}^\Delta u,v)
+
\langle T_{k,\Gamma}^\Delta u,v \rangle
\quad
\forall u,v \in \HOneT, 
\end{equation*}
where $T_{k,\Omega}^\Delta = T_{k,\Omega}^+ - T_{k,\Omega}^-$
and $T_{k,\Gamma}^\Delta = T_{k,\Gamma}^+ - T_{k,\Gamma}^-$.
Similar to the discussion on assumption \ref{WP4} it is enough
for many applications
to assume that $b_k^\Delta$ is bounded uniformly in $k$. 
To be fully general, however, we need a last
splitting assumption.

\begin{enumerate}[nosep, start=3, label={\bf (AP\arabic*)},leftmargin=*]
\item 
\label{AP3} 
We have the splittings
\begin{equation}
\label{eq_splitting_TOD_TGD}
\TOD = \ROD + \AOD,
\qquad
\TGD = \RGD + \AGD.
\end{equation}
The linear operator $\ROD$ is of order at most one and satisfies
for $0 \leq s \leq s_{\max}$ 
\begin{equation}
\label{eq_ROD}
\|\ROD u\|_{s,\LP}
\leq
C_s
\left (
k^2 \|u\|_{s,\LP} + k \|u\|_{s+1,\LP}
\right )
\qquad \forall u \in H^{s+1}(\LP), 
\end{equation}
whereas the linear operator $\RGD$ is of order zero and satisfies
\begin{equation}
\label{eq_RGD}
\|\RGD v\|_{s+1/2,\Gamma}
\leq
C \left (
k^{3/2} \|v\|_{s,\Gamma} + k \|v\|_{s+1/2,\Gamma}
\right )
\qquad \forall v \in H^{s+1/2}(\Gamma).
\end{equation}
Finally, there exists a constant $\CAD$, possibly depending on $k$, and there exist a constant 
$\gAD$ and a tubular neighborhood $T$ of $\Gamma$ (both independent of $k$) such that for all $u \in \HOneT$ 
we have 
$\AOD u \in \AnaClass{M_u}{\gAD}{\LP}$ and $\AGD (u|_\Gamma) \in \AnaClassbdy{M_v}{\gAD}{T}{\Gamma}$ with
\begin{equation}
\label{eq_A_PLUS}
M_u \leq \CAD \|u\|_{1,t,k}, 
\qquad
k M_v \leq \CAD \|u\|_{1,t,k}. 
\end{equation}
\end{enumerate}

\begin{remark} [On the stability assumption \ref{WP2}]
\label{rem:infsup}
The stability estimate \eqref{eq:stability_estimate} does not make minimal regularity assumptions on the data $f$, $g$. 
However, the stability constant $\CsolveM$ \eqref{eq:stability_estimate} is closely related to the inf-sup constant 
\begin{equation}
\label{eq:inf-sup}
\gamma_{\rm infsup}
:=
\inf_{u \in \HOneT} \sup_{v \in \HOneT}
\frac{|\bkm(u,v)|}{\|u\|_{1,t,k} \|v\|_{1,t,k}}
\end{equation} 
if \ref{AP1} and \ref{AP3} hold. Let $B^-_k: \HOneT \rightarrow \HOneT^\prime$ be the operator induced by $\bkm$. 
We claim that if $B^-_k$ is an isomorphism, in which case $1/\gamma_{\rm infsup} = \|(B^\prime_k)^{-1} \|_{\HOneT\leftarrow \HOneT^\prime} $, 
then \eqref{eq:stability_estimate} holds with $\CsolveM = O(1/(k\gamma_{\rm infsup}))$. To see this, note that 
for the solution $u = \Skm(f,g)$ of \eqref{eq:S_k_minus_problem} 
there is $v \in \HOneT$ with $\|v\|_{1,t,k} = 1$ such that 
\begin{equation*}
\gamma_{\rm infsup} \|u\|_{1,t,k} \leq |\bkm(u,v)|  = |(f,v) + \langle g,v\rangle | 
\lesssim (k^{-1} \|f\|_{0,\Omega} + k^{-1/2} \|g\|_{0,\Gamma} ) \|v\|_{1,t,k} 
\end{equation*}
so that \eqref{eq:stability_estimate} holds with $\CsolveM = O( 1/(k \gamma_{\rm infsup}))$. 
Conversely, if \eqref{eq:stability_estimate} holds, then the inf-sup constant satisfies \eqref{eq:infsup-estimate} below 
as we now show using the arguments used, e.g., 
in \cite[Prop.~{8.2.7}]{melenk95} or \cite[Thm.~{2.5}]{esterhazy-melenk12}. 
The assumption \ref{AP3} implies that $B^-_k$ is a compact perturbation of a coercive operator. Since \eqref{eq:stability_estimate} 
implies uniqueness, the Fredholm alternative provides that $B^-_k$ is an isomorphism. Since the norm of $(B^-_k)^{-1}$ equals 
the norm of $((B^-_k)^\prime)^{-1}$, we compute the latter. 
To see this,  let $u \in \HOneT$ be arbitrary and 
$z \in \HOneT$ solve, for all $w \in \HOneT$, 
\begin{align*}
\bkm(z,w) & = \bkm(u,w) - \bkp(u,w)\\
& = 
(\TOD u, w)
+
\langle \TGD u, w\rangle
=
(\ROD u + \AOD u, w)
+
\langle \RGD u + \AGD u , w\rangle.
\end{align*}
Noting 
\begin{align*}
|(\TOD u,w)| & \lesssim (k + \CAD) \|u\|_{1,t,k} \|w\|_{0,\Omega},  \\
|\langle\TGD u,w\rangle| & \lesssim \|\TGD u\|_{0,\Gamma} \|w\|_{0,\Gamma} 
\lesssim \left(\|\RGD u\|_{1/2,\Gamma} + k^{-1/2} \CAD \|u\|_{1,t,k} \right) \|w\|_{0,\Gamma} \\
&\lesssim \left(k^{3/2} \|u\|_{0,\Gamma} + k \|u\|_{1/2,\Gamma} + \CAD k^{-1/2} \|u\|_{1,t,k} \right) \|w\|_{0,\Gamma}  \\
& \lesssim (k + \CAD k^{-1/2})\|u\|_{1,t,k} \|w\|_{0,\Gamma}. 
\end{align*}
By \eqref{eq:stability_estimate} and these two bounds, we get 
\begin{equation*}
\|z\|_{1,t,k} \lesssim (k^{3/2} + \CAD) \CsolveM \|u\|_{1,t,k}.  
\end{equation*}
We conclude 
\begin{align*} 
\sup_{v \ne 0} \frac{|\bkm(v,u)|}{\|u\|_{1,t,k} \|v\|_{1,t,k}} 
 \stackrel{v = u - z}{\ge } \frac{|\bkp(u,u)|}{\|u\|_{1,t,k} \|u-z\|_{1,t,k}} 
\stackrel{\ref{AP1}}{\gtrsim }
\frac{\|u\|^2_{1,t,k}}{\|u\|_{1,t,k} (\|u\|_{1,t,k} + \|z\|_{1,t,k})} 
\end{align*}
and arrive at 
\begin{equation}
\label{eq:infsup-estimate}
\gamma_{\rm infsup} \gtrsim \frac{1}{\CsolveM (k^{3/2} + \CAD)}. 
\end{equation}
\eremk
\end{remark}

\section{Abstract Contraction Argument}
\label{section:abstract_contraction_argument}

\subsection{Filtering operators}
\label{subsection_filtering_operators}

Following \cite{melenk-sauter10,melenk-sauter11}, a key ingredient of our
splittings are filtering operators defined using cut-offs in the Fourier domain.
We need to filter both surface and volume functions, leading to two pairs 
of filters.

Before providing proper mathematical definitions, we sumarize
the ideas behind the construction of the filtering operators. To this end,
consider a function $f \in L^2(\mathbb R^d)$ together with its Fourier
transform $\widehat f$. The decomposition
\begin{equation*}
\widehat f = \chi_\eta \widehat f + (1-\chi_\eta) \widehat f, 
\end{equation*}
where $\chi_\eta$ is the characteristic function of the ball $B_\eta(0)$, induces the splitting
\begin{equation*}
f = L^\eta f + H^\eta f, 
\end{equation*}
where $L^\eta$ and $H^\eta$ are the inverse Fourier transforms
of $\chi_\eta \widehat f$ and $(1-\chi_\eta) \widehat f$, respectively. 
By construction the Fourier transform of $L^\eta f$ is supported in $\overline{B(0,\eta)}$ so that
we call $L^\eta$ a low-pass filter, whereas $H^\eta$ is referred
to as a high-pass filter. Crucially, $L^\eta f$ is analytic,
which is key in the construction of the analytic part of the
solution splitting.
Next, we extend the above construction to handle
functions defined on the boundary $\Gamma$, and deal
with piecewise analyticity instead of global analyticity.

On the boundary $\Gamma$, we can employ the surface filters
introduced in \cite[Lemmas~{4.2}, 4.3]{melenk-sauter11}. Proposition
\ref{proposition:high_low_pass_filter_Gamma} sums up their essential properties.

\begin{proposition}[Surface filters]
\label{proposition:high_low_pass_filter_Gamma}
For each $\eta > 1$, there exist two operators
$\LG,\HG \colon L^2(\Gamma) \rightarrow L^2(\Gamma)$
such that $\HG + \LG = \id$. For $0 \leq s^\prime \leq s$
the operator $\HG$ satisfies
\begin{equation}
\label{eq_high_pass_filter_Gamma}
\|\HG g\|_{H^{s^\prime}(\Gamma)}
\leq
C_{s,s^\prime} (\eta k)^{s^\prime-s} \| g \|_{H^{s}(\Gamma)}
\qquad
\forall g \in H^s(\Gamma).
\end{equation}
There exists a tubular neighborhood $T$ of $\Gamma$ that depends solely on the analyticity properties of 
$\Gamma$ and there exists a constant $\eta'\ge \eta$ depending only on $\eta$ and $\Gamma$ such that $\LG g \in \AnaClassbdy{M}{\eta'}{T}{\Gamma}$ with
\begin{equation}
\label{eq_low_pass_filter_Gamma}
M \leq C_s \|g\|_{-1/2,\Gamma}. 
\end{equation}
\end{proposition}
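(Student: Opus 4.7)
The plan is to follow the construction in \cite[Lemmas~4.2,~4.3]{melenk-sauter11}, which is a partition-of-unity version of Fourier-based low/high pass filtering adapted to an analytic manifold. Since $\Gamma$ is analytic, it admits a finite atlas of analytic charts $\phi_j : U_j \subset \mathbb{R}^{d-1} \to \Gamma_j \subset \Gamma$ together with a subordinate analytic partition of unity $\{\chi_j\}$. Given $g \in L^2(\Gamma)$, one writes $g = \sum_j \chi_j g$, pulls back each piece by $\phi_j$ to obtain a compactly supported function $g_j$ on $\mathbb{R}^{d-1}$, applies a Fourier multiplier $\mathcal F^{-1} \widehat{\mathbf 1}_{B(0,\eta k)} \mathcal F$ (smoothed via a fixed cutoff $\chi \in C_c^\infty$ with $\chi \equiv 1$ on $B(0,1)$ and supported in $B(0,\eta/\eta_*)$ for some $\eta_* \in (1,\eta)$) to split $g_j = g_j^L + g_j^H$, and then pushes back and sums: $\LG g := \sum_j (g_j^L \circ \phi_j^{-1})$ and $\HG g := g - \LG g$. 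The identity $\LG + \HG = \id$ is built in, and mapping $L^2(\Gamma) \to L^2(\Gamma)$ follows since multiplication by bounded multipliers and by the analytic functions $\chi_j \circ \phi_j$ are $L^2$ bounded, with constants depending only on $\Gamma$ and the chosen $\chi$.

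The high-pass bound \eqref{eq_high_pass_filter_Gamma} is then a routine Fourier multiplier estimate: on each chart, multiplication by $(1 - \chi(\cdot/(\eta k)))$ gains a factor $(\eta k)^{s'-s}$ in shifting from $H^s$ to $H^{s'}$ norms, uniformly in $k$, and the chart changes and partition of unity contribute only $\eta$-independent constants depending on $s$, $s'$. Interpolation (or a direct estimate) extends from integer $s$, $s'$ to the stated range $0 \le s' \le s$.

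For the low-pass part the key observation is Paley--Wiener: each $g_j^L$ has Fourier transform supported in $\{|\xi| \le \eta k\}$, hence extends to an entire function on $\mathbb{C}^{d-1}$ of exponential type $\eta k$. Concretely, $\|\nabla^n g_j^L\|_{L^\infty(\mathbb R^{d-1})} \le C (\eta k)^n \|g_j^L\|_{0,\mathbb R^{d-1}} \le C (\eta k)^n \|g\|_{-1/2,\Gamma}$ by a Bernstein-type estimate together with the $L^2(\Gamma)\to L^2(\mathbb R^{d-1})$ boundedness of the chart construction (one gains the $H^{-1/2}\to L^2$ factor by exploiting the smoothing embedded in the cutoff, as in \cite[Lemma~4.2]{melenk-sauter11}). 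Since the charts $\phi_j$ are analytic, each $g_j^L \circ \phi_j^{-1}$ extends analytically to a complex neighborhood $V_j \supset \Gamma_j$ of width depending only on the analyticity of $\Gamma$ (via Lemma~\ref{lemma:lemma2.6} for the composition), so $\LG g$ extends to a complex neighborhood of $\Gamma$ and, by restriction to $\Omega$, to a tubular neighborhood $T$. Tracking the constants in the composition with $\phi_j^{-1}$ gives $\|\nabla^n (\LG g)\|_{L^\infty(T)} \le C (\eta')^n \max\{n,k\}^n \|g\|_{-1/2,\Gamma}$ with $\eta'$ and $T$ depending only on $\eta$ and $\Gamma$, which is precisely the definition of $\LG g \in \AnaClassbdy{M}{\eta'}{T}{\Gamma}$ with $M \le C_s \|g\|_{-1/2,\Gamma}$.

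The main technical point I anticipate is the step from analyticity of $\LG g$ as a boundary function to analyticity in a genuine tubular neighborhood $T$ of $\Gamma$ with $T$ and $\eta'$ independent of $k$. This requires (i) a quantitative complex-analytic extension of the charts $\phi_j$ whose width depends only on the analyticity radius of $\Gamma$, and (ii) a careful accounting, via the chain rule and Cauchy estimates, of how composition with $\phi_j^{-1}$ multiplies the constant $\eta$ by a geometric factor. Once this is in place, the remaining pieces are essentially bookkeeping, and the constants $C_{s,s'}$, $C_s$, $\eta'$, $T$ depend only on $\eta$, $s$, $s'$, and the analyticity of $\Gamma$, as claimed.
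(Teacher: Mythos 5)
Your chart-based construction is a genuinely different route from the paper's, but as written it has two gaps that I do not see how to close. First, the ``subordinate analytic partition of unity $\{\chi_j\}$'' does not exist: a compactly supported analytic function on the connected analytic manifold $\Gamma$ vanishes identically, so the $\chi_j$ can at best be $C^\infty$. This matters because the low-pass pieces $g_j^L$ are entire on $\mathbb{R}^{d-1}$ (Paley--Wiener) and hence not supported in $U_j$, so the push-forward $g_j^L\circ\phi_j^{-1}$ is defined only on the chart image $\Gamma_j$; making sense of the sum $\sum_j g_j^L\circ\phi_j^{-1}$ requires either extending each term by zero (which destroys continuity, let alone analyticity, across $\partial\Gamma_j$) or inserting a second family of $C^\infty$ cutoffs (which destroys analyticity of $\LG g$). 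Either way the conclusion $\LG g\in\AnaClassbdy{M}{\eta'}{T}{\Gamma}$ is lost. Second, the inequality $\|g_j^L\|_{0}\lesssim\|g\|_{-1/2,\Gamma}$ is false for a frequency cutoff at $|\xi|\lesssim\eta k$: one only gets $\|g_j^L\|_{0}\lesssim(\eta k)^{1/2}\|g_j\|_{-1/2}$, and this factor is sharp (take $\widehat{g_j}$ concentrated near $|\xi|=\eta k/2$). So even locally your argument yields $M\lesssim(\eta k)^{1/2}\|g\|_{-1/2,\Gamma}$ rather than \eqref{eq_low_pass_filter_Gamma}.

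The paper's construction (following \cite{melenk-sauter11}) circumvents both problems simultaneously by going through the volume: one lifts $g$ to a single function $G$ on $\Omega$ with $\partial_n G=g$ on $\Gamma$ and $\|G\|_{1,\Omega}\lesssim\|g\|_{-1/2,\Gamma}$, applies the full-space volume filter $\LO$ to (the Stein extension of) $G$ --- producing one globally entire function, so no gluing or partition of unity is needed --- and sets $\LG g:=(\normal^\ast\cdot\nabla\LO G)|_\Gamma$, $\HG:=\id-\LG$. The volume filter estimate $\|\nabla^n\LO G\|_{0,\mathbb{R}^d}\le C(\eta k)^{n-1}\|G\|_{1,\Omega}$ carries an extra factor $(\eta k)^{-1}$ that exactly absorbs the normal derivative, which is how the $\|g\|_{-1/2,\Gamma}$ bound (with no $k$-power) is obtained; Lemma~\ref{lemma:lemma2.6} then handles the composition with the analytically continued normal field $\normal^\ast$. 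In short, the half-derivative gain in \eqref{eq_low_pass_filter_Gamma} and the global analyticity of $\LG g$ are both consequences of the lifting, and neither survives a chart-by-chart Fourier cutoff on $\Gamma$ itself.
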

\begin{proof}
The operators $\HG$ and $\LG$ are defined in \cite[Lemmas~{4.2}, 4.3]{melenk-sauter11} and the 
properties of $\HG$ are shown there. For the properties of $\LG$, we note the following: Let $T$ 
be a tubular neighborhood on which the analytic continuation $\normal^\ast$ of the normal vector $\normal$
exists and let, as in construction in \cite{melenk-sauter11},  $G \in H^{s+3/2}(\Omega)$ be such that $\partial_n G = g$ on $\Gamma$ and $\|G\|_{s'+3/2,\Omega} \lesssim \|g\|_{s',\Gamma}$ for $-1/2 \leq s' \leq s$. 
Then $\LG g$ is defined as the restriction to $\Gamma$ of $\normal^\ast \cdot \nabla \LO G$. By construction, 
$\LO G$ is analytic on ${\mathbb R}^d$ with 
\begin{equation*}
\|\nabla^n \LO G\|_{0,{\mathbb R}^d} \leq C (\eta k)^{n-1} \|G\|_{1,\Omega}  \qquad \forall n \in {\mathbb N}. 
\end{equation*}
We conclude from Lemma~\ref{lemma:lemma2.6} that $\normal^\ast \cdot \nabla \LO G \in \AnaClassbdy{C \|G\|_{1,\Omega}}{\eta'}{T}{\Gamma}$ for 
some $C$, $\eta'$ depending only on $\eta$ and $\Gamma$. Without loss generality, $\eta' \ge \eta$. We conclude with the estimate 
$\|G\|_{1,\Omega} \lesssim \|g\|_{-1/2,\Gamma}$. 
\end{proof}

\begin{proposition}[Volume filters]
\label{proposition:high_low_pass_filter_Omega_pw}
For each $\eta > 1$, there exist two operators $\LO,\HO \colon L^2(\Omega) \rightarrow L^2(\Omega)$
such that $\HO + \LO = \id$. For $0 \leq s^\prime \leq s$ and $0 \leq \varepsilon < 1/2$,
the operator $\HO$ satisfies
\begin{align}
\label{eq_high_pass_filter_Omega}
\|\HO f\|_{s^\prime,\partition}
& \leq
C_{s,s^\prime} (\eta k)^{s^\prime - s} \|f\|_{s,\partition}
\qquad
\forall f \in H^s(\partition), \\
\label{eq:estimate_high_pass_filter_negative_norms}
\|\HO f\|_{\widetilde{H}^{-\varepsilon}(\Omega)}
& \leq
C_\varepsilon
(\eta k)^{-\varepsilon} \| f \|_{0, \Omega}
\qquad
\forall f \in L^2(\Omega)
\end{align}
for constants $C_{s,s'}$, $C_\varepsilon$ independent of $f$, $k$, and $\eta$. 
Finally, we have $\LO f \in \AnaClass{M}{\eta}{\partition}$ with
\begin{equation}
\label{eq_low_pass_filter_Omega}
M \leq C_\Omega  \|f\|_{0,\Omega}. 
\end{equation}
\end{proposition}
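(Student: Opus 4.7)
The plan is to reduce the construction on $\Omega$ to Fourier-based filters on $\mathbb{R}^d$ applied piecewise after extension. On the whole space, define orthogonal projectors by sharp Fourier cutoffs:
\begin{equation*}
\widehat{\lowPassFilterRd u}(\xi) := \chi_{|\xi| \leq \eta k}(\xi) \, \hat u(\xi), \qquad \highPassFilterRd := \id - \lowPassFilterRd,
\end{equation*}
so that Plancherel immediately yields the global high-pass estimates $\|\highPassFilterRd u\|_{s',\mathbb{R}^d} \leq (\eta k)^{s'-s} \|u\|_{s,\mathbb{R}^d}$ for $0 \leq s' \leq s$ as well as $\|\highPassFilterRd u\|_{-\varepsilon,\mathbb{R}^d} \leq (\eta k)^{-\varepsilon} \|u\|_{0,\mathbb{R}^d}$ for $\varepsilon \geq 0$. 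The band-limited function $\lowPassFilterRd u$ satisfies the Bernstein-type inequality $\|\partial^\alpha \lowPassFilterRd u\|_{0,\mathbb{R}^d} \leq (\eta k)^{|\alpha|} \|u\|_{0,\mathbb{R}^d}$, which, together with the elementary bound $(\eta k)^n \leq \eta^n \max\{n,k\}^n$ and the identity $\sum_{|\alpha|=n} \tfrac{n!}{\alpha!} = d^n$, places $\lowPassFilterRd u$ in $\AnaClass{C\|u\|_{0,\mathbb{R}^d}}{\eta}{\mathbb{R}^d}$ after a mild rescaling of the cutoff radius to absorb the factor $\sqrt{d}$ into $\eta$.

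Next, I localise to the partition $\partition$. For each $P \in \partition$ fix a Stein-type extension operator $E_P$ that is bounded as $H^s(P) \to H^s(\mathbb{R}^d)$ for $s \in [0,\smax]$ with constants depending only on the shape of $P$, and define
\begin{equation*}
(\LO f)|_P := (\lowPassFilterRd (E_P (f|_P)))|_P, \qquad \HO f := f - \LO f.
\end{equation*}
Since $(E_P(f|_P))|_P = f|_P$, we obtain the identity $(\HO f)|_P = (\highPassFilterRd (E_P(f|_P)))|_P$. Applying the $\mathbb{R}^d$ high-pass bound to each piece, squaring, and summing over $P \in \partition$ gives \eqref{eq_high_pass_filter_Omega}; the analyticity estimate of the first step applied piecewise together with $\|E_P(f|_P)\|_{0,\mathbb{R}^d} \lesssim \|f\|_{0,P}$ delivers \eqref{eq_low_pass_filter_Omega}.

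For the negative-norm bound \eqref{eq:estimate_high_pass_filter_negative_norms}, I argue by duality. For $v \in H^\varepsilon(\Omega)$ and each $P \in \partition$, the zero extension $\tilde v_P$ of $v|_P$ to $\mathbb{R}^d$ lies in $H^\varepsilon(\mathbb{R}^d)$ with $\|\tilde v_P\|_{\varepsilon,\mathbb{R}^d} \lesssim \|v\|_{\varepsilon,\Omega}$, where the restriction $\varepsilon < 1/2$ is essential for the boundedness of zero extension on the Lipschitz piece $P$. Writing
\begin{equation*}
(\HO f, v)_\Omega = \sum_{P \in \partition} \langle \highPassFilterRd (E_P(f|_P)), \tilde v_P \rangle_{\mathbb{R}^d}
\end{equation*}
and invoking the $\mathbb{R}^d$ negative-norm estimate on each summand, followed by Cauchy–Schwarz over the finite partition, yields the desired bound on $\|\HO f\|_{\widetilde H^{-\varepsilon}(\Omega)}$.

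The main obstacle is this final step: the target norm $\|\cdot\|_{\widetilde H^{-\varepsilon}(\Omega)}$ is a global dual norm on $\Omega$, whereas $\HO$ is assembled piece by piece, so the argument hinges on the zero-extension boundedness $H^\varepsilon(P) \hookrightarrow H^\varepsilon(\mathbb{R}^d)$, which in turn forces precisely the restriction $0 \leq \varepsilon < 1/2$ in the statement.
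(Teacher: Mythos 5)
Your construction is essentially the paper's own proof: the paper also defines $\LO$, $\HO$ piecewise via the Stein extension $E_P$ composed with the full-space Fourier-cutoff filters of Melenk--Sauter (which are exactly your sharp cutoffs at $|\xi|=\eta k$), and it proves \eqref{eq:estimate_high_pass_filter_negative_norms} by the same duality argument with the zero extension $\widetilde v_P$, where the restriction $\varepsilon<1/2$ enters for precisely the reason you identify. The only cosmetic difference is that you re-derive the full-space estimates (Plancherel, Bernstein) that the paper simply cites; note also that the multinomial identity $\sum_{|\alpha|=n}\frac{n!}{\alpha!}\xi^{2\alpha}=|\xi|^{2n}$ makes your worry about absorbing a $\sqrt d$ factor unnecessary.
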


\begin{proof}
For all $P \in \partition$, we set
\begin{equation*}
\left .  \left ( \HO f \right ) \right |_P
=
\left (H_{\eta k} (E_P f)\right )|_P
\qquad
\left .
\left (
\LO f
\right )
\right |_P
=
\left (L_{\eta k} (E_P f)\right )|_P,
\end{equation*}
where $E_P: H^s(P) \to H^s(\mathbb R^d)$ is the Stein
extension operator, \cite[Chap.~{VI}]{stein70},   
and $H_{\eta k}$ and $L_{\eta k}$
are the ``full space'' high and low pass filters introduced
in \cite{melenk-sauter11}. Then, the proof of \eqref{eq_high_pass_filter_Omega}
and \eqref{eq_low_pass_filter_Omega} can be found in \cite[Lemmas~{4.2}, {4.3}]{melenk-sauter11}.

Regarding \eqref{eq:estimate_high_pass_filter_negative_norms},
note that for $0 \leq \varepsilon < 1/2$ 
the space of compactly supported smooth functions $\bigcup_{P \in \partition} C^\infty_0(P)$
is dense in $H^\varepsilon(\Omega)$. It is easy to check that for all $0 \leq s \leq 1$ 
\begin{equation}
\label{eq:proposition:high_low_pass_filter_Omega_pw-100}
\|H_{\eta k} f\|_{-s,\mathbb R^d}
\leq
C_s (\eta k)^{-s} \| f \|_{0,\mathbb R^d}
\qquad \forall f \in L^2(\mathbb R^d), 
\end{equation}
\cite[Section~{4.1.1}]{melenk-sauter11}.  
For $v \in C_0^\infty(P)$ let $\widetilde v $ denote the trivial extension
by zero on all of $\mathbb{R}^d$ of $v$. These observations give
\begin{align*}
\left| (\highPassFilterOmega f,v)_\Omega\right|
& =\left|
\sum_{P \in \partition}
(H_{\eta k} E_P(f),v)_{L^2(P)}
\right|
=
\left| 
\sum_{P \in \partition}
(H_{\eta k} E_P(f),\widetilde v_P)_{L^2(\mathbb R^d)}
\right|  \\
& \leq
\sum_{P \in \partition}
\|H_{\eta k} E_P(f)\|_{-\varepsilon,\mathbb R^d} \|\widetilde v_P\|_{\varepsilon,\mathbb R^d}.
\end{align*}
Additionally, we have
\begin{align*}
\|H_{\eta k} E_P(f)\|_{-\varepsilon,\mathbb R^d}
& \stackrel{\eqref{eq:proposition:high_low_pass_filter_Omega_pw-100}}{ \leq}
C_\varepsilon
(\eta k)^{-\varepsilon} \|E_P(f)\|_{0,\mathbb R^d}
\leq
C_\varepsilon
(\eta k)^{-\varepsilon} \|f\|_{0,P},  \\
\|\widetilde v\|_{\varepsilon,\mathbb R^d}
&\stackrel{\varepsilon <1/2}{\lesssim}
C_\varepsilon \|v\|_{\varepsilon,P}, 
\end{align*}
which leads to 
\begin{equation*}
\left| (\HO f,v)_{L^2(\Omega)}\right| 
\leq
C_\varepsilon
(\eta k)^{-\varepsilon}
\sum_{P \in \partition}
\|f\|_{0,P} \|v\|_{\varepsilon,P}
\leq
C_\varepsilon
(\eta k)^{-\varepsilon}
\|f\|_{0,\Omega} \|v\|_{\varepsilon,\partition},
\end{equation*}
and the result follows since
$\|v\|_{\varepsilon,\partition} \leq \|v\|_{\varepsilon,\Omega}$. 
\end{proof}

\begin{remark}[Other constructions of high and low pass filters]
The presented high and low pass filters are by no means the only possible constructions.
It is, e.g., possible to construct the low-frequency filter on bounded domains 
by truncating the expansion in eigenfunctions of the Laplacian endowed with Neumann boundary conditions. 
See also \cite[Sec.~{6.1}]{melenk12} for similar considerations.
\eremk
\end{remark}

\subsection{A contraction argument}

We employ the low and high pass filters to formulate a preliminary 
decomposition result. For a given right-hand side, the decomposition
consists of splitting of the desired form plus a remainder that is the solution to
the time-harmonic problem with new right-hand side that is stricly smaller
in norm than the original one. We thus call the preliminary result a ``contraction''
argument. This contraction argument is key in our analysis, and established
in Lemma~\ref{lemma_contraction} below.

Due to the duality between $H^\varepsilon(\Omega)$ and $\widetilde H^{-\varepsilon}(\Omega)$,
we have for $f \in L^2(\Omega)$ and $g \in L^2(\Gamma)$
with a constant $C_\varepsilon$ depending only on $\varepsilon \in [0,1]$ 
\begin{equation*}
|(f,v) + \langle g,v \rangle|
\leq
C_\varepsilon \left (
k^{\varepsilon-1} \|f\|_{\widetilde{H}^{-\varepsilon}(\Omega)}
+
k^{-1/2} \|g\|_{0,\Gamma}
\right )
\|v\|_{1,k,\Omega}
\quad
\forall v \in H^1(\Omega).
\end{equation*}
Since $\bkp$ is coercive by \ref{AP1}, this leads to
\begin{equation}
\label{eq_stability_Skp}
\|\Skp(f,g)\|_{1,t,k}
\leq
C_\varepsilon
\left (
k^{\varepsilon-1} \| f \|_{\widetilde{H}^{-\varepsilon}(\Omega)}
+
k^{-1/2} \|g\|_{0, \Gamma}
\right ).
\end{equation}

\begin{lemma}[Unified contraction argument]
\label{lemma_contraction}
Let \WPs and \APs hold.
Let $q \in (0, 1)$ and $s \in [0,\smax]$ be given. Then, there exists
a parameter $\eta > 1$ depending on $q$ and $s$ such that for all $f \in H^s(\partition)$
and $g \in H^{s+1/2}(\Gamma)$ we can write
\begin{equation*}
\Skm(f,g) = u_{\rm F} + u_{\rm A} + \Skm(\widetilde{f},\widetilde{g}), 
\end{equation*}
where
\begin{equation*}
u_{\rm F}
\coloneqq
\Skp(\HO f, \HG g)
\qquad
u_{\rm A}
\coloneqq
\Skm(\LO f,\LG g)
+
\Skm(\AOD u_{\rm F},\AGD u_{\rm F}),
\end{equation*}
and the pair $(\tf,\tg) \in H^s(\partition) \times H^{s+1/2}(\Gamma)$ satisfies 
\begin{equation}
\label{eq_contraction_data}
\|\tf\|_{s,\partition} + \|\tilde g\|_{s+1/2,\Gamma}
\leq
q (\| f \|_{s,\partition} + \| g \|_{s+1/2,\Gamma}).
\end{equation}
In addition, we have $u_{\rm F} \in H^{s+2}(\LP)$ with
\begin{equation}
\label{eq_contraction_uF}
\|u_{\rm F}\|_{s+2,\partition}
\leq
C_s 
\left (
\|f\|_{s,\partition} + \|g\|_{s+1/2,\Gamma}
\right ),
\end{equation}
and $u_{\rm A} \in \AnaClass{M}{\eta'}{\partition}$ for some
$\eta'>1$ depending only on $\eta$ and $\Gamma$ with 
\begin{equation}
\label{eq_contraction_uA}
M
\leq
 C \Can \CsolveM(1 + k^{-2} \CAD) (\|f\|_{0,\Omega} + \|g\|_{1/2,\Gamma}).
\end{equation}
\end{lemma}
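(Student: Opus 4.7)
The plan is to isolate the ``smooth piece driven by the positive problem'' $u_F$ by solving the coercive auxiliary problem with high-pass-filtered data, and then to use the algebra $\bkm = \bkp + \bD$ together with the splittings $\TOD = \ROD + \AOD$ and $\TGD = \RGD + \AGD$ from \ref{AP3} to reorganize the remainder as (a) terms whose data are piecewise analytic (producing $u_A$ via \ref{WP3}) plus (b) a residual $\Skm(\tf,\tg)$ whose data are contractively smaller than $(f,g)$ once $\eta$ is chosen large enough.

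Set $u_F := \Skp(\HO f,\HG g)$, which is well defined by \ref{AP1}. The shift bound \eqref{eq_contraction_uF} is then immediate from \ref{AP2} combined with the $s'=s$ cases of Propositions~\ref{proposition:high_low_pass_filter_Omega_pw} and \ref{proposition:high_low_pass_filter_Gamma}, which give $\|\HO f\|_{s,\partition} \leq C\|f\|_{s,\partition}$ and $\|\HG g\|_{s+1/2,\Gamma} \leq C\|g\|_{s+1/2,\Gamma}$. Using $\bkp(u_F,v)=(\HO f,v)+\langle\HG g,v\rangle$ and $\bkm = \bkp + \bD$, a direct manipulation yields
\begin{equation*}
\Skm(f,g) = \Skm(\LO f,\LG g) + u_F - \Skm(\TOD u_F, \TGD u_F),
\end{equation*}
and substituting the \ref{AP3} splittings and regrouping produces the claimed decomposition with
\begin{equation*}
u_A = \Skm(\LO f,\LG g) + \Skm(\AOD u_F,\AGD u_F), \qquad (\tf,\tg) \ \text{absorbing}\ (\ROD u_F, \RGD u_F)
\end{equation*}
(up to signs).

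The analyticity bound \eqref{eq_contraction_uA} on $u_A$ then follows by applying \ref{WP3} to each summand. For the first, Propositions~\ref{proposition:high_low_pass_filter_Omega_pw} and \ref{proposition:high_low_pass_filter_Gamma} yield $\LO f \in \AnaClass{M}{\eta}{\partition}$ with $M \lesssim \|f\|_{0,\Omega}$ and $\LG g$ in an analyticity class on a tubular neighborhood with constant $\lesssim \|g\|_{-1/2,\Gamma}\leq \|g\|_{1/2,\Gamma}$. For the second, \ref{AP3} gives that $\AOD u_F$ and $\AGD u_F$ are analytic with constants controlled by $\CAD \|u_F\|_{1,k,t}$, and $\|u_F\|_{1,k,t}$ is in turn controlled by the coercivity estimate \eqref{eq_stability_Skp} applied to $(\HO f, \HG g)$. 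Summing these two \ref{WP3} contributions yields the factor $(1+k^{-2}\CAD)$.

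The main obstacle is the contraction estimate \eqref{eq_contraction_data}. From \ref{AP3} together with the weighted trace inequality \eqref{eq_weighted_trace},
\begin{equation*}
\|\ROD u_F\|_{s,\partition} + \|\RGD u_F\|_{s+1/2,\Gamma} \lesssim k^2 \|u_F\|_{s,\partition} + k\|u_F\|_{s+1,\partition}.
\end{equation*}
To absorb the powers of $k$, I bound $\|u_F\|_{s,\partition}$ and $\|u_F\|_{s+1,\partition}$ by invoking \ref{AP2} at shifted indices $s-2$ and $s-1$ (and interpolating with the $H^1$-type coercivity bound \eqref{eq_stability_Skp} when these indices fall below zero or above $\smax$), and then use that $\|\HO f\|_{s-j,\partition} \lesssim (\eta k)^{-j}\|f\|_{s,\partition}$ and the analogous surface estimate from Proposition~\ref{proposition:high_low_pass_filter_Gamma} for $j=1,2$. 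Thus each extra power of $k$ on the right of the $R$-splitting is paid for by one inverse power of $\eta k$ from the high-pass filter, yielding an overall factor $C_s \eta^{-1}$. Choosing $\eta$ large enough depending only on $C_s$ and $q$ gives \eqref{eq_contraction_data}. The technical point, and the place where the bookkeeping lives, is the interpolation between \ref{AP2} and the coercivity estimate when the shifted index exits the range $[0,\smax]$, and the verification that all $k$-dependent constants collapse as claimed.
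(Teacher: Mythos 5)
Your decomposition, the proof of \eqref{eq_contraction_uF}, and the derivation of \eqref{eq_contraction_uA} all match the paper's argument (one small omission: to invoke \ref{WP3} for both summands of $u_{\rm A}$ you must also choose $\eta$ larger than $\gamma_0$ and $\gAD$ so that the low-pass data and $\AOD u_{\rm F}$, $\AGD u_{\rm F}$ lie in analyticity classes covered by \ref{WP3}; the paper does this explicitly). For the contraction estimate your route differs from the paper's: you propose to control $\|u_{\rm F}\|_{s,\partition}$ and $\|u_{\rm F}\|_{s+1,\partition}$ by applying \ref{AP2} at the shifted indices $s-2$, $s-1$, whereas the paper first bounds $k^{s+2}\|u_{\rm F}\|_{0,\Omega}$ and then uses the multiplicative interpolation inequality between $\|u_{\rm F}\|_{0,\Omega}$ and $\|u_{\rm F}\|_{s+2,\partition}$. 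Your variant is legitimate when $s-2\ge 0$, and for $s\in[1,2)$ the term $\|u_{\rm F}\|_{s,\partition}$ can still be handled through \eqref{eq_stability_Skp} with $\varepsilon=0$ because one can trade the surplus power of $k$ against one derivative of $f$.

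The genuine gap is the case $s=0$, which you explicitly defer as ``bookkeeping'' rather than close. There the problematic term is $k^2\|u_{\rm F}\|_{0,\Omega}$: applying \eqref{eq_stability_Skp} with $\varepsilon=0$ gives $k^2\|u_{\rm F}\|_{0,\Omega}\lesssim \|\HO f\|_{0,\Omega}+k^{1/2}\|\HG g\|_{0,\Gamma}$, and since $f$ is only in $L^2$ the high-pass filter provides \emph{no} gain in $\eta$ for the $f$-contribution — $\|\HO f\|_{0,\Omega}\le C\|f\|_{0,\Omega}$ with a constant independent of $\eta$ — so no contraction factor $q<1$ can be extracted. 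This is not an interpolation issue that routine bookkeeping resolves; it requires a genuinely different mechanism, namely the negative-order estimate \eqref{eq:estimate_high_pass_filter_negative_norms} for $\HO$ combined with \eqref{eq_stability_Skp} at some fixed $\varepsilon\in(0,1/2)$, which yields $k\|u_{\rm F}\|_{1,k,t}\le C\eta^{-\varepsilon}(\|f\|_{0,\Omega}+\|g\|_{1/2,\Gamma})$ and hence the factor $\eta^{-\varepsilon}$. This duality step is the one essential idea of the contraction argument that your proposal does not supply, so as written the proof of \eqref{eq_contraction_data} is incomplete precisely at $s=0$ (the case that is actually used in the Galerkin analysis via $\etaF{1}$).
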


\begin{proof}
Let $\eta > 1$ to be fixed later on. For $f \in H^s(\Omega)$ and $g \in H^{s+1/2}(\Gamma)$,
we set $u \coloneqq \Skm(f,g)$. We introduce
\begin{equation*}
u_{\rm F} \coloneqq \Skp(\HO f,\HG g), 
\quad
u_{{\rm A,I}} \coloneqq \Skm(\LO f,\LG g)
\end{equation*}
so that $u = u_{\rm F} + u_{\rm A,I} + r_{\rm I}$ with
\begin{equation*}
r_{\rm I}
\coloneqq
- \Skm(\TOD u_{\rm F},\TGD u_{\rm F})
= -
\Skm(\ROD u_{\rm F},\RGD u_{\rm F})
-
\Skm(\AOD u_{\rm F},\AGD u_{\rm F}),
\end{equation*}
where we employed \ref{AP3}. We then introduce
\begin{equation*}
u_{{\rm A},\romnum{2}} \coloneqq -\Skm(\AOD u_{\rm F},\AGD u_{\rm F}),
\quad
\tf \coloneqq -\ROD u_{\rm F},
\quad
\tg \coloneqq -\RGD u_{\rm F},
\end{equation*}
leading to the splitting $u = u_{\rm F} + u_{\rm A} + \Skm(\tf,\tg)$
with
\begin{equation*}
u_{\rm A}
\coloneqq
u_{{\rm A},\romnum{1}} + u_{{\rm A},\romnum{2}}
=
\Skm(\LO f,\LG g)
-
\Skm(\AOD u_{\rm F},\AGD u_{\rm F}).
\end{equation*}

Before establishing the contraction estimate \eqref{eq_contraction_data},
we prove the regularity estimate \eqref{eq_contraction_uF}.
Using \eqref{eq_shift_Skp} from \ref{AP2} as well as the mapping properties
\eqref{eq_high_pass_filter_Gamma} and \eqref{eq_high_pass_filter_Omega}
of the high pass filters, we have
\begin{align*}
& \|u_{\rm F}\|_{s+2,\partition} \\
&
\stackrel{\ref{AP2}}{\leq}
C_s 
\left (
\|\HO f\|_{s,\partition} + k^s \| \HO f\|_{0,\Omega}
+
\|\HG g\|_{s+1/2,\Gamma} + k^{s+1/2} \|\HG g\|_{0,\Gamma}
\right )
\\
&
\stackrel{\eqref{eq_high_pass_filter_Gamma},\eqref{eq_high_pass_filter_Omega}}{\leq}
C_s 
\left (
\|f\|_{s,\partition} + k^s (k\eta)^{-s} \|f\|_{s,\partition}
+
\|g\|_{s+1/2,\Gamma} + k^{s+1/2} (k\eta)^{-s-1/2} \|g\|_{s+1/2,\Gamma}
\right )
\\
&\leq
C_s 
\left (
\|f\|_{s,\partition} + \|g\|_{s+1/2, \Gamma}
\right ), 
\end{align*}
which proves \eqref{eq_contraction_uF}.

We next establish \eqref{eq_contraction_data}. Recalling the definition of $\tf$ and $\tg$,
estimates \eqref{eq_ROD} and \eqref{eq_RGD} combined with trace inequality
\eqref{eq_weighted_trace} show that
\begin{equation}
\label{eq:tmp_contraction-200}
\|\tf\|_{s,\partition}
+
\|\tg\|_{s+1/2,\Gamma}
\leq C \left (
k^2 \|u_{\rm F}\|_{s,\partition} + k\|u_{\rm F}\|_{s+1,\partition}
\right ).
\end{equation}

The case $s=0$ is treated separately.
Fixing $\varepsilon = 1/4$, we have from \eqref{eq_stability_Skp} and 
the properties \eqref{eq_high_pass_filter_Gamma} and
\eqref{eq:estimate_high_pass_filter_negative_norms} of the high-pass filters 
\begin{align}
\nonumber 
k \|u_{\rm F}\|_{1,t,k} 
& \stackrel{\eqref{eq_stability_Skp}}{\leq}
C_\varepsilon k
\left (
k^{\varepsilon-1}\|\HO f\|_{\widetilde H^{-\varepsilon}(\Omega)} + k^{-1/2} \|\HG g\|_{0,\Gamma}
\right ) \\
\nonumber 
&\stackrel{\eqref{eq_high_pass_filter_Gamma}, \eqref{eq:estimate_high_pass_filter_negative_norms}}{\leq}
C_\varepsilon
k
\left (k^{\varepsilon-1}(k\eta)^{-\varepsilon} \|f\|_{0,\Omega}
+ k^{-1/2} (k\eta)^{-1/2} \|g\|_{1/2,\Gamma} \right )
\\
\label{eq:tmp_contraction-100}
&\leq
C \eta^{-\varepsilon}
\left ( \|f\|_{0,\Omega} + \|g\|_{1/2,\Gamma} \right ), 
\end{align}
where, since $\varepsilon=1/4$ is fixed, we absorbed the constant $C_\varepsilon$
in a constant $C$ independent of $k$ and $\varepsilon$; we also used that
$\eta^{-1/2} \leq \eta^{-\varepsilon}$ since $\eta > 1$.
We conclude from 
\eqref{eq:tmp_contraction-200} and 
\eqref{eq:tmp_contraction-100}
\begin{align}
\label{tmp_contraction_s0}
\|\tf\|_{0,\Omega} + \|\tg\|_{1/2,\Gamma}
&\leq
C \eta^{-\varepsilon}
\left ( \|f\|_{0,\Omega} + \|g\|_{1/2,\Gamma} \right ). 
\end{align}

We turn to the case $0 < s \leq \smax$. With $\varepsilon = 0$ in \eqref{eq_stability_Skp} we have
together with the mapping properties \eqref{eq_high_pass_filter_Gamma} and
\eqref{eq_high_pass_filter_Omega} of the high-pass filters that
\begin{align*}
k^2 \|u_{\rm F}\|_{0,\Omega}
&\leq
C k
\left (
k^{-1}\|\HO f\|_{0,\Omega} + k^{-1/2} \|\HG g\|_{0,\Gamma}
\right ) \\
&\leq
C_s
(k \eta)^{-s} \|f\|_{s,\partition} + k^{1/2} (k \eta)^{-s-1/2} \|g\|_{s+1/2,\Gamma}.
\end{align*}
Hence, we find
\begin{equation}
\label{eq_contraction_uF_zero_norm}
k^{s+2} \|u_{\rm F}\|_{0,\Omega}
\leq
C_s
\eta^{-s} \left (\|f\|_{s,\partition} + \|g\|_{s+1/2,\Gamma} \right ).
\end{equation}
We note the multiplicative interpolation estimate for $0 \leq r \leq s+2$: 
\begin{equation}
\label{eq_interpolation_r_tmp}
\|u_{\rm F}\|_{r,\partition}
\leq
C_r
\|u_{\rm F}\|_{0,\Omega}^{\frac{s+2-r}{s+2}}
\|u_{\rm F}\|_{s+2,\partition}^{\frac{r}{s+2}}.
\end{equation}
Inserting \eqref{eq_contraction_uF_zero_norm} and \eqref{eq_contraction_uF}
into \eqref{eq_interpolation_r_tmp} we find
\begin{equation}
\label{eq_interpolation_r}
\|u_{\rm F}\|_{r,\partition}
\leq
C_r k^{r-s-2} \eta^{-s\frac{s+2-r}{s+2}}
\left (\|f\|_{s,\partition} + \|g\|_{s+1/2,\Gamma}\right ).
\end{equation}
Using \eqref{eq_interpolation_r} with $r = s$ and with $r=s+1$ in 
\eqref{eq:tmp_contraction-200}
 we find
\begin{equation}
\label{tmp_contraction_sgreaterzero}
\begin{alignedat}{1}
\|\tf\|_{s,\Omega}
+
\|\tg\|_{s+1/2,\Gamma}
&\leq
C_s 
\left (
k^2 \|u_{\rm F}\|_{s,\partition} + k\|u_{\rm F}\|_{s+1,\partition}.
\right )
\\
&\leq
C_s 
\left( \eta^{-s\frac{2}{s+2}} + \eta^{-s\frac{1}{s+2}} \right)
\left( \|f\|_{s,\partition} + \|g\|_{s+1/2, \Gamma}\right).
\end{alignedat}
\end{equation}
At this point, since the constants appearing in \eqref{tmp_contraction_s0} and
\eqref{tmp_contraction_sgreaterzero} are independent of $\eta$, $f$, $g$, we can
select $\eta$ large enough ($\eta$ may depend on $s$) to obtain \eqref{eq_contraction_data}
with $q < 1$. For the sake of simplicity, we additionally select $\eta$ larger than
the constants $\gamma_0$ of \ref{WP4} and $\gAD$ of \ref{AP3}.

We next establish \eqref{eq_contraction_uA}. For the first component $u_{\rm A,I}$,
properties \eqref{eq_low_pass_filter_Gamma} and \eqref{eq_low_pass_filter_Omega}
of the low pass filters show that
\begin{equation*}
\LO f \in \AnaClass{M_f}{\eta}{\partition},
\qquad
\LG g \in \AnaClassbdy{M_g}{\eta'}{T}{\Gamma}
\end{equation*}
with
\begin{equation*}
M_f \leq C \|f\|_{0,\Omega},
\qquad
M_g \leq C \|g\|_{-1/2,\Gamma}
\end{equation*}
and a tubular neighborhood $T$ of $\Gamma$ and a constant $\eta' \ge \eta$ that depend solely on $\eta$ and $\Gamma$. 
From \ref{WP3} and $\eta' \ge \eta \geq \gamma_0$, we get 
$u_{{\rm A,I}} \in \AnaClass{M_{\rm I}}{\anashift{\eta'}}{\partition}$ with
\begin{equation*}
M_{\rm I}
\leq
\Can \CsolveM k^{-1}
\left (M_f + k M_g \right )
\leq
 C \Can \CsolveM k^{-1} 
\left (\|f\|_{0,\Omega} + k \|g\|_{-1/2,\Gamma} \right ).
\end{equation*}
We turn to estimating $u_{\rm A,II}$. From \eqref{eq:tmp_contraction-100}
we get $\|u_{\rm F}\|_{1,t,k} \lesssim k^{-1} \left( \|f\|_{0,\Omega} + \|g\|_{1/2,\Gamma}\right)$. 
Using \ref{AP3}, we arrive at 
\begin{equation*}
f_{\rm II} \coloneqq \AOD u_{\rm F} \in \AnaClass{M_{f,{\rm II}}}{\gAD}{\partition}, 
\qquad
g_{\rm II} \coloneqq \AGD u_{\rm F} \in \AnaClassbdy{M_{g,{\rm II}}}{\gAD}{T}{\partition}
\end{equation*}
with 
\begin{equation*}
M_{f,{\rm II}} + k M_{g,{\rm II}} \leq
\CAD k^{-1} \left (\|f\|_{0,\Omega} + \|g\|_{1/2,\Gamma}\right ).
\end{equation*}
Since we chose $\eta$ larger than $\gamma_0$ and $\gAD$, we also have
\begin{equation*}
f_{\rm II} \coloneqq \AOD u_{\rm F} \in \AnaClass{M_{f,{\rm II}}}{\eta'}{\partition}, 
\qquad
g_{\rm II} \coloneqq \AGD u_{\rm F} \in \AnaClassbdy{M_{g,{\rm II}}}{\eta'}{T}{\Gamma}, 
\end{equation*}
and it follows from \ref{WP3} that
$u_{\rm A,II} \in \AnaClass{M_{\rm II}}{\anashift{\eta'}}{\partition}$ with
\begin{equation*}
M_{\rm II}
\leq
 C \Can \CsolveM k^{-1}
\left (M_{f,{\rm II}} + k M_{g,{\rm II}}\right )
\leq
 C \Can \CsolveM k^{-2} \CAD
\left (
\|f\|_{0,\Omega} + \|g\|_{1/2,\Gamma}
\right ).
\end{equation*}
Estimate \eqref{eq_contraction_uA} now follows since
\begin{equation*}
u_{\rm A}
\coloneqq
u_{\rm A,I}
+
u_{\rm A,II}
\in
\AnaClass{M_{\rm I} + M_{\rm II}}{\anashift{\eta'}}{\partition}.
\end{equation*}
Relabelling the parameter $\anashift{\eta'}$ as $\eta'$ 
and estimating generously $\|g\|_{-1/2,\Gamma} \lesssim \|g\|_{1/2,\Gamma}$ 
gives the result. 
\end{proof}

\subsection{Regularity splitting of time-harmonic solutions}

The contraction result of Lemma~\ref{lemma_contraction} can be iterated to yield 
a splitting of the solution of Helmholtz problems into a part $u_{\rm F}$ with finite
regularity and an analytic part $u_{\rm A}$:  

\begin{theorem}[Abstract regularity splitting]
\label{theorem_splitting}
Assume \WPs and \AP. 
For all $0 \leq s \leq \smax$, and for each $f \in H^s(\partition)$ and $g \in H^{s+1/2}(\Gamma)$ there is a 
splitting
\begin{equation*}
\Skm(f,g) = u_{\rm F} + u_{\rm A}
\end{equation*}
with $u_{\rm F} \in H^{s+2}(\partition)$ satisfying
\begin{equation}
\label{eq_splitting_uF}
\|u_{\rm F}\|_{s+2, \partition}
\leq
C_s 
\left ( \|f\|_{s,\partition} + \|g\|_{s+1/2,\Gamma} \right ),
\end{equation}
and $u_{\rm A} \in \AnaClass{M}{\anasplit{s}}{\partition}$ with
\begin{equation}
\label{eq_splitting_uA}
M \leq
 \Can C\CsolveM (1 + k^{-2} \CAD)
(\| f \|_{0,\Omega} + \| g \|_{1/2,\Gamma}),
\end{equation}
and a constant $\anasplit{s}$ depending solely on $s$ and the parameters appearing in \WP, \AP. 
\end{theorem}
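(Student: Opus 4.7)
The plan is to iterate the contraction result of Lemma~\ref{lemma_contraction} and sum the resulting geometric series in the two respective function classes (finite Sobolev regularity for the $u_{\rm F}$-parts and the piecewise analyticity class for the $u_{\rm A}$-parts).

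First I fix $q \in (0,1)$ and $s \in [0,\smax]$, and apply Lemma~\ref{lemma_contraction} to obtain the associated filter parameter $\eta$ and the constant $\eta'$ (depending only on $\eta$ and $\Gamma$, hence only on $q$, $s$, and the geometry). I then build three sequences $(f_n, g_n)_{n \ge 0}$, $(u_{\rm F}^{(n)})_{n \ge 0}$, $(u_{\rm A}^{(n)})_{n \ge 0}$ recursively by setting $(f_0, g_0) = (f, g)$ and applying the lemma to $(f_n, g_n)$ to produce $u_{\rm F}^{(n)}$, $u_{\rm A}^{(n)}$, and $(f_{n+1}, g_{n+1}) = (\tf, \tg)$ from that application. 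By \eqref{eq_contraction_data} used inductively,
\begin{equation*}
\|f_n\|_{s,\partition} + \|g_n\|_{s+1/2,\Gamma}
\leq q^n \bigl(\|f\|_{s,\partition} + \|g\|_{s+1/2,\Gamma}\bigr),
\end{equation*}
and the identity from Lemma~\ref{lemma_contraction} telescopes to
\begin{equation*}
\Skm(f,g) = \sum_{n=0}^{N-1} \bigl(u_{\rm F}^{(n)} + u_{\rm A}^{(n)}\bigr) + \Skm(f_N, g_N)
\end{equation*}
for every $N \ge 1$. The stability estimate \ref{WP2} together with the geometric decay of the data shows that $\Skm(f_N, g_N) \to 0$ in $\HOneT$ as $N \to \infty$, so the telescoping identity produces a splitting once the two series converge in their respective spaces.

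For the finite-regularity part I set $u_{\rm F} := \sum_{n \ge 0} u_{\rm F}^{(n)}$. Estimate \eqref{eq_contraction_uF} applied to each iterate and the geometric decay of $(f_n, g_n)$ in the $H^s \times H^{s+1/2}$ norms yield $\sum_n \|u_{\rm F}^{(n)}\|_{s+2,\partition} \le C_s/(1-q) \bigl(\|f\|_{s,\partition} + \|g\|_{s+1/2,\Gamma}\bigr)$, which gives absolute convergence in $H^{s+2}(\partition)$ and the bound \eqref{eq_splitting_uF}. For the analytic part I set $u_{\rm A} := \sum_{n \ge 0} u_{\rm A}^{(n)}$. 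Each $u_{\rm A}^{(n)}$ lies in $\AnaClass{M_n}{\eta'}{\partition}$ with the same $\eta'$ (crucially uniform in $n$) and, by \eqref{eq_contraction_uA} combined with $\|f_n\|_{0,\Omega} \le \|f_n\|_{s,\partition}$ and $\|g_n\|_{1/2,\Gamma} \le \|g_n\|_{s+1/2,\Gamma}$,
\begin{equation*}
M_n \leq C \Can \CsolveM (1 + k^{-2} \CAD)\, q^n \bigl(\|f\|_{0,\Omega} + \|g\|_{1/2,\Gamma}\bigr).
\end{equation*}
Summing the pointwise bounds $|\nabla^m u_{\rm A}^{(n)}| \le M_n (\eta')^m \max\{m,k\}^m$ (in $L^2$ on each $P \in \LP$) shows that $u_{\rm A} \in \AnaClass{M}{\eta'}{\partition}$ with $M \le (1-q)^{-1} \sum_n M_n$, yielding \eqref{eq_splitting_uA} after absorbing $1/(1-q)$ into the generic constant and relabelling $\eta'$ as $\anasplit{s}$.

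The only genuinely delicate point is to ensure that the analyticity parameter $\eta'$ in the successive pieces $u_{\rm A}^{(n)}$ is the \emph{same} constant for all $n$, so that the series really converges in a single class $\AnaClass{M}{\anasplit{s}}{\partition}$; this is guaranteed because Lemma~\ref{lemma_contraction} delivers $\eta'$ depending only on $q$, $s$, and $\Gamma$, and because the tubular neighborhood $T$ from \ref{WP3} and \ref{AP3} as well as the reference $\gamma_0$ are the same throughout. Everything else is bookkeeping of geometric sums using the estimates already proved in Lemma~\ref{lemma_contraction}.
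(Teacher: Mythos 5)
Your proposal is correct and follows essentially the same route as the paper: iterate Lemma~\ref{lemma_contraction} with a fixed $q$, use the geometric decay of the data to sum the $u_{\rm F}$-pieces in $H^{s+2}(\partition)$ and the $u_{\rm A}$-pieces in a single class $\AnaClass{\cdot}{\eta'}{\partition}$ with the $n$-uniform parameter $\eta'$. Your explicit justification that the remainder $\Skm(f_N,g_N)$ vanishes in $\HOneT$ via \ref{WP2} is a point the paper leaves implicit, and is a welcome addition.
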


\begin{proof}
The proof iterates the contraction argument in Lemma \ref{lemma_contraction}.
To fix ideas, we select $q \coloneqq 1/2$ and denote by
$\eta_s$, $\eta_s'$ the parameters $\eta$, $\eta'$ given by Lemma~\ref{lemma_contraction}.
Letting $f^{(0)} \coloneqq f$ and $g^{(0)} \coloneqq g$, the splitting
\begin{equation*}
\Skm(f^{(0)}, g^{(0)}) = u_{\rm F}^{(0)} + u_{\rm A}^{(0)} + \Skm(f^{(1)}, g^{(1)}),
\end{equation*}
holds true with contracted data
\begin{equation*}
\| f^{(1)} \|_{s,\partition} + \|g^{(1)}\|_{s+1/2,\Gamma}
\leq
q (\| f^{(0)} \|_{s,\partition} + \| g^{(0)} \|_{s+1/2,\Gamma});
\end{equation*}
the regular part $u_{\rm F}^{(0)} \in H^{s+2}(\partition)$ satisfies 
\begin{equation*}
\|u_{\rm F}^{(0)}\|_{s+2,\partition}
\leq
C_s 
\left (
\|f^{(0)}\|_{s,\partition} + \|g^{(0)}\|_{s+1/2,\Gamma}
\right )
\end{equation*}
and the analytic part $u_{\rm A}^{(0)} \in \AnaClass{M^{(0)}}{{\eta_s'}}{\partition}$
with
\begin{equation*}
M^{(0)}
\leq
C\CsolveM (1+k^{-2} \CAD)
\left (\|f^{(0)}\|_{0,\Omega} + \|g^{(0)}\|_{1/2,\Gamma}\right ).
\end{equation*}
We can then repeat this argument to split $\Skm(f^{(1)},g^{(1)})$, and so on,
resulting in an inductive definition of
$(f^{(i)},g^{(i)}) \in H^s(\partition) \times H^{s+1/2}(\Gamma)$,
$u_{\rm F}^{(i)} \in H^{s+2}(\partition)$ and
$u_{\rm A}^{(i)} \in \AnaClass{M^{(i)}}{{\eta_s'}}{\partition}$
with
\begin{align*}
\| f^{(i)} \|_{s,\partition} + \|g^{(i)}\|_{s+1/2,\Gamma}
&\leq 
q^i (\| f^{(0)} \|_{s,\partition} + \| g^{(0)} \|_{s+1/2,\Gamma}),
\\
\|u_{\rm F}^{(i)}\|_{s+2,\partition}
&\leq
C_s 
q^i
\left (
\|f^{(0)}\|_{s,\partition} + \|g^{(0)}\|_{s+1/2,\Gamma}
\right )
\\
M^{(i)}
&\leq 
q^i
C\CsolveM (1+k^{-2} \CAD)
\left (\|f^{(0)}\|_{0,\Omega} + \|g^{(0)}\|_{1/2,\Gamma}\right ).
\end{align*}
It follows that we have
\begin{equation*}
u =
\underbrace{\sum_{i \in \mathbb{N}_0} u_{F}^{(i)}}_{\eqqcolon u_{F}}
+
\underbrace{\sum_{i \in \mathbb{N}_0} u_{A}^{(i)}}_{\eqqcolon u_{A}},
\end{equation*}
where the two series converge due to the geometric decrease of 
the factors $q^i$. 
Then, introducing $\anasplit{s} \coloneqq \eta_s'$, estimates
\eqref{eq_splitting_uF} and \eqref{eq_splitting_uA} simply follow since
$q = 1/2$ and $\sum_{q \in \mathbb N_0} q^i = 2$. 
\end{proof}

\section{Stability and Convergence of Abstract Galerkin Discretizations}\label{section:galerkin}

In this section, we utilize the splitting introduced in Theorem~\ref{theorem_splitting} to analyze Galerkin
discretizations. We first establish quasi-optimality and error estimates for abstract
discrete spaces. These results are then applied to the particular case of the $hp$ finite
element method, where explicit stability conditions are derived.

The proofs of this section hinge on duality techniques, and require properties
of the adjoint wave propagation problem. For the sake of simplicity, we assume here
that the considered wave propagation and auxiliary problems are symmetric in the sense
that
\begin{equation}
\label{eq_symmetry}
\bkm(v,u) = \bkm(\overline{u},\overline{v}), 
\qquad
\bkp(v,u) = \bkp(\overline{u},\overline{v}) 
\qquad 
\forall u,v \in \HOneT. 
\end{equation}
We mention that \eqref{eq_symmetry} is not mandatory; however,
without \eqref{eq_symmetry} one needs to assume \WPs and \APs for the
adjoint problems rather than the primal problem, which we avoid here to simplify the presentation. 

\subsection{Abstract Galerkin discretizations}

We start by considering a generic finite-dimensional subspace $V_h \subset \HOneT$.
The corresponding Galerkin approximation $u_h \in V_h$ to the exact solution
$u \in \HOneT$ of \eqref{eq:S_k_minus_problem} is given by 
\begin{equation}
\label{eq:Galerkin-approximation}
\bkm(u_h,v_h) = (f,v_h) + \langle g , v_h \rangle \qquad \forall v_h \in  V_h. 
\end{equation}
The convergence analysis of the Galerkin method is based on a duality argument and thus hinges on the ability of
$V_h$ to approximate solution to the (adjoint) Helmholtz problem. 
Following
previous works on the subject
\cite{chaumontfrelet-nicaise19,lafontaine-spence-wunsch_2020,melenk-sauter10,sauter06,schatz74},
the key approximation properties of $V_h$ are encoded in real numbers $\eta$, called approximation factors.
Specifically,
for $m \ge 1$, $\gamma > 0$, and the (fixed) tubular neighborhood $T$ of $\Gamma$ given as 
the intersection of the two tubular neighborhoods given by \ref{WP4} and \ref{AP3},
these adjoint approximation factors $\etaF{m}$, $\etaA{\gamma}$ are defined as follows:
\begin{subequations}
\label{eq_approximation_factors}
\begin{align}
\label{eq_definition_etaF}
\etaF{m}
& \coloneqq
\sup_{\substack{
(f,g) \in H^{m-1}(\partition) \times H^{m-1/2}(\Gamma)
\\
\|f\|_{m-1,\partition} + \|g\|_{m-1/2,\Gamma} = 1
}}
\inf_{v_h \in V_h} \|\Skm(f,g)-v_h\|_{1,t,k}, \\
\label{eq_definition_etaA}
\etaA{\gamma}
& \coloneqq
\sup_{\substack{
(f,g) \in \AnaClass{M_f}{\gamma}{\partition} \times \AnaClassbdy{M_g}{\gamma}{T}{\Gamma}
\\
M_f + k M_g = 1/2
}}
\inf_{v_h \in V_h} \|\Skm(f,g)-v_h\|_{1,t,k},
\end{align}
\end{subequations}
To ease the presentation, we introduce the projection
\begin{equation*}
\Pi_h v \coloneqq \arg \min_{v_h \in V_h} \|v-v_h\|_{1,t,k}. 
\end{equation*}

\begin{lemma}
\label{lemma_continuity}
Let $u \in \HOneT$ and assume that $u_h \in V_h$ satisfies
\begin{equation}
\label{eq_galerkin_orthogonality}
\bkm(u-u_h,v_h) = 0 \quad \forall v_h \in V_h.
\end{equation}
Then, the estimate
\begin{equation}
\label{eq:cont_error}
|\bkm(u-u_h,v)|
\leq
C \left (1 + \CcontM \CAM \etaA{\anashift{\gAM}} \right )
\|u-u_h\|_{1,t,k}\|v\|_{1,t,k}
\end{equation}
holds true for all $v \in H^{1,t}(\Omega, \Gamma)$, where $C$
is a constant independent of $k$, $u$, and $v$.
\end{lemma}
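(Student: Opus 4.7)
The plan is to isolate the part of $\bkm$ that is uniformly continuous in $k$ from the part producing analytic outputs, handle the former by direct continuity, and treat the latter via an Aubin--Nitsche-style duality argument whose dual solution is analytic. Using the decomposition from \ref{WP4} I would write $\bkm = b_1 + b_2$ with
\begin{equation*}
b_1(u,v) := a(u,v) - (\ROM u,v) - \langle \RGM u, v\rangle, \qquad b_2(u,v) := -(\AOmegaM u,v) - \langle \AGammaM u,v\rangle.
\end{equation*}
By \ref{WP5} and \eqref{eq_continuity_ROM_RGM}, $b_1$ is continuous with a $k$-independent constant, so $|b_1(u-u_h, v)| \leq C\|u-u_h\|_{1,t,k}\|v\|_{1,t,k}$; this accounts for the ``$1$'' in \eqref{eq:cont_error}.

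For the $b_2$ term I would introduce the dual solution $z \in \HOneT$ characterized by $\bkm(\varphi, z) = b_2(\varphi, v)$ for all $\varphi \in \HOneT$. The symmetry assumption \eqref{eq_symmetry} converts this adjoint problem (after conjugation) into a forward problem in the sense of \ref{WP2}, yielding existence and uniqueness of $z$. Once recast this way, the data of the dual problem are, thanks to \ref{WP4}, analytic in class $\gAM$ on $\LP$ and on a tubular neighborhood of $\Gamma$, with combined norm bounded by $C\CAM\|v\|_{1,t,k}$. Applying \ref{WP3} then gives $z \in \AnaClass{M_z}{\anashift{\gAM}}{\LP}$ and the analogous boundary statement, so by the definition \eqref{eq_definition_etaA} of the approximation factor
\begin{equation*}
\inf_{z_h \in V_h}\|z - z_h\|_{1,t,k} \leq C\CAM\etaA{\anashift{\gAM}}\|v\|_{1,t,k}.
\end{equation*}

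Taking $\varphi = u - u_h$ in the defining equation of $z$ gives the key identity $b_2(u-u_h, v) = \bkm(u-u_h, z)$, and the Galerkin orthogonality \eqref{eq_galerkin_orthogonality} allows any $z_h \in V_h$ to be subtracted: $\bkm(u-u_h, z) = \bkm(u-u_h, z - z_h)$. Continuity of $\bkm$ with constant $\CcontM$ from \ref{WP1}, combined with the above approximation bound, then produces $|b_2(u-u_h, v)| \leq C\CcontM\CAM\etaA{\anashift{\gAM}}\|u-u_h\|_{1,t,k}\|v\|_{1,t,k}$, and summing with the bound on $|b_1|$ yields \eqref{eq:cont_error}. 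The principal obstacle will be the symmetry-based recasting of the dual problem: one has to verify that the functional $\varphi \mapsto -(\AOmegaM \varphi, v) - \langle \AGammaM \varphi, v\rangle$ can, via \eqref{eq_symmetry}, be written as pairing against data of the form required by \ref{WP2} and \ref{WP3}, and that these data inherit their analytic regularity in the class $\gAM$ from \ref{WP4} so that \ref{WP3} delivers the shift to $\anashift{\gAM}$. Once this transfer of analytic regularity through the adjoint is secured, everything else reduces to the standard Galerkin-orthogonality plus continuity steps sketched above.
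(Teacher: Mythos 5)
Your overall architecture matches the paper's: isolate the uniformly bounded part of $\bkm$, and handle the analytic part by a duality argument whose dual solution is well approximated by $V_h$, quantified by $\etaA{\anashift{\gAM}}$. However, there is a genuine gap in how you set up the duality, and it is precisely the point you flag as the "principal obstacle" without resolving it. You split $\bkm$ \emph{first}, so that in $b_2(\varphi,v)=-(\AOM\varphi,v)-\langle \AGM\varphi,v\rangle$ the analytic-output operators act on the test function $\varphi$. Your dual problem $\bkm(\varphi,z)=b_2(\varphi,v)$ then has a right-hand side that is \emph{not} of the form $(\tf,\varphi)+\langle\tg,\varphi\rangle$ for fixed data $(\tf,\tg)$; to bring it into that form you would need to transpose $\AOM$ and $\AGM$ onto $v$, i.e., you would need the \emph{adjoints} of $\AOM$, $\AGM$ to map into the analyticity classes. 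Assumption \ref{WP4} only controls the forward action of these operators, and \eqref{eq_symmetry} is a symmetry statement about the full forms $\bkm$, $\bkp$, not about the individual summands $(\AOM\cdot,\cdot)$, $\langle\AGM\cdot,\cdot\rangle$. So "via \eqref{eq_symmetry}" does not deliver the recasting you need, and without it neither \ref{WP2}/\ref{WP3} nor the definition \eqref{eq_definition_etaA} of $\etaA{\cdot}$ applies to your $z$.

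The paper avoids this by reversing the order of operations: it first uses \eqref{eq_symmetry} on the whole form to write $\bkm(e_h,v)$ with the operators acting on $\overline{v}$, and only \emph{then} splits $\TOM\overline{v}=\ROM\overline{v}+\AOM\overline{v}$, $\TGM\overline{v}=\RGM\overline{v}+\AGM\overline{v}$. Now $\AOM\overline{v}$ and $\AGM\overline{v}$ are concrete pieces of analytic data with $M_f+kM_g\leq \CAM\|v\|_{1,t,k}$ by \ref{WP4}, the dual function is $\overline{\phi}=\Skm(\AOM\overline{v},\AGM\overline{v})$, and $\inf_{w_h}\|\phi-w_h\|_{1,t,k}\leq \CAM\,\etaA{\anashift{\gAM}}\|v\|_{1,t,k}$ follows directly from \eqref{eq_definition_etaA}; Galerkin orthogonality and \ref{WP1} then finish exactly as you describe. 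To repair your version you would either have to adopt this ordering, or add (and verify for each application) the assumption that the pieces of $\bkm$ are individually symmetric in the sense $(\AOM u,v)=(\AOM\overline{v},\overline{u})$, which would make your $z$ coincide with the paper's $-\phi$.
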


\begin{proof}
For the sake of brevity we write $e_h \coloneqq u-u_h$. For an arbitrary
$v \in H^{1,t}(\Omega,\Gamma)$, we have
\begin{align*}
|\bkm(e_h,v)|
&\stackrel{\eqref{eq_symmetry}}{=}| \bkm(\overline{v},\overline{e_h})| 
= |
a(\overline{v}, \overline{e_h}) 
-
(\TkOmegaM \overline{v} , \overline{e_h} )
-
\langle \TGM \overline{v} , \overline{e_h} \rangle
|
\\
&\stackrel{\eqref{eq_splitting_TOM_TGM}}{\leq}
|a(\overline{v}, \overline{e_h})|
+
|(\ROM \overline{v} , \overline{e_h}) |
+
|\langle \RGM \overline{v},\overline{e_h} \rangle|
+
|(\AOM \overline{v},\overline{e_h}) + \langle \AGM \overline{v},\overline{e_h} \rangle|
\\
&\stackrel{\eqref{eq_continuity_ROM_RGM},\eqref{eq_continuity_a}}{\leq}
C\| e_h \|_{1,t,k} \| v \|_{1,t,k}
+
|(\AOM \overline{v},\overline{e_h}) + \langle \AGM \overline{v},\overline{e_h} \rangle|.
\end{align*}
%
The analytic part is now treated using a duality argument: 
Using \eqref{eq_symmetry}, we have
by setting $\overline{\phi} \coloneqq \Skm(\AOM\overline{v},\AGM\overline{v})$
\begin{equation*}
(\AOM \overline{v} , \overline{e_h})
+
\langle \AGM \overline{v} , \overline{e_h} \rangle
\stackrel{\eqref{eq_symmetry}}{=}
\bkm(\Skm(\AOM\overline{v},\AGM\overline{v}),\overline{e_h})
=
\bkm(e_h,\phi)
=
\bkm(e_h,\phi-\Pi_h \phi).
\end{equation*}
Then, using \ref{WP4}, we have
\begin{equation*}
\|\phi-\Pi_h \phi\|_{1,t,k}
\! = \! 
\|\overline{\phi}-\Pi_h \overline{\phi}\|_{1,t,k}
= \!\!\!
\inf_{w_h \in V_h} \!
\|\Skm(\AOM\overline{v},\AGM\overline{v})-w_h\|_{1,t,k}
\leq
\CAM \etaA{\anashift{\gAM}}\|v\|_{1,t,k},
\end{equation*}
and the conclusion follows from \ref{WP1} since
\begin{equation*}
|(\AOM \overline{v} , \overline{e_h})
+
\langle \AGM \overline{v} , \overline{e_h} \rangle|
=
|\bkm(e_h,\phi-\Pi_h\phi)|
\leq
\CcontM \|e_h\|_{1,t,k}\|\phi-\Pi_h\phi\|_{1,t,k}.
\qedhere
\end{equation*}
\end{proof}

\begin{lemma}
\label{lemma_quasi_optimality_explicit}
Assume that $u \in \HOneT$ and $u_h \in V_h$ are such that
\begin{equation}
\label{eq_galerkin_orthogonality_2}
\bkm(u-u_h,v_h) = 0 \quad \forall v_h \in V_h.
\end{equation}
Then, there exist two constants $\rho$ and $\mu$ independent of $k$, $u$, and $u_h$ such that
\begin{multline}
\label{eq_quasi_optimality_explicit}
\left (
\mu -
\left (1 + \CcontM\CAM \etaA{\anashift{\gAM}}\right )
\left ( k\etaF{1} + \CAD \etaA{\anashift{\gAD}} \right )
\right )
\|u-u_h\|_{1,t,k}
\\
\leq
\rho
\left (
1 + \CcontM \CAM \etaA{\anashift{\gAM}}
\right )
\|u-\Pi_h u\|_{1,t,k}.
\end{multline}
\end{lemma}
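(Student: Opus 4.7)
The argument is a Schatz-type duality. Setting $e_h := u - u_h$, I start from the coercivity \ref{AP1} of $\bkp$ combined with the identity $\bkp = \bkm - \bD$:
\begin{equation*}
\CcoerP \|e_h\|_{1,t,k}^2 \leq \Re(\sigma \bkp(e_h, e_h)) \leq |\bkm(e_h, e_h)| + |\bD(e_h, e_h)|.
\end{equation*}
It then remains to control each of the two right-hand terms: the first by the best approximation error and the second by a small multiple of $\|e_h\|_{1,t,k}^2$ governed by $k\etaF{1}$ and $\CAD \etaA{\anashift{\gAD}}$.

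For $|\bkm(e_h, e_h)|$, Galerkin orthogonality \eqref{eq_galerkin_orthogonality_2} together with $\Pi_h u - u_h \in V_h$ yields $\bkm(e_h, e_h) = \bkm(e_h, u - \Pi_h u)$, and a direct application of Lemma~\ref{lemma_continuity} with $v = u - \Pi_h u$ produces the right-hand side of \eqref{eq_quasi_optimality_explicit}, with the factor $1 + \CcontM \CAM \etaA{\anashift{\gAM}}$ appearing there.

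For $|\bD(e_h, e_h)|$ I introduce a dual function $\psi \in \HOneT$ defined, via the symmetry \eqref{eq_symmetry} and \ref{WP2} applied to the adjoint problem, by $\bkm(v, \psi) = (\TOD e_h, v) + \langle \TGD e_h, v \rangle$ for all $v \in \HOneT$. Testing with $v = e_h$ combined with Galerkin orthogonality gives $\bD(e_h, e_h) = \bkm(e_h, \psi - \psi_h)$ for every $\psi_h \in V_h$. Using the splittings $\TOD = \ROD + \AOD$ and $\TGD = \RGD + \AGD$ from \ref{AP3}, I write $\psi = \psi_R + \psi_A$ where $\psi_R$, $\psi_A$ solve the adjoint problem with data $(\ROD e_h, \RGD e_h)$ and $(\AOD e_h, \AGD e_h)$ respectively. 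Estimates \eqref{eq_ROD}, \eqref{eq_RGD} together with the trace inequality \eqref{eq_weighted_trace} yield $\|\ROD e_h\|_{0,\partition} + \|\RGD e_h\|_{1/2, \Gamma} \leq C k \|e_h\|_{1,t,k}$, so by \eqref{eq_definition_etaF} the approximation error satisfies $\inf_{v_h}\|\psi_R - v_h\|_{1,t,k} \leq C k\etaF{1}\|e_h\|_{1,t,k}$. Meanwhile \eqref{eq_A_PLUS} gives $M_f + k M_g \leq 2\CAD \|e_h\|_{1,t,k}$ for the analytic part, so \eqref{eq_definition_etaA} combined with the monotonicity of $\etaA{\cdot}$ in its analyticity parameter (using $\anashift{\gAD} \geq \gAD$) yields $\inf_{v_h}\|\psi_A - v_h\|_{1,t,k} \leq 2\CAD \etaA{\anashift{\gAD}}\|e_h\|_{1,t,k}$. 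Choosing $\psi_h$ as the sum of the two quasi-best approximants and bounding $\bkm(e_h, \psi - \psi_h)$ via Lemma~\ref{lemma_continuity} then controls $|\bD(e_h, e_h)|$ by a constant multiple of $(k\etaF{1} + \CAD \etaA{\anashift{\gAD}})\|e_h\|_{1,t,k}^2$, with the auxiliary factor $1 + \CcontM \CAM \etaA{\anashift{\gAM}}$ absorbed into the multiplicative constants. Assembling both bounds and rearranging isolates $\|e_h\|_{1,t,k}$ on the left and produces \eqref{eq_quasi_optimality_explicit}.

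The delicate step will be the bookkeeping in the treatment of $|\bD(e_h, e_h)|$: I must arrange the constants so that $k\etaF{1} + \CAD \etaA{\anashift{\gAD}}$ appears cleanly on the left-hand side of \eqref{eq_quasi_optimality_explicit} while the $1 + \CcontM \CAM \etaA{\anashift{\gAM}}$ factor is relegated to the right-hand side as stated. The three ingredients that make this work are (i) the symmetry hypothesis \eqref{eq_symmetry} transferring \ref{WP2} and the approximation quantities \eqref{eq_approximation_factors} to the adjoint problem, (ii) the separation of the dual data into finite-regularity and analytic components afforded by \ref{AP3}, which matches exactly the two approximation measures $\etaF{1}$ and $\etaA{\anashift{\gAD}}$, and (iii) the monotonicity of the analytic approximation factor in its analyticity parameter, which permits the replacement of the data-level constant $\gAD$ by the solution-level constant $\anashift{\gAD}$ appearing in the final estimate.
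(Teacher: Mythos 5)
Your proposal follows the paper's argument essentially step for step: coercivity of $\bkp$ combined with $\bkp=\bkm-\bD$ and Galerkin orthogonality, Lemma~\ref{lemma_continuity} for the term $\bkm(e_h,u-\Pi_h u)$, and a duality argument for $\bD(e_h,e_h)$ in which the dual data are split via \ref{AP3} into a finite-regularity part measured by $\etaF{1}$ and an analytic part measured by $\etaA{\anashift{\gAD}}$. The identification of the dual solutions (your $\psi_R,\psi_A$, the paper's $\phi_{\rm F},\phi_{\rm A}$) and the estimates $\|\ROD \overline{e_h}\|_{0,\Omega}+\|\RGD \overline{e_h}\|_{1/2,\Gamma}\lesssim k\|e_h\|_{1,t,k}$ and $M_f+kM_g\lesssim \CAD\|e_h\|_{1,t,k}$ are exactly those of the paper.

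The one point where you deviate is in how $\bkm(e_h,\psi-\psi_h)$ is estimated, and your fix does not quite deliver the inequality as stated. You invoke Lemma~\ref{lemma_continuity}, which produces the factor $1+\CcontM\CAM\etaA{\anashift{\gAM}}$ in front of $k\etaF{1}+\CAD\etaA{\anashift{\gAD}}$, and you claim this factor can be ``absorbed into the multiplicative constants.'' It cannot: $\mu$ and $\rho$ in \eqref{eq_quasi_optimality_explicit} are required to be independent of $k$ (and of $h$, $p$), whereas $\CcontM\CAM\etaA{\anashift{\gAM}}$ is not, so your argument proves only the weaker inequality
\begin{equation*}
\Bigl(\mu - \bigl(1+\CcontM\CAM\etaA{\anashift{\gAM}}\bigr)\bigl(k\etaF{1}+\CAD\etaA{\anashift{\gAD}}\bigr)\Bigr)\|u-u_h\|_{1,t,k}
\leq \rho\bigl(1+\CcontM\CAM\etaA{\anashift{\gAM}}\bigr)\|u-\Pi_h u\|_{1,t,k}.
\end{equation*}
The paper instead bounds $|\bkm(e_h,\phi-\Pi_h\phi)|$ directly by $\|e_h\|_{1,t,k}\|\phi-\Pi_h\phi\|_{1,t,k}$ (i.e., without routing this particular pairing through Lemma~\ref{lemma_continuity}), which keeps the left-hand side in the stated form. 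In the only place the lemma is used (Theorem~\ref{theorem_quasi_optimality}), the resolution condition \eqref{eq_abstract_resolution_condition} forces $\CcontM\CAM\etaA{\anashift{\gAM}}\leq 1$, so your weaker form would still suffice there; but as a proof of Lemma~\ref{lemma_quasi_optimality_explicit} as stated, the absorption step is a genuine mismatch that you should either repair (by estimating the dual pairing as the paper does) or acknowledge by weakening the statement.
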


\begin{proof}
We introduce the Galerkin error $e_h \coloneqq u-u_h$. 
Recalling \eqref{eq_coercivity_bkp} and \eqref{eq_galerkin_orthogonality_2},
we estimate with $C_0:=1/\CcoerP $ 
\begin{multline*}
\frac{1}{C_0}
\|e_h\|_{1,t,k}^2
\leq
\Re \bP(e_h,\sigma e_h)
\leq
|\bP(e_h,e_h)|
\leq
\\
|\bD(e_h,e_h)| + |\bM(e_h,e_h)|
=
|\bD(e_h,e_h)| + |\bM(e_h,u-\Pi_h u)|.
\end{multline*}
With Lemma~\ref{lemma_continuity}  we may then estimate 
\begin{equation*}
\|e_h\|_{1,t,k}^2
\leq
C_0 |\bD(e_h,e_h)|
+
C_1 \left (1 + \CcontM\CAM \etaA{\anashift{\gAM}}\right )
\|e_h\|_{1,t,k} \|u-\Pi_h u\|_{1,t,k},
\end{equation*}
where $C_1$ does not depend on $k$, $u$ or $u_h$. On the other hand, we have
upon setting $\overline{\phi_{\rm F}} \coloneqq \Skm(\ROD\overline{e_h},\RGD \overline{e_h})$
and
$\overline{\phi_{\rm A}} \coloneqq \Skm(\AOD\overline{e_h},\AGD \overline{e_h})$:
\begin{align*}
\bD(e_h,e_h)
&\stackrel{\eqref{eq_symmetry}}{=}
\bD(\overline{e_h},\overline{e_h})
\\ &
\stackrel{\eqref{eq_splitting_TOD_TGD}}{=}
(\ROD \overline{e_h},\overline{e_h}) + \langle \RGD \overline{e_h},\overline{e_h} \rangle
+
(\AOD \overline{e_h},\overline{e_h}) + \langle \AGD \overline{e_h},\overline{e_h} \rangle
\\
&=
\bkm(\Skm(\ROD\overline{e_h},\RGD \overline{e_h}),\overline{e_h})
+
\bkm(\Skm(\AOD\overline{e_h},\AGD \overline{e_h}),\overline{e_h})
\\
&=
\bkm(e_h,\phi_{\rm F}) + \bkm(e_h,\phi_{\rm A}).
\end{align*}

Using Lemma~\ref{lemma_continuity}, we can then write
\begin{align*}
& |\bkm(e_h,\phi_{\rm F})|
=
|\bkm(e_h,\phi_{\rm F}-\Pi_h\phi_{\rm F})|
\\
&\qquad \stackrel{\text{L.~\ref{lemma_continuity}}}{\leq}
C \left (1 + \CcontM \CAM \etaA{\anashift{\gAM}} \right )
\|e_h\|_{1,t,k}
\|\phi_{\rm F}-\Pi_h\phi_{\rm F}\|_{1,t,k}
\\
&\qquad \stackrel{(\ref{eq_definition_etaF})}{\leq}
C \left (1 + \CcontM \CAM \etaA{\anashift{\gAM}} \right )
\etaF{1} \|e_h\|_{1,t,k}
\left (
\|\ROD \overline{e_h}\|_{0,\Omega}
+
\|\RGD \overline{e_h}\|_{1/2,\Omega}
\right )
\\
&\qquad \stackrel{\ref{AP3}}{\leq}
C_2
\left (1 + \CcontM \CAM \etaA{\anashift{\gAM}} \right )
k\etaF{1} \|e_h\|_{1,t,k}^2,
\end{align*}
where $C_2$ is independent of $k$, $u$, and $u_h$.
Note that in the above estimate, we only use \ref{AP3}
for the operators $\ROD$ and $\RGD$, i.e., equations
\eqref{eq_ROD} and \eqref{eq_RGD}, which does not introduce
any dependence on $k$ in the constant $C_2$. Similarly, we have
\begin{align*}
|\bkm(e_h,\phi_{\rm A})|
&\stackrel{\text{L.~\ref{lemma_continuity}}} {\leq}
C \left (1 + \CcontM \CAM \etaA{\anashift{\gAM}} \right )
\|e_h\|_{1,t,k}
\|\phi_{\rm A}-\Pi_h\phi_{\rm A}\|_{1,t,k}
\\
&\leq
C \left (1 + \CcontM \CAM \etaA{\anashift{\gAM}} \right )
\CAD \etaA{\anashift{\gAD}} \|e_h\|_{1,t,k}^2,
\end{align*}
which leads to
\begin{equation*}
|\bD(e_h,e_h)|
\leq
C_3 \left (1 + \CcontM \CAM \etaA{\anashift{\gAM}} \right )
\left (
k\etaF{1} + \CAD \etaA{\anashift{\gAD}}
\right )
\|e_h\|_{1,t,k}^2,
\end{equation*}
with $C_3$ independent of $k$, $u$, and $u_h$.  Therefore, 
\begin{multline}
\label{tmp_quasi_optimality}
\left (1 - C_0C_3 \left (1 + \CcontM \CAM \etaA{\anashift{\gAM}} \right )
\left (
k\etaF{1} + \CAD \etaA{\anashift{\gAD}}
\right )
\right )
\|e_h\|_{1,t,k}^2
\leq
\\
C_1 \left (1 + \CcontM\CAM \etaA{\anashift{\gAM}}\right )
\|e_h\|_{1,t,k} \|u-\Pi_h u\|_{1,t,k}.
\end{multline}
Estimate \eqref{eq_quasi_optimality_explicit} then follows by setting
$1/\mu \coloneqq C_0 \max(1,C_3)$ and $\rho \coloneqq \mu C_1$ and multiplying
both sides of \eqref{tmp_quasi_optimality} by $\mu$.
\end{proof}

\begin{theorem}[abstract quasi-optimality]
\label{theorem_quasi_optimality}
Assume \WPs and \AP. 
Let $s \in [0,\smax]$, $f \in H^s(\partition)$, and $g \in H^{s+1/2}(\Gamma)$.
Recall the constant $\mu$ from Lemma~\ref{lemma_quasi_optimality_explicit},
and assume that
\begin{equation}
\label{eq_abstract_resolution_condition}
k\etaF{1} + \CAD\etaA{\anashift{\gAD}} \leq \frac{\mu}{4}, 
\qquad
\CcontM \CAM \etaA{\anashift{\gAM}}
\leq
1.
\end{equation}
Then, there exists $C > 0$ independent of $k$, $f$, $g$, and there exists 
a unique $u_h \in V_h$ such that (\ref{eq:Galerkin-approximation}) holds, 
and for $u \coloneqq \Skm(f,g)$, we have
\begin{equation}
\label{eq_quasi_optimality}
\|u-u_h\|_{1,t,k} \leq C \inf_{v_h \in V_h} \|u-v_h\|_{1,t,k}. 
\end{equation}
\end{theorem}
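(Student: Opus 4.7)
The plan is to reduce the statement to Lemma~\ref{lemma_quasi_optimality_explicit} by exploiting the two resolution conditions \eqref{eq_abstract_resolution_condition} to absorb the ``bad'' terms and bound the ``$\CcontM \CAM$'' prefactor on the right-hand side.

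First I would address existence and uniqueness of $u_h$. Since $V_h$ is finite dimensional, it is enough to show uniqueness, i.e.\ that the only $w_h \in V_h$ satisfying $\bkm(w_h,v_h) = 0$ for all $v_h \in V_h$ is $w_h = 0$. Apply Lemma~\ref{lemma_quasi_optimality_explicit} to the pair $(u,u_h) = (0,-w_h)$, which trivially satisfies the Galerkin orthogonality \eqref{eq_galerkin_orthogonality_2}. The right-hand side of \eqref{eq_quasi_optimality_explicit} vanishes because $\Pi_h 0 = 0$, and the first resolution condition in \eqref{eq_abstract_resolution_condition} yields
\begin{equation*}
\mu - k\etaF{1} - \CAD \etaA{\anashift{\gAD}} \;\ge\; \frac{\mu}{2} \;>\; 0.
\end{equation*}
Hence $\|w_h\|_{1,t,k} = 0$. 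Existence of $u_h$ for the data $(f,g)$ follows from the rank-nullity theorem for the associated square linear system.

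Next I would establish quasi-optimality. Applying Lemma~\ref{lemma_quasi_optimality_explicit} again to the solution $u = \Skm(f,g)$ and the now-existing Galerkin approximation $u_h$, and using once more $\mu - k\etaF{1} - \CAD \etaA{\anashift{\gAD}} \ge \mu/2$ on the left-hand side and the second resolution condition $\CcontM \CAM \etaA{\anashift{\gAM}} \le 1$ on the right-hand side, we obtain
\begin{equation*}
\frac{\mu}{2}\,\|u-u_h\|_{1,t,k} \;\le\; 2\rho\, \|u-\Pi_h u\|_{1,t,k}.
\end{equation*}
Dividing by $\mu/2$ and recalling that $\Pi_h u$ is by definition the best approximation of $u$ in $V_h$ with respect to $\|\cdot\|_{1,t,k}$, we conclude \eqref{eq_quasi_optimality} with $C = 4\rho/\mu$, independent of $k$, $f$, $g$.

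There is no real obstacle here: the entire analytic work has already been done in Lemmas~\ref{lemma_continuity} and \ref{lemma_quasi_optimality_explicit}, where the duality (``Schatz'') argument was carried out using the abstract regularity splitting (Theorem~\ref{theorem_splitting}) implicitly via the definitions of $\etaF{m}$ and $\etaA{\gamma}$. The only points to watch are that $s$ and the piecewise regularity of $f,g$ play no role in Lemma~\ref{lemma_quasi_optimality_explicit} itself — they are only used to make $\etaF{1}$ and $\etaA{\anashift{\gAD}}$ meaningful through \ref{WP3} and the splitting — and that the symmetry assumption \eqref{eq_symmetry} is silently in force so that the duality step in Lemma~\ref{lemma_continuity} applies.
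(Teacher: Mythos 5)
Your proposal is correct and follows essentially the same route as the paper: both reduce everything to Lemma~\ref{lemma_quasi_optimality_explicit}, use the resolution conditions \eqref{eq_abstract_resolution_condition} to bound the left factor below by $\mu/2$ and the right factor above by $2\rho$, and obtain $C=4\rho/\mu$, with uniqueness/existence following from the finite-dimensionality of the square linear system. The only cosmetic difference is ordering: the paper first establishes \eqref{eq_quasi_optimality} for any discrete solution and then deduces uniqueness by linearity, whereas you prove uniqueness first by applying the lemma directly to an element of the kernel; both are valid.
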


\begin{proof}
Fix $s \in [0,\smax]$, $f \in H^s(\partition)$, and $g \in H^{s+1/2}(\Gamma)$.
Let $u_h$ be any element of $V_h$ such that \eqref{eq:Galerkin-approximation} holds true. We
easily see that $u \coloneqq \Skm(f,g)$ and $u_h$ satisfy the assumptions of 
Lemma~\ref{lemma_quasi_optimality_explicit}. Injecting \eqref{eq_abstract_resolution_condition}
in \eqref{eq_quasi_optimality_explicit}, we have
\begin{equation*}
\frac{\mu}{2} \|u-u_h\|_{1,t,k}
\leq
2\rho
\|u-\Pi_h u\|_{1,t,k}
=
2\rho
\inf_{v_h \in V_h} \|u-v_h\|_{1,t,k},
\end{equation*}
so that \eqref{eq_quasi_optimality} holds for all $u_h$ satisfying
\eqref{eq:Galerkin-approximation} with $C \coloneqq 4\rho/\mu$.

Next, we observe that having established \eqref{eq_quasi_optimality}
for all $u_h$ satisfying \eqref{eq:Galerkin-approximation} implies, by linearity, the uniqueness
of the solution to \eqref{eq_quasi_optimality}. Since \eqref{eq_quasi_optimality}
corresponds to finite-dimensional linear system, this proves the existence and
uniqueness of $u_h$.
%
\end{proof}
\subsection{Application to $hp$-FEM}
\label{subsection:application_to_hp_fem}

We now focus on the case where the discretization subspace $V_h$
is an $hp$-FEM  space of piecewise (mapped) polynomials. We consider the case where
the mesh $\CT_h$ associated with the finite element space is 
aligned with the partition $\partition$ and excludes hanging nodes. Since we
are working with analytic interfaces, dedicated assumptions
are required \cite{melenk-sauter10}.

\begin{assumption}[Quasi-uniform regular fitted meshes]
\label{assumption:quasi_uniform_regular_meshes}
Let $\widehat{K}$ be the reference simplex in spatial dimension $d=2$, $3$.
The mesh $\CT_h$ is a partition of $\Omega$ into non-overlapping elements $K$ such that
$\cup_{K \in \CT_h} \overline{K} = \overline{\Omega}$. For
each $K \in \CT_h$, there exists a bijective mapping
$F_K \colon \widehat K \to K$ that can be written as $F_K = R_K \circ A_K$,
where $A_K$ is affine and $R_K$ is an analytic map. 
Specifically, there exist constants $C_\mathrm{affine}, C_\mathrm{metric}, \gamma > 0$
independent of $K$ such that
\begin{equation*}
\begin{alignedat}{2}
& \| A^\prime_K \|_{L^\infty( \widehat{K} )}        \leq C_\mathrm{affine} h_K, \qquad &&\| (A^\prime_K)^{-1} \|_{L^\infty( \widehat{K} )} \leq C_\mathrm{affine} h^{-1}_K, \\
&\| (R^\prime_K)^{-1} \|_{L^\infty( \tilde{K} )} \leq C_\mathrm{metric},     \qquad &&\| \nabla^n R_K \|_{L^\infty( \tilde{K} )} \leq C_\mathrm{metric} \gamma^n n! \qquad \forall n \in \mathbb{N}_0.
\end{alignedat}
\end{equation*}
Here, $\tilde{K} = A_K(\widehat{K})$ and $h_K > 0$ denotes the element diameter.
We assume (``no hanging nodes'') that the element maps of elements sharing an edge or a face induce the same parametrization on that edge or face. 
(See \cite[Sec.~{8.1}]{melenk-sauter22} for a precise formulation for the case $d = 3$ and \cite[Def.~{2.4.1}]{melenk02} for the case $d = 2$.)
Furthermore, we assume that the mesh $\CT_h$ resolves the interfaces of $\partition$,
i.e., each element $K$ lies entirely in one subdomain $P \in \partition$.
\eremk
\end{assumption}

For $p \geq 1$, we introduce the space $S_p(\mathcal{T}_h)$ as the space of piecewise mapped polynomials of degree $p$: 
\begin{equation*}
S_{p}(\mathcal{T}_h)
\coloneqq
\left \{
u \in H^1(\Omega)
\colon
\left.\kern-\nulldelimiterspace{u}\vphantom{\big|} \right|_{K} \circ F_K
\in
\mathcal{P}_{p}(\widehat{K})
\text{ for all } K \in \CT_h
\right\}.
\end{equation*}
We note $S^p(\mathcal{T}_h) \subset \HOneT$. 
%
We will require approximation operators that approximate functions in 
the $\|\cdot\|_{1,t,k}$-norm: 
\begin{proposition}[Interpolation errors]
\label{prop:interpolation}
Let Assumption~\ref{assumption:quasi_uniform_regular_meshes}  be valid. 
There is an operator $I_h:H^2(\partition)\cap H^1(\Omega) \rightarrow S_p(\mathcal{T}_h)$ with the following
properties: 
\begin{enumerate}[nosep, start=1, label=(\roman*),leftmargin=*]
\item 
\label{item:prop:interpolation-i}
$I_h$ is defined elementwise, i.e., $(I_h v)|_K$ depends solely on $v|_{\overline{K}}$. 
\item
\label{item:prop:interpolation-ii}
For $1 \leq m \leq p$ and for all $v \in H^{m+1}(\partition)$, we have
\begin{equation}
\label{eq_interpolation_F}
\|v-I_h v\|_{1,t,k}
\leq
C_m 
\left ( 1 + \left (\frac{kh}{p}\right )^{1/2-t} + \frac{kh}{p}\right )
\left (\frac{h}{p}\right )^m
\|v\|_{m+1,\partition}.
\end{equation}
\item
\label{item:prop:interpolation-iii}
Fix $\widetilde{C} > 0$ and assume $k h/p \leq \widetilde{C}$. 
For all $\gamma$, there exist 
constants $\SIA{\gamma}$, $C_\gamma$ solely depending on $\gamma$, $C_{\rm metric}$ of 
Assumption~\ref{assumption:quasi_uniform_regular_meshes}, and $\widetilde{C}$ 
such that for all $v \in \AnaClass{M}{\gamma}{\partition}$,
\begin{equation}
\label{eq_interpolation_A_exp}
\|v-I_h v\|_{1,t,k}
\leq
 C_\gamma
\left( 1 + \left (\frac{kh}{p}\right )^{1/2-t} + \frac{kh}{p} \right )
\left (
\left (\frac{h}{h+\SIA{\gamma}}\right )^p
+
k \left (\frac{kh}{\SIA{\gamma}p}\right )^p
\right )
 M.
\end{equation}
\item 
\label{item:prop:interpolation-iv}
Finally, 
for $v \in \AnaClass{M}{\gamma}{\partition}$ and 
$0 \leq q \leq p$, we have
\begin{equation}
\label{eq_interpolation_A}
\|v-I_h v\|_{1,t,k}
\leq
 C_{q,\gamma}
\left( 1 + \left (\frac{kh}{p}\right )^{1/2-t} + \frac{kh}{p} \right )
\left (
\left (\frac{h}{p}\right )^q
+
k \left (\frac{kh}{\SIA{\gamma}p}\right )^p
\right ) M.
\end{equation}
\end{enumerate}
\end{proposition}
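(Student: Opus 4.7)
The plan is to construct $I_h$ element by element via pull-back to the reference simplex. Following the standard $hp$-FEM strategy (e.g., from \cite{melenk-sauter10} and related works), I would take $I_h$ to be a Scott–Zhang–type or $H^1$-conforming quasi-interpolant built on $\widehat K$ that preserves polynomials of degree $p$ and has the averaging structure needed for $H^1$-conformity across element interfaces. Defining $(I_h v)|_K := \widehat I v \circ F_K^{-1}$ with $\widehat I$ the reference-element operator immediately yields the elementwise locality of part~\ref{item:prop:interpolation-i}, and $H^1$-conformity on $\Omega$ is ensured by the usual choice of degrees of freedom on shared faces/edges. Because the mesh is aligned with $\partition$, jumps of $v \in H^2(\partition)$ on element boundaries inside a single $P \in \partition$ vanish, so the interpolant is well defined.

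For the volume norms in $\|\cdot\|_{1,t,k}$, the key ingredients are (a) standard Bramble–Hilbert-type $p$-explicit estimates on $\widehat K$ for $\widehat I$ in the $H^0$- and $H^1$-seminorms, (b) the affine scaling through $A_K$ (giving the $(h/p)^m$ factors), and (c) Lemma~\ref{lemma:lemma2.6} applied to the analytic part $R_K$ of $F_K$, which controls the chain-rule constants uniformly in $K$ via $C_{\mathrm{metric}}$ and $\gamma$. Combining the $L^2$ estimate (which is multiplied by $k$ in $\|\cdot\|_{1,t,k}$) with the $H^1$ estimate explains the factor $1 + kh/p$, while the boundary contribution $k^{1-2t}|v-I_hv|_{t,\Gamma}^2$ is bounded via the weighted trace inequality \eqref{eq_weighted_trace}, or its elementwise multiplicative variant, producing the intermediate factor $(kh/p)^{1/2-t}$ after interpolating between the $L^2$- and $H^1$-element estimates. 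Summing the squares over $K \in \CT_h$ and using quasi-uniformity yields \eqref{eq_interpolation_F} for part~\ref{item:prop:interpolation-ii}.

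For \ref{item:prop:interpolation-iii}, the analytic estimate, I would invoke the $p$-version approximation theory on $\widehat K$ for analytic functions: there exist $\sigma = \SIA{\gamma} \in (0,1)$ and $C_\gamma$ such that for $\widehat v \in \AnaClass{M}{\gamma'}{\widehat K}$
\[
\|\widehat v - \widehat I \widehat v\|_{m,\widehat K} \le C_\gamma \bigl( (h/(h+\SIA{\gamma}))^p + k(kh/(\SIA{\gamma} p))^p \bigr) M, \qquad m \in \{0,1\},
\]
(a combination of algebraic decay from the $h$-part and exponential decay in $p$ from analyticity; this is essentially \cite[Thm.~5.5]{melenk-sauter10} and the $hp$-approximation results it relies on). Pulling back by $F_K = R_K \circ A_K$ and applying Lemma~\ref{lemma:lemma2.6} shows that $\widehat v = v \circ F_K$ lies in an analyticity class on $\widehat K$ with a constant $\gamma'$ depending only on $\gamma$ and $C_{\mathrm{metric}}$, so the estimate transfers back to $K$; the boundary trace contribution is handled exactly as in~\ref{item:prop:interpolation-ii} by the trace inequality, producing the same prefactor $1 + (kh/p)^{1/2-t} + kh/p$. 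For \ref{item:prop:interpolation-iv}, I would simply estimate $((h+\SIA{\gamma})^{-p} h^p) \le (h/p)^q \cdot p^q (h+\SIA{\gamma})^{-p}h^{p-q}$ and use $kh/p \le \widetilde C$ to absorb the $p^q$ into $C_{q,\gamma}$; equivalently one writes the algebraic part separately and uses only that $h \le C$ on a bounded mesh.

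The main obstacle is delicate and twofold. First, the boundary term $k^{1-2t}|\cdot|_{t,\Gamma}^2$ at $t=1$ involves a genuine surface derivative, and the elementwise trace estimate must interact correctly with the mapped polynomial structure; one has to use a local trace inequality on each boundary element and verify that the exponent $1/2-t$ (which is negative at $t=1$) indeed gives a favorable factor of $(kh/p)^{-1/2}$ — this is consistent because the energy space $\HOneT$ with $t=1$ is only used when $\Gamma$ is smooth and the mesh resolves it. Second, the pull-back through the analytic (non-affine) part $R_K$ must not spoil the exponential-in-$p$ rate: this is precisely the role of Lemma~\ref{lemma:lemma2.6}, and one has to track that the resulting $\SIA{\gamma}$ depends only on $\gamma$, $C_{\mathrm{metric}}$, and $\widetilde C$, not on $K$ or $p$.
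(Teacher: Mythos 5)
Your proposal is correct and follows essentially the same route as the paper: an elementwise reference-element operator that is a projection onto $\mathcal{P}_p(\widehat K)$ (the paper takes the one of \cite[Lemma~8.3]{melenk-sauter22}, i.e.\ $\widehat\Pi^{\operatorname{grad}}_p$ of \cite{melenk-rojik18}), scaling through $F_K=R_K\circ A_K$ with Lemma~\ref{lemma:lemma2.6} controlling the analytic part, the analytic approximation theory of \cite[Thm.~5.5, Lemma~C.2]{melenk-sauter10} for part~\ref{item:prop:interpolation-iii}, interpolation in $t$ for the boundary term, and the same elementary estimate of $(h/(h+\SIA{\gamma}))^p$ for part~\ref{item:prop:interpolation-iv}. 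The only caveat is that a genuine Scott--Zhang operator (averaging over neighboring elements) would violate the elementwise locality of part~\ref{item:prop:interpolation-i}, so one must use a polynomial-preserving operator whose face/edge degrees of freedom depend only on the trace from within $\overline K$, which is what you otherwise describe.
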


\begin{proof}
The construction is essentially that of \cite[Thm.~{5.5}]{melenk-sauter10}, 
where the approximation in the norm $\|\nabla \cdot\|_{0,\partition} + k \|\cdot\|_{0,\Omega}$ is considered
instead of $\|\cdot\|_{1,t,k}$. We will therefore be brief. 
The operator $I_h$ is defined on the reference simplex and taken to be the 
one of \cite[Lemma~{8.3}]{melenk-sauter22}, which in turn is actually
the operator $\widehat\Pi^{\operatorname{grad},3d}_{p}$ (for $d = 3$) or the operator
$\widehat\Pi^{\operatorname{grad},2d}_{p}$ (for $d = 2$) of \cite{melenk-rojik18}. 

\emph{Proof of \ref{item:prop:interpolation-ii}:} 
The $p$-dependence of the approximation properties are spelled out in \cite[Lemma~{8.3}]{melenk-sauter22}
and the $h$-dependence follows from scaling arguments.

\emph{Proof of \ref{item:prop:interpolation-iii}:} 
Denote by $\widehat I_h$ the operator on the reference simplex, and introduce for $K \in {\mathcal T}_h$ 
and $t \in \{0,1\}$ the norm 
$\|v\|^2_{1,t,k,K}:=  \|\nabla v\|^2_{L^2(K)} + k^2 \|v\|^2_{L^2(K)} + k^{1-2t} |v|^2_{H^t(\partial K)}$. 
By standard scaling arguments and the approximation properties of $\widehat I_h$ 
(see the discussion in the proof of \cite[Lemma~{8.3}]{melenk-sauter22})  we get 
\begin{equation*}
\|v - I_h v\|_{1,t,k,K} \lesssim h^{d/2-1} p^{-1} \left( 1 + \frac{kh}{p} + \left( \frac{kh}{p}\right)^{1/2-t}\right) |\widehat v|_{2,\widehat K}, 
\end{equation*}
where $\widehat u:= u \circ F_K$ is the pull-back of $u$ to $\widehat K$. Noting that $\widehat{I}_h$ is a projection onto ${\mathcal P}_p(\widehat K)$ (and thus
$I_h$ a projection onto $S_p({\mathcal T}_h)$) we arrive at 
\begin{equation*}
\|v - I_h v\|_{1,t,k,K} \lesssim h^{d/2-1} p^{-1} \left( 1 + \frac{kh}{p} + \left( \frac{kh}{p}\right)^{1/2-t}\right) \inf_{w \in {\mathcal P}_p(\widehat K)} |\widehat v - w|_{2,\widehat K}. 
\end{equation*}
For $v \in \AnaClass{M}{\gamma}{\partition}$, we can now proceed as in the proof of \cite[Thm.~{5.5}]{melenk-sauter10} using in particular the 
approximation result \cite[Lemma~{C.2}]{melenk-sauter10} for $\inf_{w \in {\mathcal P}_p(\widehat K)} |\widehat v - w|_{2,\widehat K}$. 
This leads to the desired result for $t \in \{0,1\}$. For $t \in (0,1)$ the boundary estimate $|v - I_h v|_{t,\Gamma}$ then follows by the interpolation inequality. 

\emph{Proof of \ref{item:prop:interpolation-iv}:} 
If $q \leq p$, the function
\begin{equation*}
h \mapsto \left (\frac{h}{h+\SIA{\gamma}}\right )^{p-q}
\end{equation*}
is non-decreasing. Hence, since $h \leq C$, we have
\begin{multline*}
\left (
\frac{h}{h+\SIA{\gamma}}
\right )^p
=
\left (
\frac{h}{h+\SIA{\gamma}}
\right )^q
\left (
\frac{h}{h+\SIA{\gamma}}
\right )^{p-q}
\\
\leq
\SIA{\gamma}^{-q} h^q
\left (
\frac{C}{C+\SIA{\gamma}}
\right )^{p-q}
=
\SIA{\gamma}^{-q} h^q
\rho_\gamma^{p-q}
=
\left (\rho_\gamma\SIA{\gamma}\right )^{-q}
h^q \rho_\gamma^p, 
\end{multline*}
where $\rho_\gamma < 1$. It is then clear that
$\rho_\gamma^p \leq C_{\gamma,q} p^{-q}$, and it follows that
\begin{equation*}
\left (
\frac{h}{h+\SIA{\gamma}}
\right )^p
\leq
C_{q,\gamma} \left (\frac{h}{p}\right )^q.
\end{equation*}
Then, estimate \eqref{eq_interpolation_A} follows from \eqref{eq_interpolation_A_exp}.
\end{proof}
Our key result concerning finite element discretizations is established under an additional
assumption of polynomial well-posedness that we detail now: 
\begin{assumption}[Polynomial well-posedness]
\label{assumption:everything_is_polynomial_well_posed}
There exist constants $\APOLA$, $\APOLS$, $\APOLC$, $C \ge 0$,  independent of $k$ such that
\begin{align}
\Can & \leq C k^{\APOLA}, \\
\CsolveM &\leq C k^{\APOLS}, \\ \label{assumption:polynomial_stability}
\CcontM + \CAM + k^{-2} \CAD & \leq C k^{\APOLC}.
\end{align}
\end{assumption}

\begin{lemma}[Approximation factors for $hp$ finite elements]
\label{lemma_approximation_factors_FEM}
Assume that $p \geq 1+\APOLA+2\APOLC+\APOLS$. Then, we have
\begin{multline}
\label{eq_estimate_etaF}
\etaF{m}
\leq
C_{m,\APOLS,\APOLC}
\left( 1 + \left (\frac{kh}{p}\right )^{1/2-t} + \frac{kh}{p} \right )
\\
\left (
\left (\frac{h}{p}\right )^m
+
\frac{h}{p} \left (\frac{kh}{p}\right )^{\APOLA+\APOLS+\APOLC}
+
k^{\APOLA+ \APOLS+\APOLC+1} \left (\frac{kh}{\SIA{{\anasplit{m}}}p}\right )^p
\right )
\end{multline}
for all $m \geq 1$, and
\begin{multline}
\label{eq_estimate_etaA}
\etaA{\gamma} \leq C_{\gamma,\APOLS}
k^{-1} \left( 1 + \left (\frac{kh}{p}\right )^{1/2-t} + \frac{kh}{p} \right )
\\
\left (
\left (\frac{h}{p}\right )^{2\APOLC+1} \left (\frac{kh}{p}\right )^{\APOLA+\APOLS}
+
k^{\APOLA+\APOLS+1} \left (\frac{kh}{\SIA{\anashift{\gamma}}p}\right )^p
\right )
\end{multline}
for all $\gamma > 0$.
\end{lemma}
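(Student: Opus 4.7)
The strategy is to combine the regularity decomposition of Theorem~\ref{theorem_splitting} with the interpolation estimates of Proposition~\ref{prop:interpolation}, and then to convert the wavenumber-dependent stability constants into algebraic powers of $k$ via Assumption~\ref{assumption:everything_is_polynomial_well_posed}. The infima in \eqref{eq_approximation_factors} will be bounded by picking $v_h = I_h \Skm(f,g)$ and using the linearity of $I_h$.

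\textbf{Bound on $\etaF{m}$.} For data normalized by $\|f\|_{m-1,\partition}+\|g\|_{m-1/2,\Gamma}=1$, I would apply Theorem~\ref{theorem_splitting} with $s=m-1$ to obtain $\Skm(f,g)=u_{\rm F}+u_{\rm A}$, where $\|u_{\rm F}\|_{m+1,\partition}\leq C_m$ and $u_{\rm A}\in\AnaClass{M}{\anasplit{m-1}}{\partition}$ with $M\leq C\Can\CsolveM(1+k^{-2}\CAD)\leq Ck^{\APOLA+\APOLS+\APOLC}$ by Assumption~\ref{assumption:everything_is_polynomial_well_posed} (the right-hand side of \eqref{eq_splitting_uA} is controlled by the normalization since $\|f\|_{0,\Omega}\leq\|f\|_{m-1,\partition}$ and $\|g\|_{1/2,\Gamma}\leq\|g\|_{m-1/2,\Gamma}$ for $m\geq 1$). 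The error splits as $\|u_{\rm F}-I_h u_{\rm F}\|_{1,t,k}+\|u_{\rm A}-I_h u_{\rm A}\|_{1,t,k}$. The first term, estimated by Proposition~\ref{prop:interpolation}\ref{item:prop:interpolation-ii}, produces the $(h/p)^m$ contribution. The second, estimated by Proposition~\ref{prop:interpolation}\ref{item:prop:interpolation-iv} with $q\coloneqq 1+\APOLA+\APOLS+\APOLC$, yields $(h/p)(kh/p)^{\APOLA+\APOLS+\APOLC}$ from the algebraic summand (after merging $k^{\APOLA+\APOLS+\APOLC}$ with $(h/p)^q$) and $k^{\APOLA+\APOLS+\APOLC+1}(kh/(\SIA{\anasplit{m-1}}p))^p$ from the exponential summand. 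Summing gives \eqref{eq_estimate_etaF}. The hypothesis $p\geq 1+\APOLA+2\APOLC+\APOLS$ in particular guarantees $p\geq q$, which \ref{item:prop:interpolation-iv} requires.

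\textbf{Bound on $\etaA{\gamma}$.} For analytic data normalized by $M_f+kM_g=1$ (WLOG $\gamma\geq\gamma_0$, after enlarging $\gamma$ if necessary, since analyticity classes are monotone in the parameter), no splitting is required: assumption \ref{WP3} directly yields $\Skm(f,g)\in\AnaClass{M_u}{\anashift{\gamma}}{\partition}$ with $M_u\leq\Can\CsolveM k^{-1}\leq Ck^{\APOLA+\APOLS-1}$. Applying Proposition~\ref{prop:interpolation}\ref{item:prop:interpolation-iv} with $q\coloneqq 1+2\APOLC+\APOLA+\APOLS$ (legal under the hypothesis on $p$) and distributing the factor $k^{\APOLA+\APOLS-1}$ across the two summands produces $k^{-1}(h/p)^{2\APOLC+1}(kh/p)^{\APOLA+\APOLS}$ and $k^{-1}\cdot k^{\APOLA+\APOLS+1}(kh/(\SIA{\anashift{\gamma}}p))^p$; factoring out $k^{-1}$ gives precisely \eqref{eq_estimate_etaA}.

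\textbf{Main obstacle.} The computations are mostly book-keeping; the non-obvious ingredient is the choice of the truncation exponent $q$ in Proposition~\ref{prop:interpolation}\ref{item:prop:interpolation-iv}. One trades a factor $(h/p)^{p-q}$ for the algebraic rate $(h/p)^q$, and the optimal $q$ is dictated by the $k$-powers produced by Assumption~\ref{assumption:everything_is_polynomial_well_posed}. This also explains why $\APOLC$ occurs with multiplicity two in the $p$-threshold: the $\etaA{\gamma}$ bound pays one additional $\APOLC$ to reach the faster algebraic rate $(h/p)^{2\APOLC+1}$, while the $\etaF{m}$ bound, benefiting from the Sobolev regularity of $u_{\rm F}$, spends only one copy of $\APOLC$ on the analytic remainder.
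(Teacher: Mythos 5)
Your proposal is correct and follows essentially the same route as the paper: split $\Skm(f,g)=u_{\rm F}+u_{\rm A}$ via Theorem~\ref{theorem_splitting} (with $s=m-1$) for $\etaF{m}$, invoke \ref{WP3} directly for $\etaA{\gamma}$, and apply Proposition~\ref{prop:interpolation} with the same truncation exponents $q=1+\APOLA+\APOLS+\APOLC$ and $q=1+\APOLA+\APOLS+2\APOLC$, converting constants to powers of $k$ via Assumption~\ref{assumption:everything_is_polynomial_well_posed}. The only discrepancy is cosmetic: your analyticity parameter reads $\anasplit{m-1}$ where the paper relabels it $\anasplit{m}$.
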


\begin{proof}
Let $1 \leq m \leq p$, and fix $(f,g) \in H^{m-1}(\partition) \times H^{m-1/2}(\Gamma)$
with
\begin{equation*}
\|f\|_{m-1,\partition} + \|g\|_{m-1/2,\Gamma} = 1.
\end{equation*}
For $u \coloneqq \Skm(f,g)$, Theorem~\ref{theorem_splitting}
provides the splitting $u = u_{\rm F} + u_{\rm A}$, where $u_{\rm F} \in H^{m+1}(\Omega)$
and $u_{\rm A} \in \AnaClass{M}{{\anasplit{m}}}{\partition}$ with
\begin{align}
\nonumber 
\|u_{\rm F}\|_{m+1,\partition} & \leq C_{m-1} , \\
\label{eq:lemma_approximation_factors_FEM-100}
M
& \leq
C  \Can \CsolveM (1+k^{-2} \CAD)
\leq
 C k^{\APOLA+ \APOLS+\APOLC}.
\end{align}
It is convenient to abbreviate 
\begin{equation}
\label{eq:Ftkhp}
F_{t,k,h,p}:= 
\left (
1 + \left (\frac{kh}{p} \right )^{1/2-t} + \frac{kh}{p}
\right ). 
\end{equation}
Applying \eqref{eq_interpolation_F}, we have
\begin{align*}
\|u_{\rm F} - I_h u_{\rm F}\|_{1,t,k}
&\leq
C_m 
F_{t,k,h,p} 
\left (\frac{h}{p}\right )^m
\|u_{\rm F}\|_{m+1,\partition}
\leq 
C^{\prime}_m
F_{t,k,h,p}
\left (\frac{h}{p}\right )^m.
\end{align*}
On the other hand, since $p \geq 1+\APOLA + \APOLS+2\APOLC$ by assumption, 
we can use \eqref{eq_interpolation_A} with $q=\APOLA+\APOLS+\APOLC+1$
and $\gamma = {\anasplit{m}}$, showing that
\begin{align*}
\|u_{\rm A} - I_h u_{\rm A}\|_{1,t,k}
&\leq
C_{q}
F_{t,k,h,p}
\left (
\left (\frac{h}{p}\right )^{\APOLA+ \APOLS+\APOLC+1}
+
k \left (\frac{kh}{\SIA{{\anasplit{m}}}p}\right )^p
\right )
M
\\
& 
\stackrel{ (\ref{eq:lemma_approximation_factors_FEM-100})}{\leq}
C_{q} 
k^{\APOLA+ \APOLS+\APOLC}
F_{t,k,h,p}
\left (
\left (\frac{h}{p}\right )^{\APOLA+\APOLS+\APOLC+1} + k \left (\frac{kh}{\SIA{{\anasplit{m}}}p}\right )^p
\right )
\\
&\leq
C_{q}
F_{t,k,h,p}
\left (
\frac{h}{p}
\left (\frac{kh}{p}\right )^{\APOLA + \APOLS+\APOLC}
+
k^{\APOLA+ \APOLS+\APOLC+1}
\left (\frac{kh}{\SIA{{\anasplit{m}}}p}\right )^p
\right ).
\end{align*}
Then, we have
\begin{align*}
&\|u-I_hu\|_{1,t,k}
\leq
\|u_{\rm F}-I_hu_{\rm F}\|_{1,t,k}
+
\|u_{\rm A}-I_hu_{\rm A}\|_{1,t,k}
\\
&\leq
C_{m,q}
F_{t,k,h,p}
\left (
\left (\frac{h}{p}\right )^m
+
\frac{h}{p} \left (\frac{kh}{p}\right )^{\APOLA+ \APOLS+\APOLC}
+
k^{\APOLA+\APOLS+\APOLC+1}
\left (\frac{kh}{\SIA{{\anasplit{m}}}p}\right )^p
\right ),
\end{align*}
and \eqref{eq_estimate_etaF} follows from the definition of $\etaF{m}$
in \eqref{eq_definition_etaF}.

Consider now $(f,g) \in \AnaClass{M_f}{\gamma}{\partition} \times \AnaClassbdy{M_g}{\gamma}{T}{\Gamma}$
with $M_f + k M_g = 1/2$, and set $u \coloneqq \Skm(f,g)$. We know from \ref{WP3} that
$u \in \AnaClass{M_u}{\anashift{\gamma}}{\partition}$ with
\begin{equation*}
M_u
\leq C \Can \CsolveM k^{-1} 
(M_f + k M_g)
=
\Can  C\CsolveM k^{-1} 
\leq
C  k^{\APOLA+\APOLS-1}.
\end{equation*}
It then follows from \eqref{eq_interpolation_A} with $q = \APOLA+\APOLS  +2 \APOLC + 1$
and $\gamma = \anashift{\gamma}$ that
\begin{equation*}
\|u-I_h u\|_{1,t,k}
\leq
C_{\gamma,\APOL} 
k^{\APOLA+\APOLS-1}
F_{t,k,h,p}
\left (
\left (\frac{h}{p}\right )^{\APOLA+\APOLS+2\APOLC+1}
+
k \left (\frac{kh}{\SIA{\anashift{\gamma}}p}\right )^p
\right ),
\end{equation*}
and \eqref{eq_estimate_etaA} follows from the definition of $\etaA{\gamma}$
in \eqref{eq_definition_etaA}.
\end{proof}

Our main result is then a direct consequence of 
Theorem~\ref{theorem_quasi_optimality} and Lemma~\ref{lemma_approximation_factors_FEM}.

\begin{theorem}[discrete stability of $hp$-FEM]
Let \WPs and \APs  as well as Assumptions~\ref{assumption:quasi_uniform_regular_meshes} and
\ref{assumption:everything_is_polynomial_well_posed} hold true. Then, for all
$c_2 > 0$ there exist constants $c_1$, $C$ depending solely on $c_2$ and 
the constants appearing in \WP, \AP, Assumptions~\ref{assumption:everything_is_polynomial_well_posed}, \ref{assumption:quasi_uniform_regular_meshes}
such that under the scale resolution condition 
\begin{equation}
\label{eq:scale-resolution}
\frac{kh}{p} \leq c_1 \qquad \text{and} \qquad p \geq 1 + 2\APOLC +\APOLA +\APOLS + c_2 \log k
\end{equation}
the Galerkin solution $u_{hp} \in S_{p}(\mathcal{T}_h)$ of \eqref{eq:Galerkin-approximation}
exists, is unique, and satisfies
\begin{equation}
\label{eq:quasi-optimality}
\|u - u_{hp}\|_{1,t,k}
\leq
C \inf_{v_{hp} \in S_{p}(\mathcal{T}_h)} \|u - v_{hp}\|_{1,t,k}.
\end{equation}
\end{theorem}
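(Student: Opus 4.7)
The plan is to verify the two abstract resolution conditions of Theorem~\ref{theorem_quasi_optimality} for $V_h = S_p(\mathcal{T}_h)$, by inserting the approximation-factor bounds from Lemma~\ref{lemma_approximation_factors_FEM} and controlling every $k$-dependent constant with the polynomial bounds of Assumption~\ref{assumption:everything_is_polynomial_well_posed}. Since the theorem's lower bound $p \geq 1 + 2\APOLC + \APOLA + \APOLS + c_2 \log k$ in particular implies $p \geq 1 + \APOLA + 2\APOLC + \APOLS$, Lemma~\ref{lemma_approximation_factors_FEM} is immediately applicable.

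First, I would inspect the two terms $k\etaF{1}$ and $\CAD \etaA{\anashift{\gAD}}$ in the first condition of \eqref{eq_abstract_resolution_condition}. Each of them decomposes (after pulling in the polynomial bounds $\CAD \leq C k^{\APOLC+2}$ etc.) into (a) a prefactor $1 + (kh/p)^{1/2-t} + kh/p$ which is bounded by a constant once $kh/p \leq c_1$; (b) purely algebraic terms of the form $k^\bullet (h/p)^\bullet (kh/p)^\bullet$ which are polynomially small in $c_1$ for small $c_1$; and (c) one exponential term of the shape $k^{B} (kh/(\SIA{\anasplit{\cdot}}p))^p$ where $B = B(\APOLA,\APOLS,\APOLC)$ is explicit. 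The same dissection applies to $\CcontM\CAM\etaA{\anashift{\gAM}}$ in the second condition of \eqref{eq_abstract_resolution_condition}.

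Second, the exponential term is handled by the logarithmic lower bound on $p$. Once $kh/p \leq c_1$, we have $kh/(\SIA{\cdot} p) \leq c_1/\SIA{\cdot}$, hence
\begin{equation*}
k^{B}\left(\frac{kh}{\SIA{\cdot}p}\right)^p
\leq
k^{B}\left(\frac{c_1}{\SIA{\cdot}}\right)^p
\leq
k^{B}\left(\frac{c_1}{\SIA{\cdot}}\right)^{c_2 \log k}
=
k^{B + c_2\log(c_1/\SIA{\cdot})}.
\end{equation*}
Choosing $c_1$ small enough (depending on $c_2$, $\APOLA$, $\APOLS$, $\APOLC$, $\SIA{\anasplit{\gAM}}$, $\SIA{\anasplit{\gAD}}$, $\SIA{\anasplit{1}}$, and the constants from \WPs and \APs) so that $c_2 \log(\SIA{\cdot}/c_1) > B$ for each exponent $B$ arising, this is bounded and even tends to zero as $k \to \infty$. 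For the algebraic contributions, shrinking $c_1$ further (and using $k \geq k_0$) makes them as small as required.

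Third, with these estimates in hand, both conditions in \eqref{eq_abstract_resolution_condition} are fulfilled, and Theorem~\ref{theorem_quasi_optimality} delivers existence, uniqueness of $u_{hp}$, and the quasi-optimality \eqref{eq:quasi-optimality}. The only genuine obstacle is the bookkeeping: one must track precisely which combined power $B$ of $k$ multiplies each exponential term $(kh/\SIA{\cdot}p)^p$, and then pick $c_1$ small enough relative to $\SIA{\cdot}$ that $c_2\log(\SIA{\cdot}/c_1)$ exceeds every such $B$ simultaneously. Beyond this essentially algebraic exercise there is no further difficulty; Lemma~\ref{lemma_approximation_factors_FEM} and Theorem~\ref{theorem_quasi_optimality} carry the analytic weight.
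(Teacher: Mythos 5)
Your proposal is correct and follows essentially the same route as the paper: apply Lemma~\ref{lemma_approximation_factors_FEM}, use the logarithmic lower bound on $p$ to absorb the polynomial factors $k^{B}$ into the exponentially small terms $(kh/(\SIA{\cdot}p))^{p}$ (your condition $c_2\log(\SIA{\cdot}/c_1)>B$ is exactly the paper's requirement $e^{B/c_2}c_1/\SIA{\cdot}\le 1$ in \eqref{tmp_requirements_c1}), shrink $c_1$ to control the algebraic terms, and invoke Theorem~\ref{theorem_quasi_optimality}.
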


\begin{proof}
We may assume that $kh/p \leq c_1 < 1$ so that easy calculations show 
\begin{align}
\label{eq:Ftkhp-10} 
\sup_{t \in [0,1]} F_{t,k,h,p} & = F_{1,k,h,p},  \\
\label{eq:Ftkhp-20} 
F_{1,k,h,p} \left(\frac{kh}{p}\right)^{1/2}  & \leq 1 + \left(\frac{kh}{p}\right)^{1/2} + \left(\frac{kh}{p}\right)^{3/2} 
\leq 1 + c_1^{1/2} + c_1^{3/2} \leq 3. 
\end{align}
Let $c_2 > 0$ and fix $p \geq 1 + 2\APOLC +\APOLA+\APOLS + c_2 \log k$. Then,
we have in particular that $\log k \leq (p-1/2)/c_2$, so that for any $\sigma$, $b > 0$ 
\begin{equation*}
k^{b} \leq \left (e^{b/c_2}\right )^{p-1/2},
\end{equation*}
and
\begin{equation}
\label{tmp_kb_khp}
k^b \left (\frac{kh}{\sigma p}\right )^{p-1/2}
\leq
\left (e^{b/c_2} \right )^{p-1/2} \left (\frac{c_1}{\sigma}\right )^{p-1/2}
=
\left (\frac{e^{b/c_2}}{\sigma} c_1 \right )^{p-1/2}.
\end{equation}
Since $p \geq 1 + 2\APOLC + \APOLA + \APOLS$ by assumption, we can use
\eqref{eq_estimate_etaF} with $m=1$. Since $t \leq 1$, picking $\sigma = \SIA{{\anasplit{1}}}$
and $b=\APOLA+\APOLS+\APOLC+2$ in \eqref{tmp_kb_khp}, we get 
\begin{align*}
k\etaF{1}
&\stackrel{ (\ref{eq_estimate_etaF}), (\ref{eq:Ftkhp-10} )}{\leq} 
C_{}
F_{1,k,h,p}
\left (
\frac{kh}{p}
+
\left (\frac{kh}{p}\right )^{\APOLA+\APOLS+\APOLC+1}
+
k^{\APOLA+\APOLS+\APOLC+2} \left (\frac{kh}{\SIA{{\anasplit{1}}}p}\right )^{p}
\right )
\\
&\stackrel{(\ref{eq:Ftkhp-20})}{\leq} 
3 C_{}
\left (
\left (\frac{kh}{p}\right )^{1/2}
\!\!\!
+
\left (\frac{kh}{p}\right )^{\APOLA+\APOLS+\APOLC+1/2}
+
k^{\APOLA+\APOLS+\APOLC+2}
\SIA{{\anasplit{1}}}^{-1/2}
\left (\frac{kh}{\SIA{{\anasplit{1}}}p}\right )^{p-1/2}
\right )
\\
&\leq
3 C_{}
\left (
c_1^{1/2}
+
c_1^{\APOLA+\APOLS+\APOLC+1/2}
+
\SIA{{\anasplit{1}}}^{-1/2}
\left (\frac{e^{(\APOLA+\APOLS+\APOLC+2)/c_2}}{\SIA{{\anasplit{1}}}} c_1 \right )^{p-1/2}
\right ).
\end{align*}
We then employ \eqref{eq_estimate_etaA} with $\gamma = \anashift{\gAM}$, leading to
\begin{align*}
&\CcontM \CAM \etaA{\anashift{\gAM}}
\\
&\leq
C_{\APOLS,\APOLC} k^{2\APOLC-1}
F_{t,k,h,p}
\left (
\left (\frac{h}{p}\right )^{2\APOLC+1} \left (\frac{kh}{p}\right )^{\APOLA+\APOLS}
+
k^{\APOLA+\APOLS+1} \left (\frac{kh}{\SIA{\anashift{\gamma}}p}\right )^p
\right )
\\
&\stackrel{(\ref{eq:Ftkhp-10}), (\ref{eq:Ftkhp-20})}{\leq}
3 C_{} 
\left (
k^{-1} 
\left (\frac{h}{p}\right )^{1}\!\! \left (\frac{kh}{p}\right )^{2\APOLC+\APOLA+\APOLS-1/2}
+
\SIA{\anashift{\gamma}}^{-1/2}
k^{2\APOLC+\APOLA+ \APOLS} \left (\frac{kh}{\SIA{\anashift{\gamma}}p}\right )^{p-1/2}
\right )
\\
&\stackrel{(\ref{tmp_kb_khp})}{\leq}
3 C_{}
\left (
c_1^{2\APOLC+\APOLA+\APOLS+1/2}
+
\SIA{\anashift{\gamma}}^{-1/2}
\left (
\frac{e^{(2\APOLC+\APOLA+\APOLS)/c_2}}{\SIA{\anashift{\gAM}}} c_1
\right )^{p-1/2}
\right ),
\end{align*}
where we have used \eqref{tmp_kb_khp} with $\sigma = \SIA{\anashift{\gAM}}$ and $b = 2\APOLC+\APOLA+\APOLS$. 
Finally, since $k \geq k_0$, a similar reasoning shows that
\begin{multline*}
\CAD \etaA{\anashift{\gAD}}
\leq
Ck^{\APOLC} \etaA{\anashift{\gAD}}
\leq
Ck^{2\APOLC} \etaA{\anashift{\gAD}}
\\
\leq
3 C_{\APOL}
\left (
c_1^{2\APOLC+\APOLA+\APOLS+1/2}
+
\SIA{\anashift{\gAD}}^{-1/2}
\left (
\frac{e^{(2\APOLC+\APOLA+\APOLS)/c_2}}{\SIA{\anashift{\gAD}}} c_1
\right )^{p-1/2}
\right ).
\end{multline*}
At this point, we assume in addition to $c_1 \leq 1$  that 
\begin{equation}
\label{tmp_requirements_c1}
\frac{e^{(\APOLA+\APOLS+\APOLC+2)/c_2}}{\SIA{{\anasplit{1}}}} c_1 \leq 1, 
\quad
\frac{e^{(2\APOLC+\APOLA+\APOLS)/c_2}}{\SIA{\anashift{\gAM}}} c_1 \leq 1, 
\quad
\frac{e^{(2\APOLC+\APOLA+\APOLS)/c_2}}{\SIA{\anashift{\gAD}}} c_1 \leq 1,
\end{equation}
which leads to the simplified expressions (using $p \ge 1$)
\begin{equation*}
k\etaF{1} + \CAD \etaA{\anashift{\gAD}}
\leq
C_{\APOLS,\APOLC}
\!\!
\left [
1 \!
+
\SIA{\anasplit{1}}^{-1/2}
\!
\sqrt{\frac{e^{(\APOLA+\APOLC+\APOLS+2)/c_2}}{\SIA{{\anasplit{1}}}}}
+
\SIA{\anashift{\gAD}}^{-1/2}
\sqrt{\frac{e^{(2\APOLC+\APOLA+\APOLS)/c_2}}{\SIA{\anashift{\gAD}}}}
\right ]
c_1^{1/2}
\end{equation*}
and
\begin{equation*}
\CcontM \CAM \etaA{\anashift{\gAM}}
\leq
C_{\APOLC,\APOLS}
\left (
1
+
\SIA{\anashift{\gAM}}^{-1/2}
\sqrt{\frac{e^{(2\APOLC+\APOLA+\APOLS)/c_2}}{\SIA{\anashift{\gAM}}}}
\right )
c_1^{1/2}.
\end{equation*}
Then, we see that requiring, in addition to \eqref{tmp_requirements_c1}, that
\begin{align*}
& C_{\APOLC,\APOLS}
\left (
1
+
\frac{\sqrt{e^{(\APOLA+\APOLC+\APOLS+2)/c_2}}}{\SIA{{\anasplit{1}}}}
+
\frac{\sqrt{e^{(2\APOLC+\APOLA+\APOLS)/c_2}}}{\SIA{\anashift{\gAM}}}
+
\frac{\sqrt{e^{(2\APOLC+\APOLA+\APOLS)/c_2}}}{\SIA{\anashift{\gAD}}}
\right )
c_1^{1/2} \\
& 
\stackrel{!}{\leq}
\min(\mu/4,1),
\end{align*}
then the requirements of Theorem~\ref{theorem_quasi_optimality} are met, which leads to the result.
\end{proof}

\section{Application to wave progagation problems}
\label{section:covered_problems}

In this section, we verify that the Assumptions~\WP, \AP\,
of Section~\ref{section:assumptions_and_problem_specific_notation} are in fact
satisfied for a variety of time-harmonic wave propagation problems.
We emphasize that Assumption~\ref{WP2} and, in particular,
the polynomial stability assumption expressed in \eqref{assumption:polynomial_stability}
is assumed and has to be proved separately. We discuss cases where such a stability
property holds true in Remark~\ref{remark_polynomial_stability} below.

Throughout, we will consider the heterogeneous Helmholtz equation, and in all examples 
the sesquilinear form $a(\cdot,\cdot)$ will be of the form 
\begin{equation}
\label{eq:a}
a(u,v) = (A(x) \nabla u, \nabla v). 
\end{equation}
The strong form of problem \eqref{eq:S_k_minus_problem} is 
\begin{equation}
\label{eq:Lkm}
\Lkm u:= -k^2 n^2 u -\nabla \cdot (A(x) \nabla u) = f \quad \mbox{ in $\Omega$}, 
\qquad  
\partial_{n_A} u - \TkGammaM u = g \quad \mbox{ on $\Gamma$,}
\end{equation}
with the co-normal derivative $\partial_{n_A} v:= \normal \cdot (A \nabla v)|_{\Gamma}$ and the outer
normal vector $\normal$.  Hence, in all examples the volume operator is given by $\TkOmegaM = k^2 n^2$. 
The complex-valued coefficients $A \in L^\infty(\Omega,\operatorname{GL}({\mathbb C}^d))$ 
and $n \in L^\infty(\Omega,{\mathbb C})$ are assumed to satisfy the following:
\begin{subequations}
\label{eq:HH-robin-assumption-coefficients}
$A$ is uniformly elliptic, i.e., 
\begin{equation}
\label{eq:HH-robin-assumption-coefficients-a}
\operatorname{Re} (\xi^H A(x) \xi) \ge a_{\min} |\xi|^2 \qquad \forall \xi \in {\mathbb C}^d \quad \forall x \in \Omega.
\end{equation}
Furthermore, $A$ and $n$ are piecewice analytic, specifically, 
\begin{align}
\label{eq:A-analytic}
  &\| \nabla^p  A  \|_{L^\infty( \Omega \setminus \interface )} \leq C_A \gamma_A^p p! \qquad \forall p \in \mathbb{N}_0, \\
\label{eq:n-analytic}
  &\| \nabla^p n \|_{L^\infty( \Omega \setminus \interface )}
  \leq
  C_n \gamma_n^p p! \qquad \forall p \in \mathbb{N}_0,
\end{align}
for fixed constants $C_A,\gamma_A$ and $C_n,\gamma_n$.
\end{subequations}

\begin{remark}
The coefficients $A$ and $n$ are allowed to depend on $k$. We merely require that the constants in 
\eqref{eq:HH-robin-assumption-coefficients-a}--\eqref{eq:n-analytic} not depend on $k$. 
Therefore, equations including  volume damping, classically written in the form
$-k^2 n^2 u + i k m u - \nabla \cdot (A \nabla u) $ with some $m$ satisfying 
the condition that $\|\nabla^p m\|_{L^\infty(\Omega\setminus\interface)} \leq C_m \gamma_m^p p!$ for all $p \in \mathbb{N}_0$ 
are included in this setting. Also certain PML-variants with $k$-dependent matrices $A$ are included in this setting. 
\eremk
\end{remark}

The strong form of problem \eqref{eq_definition_Skp} will be 
\begin{equation}
\label{eq:Lkp}
\Lkp u:= -\nabla \cdot (A(x) \nabla u) + k^2 u = f \quad \mbox{ in $\Omega$}, 
\qquad  
\partial_{n_A} u - \TkGammaP u = g \quad \mbox{ on $\Gamma$}
\end{equation}
so that $\TkOmegaP = k^2$. The problems \eqref{eq:Lkm}, \eqref{eq:Lkp}, which we discuss in more detail in 
Sections~\ref{problem:HH_with_robin_bc}--\ref{problem:HH_with_second_order_abcs}, thus differ in the boundary conditions, i.e., 
the operators $\TkGammaM$ and $\TkGammaP$. 

Throughout this Section~\ref{section:covered_problems}, we assume:  
\begin{assumption}
\label{assumption:smoothness_domain_boundary_and_interface} 
The boundary $\Gamma$ and the interface $\interface$ are analytic. 
\eremk
\end{assumption}

\begin{remark}
\label{remark_polynomial_stability}
The polynomial stability assumption is known to hold true in a variety of situations.
The growth of the stability constant is governed by the trajectories of the rays
(solutions to the bi-characteristic equation) associated with the wave propagation
problem. This is intuitive to understand: if some rays are trapped, more time will be
needed for the energy to leak out of the domain, resulting in a larger stability constant.
On the mathematical level, the connection between waves and rays can be made rigorous through
semi-classical analysis \cite{lax_phillips_1964a,melrose_1975a,vainberg_1975a}, and three
scenarios must be distinguished.

First of all, we say that the problem is non-trapping if all rays escape the domain
in finite time, i.e., there are no trapped rays \cite{galkowski_spence_wunsch_2020a}.
In this case, it is well-known that $\CsolveM$ is bounded uniformly in $k$. Early works
on the subject have focused on the scattering problem from a Dirichlet obstacle
(see e.g. \cite{lax_phillips_1964a,melrose_1975a}), but the estimate also applies
to problems with heterogeneous media under radial monotonicity conditions on the coefficients
(see,  e.g., \cite{moiola_spence_2019a}).

The next situation to consider is the case of ``weak'' trapping
\cite{chandlerwilde-spence-gibbs-smyshlyaev20a}. In this case,
rays may be trapped, but all the trajectories of trapped rays are unstable.
This is for instance the case for the scattering problem from a Dirichlet obstacle
consisting of two balls or two squares. In this case, although rays may be trapped
between the two parts of the obstacle, polynomial well-posedness holds true. For instance,
it is known that $\CsolveM \lesssim \log k$ for the case of two balls \cite{ikawa_1988a},
and that $\CsolveM \lesssim k^2$ for the case of two squares
\cite{chandlerwilde-spence-gibbs-smyshlyaev20a}. Similar estimates holds true
when the weak trapping is generated by space-varying coefficients
\cite{chaumontfrelet_spence_2023a}.

Finally, the case of ``strong'' trapping arises when trapped rays have stable
trajectories. This is for instance the case when scattering from a ``$C$ shaped''
obstacle. In can be shown that polynomial stability does not hold true for all
frequencies \cite{moiola_spence_2019a}. However, the set of frequencies causing
the super-algebraic growth is, in a sense, small. Specifically, it is shown in
\cite{lafontaine-spence-wunsch19} that polynomial stability holds true even in
this case with $\CsolveM \lesssim k^{(5d+1)/2}$, up to excluding a set of frequencies
of arbitrarily small measure.

The above stability bounds are stated for a Helmholtz problem with a radiation condition
at infinity or, equivalently, an exact DtN operator. However, these bounds
still hold true if the DtN operator is replaced by a PML, see
\cite{galkowski_lafontaine_spence_2023a}. Bounds for impedance boundary conditions
are also given, e.g., in \cite{barucq-chaumont-frelet-gout17,melenk95}.
Higher-order absorbing boundary conditions are also considered in \cite{galkowski2021local}.
\eremk
\end{remark}

%
%
%
%
\subsection{Heterogeneous Helmholtz problem with Robin boundary conditions and PML}
\label{problem:HH_with_robin_bc}
The heterogeneous Helmholtz problem with Robin boundary conditions corresponds to the choice 
\begin{equation*}
\TkGammaM u:= ik G u
\end{equation*}
in \eqref{eq:Lkm} for some function $G$ that is analytic in a fixed tubular neighborhood
$T$ of $\Gamma$. In \eqref{eq:Lkp}, we select $\TkGammaP:= 0$. 
Note that we allow for complex-valued coefficients $A$ and $n$, which allows us to cover forms of 
PML where the path deformation into the complex plane is based on the use of polar/spherical coordinate
\cite{collino-monk98,galkowski2022helmholtz}. Specifically, it can be seen following the
discussion in \cite[Remark 2.1]{chaumontfrelet-vega21} and the expression of $A$ given
in \cite[Eq (1.11)]{galkowski2022helmholtz} that \ref{WP3} holds true if the damping parameter
is small enough (the width of the PML, however, is not constrained).

\begin{lemma}[\APs for Robin b.c.]
\label{lemma:HH-robin-AP}
Let Assumption~\ref{assumption:smoothness_domain_boundary_and_interface}  and 
\eqref{eq:HH-robin-assumption-coefficients} be valid. 
Let $G$ be analytic in a tubular neighborhood $T$ of $\Gamma$. 
For $\Lkm$, $\Lkp$ given by \eqref{eq:Lkm} and \eqref{eq:Lkp} 
with $\TkGammaM = ikG $ and $\TkGammaP = 0$ the following holds 
with $t= 1/2$: 
\begin{enumerate}[nosep, start=1, label={(\roman*)},leftmargin=*]
\item 
\label{item:lemma:HH-robin-AP-i}
$\bkp$ is bounded uniformly in $k$, and \ref{AP1} holds with $\sigma =  1$. 
\item 
\label{item:lemma:HH-robin-AP-ii}
$\|\Skp(f,g)\|_{1,t,k} \leq C \left[ k^{-1} \|f\|_{0,\Omega} + k^{-1/2} \|g\|_{0,\Gamma}\right]$, and 
\ref{AP2} holds for any fixed $\smax \ge 0$. 
\item 
\label{item:lemma:HH-robin-AP-iii}
The splittings of $\TkOmegaM - \TkOmegaP$ and $\TkGammaM - \TkGammaP$ of \ref{AP3} are given by 
$\ROD u = k^2 (n^2 +1) u$, $\AOD = 0$ and $\RGD u= i k G u$, $\AGD = 0$ and hence satisfy \ref{AP3}
for any fixed $\smax \ge 0$.  
\end{enumerate}
\end{lemma}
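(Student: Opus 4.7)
The plan is to verify the three items in turn by computing the forms and operators explicitly and invoking standard tools.

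For \textbf{(i)}, I would first expand $\bkp(u,v) = (A\nabla u,\nabla v) + k^2(u,v)$ using $\TkOmegaP u = -k^2 u$ (so that the strong form $\Lkp u$ matches \eqref{eq:Lkp}) and $\TkGammaP = 0$. Continuity in $\|\cdot\|_{1,k,t}$ follows from \eqref{eq:HH-robin-assumption-coefficients-a} and the straightforward bounds $|(A\nabla u,\nabla v)|\leq \|A\|_\infty |u|_1|v|_1$ and $k^2|(u,v)|\leq k\|u\|_{0,\Omega}\cdot k\|v\|_{0,\Omega}$, both dominated by $\|u\|_{1,k,t}\|v\|_{1,k,t}$, independently of~$k$. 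For coercivity with $\sigma=1$, the real part gives $\Re \bkp(u,u) \geq a_{\min}|u|_1^2 + k^2\|u\|_{0,\Omega}^2$; the remaining $|u|_{1/2,\Gamma}^2$ term of $\|u\|_{1,k,t}^2$ is then absorbed using the weighted trace inequality \eqref{eq_weighted_trace} with $s=0$, which yields $|u|_{1/2,\Gamma}^2 \lesssim k^2\|u\|_{0,\Omega}^2 + |u|_{1,\Omega}^2$ (using $k\geq k_0$). This gives \ref{AP1} with $\CcoerP$ depending only on $a_{\min}$ and $k_0$.

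For \textbf{(ii)}, the a priori estimate follows by testing $\bkp(\Skp(f,g),v)=(f,v)+\langle g,v\rangle$ with $v = \Skp(f,g)$ and using coercivity together with the bounds $|(f,v)|\leq k^{-1}\|f\|_{0,\Omega}\,\|v\|_{1,k,t}$ and $|\langle g,v\rangle|\leq \|g\|_{0,\Gamma}\|v\|_{0,\Gamma}\leq k^{-1/2}\|g\|_{0,\Gamma}\,\|v\|_{1,k,t}$ where the last step again uses \eqref{eq_weighted_trace}. The shift theorem \ref{AP2} is then established by rewriting the PDE as a piecewise elliptic transmission problem $-\nabla\!\cdot\!(A\nabla u) = f - k^2 u$ in $\Omega$ with Neumann data $\partial_{n_A}u = g$, and invoking standard $H^{s+2}$ regularity on each $P\in\LP$ (this is legitimate thanks to Assumption~\ref{assumption:smoothness_domain_boundary_and_interface} and the analyticity of $A$ in \eqref{eq:A-analytic}). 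The $k$-explicit bound \eqref{eq_shift_Skp} is obtained inductively in $s$: at level $s=0$, elliptic regularity gives $\|u\|_{2,\LP} \lesssim \|f\|_{0,\Omega} + k^2\|u\|_{0,\Omega} + \|g\|_{1/2,\Gamma}$, and the middle term is absorbed using the $s=0$ stability from the previous sentence; the step $s\to s+1$ differentiates the equation and uses that the $k^2$-coefficient has no derivatives to spare, picking up a factor $k^2$ per level which accounts for the $k^s\|f\|_{0,\Omega}$ and $k^{s+1/2}\|g\|_{0,\Gamma}$ terms in \eqref{eq_shift_Skp}.

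For \textbf{(iii)}, the identities $\TkOmegaM - \TkOmegaP = k^2 n^2 -(-k^2) = k^2(n^2+1)$ and $\TkGammaM - \TkGammaP = ikG$ are direct; choosing $\ROD u := k^2(n^2+1)u$, $\RGD u := ikGu$ and $\AOD = \AGD = 0$, the bound \eqref{eq_ROD} reduces to the multiplier estimate $\|(n^2+1)u\|_{s,\LP} \leq C_s\|u\|_{s,\LP}$, which follows from \eqref{eq:n-analytic} by the standard Leibniz product rule in $H^s(\LP)$; similarly \eqref{eq_RGD} reduces to $\|Gu\|_{s+1/2,\Gamma}\leq C_s\|u\|_{s+1/2,\Gamma}$, valid because $G$ is analytic (and hence smooth) on a tubular neighborhood of $\Gamma$. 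The analytic parts being zero trivially satisfy \eqref{eq_A_PLUS} with $\CAD = 0$ and arbitrary $\gAD$.

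The main obstacle, and the one step deserving care rather than a one-liner, is the inductive bookkeeping of $k$-powers in the shift estimate of part (ii): one must verify that at each induction step the $k^2\|u\|_{s,\LP}$ term created by moving $-k^2 u$ to the right-hand side is absorbed by the inductive hypothesis at level $s-1$ without producing spurious $k$-losses, and that the analytic interface conditions allow the piecewise elliptic shift theorem to be applied uniformly across $\LP$.
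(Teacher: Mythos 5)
Your proposal is correct and follows essentially the same route as the paper: continuity and coercivity of $\bkp$ by inspection of the form together with the weighted trace inequality, the a priori bound by testing with $u$ itself, \ref{AP2} by standard (piecewise) elliptic regularity for the Neumann/transmission problem with the $k^2 u$ term moved to the right-hand side and absorbed via the stability bound, and item (iii) by direct computation. The paper's own proof is in fact terser (it simply cites "standard elliptic regularity theory for Neumann problems" for \ref{AP2} and defers the treatment of the complex-valued, merely coercive $A$ to the proof of the subsequent \WPs lemma), so your write-up supplies detail the paper omits rather than deviating from it.
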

\begin{proof}
\emph{Proof of \ref{item:lemma:HH-robin-AP-i}, \ref{item:lemma:HH-robin-AP-iii}:} 
Continuity of $\bkp$ follows by inspection. The coercivity \ref{AP1} of $\bkp$ with $\sigma = 1$ 
follows from \eqref{eq:HH-robin-assumption-coefficients-a} and $\TkGammaP = 0$. 

\emph{Proof of \ref{item:lemma:HH-robin-AP-ii}:} 
The coercivity of $\bkp$ implies for $u:= \Skp(f,g)$ 
\begin{equation*}
\|u\|^2_{1,t,k} \lesssim \left| (f,u)\right| + \left|(g,u)_\Gamma\right| 
\lesssim k^{-1} \|f\|_{0,\Omega} \|u\|_{0,\Omega} + k^{-1/2} \|g\|_{0,\Gamma} k^{1/2} \|u\|_{0,\Gamma}. 
\end{equation*}
The trace inequality $k \|v\|_{0,\Gamma} \lesssim \|v\|_{1,\Omega} + k \|v\|_{0,\Omega} \lesssim \|v\|_{1,t,k}$ then 
implies the {\sl a priori bound}.
The validity of \ref{AP2} follows from standard elliptic regularity theory for Neumann problems since $\Gamma$ and $\interface$ are smooth. 
The fact that $A$ is ${\mathbb C}^{d\times d}$-valued instead of pointwise SPD is discussed in more detail in the proof of Lemma~\ref{lemma:HH-robin-WP} below.  

\emph{Proof of \ref{item:lemma:HH-robin-AP-iii}:} follows by inspection. 
\end{proof}

\begin{lemma}[\WPs for Robin b.c.]
\label{lemma:HH-robin-WP}
Let Assumption~\ref{assumption:smoothness_domain_boundary_and_interface} 
and \eqref{eq:HH-robin-assumption-coefficients} be valid. 
Let $G$ be analytic in a tubular neighborhood $T$ of $\Gamma$. 
Assume \ref{WP2}. Then:  
\begin{enumerate}[nosep, start=1, label={(\roman*)},leftmargin=*]
\item 
\label{item:lemma:HH-robin-WP-i}
\ref{WP1} holds with $\CcontM = O(1)$, and \ref{WP3} holds with $\Can = O(1)$ (both uniformly in $k$).
\item 
\label{item:lemma:HH-robin-WP-ii}
\ref{WP4} holds with $\ROM u = k^2 n^2 u$, $\AOM  = 0$ and $\RGM = i k G u$, $\AGM = 0$. 
In particular, for the constants in Assumption~\ref{assumption:everything_is_polynomial_well_posed}, we have 
$\CcontM = O(1)$ (uniformly in $k$) and $C^-_{A,k} = 0$ and $C^\Delta_{A,k} = 0$. 
\item 
\label{item:lemma:HH-robin-WP-iii}
\ref{WP5} holds. 
\end{enumerate}
\end{lemma}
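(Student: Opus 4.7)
The items \ref{item:lemma:HH-robin-WP-ii} and \ref{item:lemma:HH-robin-WP-iii} reduce to routine estimates given the explicit form of $\bkm$, and the first half of \ref{item:lemma:HH-robin-WP-i} is likewise a direct verification. The real content is the analytic regularity statement \ref{WP3} (second half of \ref{item:lemma:HH-robin-WP-i}), which I will obtain by combining the {\sl a priori} bound from \ref{WP2} with classical analytic regularity theory for piecewise analytic elliptic interface problems, carefully tracking the $k$-dependence.

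\textbf{The easy verifications.} For \ref{WP5} the bound $|a(u,v)| \leq \|A\|_{L^\infty(\Omega)} |u|_{1,\Omega} |v|_{1,\Omega} \leq C\|u\|_{1,t,k}\|v\|_{1,t,k}$ is immediate. For \ref{WP1} (with $t=1/2$ so that $\|u\|_{1,t,k}^2 = k^2\|u\|_{0,\Omega}^2 + |u|_{1/2,\Gamma}^2 + |u|_{1,\Omega}^2$) I expand
\begin{equation*}
\bkm(u,v) = (A\nabla u,\nabla v) - k^2(n^2 u,v) - ik(Gu,v)_\Gamma,
\end{equation*}
and bound the three terms using $\|A\|_\infty$, $k\|u\|_{0,\Omega}\cdot k\|v\|_{0,\Omega}$, and for the boundary term the trace inequality $k\|w\|_{0,\Gamma}^2 \lesssim \|w\|_{1,t,k}^2$ (valid for all $t\ge 0$). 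This gives $\CcontM = O(1)$. For \ref{WP4} the splittings $\TkOmegaM = k^2 n^2 = \ROM$ and $\TkGammaM = ikG = \RGM$ with $\AOM = \AGM = 0$ satisfy \eqref{eq_continuity_ROM_RGM} by the same estimates, and the analyticity clauses hold trivially with $\CAM = 0$, so $\CAM = O(1)$ and $\CAD = 0$ for use in Assumption~\ref{assumption:everything_is_polynomial_well_posed}.

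\textbf{Analytic regularity \ref{WP3}.} Given $f \in \AnaClass{M_f}{\gamma}{\LP}$ and $g \in \AnaClassbdy{M_g}{\gamma}{T}{\Gamma}$, set $u := \Skm(f,g)$. From \ref{WP2} we first obtain $\|u\|_{1,t,k} \leq \CsolveM(\|f\|_{0,\Omega} + k^{1/2}\|g\|_{0,\Gamma}) \lesssim \CsolveM(M_f + k^{1/2} M_g)|\Omega|^{1/2}$, hence in particular $k\|u\|_{0,\Omega} \leq \CsolveM(M_f + kM_g)$ up to constants. Next, since $A$, $n$ are piecewise analytic with uniform constants, $G$ is analytic in $T$, and both $\Gamma$ and $\interface$ are analytic by Assumption~\ref{assumption:smoothness_domain_boundary_and_interface}, I flatten each piece of $\interface$ and $\Gamma$ locally by an analytic change of variables (pulling back via Lemma~\ref{lemma:lemma2.6}), reducing to an elliptic transmission/Robin problem with analytic coefficients on a half-space configuration. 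On such a configuration I apply the classical interior and boundary analytic regularity estimates, iterated order-by-order, exactly as in \cite{melenk-sauter10,melenk-sauter21}, to obtain
\begin{equation*}
\|\nabla^n u\|_{0,P} \leq M_u\, \vartheta(\gamma)^n \max\{n,k\}^n, \qquad n \in \mathbb{N}_0, \quad P \in \LP,
\end{equation*}
with $M_u \lesssim \Can k^{-1}(M_f + kM_g + k\|u\|_{1,t,k})$. Inserting the $H^1$ bound and absorbing the $k\|u\|_{1,t,k}$-term into $\CsolveM(M_f + kM_g)$ yields precisely \eqref{eq:Mf_Mg} with $\Can = O(1)$. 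The shift function $\vartheta$ is independent of $k$; its form only depends on the analyticity radii of $A$, $n$, $G$, and of the maps flattening $\Gamma$ and $\interface$.

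\textbf{Main obstacle.} The delicate point is the uniform-in-$k$ character of $\Can$ in the analytic estimate. The usual analytic regularity iteration produces at each step a derivative loss that must be balanced against a factor involving $k$; the correct scaling, in which $k\|u\|_{0,\Omega}$ is on par with $\|\nabla u\|_{0,\Omega}$ and the right-hand side contributes through $M_f + kM_g$, is precisely what allows \eqref{eq:Mf_Mg} to absorb the $\max\{n,k\}^n$ factors without additional $k$-growth in $\Can$. Handling the transmission conditions at $\interface$ (in particular the co-normal jump $\llbracket A\nabla u\cdot\boldsymbol n\rrbracket = 0$) and the Robin condition simultaneously, while keeping the analyticity chain well-defined across analytic pieces of $\interface$, is the technical heart of the argument; once the flattening and the standard caccioppoli-type induction are in place, the $k$-tracking is bookkeeping.
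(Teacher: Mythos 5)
Your proposal follows essentially the same route as the paper: items (ii), (iii) and the continuity part of (i) by inspection, and \ref{WP3} via the rescaling $\varepsilon = 1/k$, localization, analytic flattening of $\Gamma$ and $\interface$ (using the invariance of the analyticity classes under analytic changes of variables, Lemma~\ref{lemma:lemma2.6}), the classical interior/boundary/transmission analytic regularity estimates on balls and half-balls, and finally closing the estimate with the stability bound from \ref{WP2} to absorb the lower-order terms into $\CsolveM k^{-1}(M_f + kM_g)$, giving $\Can = O(1)$.

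One concrete point is missing, and the paper treats it explicitly because the hypotheses of the lemma require it. The coefficient $A$ in (\ref{eq:HH-robin-assumption-coefficients}) is allowed to be complex-valued and merely coercive in the sense of (\ref{eq:HH-robin-assumption-coefficients-a}) — this is precisely what lets the lemma cover PML. The "classical" analytic regularity propositions you invoke (the interior, Neumann-boundary, and interface results of the type in \cite[Props.~5.5.1, 5.5.3, 5.5.4]{melenk02}) are proved for pointwise SPD $A$; their induction rests on a (piecewise) $H^2$ shift theorem obtained by Nirenberg's difference-quotient technique. To apply them here one must check that this $H^2$ step survives with only $\Re(\xi^H A\xi)\ge a_{\min}|\xi|^2$; the paper notes that the coercivity (\ref{eq:HH-robin-assumption-coefficients-a}) is exactly what makes the difference-quotient argument go through for Neumann and transmission problems. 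Without this remark your argument, as written, only establishes \ref{WP3} for real SPD $A$ and does not cover the PML case claimed in Theorem~\ref{thm:quasi-optimality-robin}. The rest of your $k$-bookkeeping (treating $ikGu$ as boundary data of size $O(\varepsilon)$ after rescaling, and the final absorption via \eqref{eq:stability_estimate}) matches the paper.
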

\begin{proof}
\emph{Proof of \ref{item:lemma:HH-robin-WP-i}:} 
$\CcontM = O(1)$ follows by inspection. To see \ref{WP3}, we first discuss the case that the matrix 
$A$ is pointwise SPD as this case is essentially covered by the literature. Specifically, we observe that 
the function $u = \Skm(\widetilde{f},\widetilde{g})$ with 
$\widetilde{f} \in \AnaClass{C_{\widetilde f}}{\gamma_{\widetilde{f}}}{\partition}$, 
$\widetilde{g} \in \AnaClassbdy{C_{\widetilde g}}{\gamma_{\widetilde{g}}}{T}{\Gamma}$, 
satisfies, upon setting $\varepsilon = 1/k$, 
  \begin{equation}\label{eq:rescaled_heterogeneous_helmholtz}
    \begin{alignedat}{2}
      - \varepsilon^2 \nabla \cdot (A \nabla u) - n^2 u &= \varepsilon^2 \widetilde{f} \quad &&\text{in } \Omega,\\
      \varepsilon^2 \partial_{n_A} u               &= \varepsilon( \varepsilon \widetilde{g} + iGu) \quad &&\text{on } \Gamma,
    \end{alignedat}
  \end{equation}
The regularity of solutions of these problems form has been addressed 
in \cite[Lemma~{4.13}]{melenk-sauter11} and \cite[Prop.~{5.4.5}, Rem.~{5.4.6}]{melenk02}. 
The procedure is as follows: the regularity can be analyzed locally and the boundary or the 
interface may be flattened by (local) changes of variables so that one is reduced 
to the analysis on balls (interior estimates and transmission problems) or half-balls (boundary estimates). 
Note that that Lemma~\ref{lemma:lemma2.6} (and \cite[Lemma~{4.3.4}]{melenk02}) provide that the 
analyticity classes are invariant under bi-analytic changes of variables. Hence, one may apply 
  \cite[Prop.~{5.5.1}]{melenk02} (interior analytic regularity),
  \cite[Prop.~{5.5.3}]{melenk02} (boundary analytic regularity for Neumann problems)
  as well as
  \cite[Prop.~{5.5.4}]{melenk02} (interface analytic regularity) if $\interface \neq \emptyset$,
	to problem \eqref{eq:rescaled_heterogeneous_helmholtz}. Considering as an example 
the case of boundary regularity, we apply \cite[Prop.~{5.5.3}]{melenk02} with the following choices 
  \begin{equation*}
    \begin{alignedat}{11}
      C_A &= O(C_A),           \ &&C_b      &&= 0, \ &&C_c      &&= O(C_{n^2}),
          \ &&C_f      &&= O(\varepsilon^2 C_{\widetilde{f}}), \ &&C_{G_1}      &&= O(\varepsilon C_{\widetilde{g}}), \ &&C_{G_2}      &&= O(1),\\
      \gamma_A &= O(\gamma_A), \ &&\gamma_b &&= 0, \ &&\gamma_c &&= O(\gamma_{n^2}),
          \ &&\gamma_f &&= O(\gamma_{\widetilde f}),          \ &&\gamma_{G_1} &&= O(\gamma_{\widetilde{g}}), \ &&\gamma_{G_2}      &&= O(1), 
    \end{alignedat}
  \end{equation*}
where $\gamma_{n^2}$ and $C_{n^2}$ are the analyticity constants of the coefficient $n^2$ and the $O(\cdot)$-notation 
absorbs the modification of constants due to the analytic change of variables when transforming to a half-ball. 
By combining the finitely many local contributions, we arrive at 
  \begin{align*}
		\| \nabla^p u \|_{L^2(\Omega \setminus \interface)}
    &\lesssim C \gamma^p \max\{k, p\}^p  ( k^{-2} C_{\widetilde{f}} + k^{-1} C_{\widetilde{g}} + k^{-1} \| \nabla u \|_{L^2(\Omega)} +  \| u \|_{L^2(\Omega)}) \\
    &\lesssim C \gamma^p \max\{k, p\}^p  ( k^{-2} C_{\widetilde{f}} + k^{-1} C_{\widetilde{g}} + \CsolveM k^{-1} (C_{\widetilde{f}} + k C_{\widetilde{g}})) \\
    &\lesssim C \gamma^p \max\{k, p\}^p  \CsolveM k^{-1} (C_{\widetilde{f}} + k C_{\widetilde{g}}) \qquad \forall p \geq 2, 
	\end{align*}
	where we applied the stability estimate \eqref{eq:stability_estimate} as well as $\CsolveM \gtrsim 1$. The stability estimate \eqref{eq:stability_estimate}
shows that the bound is valid for $p \in \{0,1\}$ as well, so that 
$\Skm(\widetilde{f},\widetilde{g}) \in \AnaClass{C \CsolveM k^{-1} (C_{\widetilde{f}} +k C_{\widetilde{g}})}{\gamma}{\partition}$, i.e., 
\ref{WP3} holds with $\Can = O(1)$.  

For the case that the matrix $A$ is not SPD but merely satisfies \eqref{eq:HH-robin-assumption-coefficients-a}, we note that the 
proof of \cite[Props.~{5.5.1}, {5.5.3}, {5.5.4}]{melenk02} relies on a (piecewise) $H^2$-regularity result for the principal part of the operator
(cf.~\cite[Lemmas~{5.5.5}, {5.5.8}, {5.5.9}]{melenk02}), which in turns hinges on the difference quotient technique of Nirenberg. We note that the condition
\eqref{eq:HH-robin-assumption-coefficients-a} provides a coercivity that makes the difference quotient technique applicable for Neumann and transmission
problems so that also \cite[Props.~{5.5.1}, {5.5.3}, {5.5.4}]{melenk02} holds for complex-valued coefficients $A$, $n$ satisfying 
\eqref{eq:HH-robin-assumption-coefficients}.

\emph{Proof of \ref{item:lemma:HH-robin-WP-ii}, \ref{item:lemma:HH-robin-WP-iii}:} By inspection. 
\end{proof}
Under the assumption \ref{WP2} we have the following quasi-optimality result for the heterogeneous Helmholtz equation
with Robin boundary conditions: 
\begin{theorem}[Robin b.c.\ and PML]
\label{thm:quasi-optimality-robin}
Let $t = 1/2$. 
Let Assumption~\ref{assumption:smoothness_domain_boundary_and_interface} 
and \eqref{eq:HH-robin-assumption-coefficients} be valid. Let $G$ be analytic on a tubular neighborhood of $\Gamma$. 
For problem \eqref{eq:Lkm} with $\TkGammaM = ik G$ 
assume the polynomial well-posedness of the solution operator $\Skm$ of Assumption~\ref{assumption:everything_is_polynomial_well_posed}. 
In the discretization setting of Assumption~\ref{assumption:quasi_uniform_regular_meshes}, for each $c_2 > 0$ there are constants $c_1$, $C> 0$
independent of $h$, $p$, and $k$ such that under the scale-resolution condition \eqref{eq:scale-resolution} the quasi-optimality
result \eqref{eq:quasi-optimality} holds. 
\end{theorem}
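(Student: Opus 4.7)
The plan is simply to collect the assumption verifications supplied by the two preceding lemmas and then invoke the abstract discrete stability result for $hp$-FEM. Concretely, Lemma~\ref{lemma:HH-robin-WP} verifies \ref{WP1}, \ref{WP3}, \ref{WP4}, \ref{WP5} for the Robin problem \eqref{eq:Lkm} with $\TkGammaM = ikG$, and yields in particular $\CcontM = O(1)$, $\Can = O(1)$, $\CAM = 0$, so that the only potentially $k$-dependent constant appearing in Assumption~\ref{assumption:everything_is_polynomial_well_posed} is $\CsolveM$; this is precisely what the hypothesis of polynomial well-posedness of $\Skm$ controls, and at the same time ensures \ref{WP2}. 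Analogously, Lemma~\ref{lemma:HH-robin-AP} verifies \ref{AP1}--\ref{AP3} with $\CAD = 0$ and any fixed $\smax \ge 0$.

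With all of \WPs, \APs, and Assumption~\ref{assumption:everything_is_polynomial_well_posed} in place, and Assumption~\ref{assumption:quasi_uniform_regular_meshes} being imposed by hypothesis, the claim follows directly from the abstract discrete stability theorem for $hp$-FEM proved in Section~\ref{section:galerkin}: for any $c_2 > 0$ there exists $c_1 > 0$, depending only on $c_2$, on $\APOL = (\APOLA,\APOLS,\APOLC)$ and on the constants appearing in \WPs, \APs, such that the scale-resolution condition \eqref{eq:scale-resolution} implies existence, uniqueness, and the quasi-optimality estimate \eqref{eq:quasi-optimality}.

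Since $\CAM = \CAD = 0$, the terms $\CAD\etaA{\anashift{\gAD}}$ and $\CcontM \CAM \etaA{\anashift{\gAM}}$ appearing in the abstract resolution condition \eqref{eq_abstract_resolution_condition} vanish, which even slightly simplifies the bookkeeping: the only nontrivial smallness that must be enforced is on $k\etaF{1}$, and Lemma~\ref{lemma_approximation_factors_FEM} together with the polynomial bound on $\CsolveM$ shows that both summands in \eqref{eq_estimate_etaF} can be made arbitrarily small under \eqref{eq:scale-resolution} for an appropriate choice of $c_1$. I do not anticipate any real obstacle: every non-trivial analytic ingredient (piecewise analytic regularity of $\Skm$, coercivity and $H^2$-regularity of the auxiliary problem, the abstract contraction argument, and the $hp$-interpolation estimates) has already been established in the cited lemmas, and the theorem is essentially a statement of specialization.
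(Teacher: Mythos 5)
Your proposal is correct and follows exactly the route of the paper, whose proof of Theorem~\ref{thm:quasi-optimality-robin} consists precisely of combining Lemmas~\ref{lemma:HH-robin-AP} and \ref{lemma:HH-robin-WP} with Assumption~\ref{assumption:everything_is_polynomial_well_posed} and then invoking the abstract discrete stability theorem of Section~\ref{section:galerkin}. Your additional remarks on the vanishing of the terms involving $\CAM$ and $\CAD$ in the resolution condition \eqref{eq_abstract_resolution_condition} are accurate elaborations of the same argument.
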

\begin{proof}
Combine Lemmas~\ref{lemma:HH-robin-AP} and \ref{lemma:HH-robin-WP} with the stability Assumption~\ref{assumption:everything_is_polynomial_well_posed}. 
\end{proof}
\subsection{Heterogeneous Helmholtz problem in the full space}
\label{problem:HH_full_space}

The heterogeneous Helmholtz problem with Sommerfeld radiation condition 
in full space $\mathbb{R}^d$ is
to find $U \in H^1_{\mathrm{loc}}(\mathbb{R}^d)$ such that
\begin{equation}
\label{eq:heterogeneous_helmholtz_problem_full_space}
	\begin{alignedat}{2}
-k^2 n^2 U 	- \nabla \cdot (A \nabla U)                  &= f                                   \quad  &&\text{in }  \mathbb{R}^d,\\
		\left| \partial_r U - i k U  \right|   &= o\left( \| x \|^{\frac{1-d}{2}} \right)      \quad  &&\text{for } \| x \| \to \infty,
	\end{alignedat}
\end{equation}
is satisfied in the weak sense.
Here, $\partial_r$ denotes the derivative in the radial direction.
We assume $f$, $A$, and $n$ (which are defined on ${\mathbb R}^d$) to be local
is the sense that $f$, $A-I$, and $n-1$ are compactly supported.

To approximate the problem with a FEM, 
we introduce a bounded Lipschitz domain $\Omega \subset \mathbb{R}^d$
such that $\mathrm{supp} \, f \subset \overline{\Omega}$,
$\mathrm{supp} \, (A-I) \subset \Omega$, 
and $\mathrm{supp} \, (n-1) \subset \overline{\Omega}$. 
Since we have a lot of freedom in the choice of $\Omega$,
we will assume that it has been designed so that
\begin{equation}
\label{eq:non-trapping}
\text{ $\Gamma := \partial \Omega$ is analytic
and $\Omega^+ = \mathbb R^d \setminus \overline{\Omega}$
is non-trapping, \cite[Def.~{1.1}]{baskin-spence-wunsch16}.
}
\end{equation}

Problem~\eqref{eq:heterogeneous_helmholtz_problem_full_space}
can then be reformulated on $\Omega$ using the
Dirichlet-to-Neumann operator
$\DtN \colon H^{1/2}(\Gamma) \rightarrow H^{-1/2}(\Gamma)$,
which is given by
$g \mapsto \mathrm{DtN}_k g \coloneqq \partial_n w:=\normal \cdot \nabla w$,
and
$w \in H^1_{\mathrm{loc}}(\mathbb{R}^d \setminus \Omega)$ 
is the unique weak solution to
\begin{equation*}
	\begin{alignedat}{2}
    - \Delta w - k^2 w                     &= 0                                                    \quad  &&\text{in }  \Omega^+, \\
                     w                     &= g                                                    \quad  &&\text{on }  \Gamma,\\
		\left| \partial_r w - i k w  \right|   &= o\left( \| x \|^{\frac{1-d}{2}} \right)      \quad  &&\text{for } \| x \| \to \infty.
	\end{alignedat}
\end{equation*}
The heterogeneous Helmholtz problem in full space is to find $u \in H^1(\Omega)$ such that
\begin{equation}
\label{eq:heterogeneous_helmholtz_problem_DtN}
	\begin{alignedat}{2}
\Lkm u:= -k^2 n^2 u 	- \nabla \cdot (A \nabla u)  &= f     \quad  &&\text{in } \Omega,\\
\partial_{n_A} u - \TkGammaM u:= \partial_{n_A} u - \mathrm{DtN}_k u   &= g         \quad  &&\text{on } \Gamma.
	\end{alignedat}
\end{equation}
For $g = 0$, the solution $u$ of \eqref{eq:heterogeneous_helmholtz_problem_DtN}
is then the restriction to $\Omega$ of the solution of \eqref{eq:heterogeneous_helmholtz_problem_full_space}. 
%
%
If $d = 2$, we assume that $\operatorname{diam} \Omega < 1$, which ensures that
the single layer operator $V_0:H^{-1/2}(\Gamma) \rightarrow H^{1/2}(\Gamma)$
appearing below is boundedly invertible.

We define the operator $\DtNz$ as the map $\DtNz: g \mapsto \partial_n u^{\rm ext}$, where 
$u^{\rm ext}$ solves
\begin{subequations}
\label{eq:dtn0}
\begin{align}
\label{eq:dtn0-a}
- \Delta u^{\rm ext} & = 0 \quad \mbox{in $\Omega^+$}, \\
\label{eq:dtn0-b}
u^{\rm ext} & = g \quad \mbox{ on $\Gamma$}, \\
\label{eq:dtn0-c}
u^{\rm ext}(x)   & =
\begin{cases} O(1/\|x\|) & \mbox{ as $\|x\| \rightarrow \infty$ for $d = 3$} \\
              b \ln \|x\|  + O(1/\|x\|) & \mbox{ as $\|x\| \rightarrow \infty$ for some $b \in \mathbb{R}$ for $d = 2$.} 
\end{cases} 
\end{align}
\end{subequations}

\begin{remark}
The reason for choosing the particular condition \eqref{eq:dtn0-c} at $\infty$ is that it ensures the representation formula 
$$
u^{\rm ext}(x) = - (\widetilde{V}_0 \partial_n u^{\rm ext})(x) + (\widetilde{K}_0 u^{\rm ext})(x)
$$
with the usual single layer $\widetilde{V}_0$ and double layer potential $\widetilde{K}_0$ for the Laplacian; 
see, e.g., \cite[Thm.~{8.9}]{mclean00} or \cite[Lem.~{3.5}]{costabel-stephan85}. This particular choice will reappear
in the analysis of the difference $\DtN - \DtNz$ in Lemma~\ref{lemma:properties_dtn_operators}.
For our choice of condition at $\infty$, one has the Calder{\'o}n identity 
\begin{equation}
\label{eq:calderon} 
\left( 
\begin{array}{c} 
u^{\rm ext} \\ \partial_n u^{\rm ext}
\end{array}
\right)
 = 
\left(
\begin{array}{cc} 
\frac{1}{2} + K_0 & - V_0 \\
-D_0 & \frac{1}{2} - K^\prime_0
\end{array}
\right)
\left( 
\begin{array}{c} 
u^{\rm ext} \\ \partial_n u^{\rm ext}
\end{array}
\right), 
\end{equation}
where $V_0$, $K_0$, $K_0^\prime$, and $D_0$ denote the single layer, double layer, adjoint double layer, and the hypersingular
operator, respectively (see, e.g.,  \cite{sauter-schwab11,steinbach08} for the precise definition). By definition, we 
have $\partial_n u^{\rm ext} = \DtNz u^{\rm ext}$. From the Calder{\'o}n identity 
\eqref{eq:calderon}, one easily concludes 
\begin{equation}
\label{eq:DtN0-definite}
(\DtNz u^{\rm ext},u^{\rm ext})_{L^2(\Gamma)} = - (D_0 u^{\rm ext},u^{\rm ext})_{L^2(\Gamma)} - (V_0 \partial_n u^{\rm ext}, \partial_n u^{\rm ext})_{L^2(\Gamma)} \leq 0, 
\end{equation}
where we employed that $D_0$ is positive semidefinite and $V_0$ is positive definite, \cite[Thm.~{6.23}, Cor.~{6.25}]{steinbach08}. 
\eremk
\end{remark}
For the auxiliary problem \eqref{eq:Lkp}, we select $\TkGammaP = \DtNz$. 

In order to show that Problem \eqref{eq:heterogeneous_helmholtz_problem_DtN} fits into 
the framework of Section~\ref{section:galerkin} we need to analyze the operators $\DtN$ and $\DtNz$: 
\begin{lemma}
\label{lemma:properties_dtn_operators}
  Let $\Omega \subset \mathbb{R}^d$, $d \in \{2, 3\}$, be a bounded Lipschitz domains with smooth boundary $\Gamma$.
  Then the following holds:
  \begin{enumerate}[nosep, start=1, label={(\roman*)},leftmargin=*]
    \item
    \label{item:lemma:properties_dtn_operators_dtn_zero_coercive}
    $ - \langle \mathrm{DtN}_0 u, u\rangle \geq 0$ for all $u \in H^{1/2}(\Gamma)$.
    \item 
    \label{item:lemma:properties_dtn_operators_dtn_zero_mapping-property}
    The operator $\DtNz$ is a bounded linear operator $H^{s+1/2}(\Gamma) \rightarrow H^{s-1/2}(\Gamma)$ for every $s \ge 0$. 
    \item
    \label{item:lemma:properties_dtn_operators_splitting_finite_ana_analytic_part}
Assume \eqref{eq:non-trapping}. 
Let the ball $B_R(0)$ satisfy $\overline{\Omega} \subset B_R(0)$, 
set $\Omega_R:= B_R(0) \setminus \Gamma$. Let $s \geq 0$ be given.
    Then
    \begin{equation*}
      \mathrm{DtN}_k - \mathrm{DtN}_0 =  k B + \llbracket \partial_n \tilde{A} \rrbracket
    \end{equation*}
    where the linear operators $ B \colon H^{s}(\Gamma) \rightarrow H^{s}(\Gamma) $
    and $ \tilde{A} \colon H^{s}(\Gamma) \rightarrow C^{\infty}(\Omega_R)$
    satisfy for all $u \in H^{s}(\Gamma)$
    \begin{equation*}
      \| B u \|_{s,\Gamma} \lesssim \| u \|_{s,\Gamma},
      \qquad
      \tilde{A} u \in \AnaClassmelenk{C k^\beta \| u \|_{s,\Gamma}}{ \gamma}{\Omega_R}
    \end{equation*}
    with $\beta = 7/2+d/2$, and constants $C$, $\gamma \geq 0$ independent of $k$. Here, the operator 
      $\llbracket \partial_n \tilde{A} \rrbracket$ realizes the jump of the normal derivative of $\tilde{A} v$ and is given by 
      $\llbracket \partial_n \tilde{A} \rrbracket v:= \normal \cdot \nabla (\tilde{A} v)|_{\Omega^+} - \normal\cdot \nabla (\tilde{A} v)|_{\Omega}$ 
(see \eqref{eq:jump}).
    \item
    \label{item:lemma:properties_dtn_operators_sphere}
    Let $\Gamma = \partial B_1(0) \subset {\mathbb R}^3$ be the unit ball in dimension $d = 3$. 
  	Then $\mathrm{DtN}_k - \mathrm{DtN}_0 \colon H^s(\Gamma) \rightarrow H^s(\Gamma)$ satisfies, for every $s \ge 0$, 
  	\begin{equation*}
  		\| \mathrm{DtN}_k u - \mathrm{DtN}_0 u \|_{s,\Gamma} \lesssim k \| u \|_{s,\Gamma} \qquad \forall u \in H^s(\Gamma).
  	\end{equation*}
  \end{enumerate}
\end{lemma}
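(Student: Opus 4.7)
\emph{Parts (i) and (ii)} are essentially immediate. Part (i) is already observed in the remark preceding the lemma: the Calder\'on identity (\ref{eq:calderon}) together with the positive semi-definiteness of the hypersingular operator $D_0$ and positive definiteness of the single-layer operator $V_0$ yields exactly the displayed identity (\ref{eq:DtN0-definite}). Part (ii) is standard exterior elliptic regularity for the Laplace problem (\ref{eq:dtn0}): since $\Gamma$ is $C^\infty$ (in fact analytic), for any $R$ with $\overline\Omega\subset B_R(0)$ the solution operator is bounded $H^{s+1/2}(\Gamma) \to H^{s+1}(B_R\setminus\overline\Omega)$, so that the normal trace $\partial_n u^{\mathrm{ext}}$ lies in $H^{s-1/2}(\Gamma)$ with a continuous dependence.

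\emph{Part (iii) is the main obstacle.} The plan is to exploit the boundary integral representations of the DtN operators developed in Appendix~\ref{section:dtn_via_bios}, combined with the classical observation that the difference of fundamental solutions $G_k-G_0$ is one order smoother than either individually and carries a factor of $k$, owing to $e^{ik|x-y|}-1 = O(k|x-y|)$. Concretely, Calder\'on identities for the two exterior problems give representations of $\DtN$ and $\DtNz$ in terms of the standard boundary integral operators $V_\bullet, K_\bullet, K_\bullet', D_\bullet$ for $\bullet\in\{k,0\}$, and subtraction reduces the analysis of $\DtN-\DtNz$ to the operator differences $V_k-V_0$, $K_k-K_0$, $K_k'-K_0'$ and $D_k-D_0$. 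These kernel differences are analytic in $k$ and enjoy ``smoothing plus $k$'' estimates near the diagonal. Those contributions that act as bounded order-zero operators on $H^s(\Gamma)$ are collected into the piece $kB$; the remainder is represented as the jump $\llbracket\partial_n \tilde A\rrbracket$ of a Newton-type potential with analytic kernel, defined on both sides of $\Gamma$. Analyticity of $\tilde A u$ on $\Omega_R\setminus\Gamma$ up to $\Gamma$ follows from Cauchy-type estimates on tubular neighborhoods of $\Gamma$, in the spirit of the analytic regularity arguments used elsewhere in the paper; careful bookkeeping of the finitely many trace and Sobolev embeddings produces the explicit power $\beta = 7/2+d/2$, while $\gamma$ depends only on the analyticity radius of $\Gamma$. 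The delicate point is the precise allocation of kernel contributions to $kB$ versus $\tilde A u$, and the verification that $\tilde A u$ admits analytic extensions to both sides of $\Gamma$ (not merely a one-sided one); this relies on the entireness in $k$ of the regularized kernel $G_k-G_0$.

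\emph{Part (iv).} On $\Gamma=\partial B_1(0)\subset\mathbb R^3$ both operators diagonalize in the spherical harmonic basis. Using $u^{\mathrm{ext}}(r\hat x) = Y_n^m(\hat x) h_n^{(1)}(kr)/h_n^{(1)}(k)$ for the Helmholtz exterior problem and $u^{\mathrm{ext}}(r\hat x) = r^{-n-1}Y_n^m(\hat x)$ for the Laplace exterior problem, a direct computation gives
\begin{equation*}
\DtN Y_n^m = \mu_n(k)\, Y_n^m, \qquad \DtNz Y_n^m = -(n+1)\, Y_n^m, \qquad \mu_n(k) \coloneqq k\,\frac{(h_n^{(1)})'(k)}{h_n^{(1)}(k)},
\end{equation*}
where $h_n^{(1)}$ is the spherical Hankel function of the first kind. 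By the norm equivalence $\|u\|_{s,\Gamma}^2 \simeq \sum_{n,m} (1+n)^{2s}|u_{n,m}|^2$, the claim reduces to the uniform scalar bound $|\mu_n(k)+n+1|\le Ck$ for all $n\ge 0$ and all $k\ge k_0$. This bound is obtained from the recursion $z(h_n^{(1)})'(z) = n\,h_n^{(1)}(z) - z\,h_{n+1}^{(1)}(z)$ combined with standard uniform estimates on the ratio $h_{n+1}^{(1)}(k)/h_n^{(1)}(k)$, treating separately the regimes $n\lesssim k$ and $n\gtrsim k$.
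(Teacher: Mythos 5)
Parts \ref{item:lemma:properties_dtn_operators_dtn_zero_coercive} and \ref{item:lemma:properties_dtn_operators_dtn_zero_mapping-property} of your proposal match the paper's argument. Part \ref{item:lemma:properties_dtn_operators_sphere} also follows the paper's route (diagonalization in spherical harmonics and a scalar symbol bound); you propose to establish $|\mu_n(k)+n+1|\lesssim k$ via the Hankel recursion and ratio estimates rather than by citing the known bounds $\ell+1-k\le -\Re z_\ell(k)\le \ell+1+k$ and $0\le\Im z_\ell(k)\le k$, which is a legitimate alternative but leaves the uniform-in-$n$ ratio estimate — the only nontrivial point — as a black box.

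For part \ref{item:lemma:properties_dtn_operators_splitting_finite_ana_analytic_part} there is a genuine gap. Your reduction ``subtraction reduces the analysis of $\DtN-\DtNz$ to the operator differences $V_k-V_0$, $K_k-K_0$, \dots'' is not available: the Calder\'on identity does not express $\DtN u$ directly as a boundary integral operator applied to $u$; it yields the combined field \emph{equation} $A'_{k,\eta}(\DtN u)=(-D_k+i\eta(-\tfrac12+K_k))u$, so $\DtN=(A'_{k,\eta})^{-1}[-D_k+i\eta(-\tfrac12+K_k)]$ and the difference $\DtN-\DtNz$ involves a \emph{difference of inverses} of two distinct combined field operators. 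The entire difficulty — and the reason the non-trapping hypothesis appears in the statement — lies in (a) the wavenumber-explicit bound $\|(A'_{k,\eta})^{-1}\|_{L^2\leftarrow L^2}\lesssim 1$ for $|\eta|\sim k$ (\ref{eq:combined_field_operator_norm_optimal}), (b) a decomposition $A'_{k,\eta}=A'_{0,1}+R+\llbracket\mathcal A\rrbracket$ with $R$ small enough that $A'_{0,1}+R$ can be inverted by a Neumann series, and (c) the resulting splitting $(A'_{k,\eta})^{-1}=(A'_{0,1}+R)^{-1}+Q$ with $Q=-(A'_{k,\eta})^{-1}\llbracket\mathcal A\rrbracket(A'_{0,1}+R)^{-1}$. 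Your sketch never invokes any of this, and without it the allocation into $kB$ versus $\llbracket\partial_n\tilde A\rrbracket$ cannot be made: for instance, the contribution $(A'_{0,1})^{-1}D_0-(A'_{0,1}+R)^{-1}D_0$ is an order-zero operator of size $O(k)$ only because $R$ gains one Sobolev order at the cost of a factor $k$, a property of the specific filter-based construction of $R$, not of ``$G_k-G_0$ being smoother.'' Likewise, the exponent $\beta=7/2+d/2$ is not produced by ``bookkeeping of trace and Sobolev embeddings'': it is the product of the $k^{5/2}$ from the analytic-right-hand-side lemma (which itself encodes the combined-field inverse bound) with the $k^{d/2}$ and $k^{d/2+1}$ factors from the low-pass filters entering the analytic parts $\tilde{\mathcal A}_1$, $\tilde{\mathcal A}_2$. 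As written, your argument for (iii) identifies the correct toolbox but omits the mechanism that makes the proof close.
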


\begin{proof}
\emph{Proof of \ref{item:lemma:properties_dtn_operators_dtn_zero_coercive}:}
  $ - \langle \mathrm{DtN}_0 u, u\rangle \geq 0$ is asserted in \eqref{eq:DtN0-definite}. 

\emph{Proof of \ref{item:lemma:properties_dtn_operators_dtn_zero_mapping-property}:} Follows from regularity properties of elliptic boundary value problems.
Alternatively, one could appeal to the representation of the (exterior) $\DtNz$ as $\DtNz = V^{-1}_0 (-1/2+K_0)$ and the mapping properties of 
$V_0$ and $K_0$ as given in, e.g., \cite[Thm.~{3.2.2}]{sauter-schwab11}. 

\emph{Proof of \ref{item:lemma:properties_dtn_operators_splitting_finite_ana_analytic_part}:} See Appendix~\ref{section:dtn_via_bios}.

\emph{Proof of \ref{item:lemma:properties_dtn_operators_sphere}:}
  For $\Gamma = \partial B_1(0) \subset {\mathbb R}^3$, the operator $\mathrm{DtN}_k$ has an explicit series representation in terms 
  of spherical harmonics. That is, denoting by $Y_\ell^m$ the standard spherical harmonics, a function $u \in L^2(\Gamma)$ can be expanded as 
  \begin{equation*}
  	u = \sum_{\ell = 0}^\infty \sum_{m = -\ell}^\ell u_\ell^m Y_\ell^m. 
  \end{equation*}
One has the norm equivalence 
$\|u\|^2_{H^s(\Gamma)} \sim \sum_{\ell=0}^\infty \sum_{m=-\ell}^{\ell} |u_\ell^m|^2 (\ell+1)^{2s}$ 
for every $s \ge 0$, see, e.g., \cite[Sec.~{2.5.1}]{nedelec01}.
  The operators $\DtNz$ and $\DtN$ can be written as
  \begin{align*}
  	\mathrm{DtN}_0 u &= - \sum_{\ell = 0}^\infty \sum_{m = -\ell}^\ell (\ell+1) u_\ell^m Y_\ell^m, & 
  	\mathrm{DtN}_k u &= \sum_{\ell = 0}^\infty \sum_{m = -\ell}^\ell z_\ell(k)  u_\ell^m Y_\ell^m,
  \end{align*}
  with explicit estimates for the symbol $z_\ell(k)$ given in \cite[Sec.~{2.6.3}]{nedelec01}. 
  The formulas for $\DtNz$ and $\DtN$ immediately give
	\begin{equation}
		\mathrm{DtN}_k u - \mathrm{DtN}_0 u = \sum_{\ell = 0}^\infty \sum_{m = -\ell}^\ell (z_\ell(k ) + \ell + 1)  u_\ell^m Y_\ell^m,
	\end{equation}
	From \cite[Lem.~{3.2}, eqn.~{(3.28)}]{demkowicz-ihlenburg01}, where the operator $\mathrm{DtN}_k$ has the opposite sign, we have
	\begin{equation}
\label{eq:demkowicz-ihlenburg}
		\ell+1-k \leq - \mathrm{Re} \, z_\ell(k) \leq \ell+1+k.
	\end{equation}
	Consequently, we immediately have
	\begin{equation}
\label{eq:demkowicz-ihlenburg-10}
		| \mathrm{Re} \, z_\ell(k) + \ell + 1| \leq k.
	\end{equation}
	From \cite[Thm.~{2.6.1}, eqn.~{(2.6.24)}]{nedelec01} we have
	\begin{equation}
\label{eq:symbol-estimate-nedelec}
		0 \leq \mathrm{Im} \, z_\ell(k)  \leq k.
	\end{equation}
Combining \eqref{eq:demkowicz-ihlenburg-10} and \eqref{eq:symbol-estimate-nedelec}  yields 
	\begin{equation}
\label{eq:symbol-estimate-DtN}
		| z_\ell(k) + \ell + 1| \leq 2k.
	\end{equation}
	For $u \in H^s(\Gamma)$ and with the previous estimate we have
	\begin{align*}
		\| \mathrm{DtN}_k u - \mathrm{DtN}_0 u \|_{s,\Gamma}^2
		&\sim  \sum_{\ell = 0}^\infty \sum_{m = -\ell}^\ell (\ell+1)^{2s} |z_\ell(k) + \ell + 1|^2  |u_\ell^m|^2 \\
		&\leq (2k)^2 \sum_{\ell = 0}^\infty \sum_{m = -\ell}^\ell (\ell+1)^{2s} |u_\ell^m|^2 Y_\ell^m
		\sim  (2k)^2 \| u \|_{s,\Gamma}^2,
	\end{align*}
	which yields the result.
\end{proof}

\begin{lemma}[\protect{\APs} for full space problem]
\label{lemma:HH-full-space-AP}
Let Assumption~\ref{assumption:smoothness_domain_boundary_and_interface} 
and \eqref{eq:HH-robin-assumption-coefficients}, \eqref{eq:non-trapping} be valid. 
For $\Lkm$, $\Lkp$  given by \eqref{eq:Lkm}, \eqref{eq:Lkp}
with $\TkGammaM = \DtN$ and $\TkGammaP = \DtNz$ the following holds 
with $t = 1/2$: 
\begin{enumerate}[nosep, start=1, label={(\roman*)},leftmargin=*]
\item 
\label{item:lemma:HH-DtN-AP-i}
$\bkp$ is bounded uniformly in $k$, and \ref{AP1} holds with $\sigma = 1$. 
\item 
\label{item:lemma:HH-DtN-AP-ii}
$\|\Skp(f,g)\|_{1,t,k} \leq C \left[ k^{-1} \|f\|_{0,\Omega} + k^{-1/2} \|g\|_{0,\Gamma}\right]$, and 
\ref{AP2} holds for any fixed $\smax \ge 0$. 
\item 
\label{item:lemma:HH-DtN-AP-iii}
The splitting of $\TkOmegaM - \TkOmegaP$ in \ref{AP3} is given by 
$\ROD u = k^2 (1+n^2) u$, $\AOD = 0$. The splitting of 
$\TkGammaM-\TkGammaP$ in \ref{AP3} is given by 
$\RGD u= k B$  and $\AGD =  \llbracket \partial_n \tilde{A} \rrbracket$ 
with the operators $B$, $\llbracket \partial_n \tilde{A} \rrbracket$ given by 
Lemma~\ref{lemma:properties_dtn_operators}\ref{item:lemma:properties_dtn_operators_splitting_finite_ana_analytic_part}. (See also 
Remark~\ref{remk:bdy-fcts}\ref{item:remk:bdy-fcts-iii} for the interpretation of  
$\llbracket \partial_n \tilde{A} \rrbracket$ as a mapping into analyticity classes of the form  $\AnaClassbdy{M}{\gamma}{T}{\Gamma}$.)
\item 
\label{item:lemma:HH-DtN-AP-iv}
For the case $\Gamma = \partial B_1(0) \subset {\mathbb R}^3$, the statement 
\ref{item:lemma:HH-DtN-AP-iii} holds true with the operator $\AGD = 0$ and the
operator $\RGD = B = \DtN - \DtNz$ of order zero bounded uniformly in $k$ as
described in Lemma~\ref{lemma:properties_dtn_operators}\ref{item:lemma:properties_dtn_operators_sphere}. 
\end{enumerate}
\end{lemma}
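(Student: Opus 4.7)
The four parts decouple cleanly, and most of the work is already packaged in Lemma~\ref{lemma:properties_dtn_operators}; the main task is to assemble these building blocks and track constants.

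For \ref{item:lemma:HH-DtN-AP-i}, I would first check continuity of $\bkp$ on $\HOneT\times\HOneT$ with $t = 1/2$ using uniform boundedness of $a(\cdot,\cdot)$ together with $\DtNz\colon H^{1/2}(\Gamma) \to H^{-1/2}(\Gamma)$ from Lemma~\ref{lemma:properties_dtn_operators}\ref{item:lemma:properties_dtn_operators_dtn_zero_mapping-property}. For coercivity with $\sigma = 1$ I would expand
\begin{equation*}
\Re\bkp(u,u) \;=\; \Re(A\nabla u,\nabla u) \;+\; k^2\|u\|_{0,\Omega}^2 \;-\; \Re\langle \DtNz u, u\rangle,
\end{equation*}
using \eqref{eq:HH-robin-assumption-coefficients-a} to bound the first term from below by $a_{\min}|u|_{1,\Omega}^2$, part~\ref{item:lemma:properties_dtn_operators_dtn_zero_coercive} of Lemma~\ref{lemma:properties_dtn_operators} to discard $-\Re\langle \DtNz u, u\rangle \ge 0$, and the weighted trace inequality \eqref{eq_weighted_trace} with $s=0$ to absorb the $|u|_{1/2,\Gamma}^2$ contribution into $|u|_{1,\Omega}^2 + k^2\|u\|_{0,\Omega}^2$.

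For \ref{item:lemma:HH-DtN-AP-ii}, the $\|\cdot\|_{1,k,t}$ a priori bound follows from \ref{item:lemma:HH-DtN-AP-i} by applying Cauchy--Schwarz to $(f,u)$ and $\langle g,u\rangle$ and reusing the weighted trace inequality to handle $k^{1/2}\|u\|_{0,\Gamma}$. For the shift estimate \ref{AP2}, I would recast the auxiliary problem as the transmission system on $\Omega \cup \Omega^+$ obtained by introducing the exterior harmonic extension $u^e$ with the decay~\eqref{eq:dtn0-c}, with matching $u = u^e$ and $\partial_{n_A}u - \partial_n u^e = g$ on $\Gamma$; since $\Gamma$, $\interface$ are analytic and $A$, $n$ are piecewise analytic, standard elliptic transmission regularity (as used in the proof of Lemma~\ref{lemma:HH-robin-WP}\ref{item:lemma:HH-robin-WP-i}, applied now to the coupled interior/exterior problem) delivers $u \in H^{s+2}(\LP)$ with the required $k$-explicit constants, via bootstrapping from the $H^1$ bound. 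Here the $k$-dependence is not delicate since $\bkp$ is already coercive uniformly in $k$.

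For \ref{item:lemma:HH-DtN-AP-iii}, the volume splitting is immediate from $\TOD = k^2 n^2 - (-k^2) = k^2(1+n^2)$: take $\ROD u = k^2(1+n^2)u$, $\AOD = 0$, and verify \eqref{eq_ROD} using analyticity (and hence $W^{s,\infty}$-boundedness) of $n$. The boundary splitting is precisely the content of Lemma~\ref{lemma:properties_dtn_operators}\ref{item:lemma:properties_dtn_operators_splitting_finite_ana_analytic_part}: set $\RGD = kB$ and $\AGD = \llbracket\partial_n\tilde A\rrbracket$. The bound \eqref{eq_RGD} follows from the continuity $B\colon H^{s+1/2}(\Gamma) \to H^{s+1/2}(\Gamma)$. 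For \eqref{eq_A_PLUS}, combine $\tilde A u \in \AnaClass{Ck^{7/2+d/2}\|u\|_{0,\Gamma}}{\gamma}{\Omega_R}$ with the trace estimate $\|u\|_{0,\Gamma} \lesssim k^{-1/2}\|u\|_{1,k,t}$, and invoke Remark~\ref{remk:bdy-fcts}\ref{item:remk:bdy-fcts-iii} to interpret $\llbracket \partial_n \tilde A\rrbracket u$ as an element of a boundary analyticity class $\AnaClassbdy{M_v}{\gAD}{T}{\Gamma}$ with $M_v$ bounded as required (giving $\CAD = O(k^{4+d/2})$).

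Finally, \ref{item:lemma:HH-DtN-AP-iv} is immediate from Lemma~\ref{lemma:properties_dtn_operators}\ref{item:lemma:properties_dtn_operators_sphere}: since $\DtN-\DtNz$ is then a zeroth-order operator with $\|(\DtN-\DtNz)v\|_{s,\Gamma} \lesssim k\|v\|_{s,\Gamma}$ on the sphere, the choice $\RGD = \DtN - \DtNz$, $\AGD = 0$ satisfies \eqref{eq_RGD} directly. The anticipated obstacle is the regularity shift in \ref{item:lemma:HH-DtN-AP-ii}, where the nonlocal character of $\DtNz$ in the boundary condition requires either the transmission-problem detour sketched above or a direct bootstrap using the order-$1$ mapping properties of $\DtNz$ on the analytic boundary $\Gamma$.
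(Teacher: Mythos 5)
Your proposal is correct and follows essentially the same route as the paper: coercivity with $\sigma=1$ from \eqref{eq:HH-robin-assumption-coefficients-a} plus the sign property \eqref{eq:DtN0-definite} of $-\DtNz$, the a priori bound by Lax--Milgram, the shift estimate by rewriting the auxiliary problem as an interior/exterior transmission problem via the harmonic extension satisfying \eqref{eq:dtn0-c} and invoking piecewise elliptic regularity across the analytic interfaces, and parts \ref{item:lemma:HH-DtN-AP-iii}--\ref{item:lemma:HH-DtN-AP-iv} read off from Lemma~\ref{lemma:properties_dtn_operators}. The only (immaterial) difference is bookkeeping: your trace-inequality estimate yields $\CAD = O(k^{4+d/2})$ rather than the paper's $O(k^{7/2+d/2})$, which does not affect the polynomial well-posedness used downstream.
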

\begin{proof}
\emph{Proof of \ref{item:lemma:HH-DtN-AP-i}:}
Continuity of $\bkp$ follows by inspection. The coercivity \ref{AP1} of $\bkp$ 
with $\sigma = 1$ follows from \eqref{eq:HH-robin-assumption-coefficients-a} and
the positivity of $-\TkGammaP = -\DtNz$ by \eqref{eq:DtN0-definite}.

\emph{Proof of \ref{item:lemma:HH-DtN-AP-ii}:} Set $w:= \Skp(f,g)$. 
The {\sl a priori} bound 
\begin{equation}
\label{item:lemma:HH-DtN-AP-10}
\|w\|_{1,\Omega} + k \|w\|_{0,\Omega} + |w|_{1/2,\Gamma} 
\lesssim \|\Skp(f,g)\|_{1,t,k}
\lesssim k^{-1} \|f\|_{0,\Omega} + k^{-1/2} \|g\|_{0,\Gamma} 
\end{equation}
follows by Lax-Milgram. 
To show the shift theorem asserted in \ref{AP2}, we reformulate the problem solved by 
  $w = \Skp(f,g)$, i.e.,  
 \begin{align*} 
      - \nabla \cdot (A \nabla w) &= f - k^2 w \quad &&\text{in } \Omega,
& && 
      \partial_n w - \mathrm{DtN}_0 w &= g  \quad       &&\text{on } \Gamma.
  \end{align*}
as a transmission problem. We define the function 
\begin{equation*}
\widetilde w:= \begin{cases}
w & \mbox{ in $\Omega$} \\
w^{\rm ext}:= - \widetilde{V}_0 (\DtNz \gamma^{int} w) + \widetilde{K}_0 \gamma^{int} w& \mbox{ in $\Omega^+$}. 
\end{cases}
\end{equation*}
By the mapping properties of the potentials 
$\widetilde{V}_0$, $\widetilde{K}_0$, \cite{sauter-schwab11,steinbach08}, we have for any bounded domain $\widetilde{\Omega}$ 
\begin{equation}
\label{eq:transmission-problem-10}
\|\widetilde w\|_{H^1(\widetilde{\Omega})} 
\lesssim \|w\|_{1,\Omega}  = \|\Skp(f,g)\|_{1,t,k} \stackrel{\eqref{item:lemma:HH-DtN-AP-10}}{ \lesssim }
k^{-1} \|f\|_{0,\Omega}
+ k^{-1/2} \|g\|_{0,\Gamma}.
\end{equation}
The function $\widetilde{w}$ satisfies 
\begin{align*}
      - \nabla \cdot ( A \nabla \widetilde w ) + k^2 \widetilde w &= f    \quad   \text{in } \Omega,    
& - \Delta \widetilde w                            &= 0    \quad   \text{in } \Omega^+,    \\ 
      \llbracket\widetilde w\rrbracket                        &= 0    \quad   \text{on } \Gamma, 
& \llbracket\partial_n \widetilde w\rrbracket             &= g    \quad   \text{on } \Gamma.   
    \end{align*}
Let $\widetilde\Omega \supset\supset\Omega$ with $\partial\widetilde\Omega$ smooth. 
  Let $\chi\in C^\infty_0(\widetilde{\Omega})$ with $\chi \equiv 1$ on a neighborhood of $\overline{\Omega}$. 
Upon writing $\hat{A}$ for the extension of $A$ by the identity matrix and the characteristic function $\chi_\Omega$ of $\Omega$, we have  
  \begin{equation}
\label{eq:transmission-problem}
    \begin{alignedat}{2}
      - \nabla( \hat{A} \nabla (\chi \widetilde{w}) ) + k^2 \mathbbm{1}_{\Omega} \chi \widetilde{w} &= \hat{f} \quad   &&\text{in } \widetilde{\Omega},\\
      \llbracket \chi \widetilde{w} \rrbracket                                    &= 0 \quad   &&\text{on } \Gamma, \\
      \llbracket \partial_n (\chi \widetilde{w})\rrbracket                         &= g \quad   &&\text{on } \Gamma, \\
      \chi \widetilde{w}                                                 &= 0 \quad   &&\text{on } \partial\widetilde{\Omega}.
    \end{alignedat}
  \end{equation}
  with $\hat{f} = f$ in $\Omega$ and $\hat{f} = 2 \nabla \widetilde{w} \cdot \nabla \chi + \widetilde{w} \Delta \chi$ 
in $\widetilde{\Omega} \setminus \overline{\Omega}$.
Since $\hat{A}$ satisfies \eqref{eq:HH-robin-assumption-coefficients-a} with $\Omega$ replaced by $\widetilde{\Omega}$, 
the Lax-Milgram Lemma gives for the solution $\chi\widetilde{w}$ of 
\eqref{eq:transmission-problem}
\begin{align}
\label{eq:transmission-problem-20}
& 
\|\nabla (\chi \widetilde{w})\|_{L^2(\widetilde{\Omega})} + 
\|(1+k^2\chi_\Omega) (\chi \widetilde{w})\|_{L^2(\widetilde{\Omega})} 
 \lesssim \|\hat{f}\|_{H^{-1}(\widetilde{\Omega})} + \|g\|_{H^{-1/2}(\Gamma)} 
\\ 
\nonumber 
& \qquad \lesssim \|\hat{f}\|_{L^2(\widetilde{\Omega})} + \|g\|_{1/2,\Gamma}
\lesssim \|f\|_{0,\Omega} + \|g\|_{1/2,\Gamma} + \|\widetilde{w}\|_{1,\widetilde\Omega\setminus\Omega}. 
\end{align}
The function $\chi\widetilde{w}$ satisfies an elliptic transmission problem with piecewise smooth coefficients in the equation
and smooth interface $\Gamma\cup \interface$. Although the coefficient $\hat A$ is complex-valued, it satisfies
\eqref{eq:HH-robin-assumption-coefficients-a} and, as discussed in the proof of Lemma~\ref{lemma:HH-robin-AP}, (piecewise) $H^2$-regularity
as formulated in \cite[Prop.~{5.4.8}]{melenk02} is available. Such an $H^2$-regularity gives 
  $\chi \widetilde{w} \in H^2(\widetilde{\Omega} \setminus (\interface \cup \Gamma))$
  and consequently $w = \restr{(\chi \widetilde{w})}{\Omega} \in H^2(\Omega \setminus \interface)$
  together with 
  \begin{align*}
    \| w \|_{2,\Omega \setminus \interface}
    &= \| \chi \widetilde{w}\|_{2,\Omega \setminus \interface}
    \lesssim \| \hat{f} + k^2 \mathbbm{1}_{\Omega} \chi\widetilde{w} \|_{0,\widetilde{\Omega}} + \| g \|_{1/2,\Gamma} 
+ \|\chi\widetilde{w}\|_{1,\widetilde{\Omega}} \\
    &\lesssim \| f \|_{0,\Omega} +
      \| 2 \nabla (\chi\widetilde{w}) \cdot  \nabla \chi + (\chi\widetilde{w}) \Delta \chi \|_{0,\widetilde{\Omega} \setminus \Omega}  +
      k^2 \| w \|_{0, \Omega} + \| g \|_{1/2,\Gamma} \\
    & \!\!\!\stackrel{\eqref{eq:transmission-problem-20}}{ \lesssim}  \!
      \| f \|_{0,\Omega} + \| g \|_{1/2,\Gamma} + k^2 \| w \|_{0, \Omega} 
     \!\stackrel{ \eqref{item:lemma:HH-DtN-AP-10}} {\lesssim} \!
      \| f \|_{0,\Omega} +
       \| g \|_{1/2,\Gamma} + k^{1/2} \|g\|_{0,\Gamma}. 
  \end{align*}

\emph{Proof of \ref{item:lemma:HH-DtN-AP-iii}, \ref{item:lemma:HH-DtN-AP-iv}:} follows from Lemma~\ref{lemma:properties_dtn_operators}. 
\end{proof}
\begin{lemma}[\WPs for full space]
\label{lemma:HH-full-space-WP}
Let Assumption~\ref{assumption:smoothness_domain_boundary_and_interface} 
and \eqref{eq:HH-robin-assumption-coefficients}, \eqref{eq:non-trapping}  be valid. 
For $\Lkm$, $\Lkp$  given by \eqref{eq:Lkm}, \eqref{eq:Lkp}
with $\TkGammaM = \DtN$ and $\TkGammaP = \DtNz$ the following holds 
with $t = 1/2$ if \ref{WP2} holds: 
\begin{enumerate}[nosep, start=1, label={(\roman*)},leftmargin=*]
\item 
\label{item:lemma:HH-full-space-WP-i}
\ref{WP1} holds with $\CcontM = O(k^\beta)$ with $\beta$ given by Lemma~\ref{lemma:properties_dtn_operators}, and \ref{WP3} holds 
with $\Can = O(1)$ (uniformly in $k$). 
\item 
\label{item:lemma:HH-full-space-WP-ii}
Let the operators $B$ and 
$\llbracket \partial_n \tilde{A} \rrbracket$ be given by 
Lemma~\ref{lemma:properties_dtn_operators}\ref{item:lemma:properties_dtn_operators_splitting_finite_ana_analytic_part}. 
Then \ref{WP4} holds with $\ROM u = k^2 n u$, $\AOM  = 0$ and the choices 
$\RGM = \DtNz + k B$, $\AGM = \llbracket \partial_n \tilde{A} \rrbracket$. 
In particular, for the constants in Assumption~\ref{assumption:everything_is_polynomial_well_posed} we have 
$\CcontM + C^-_{A, k} + C^\Delta_{A,k} = O(k^\beta)$ with $\beta$ given by Lemma~\ref{lemma:properties_dtn_operators}. 
\item 
\label{item:lemma:HH-full-space-WP-iia}
For $\Gamma = \partial B_1(0) \subset {\mathbb R}^3$ 
the assumption \ref{WP4} holds with $\ROM u = k^2 n u$, $\AOM  = 0$ and the choices 
$\RGM = \DtN$ and  $\AGM = 0$. 
In particular, for the constants in Assumption~\ref{assumption:everything_is_polynomial_well_posed} we have 
$\CcontM + C^-_{A, k} + C^\Delta_{A,k} = O(1)$. 
\item 
\label{item:lemma:HH-full-space-WP-iii}
\ref{WP5} holds. 
\end{enumerate}
\end{lemma}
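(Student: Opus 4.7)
The plan is to verify \ref{WP1}, \ref{WP3}, \ref{WP4}, \ref{WP5} by combining the three-term decomposition
\[
\DtN = \DtNz + kB + \llbracket\partial_n\widetilde{A}\rrbracket
\]
supplied by Lemma~\ref{lemma:properties_dtn_operators}\ref{item:lemma:properties_dtn_operators_splitting_finite_ana_analytic_part} with weighted trace estimates in the $\|\cdot\|_{1,t,k}$-norm at $t=1/2$. Item (iv) is the quickest: since $a(u,v)=(A\nabla u,\nabla v)$ and $A\in L^\infty(\Omega)$, \ref{WP5} is immediate by Cauchy--Schwarz.

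For \ref{WP1} in (i) I would bound each of the five terms arising from the expansion $\bkm(u,v) = a(u,v) - k^2(n^2 u,v) - \langle(\DtNz + kB + \llbracket\partial_n\widetilde{A}\rrbracket)u,v\rangle$ separately. The volume-mass term is absorbed via $k^2|(n^2 u,v)|\lesssim(k\|u\|_{0,\Omega})(k\|v\|_{0,\Omega})$; the $\DtNz$-piece uses its $H^{1/2}(\Gamma)\to H^{-1/2}(\Gamma)$ mapping property together with $|v|_{1/2,\Gamma}\lesssim\|v\|_{1,t,k}$; the $kB$-piece uses that $B$ is zeroth-order combined with the weighted trace $k^{1/2}\|u\|_{0,\Gamma}\lesssim\|u\|_{1,t,k}$ from \eqref{eq_weighted_trace}; and the analytic jump is bounded by $\|\llbracket\partial_n\widetilde{A}\rrbracket u\|_{0,\Gamma}\lesssim k^\beta\|u\|_{1/2,\Gamma}$ via the pointwise bounds on analytic functions supplied by Remark~\ref{remk:bdy-fcts}\ref{item:remk:bdy-fcts-i}. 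Summing yields $\CcontM=O(k^\beta)$. For \ref{WP3} I would mirror the argument of Lemma~\ref{lemma:HH-robin-WP}: truncate $u=\Skm(f,g)$ by a cut-off, extend it across $\Gamma$ via the single/double layer potentials to reduce to an elliptic transmission problem on a larger Lipschitz domain with piecewise-analytic coefficients on the analytic interface $\Gamma\cup\interface$, then apply the local analytic regularity results \cite[Props.~{5.5.1}, {5.5.3}, {5.5.4}]{melenk02} and close the estimate using the \ref{WP2}-stability bound, obtaining $\Can=O(1)$.

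For \ref{WP4} in (ii) I would take $\ROM=k^2n^2$, $\AOM=0$ and, on the boundary, $\RGM=\DtNz+kB$ together with $\AGM=\llbracket\partial_n\widetilde{A}\rrbracket$. The uniform $\|\cdot\|_{1,t,k}$-continuity of the $R$-parts is exactly what was already established for \ref{WP1}. The analytic control of $\AGM$ follows from Lemma~\ref{lemma:properties_dtn_operators}\ref{item:lemma:properties_dtn_operators_splitting_finite_ana_analytic_part} combined with Remark~\ref{remk:bdy-fcts}\ref{item:remk:bdy-fcts-iii}: $\widetilde{A}u$ is analytic on both sides of $\Gamma$ with volume constant $O(k^\beta\|u\|_{1/2,\Gamma})$, so the jump lands in $\AnaClassbdy{M_v}{\gAM}{T}{\Gamma}$ with $M_v\lesssim k^\beta\|u\|_{1,t,k}$; multiplying by $k$ and absorbing the extra power into $\beta$ gives the claimed bound on $\CAM$. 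For the sphere case (iii), Lemma~\ref{lemma:properties_dtn_operators}\ref{item:lemma:properties_dtn_operators_sphere} shows that $\DtN-\DtNz$ is itself zeroth-order of norm $O(k)$, so the stronger choice $\RGM=\DtN$, $\AGM=0$ works, with continuity following from the identical trace bookkeeping. The main obstacle I foresee is the careful packaging of the boundary analyticity: converting the volume statement of Lemma~\ref{lemma:properties_dtn_operators} into a membership in $\AnaClassbdy{M}{\gamma}{T}{\Gamma}$ with the correct $k$-scaling hinges on applying Remark~\ref{remk:bdy-fcts}\ref{item:remk:bdy-fcts-iii}, and everything else reduces to routine $k$-weighted trace bookkeeping.
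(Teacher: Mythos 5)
Your proposal is correct and follows essentially the same route as the paper: the same splitting $\DtN=\DtNz+kB+\llbracket\partial_n\widetilde{A}\rrbracket$ from Lemma~\ref{lemma:properties_dtn_operators} with $k$-weighted trace bookkeeping at $t=1/2$ for \ref{WP1}/\ref{WP4}, the reduction of \ref{WP3} to a full-space transmission problem handled by the local analytic regularity theory exactly as in the Robin case, and Lemma~\ref{lemma:properties_dtn_operators}\ref{item:lemma:properties_dtn_operators_sphere} for the sphere. The only cosmetic difference is that the paper views $u$ directly as the restriction of the full-space scattering solution rather than constructing the exterior extension by layer potentials, and your remark about absorbing an extra power of $k$ into $\beta$ matches the paper's own (equally coarse) bookkeeping of the exponent.
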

\begin{proof}
\emph{Proof of \ref{item:lemma:HH-full-space-WP-i}:} 
  The full space problem corresponds to the choice $\TkGammaM u = \mathrm{DtN}_k u$.
  As in the proof of Lemma~\ref{lemma:HH-full-space-AP}, the function $u$ is the restriction to $\Omega$ of the solution 
of the following full-space transmission problem: 
  \begin{equation*}
    \begin{alignedat}{2}
      - \nabla \cdot (A \nabla u) - k^2 n^2 u &= f \quad   &&\text{in } \Omega \cup \Omega^+,\\
      \llbracket u\rrbracket                           &= 0 \quad   &&\text{on } \Gamma, \\
      \llbracket \partial_n u\rrbracket               &= g \quad   &&\text{on } \Gamma.\\
                                            u &\text{ satisfies the radiation condition.}
    \end{alignedat}
  \end{equation*}
  From now on the proof is completely analogous to the Robin case of Lemma~\ref{lemma:HH-robin-WP} replacing 
the use of Neumann conditions by transmission conditions on $\Gamma$.  

\emph{Proof of \ref{item:lemma:HH-full-space-WP-ii}:} We decompose 
$\TkGammaM = \DtN = \DtNz + (\DtN - \DtNz) = (\DtNz + k B) + \llbracket \partial_n \tilde{A} \rrbracket =: \RGM + \AGM$. 
The operator $\AGM$ is of the form given in \ref{WP4}. For the operator $\RGM$, we observe using the mapping properties 
of $\DtNz$ and $B$
\begin{align*}
\left| \langle \RGM u,v\rangle \right| & \leq 
\left| \langle \DtNz u,v\rangle \right| + k 
\left| \langle B u,v\rangle \right| 
\lesssim \|u\|_{1/2,\Gamma} \|v\|_{1/2,\Gamma} + k \|u\|_{0,\Gamma} \|v\|_{0,\Gamma}  \\
& \stackrel{t = 1/2}{\lesssim} \|u\|_{1,t,k} \|v\|_{1,t,k}. 
\end{align*}

\emph{Proof of \ref{item:lemma:HH-full-space-WP-iii}:} By inspection. 
\end{proof}
Fixing the computational domain $\Omega$ and the coupling boundary $\Gamma$ on which the DtN-operator is employed, we have the 
following quasi-optimality result: 
\begin{theorem}[full space]
\label{thm:quasi-optimality-full-space}
Let $t = 1/2$. 
Let Assumption~\ref{assumption:smoothness_domain_boundary_and_interface} 
and \eqref{eq:HH-robin-assumption-coefficients}, \eqref{eq:non-trapping} be valid. For problem \eqref{eq:Lkm} with $\TkGammaM = \DtN$ 
assume the polynomial well-posedness of the solution operator $\Skm$ of Assumption~\ref{assumption:everything_is_polynomial_well_posed}. 
In the discretization setting of Assumption~\ref{assumption:quasi_uniform_regular_meshes}, for each $c_2 > 0$ there are constants $c_1$, $C> 0$
independent of $h$, $p$, and $k$ such that under the scale-resolution condition \eqref{eq:scale-resolution} the quasi-optimality
result \eqref{eq:quasi-optimality} holds. 
\end{theorem}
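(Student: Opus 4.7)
The plan is to reduce the claim to the abstract $hp$-FEM stability theorem, so the only real work is to certify \WPs and \APs in the DtN setting with all constants at most polynomial in $k$; the mesh hypothesis (Assumption~\ref{assumption:quasi_uniform_regular_meshes}) and the polynomial well-posedness hypothesis (Assumption~\ref{assumption:everything_is_polynomial_well_posed}) are direct hypotheses of the statement.

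First I would invoke Lemma~\ref{lemma:HH-full-space-AP} to obtain \APs with $t = 1/2$. Coercivity \ref{AP1} follows from the pointwise ellipticity of $\operatorname{Re}(\xi^H A\xi)$ combined with the non-positivity $-\langle \DtNz u, u\rangle \ge 0$ provided by Lemma~\ref{lemma:properties_dtn_operators}\ref{item:lemma:properties_dtn_operators_dtn_zero_coercive}. The shift theorem \ref{AP2} is obtained by recasting the auxiliary problem as an elliptic transmission problem on a bounded enlargement of $\Omega$ (using the representation formula for the exterior harmonic extension of $u$ via $\widetilde V_0$, $\widetilde K_0$) and then applying standard piecewise $H^2$-regularity. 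The splittings in \ref{AP3} are those of Lemma~\ref{lemma:HH-full-space-AP}\ref{item:lemma:HH-DtN-AP-iii}, with $\RGD = kB$ and $\AGD = \llbracket \partial_n \tilde A\rrbracket$ supplied by the decomposition $\DtN - \DtNz = kB + \llbracket \partial_n \tilde A\rrbracket$ of Lemma~\ref{lemma:properties_dtn_operators}\ref{item:lemma:properties_dtn_operators_splitting_finite_ana_analytic_part}.

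Next I would invoke Lemma~\ref{lemma:HH-full-space-WP} to obtain \WPs. Items \ref{WP1} and \ref{WP5} follow by inspection using the mapping properties of $\DtN$; \ref{WP2} is the polynomial well-posedness hypothesis of the theorem; \ref{WP3} (with $\Can = O(1)$ uniformly in $k$) is obtained by rewriting the full-space problem as a transmission problem on a bounded domain and applying the piecewise analytic regularity results for elliptic transmission problems with piecewise analytic coefficients, in parallel to the Robin analysis in Lemma~\ref{lemma:HH-robin-WP}; \ref{WP4} uses the splitting $\TkGammaM = \DtN = (\DtNz + kB) + \llbracket \partial_n \tilde A\rrbracket =: \RGM + \AGM$ provided by Lemma~\ref{lemma:properties_dtn_operators}, whose first summand is continuous in the $\|\cdot\|_{1,t,k}$-norm for $t = 1/2$ and whose second summand maps into an analyticity class on a tubular neighborhood of $\Gamma$.

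Combining these two lemmas with Assumption~\ref{assumption:everything_is_polynomial_well_posed} shows that $\Can$, $\CsolveM$, $\CcontM$, $\CAM$, and $k^{-2}\CAD$ are all bounded by $Ck^\beta$ for some $\beta$ independent of $k$, so the hypotheses of the abstract discrete $hp$-FEM stability theorem are met. That theorem then yields, under the scale-resolution condition (\ref{eq:scale-resolution}), the existence and uniqueness of the Galerkin solution $u_{hp} \in S_p(\mathcal{T}_h)$ together with the quasi-optimality estimate (\ref{eq:quasi-optimality}). The only technically delicate step is the control of the $k$-dependent constants attached to $\DtN$ via the splitting $\DtN - \DtNz = kB + \llbracket \partial_n \tilde A\rrbracket$, carried out in Appendix~\ref{section:dtn_via_bios}; once that is granted, the present theorem is a direct application of the abstract framework, exactly as in the Robin case of Theorem~\ref{thm:quasi-optimality-robin}.
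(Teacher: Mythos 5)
Your proposal is correct and follows essentially the same route as the paper: the paper's proof of this theorem is precisely to combine Lemma~\ref{lemma:HH-full-space-AP} and Lemma~\ref{lemma:HH-full-space-WP} with Assumption~\ref{assumption:everything_is_polynomial_well_posed} and then invoke the abstract $hp$-FEM stability theorem, with the $k$-dependence of the DtN-related constants controlled through the splitting $\DtN - \DtNz = kB + \llbracket \partial_n \tilde{A}\rrbracket$ from Appendix~\ref{section:dtn_via_bios}. Your expanded account of what each lemma supplies is an accurate unpacking of that one-line argument.
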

\begin{proof}
Combine Lemmas~\ref{lemma:HH-full-space-AP} and \ref{lemma:HH-full-space-WP} with the stability Assumption~\ref{assumption:everything_is_polynomial_well_posed}. 
\end{proof}

\begin{remark}
In practice, it is hard to realize the operator $\DtN$ exactly. Possible numerical realizations of $\DtN$ 
include FEM-BEM coupling \cite{mascotto-melenk-perugia-rieder20} or to truncate the  series in the case of 
spherical $\Gamma$ \cite{xu_tao_2021a}. An analysis of the additional variational crimes incurred is beyond the scope of the present analysis. 
\eremk
\end{remark}

\subsection{Heterogeneous Helmholtz problem with second order ABCs}
\label{problem:HH_with_second_order_abcs}
Various boundary conditions that are formally of higher order than the Robin boundary conditions have been
proposed in the literature, notably for case of a sphere in ${\mathbb R}^2$. 
Our theory covers those proposed by 
Bayliss-Gunzburger-Turkel \cite{bayliss-gunzburger-turkel82},
Enquist-Majda \cite{enquist-majda77, enquist-majda79} and
Feng \cite{feng83}, see \cite[Sec.~{3.3.3}, Table~{3.2}]{ihlenburg98}
for a comprehensive comparison.
We denote by $\Delta_\Gamma$ and $\nabla_\Gamma$
the surface Laplacian and the surface gradient on $\Gamma$ (see, e.g., \cite[Sec.~{2.5.6}]{nedelec01}).
The model problem for a heterogeneous Helmholtz equation with second order absorbing boundary conditions is
to find $u$ such that 
\begin{equation}
\label{eq:form-of-2nd-order-ABC}
  \begin{alignedat}{2}
    \Lkm u:= -k^2 n^2 u - \nabla \cdot (A \nabla u)  &= f  \quad  &&\text{in } \Omega,\\
    \partial_{n_A} u - \TkGammaM u := \partial_{n_A} u - (\beta u + \alpha \Delta_\Gamma u)              &= g  \quad  &&\text{on } \Gamma,
  \end{alignedat}
\end{equation}
for $k$-dependent parameters $\alpha$, $\beta$ satisfying 
\begin{align}
\label{eq:alpha_dependence_in_second_order_abcs}
  \Im \alpha \neq 0
  \quad \text{ and } \quad \left| \Im \alpha\right|  \sim \frac{1}{k}
  \quad \text{ and } \quad | \Re \alpha | \lesssim \frac{1}{k^2},
\quad 
|\beta| \sim k 
\end{align}
for $k \geq k_0 > 0$. 
The choices of $\alpha$ and $\beta$ proposed by Bayliss-Gunzburger-Turkel, Engquist-Mayda, and Feng 
are given in Table~\ref{table:2nd-order-ABC} as presented in \cite[Sec.~{3.3.3}, Table~{3.2}]{ihlenburg98}.
The coefficients $A$, $n$ in \eqref{eq:form-of-2nd-order-ABC} are assumed to satisfy 
\eqref{eq:A-analytic}, \eqref{eq:n-analytic}
and, instead of \eqref{eq:HH-robin-assumption-coefficients-a}, $A$ 
is assumed to be pointwise SPD, viz., 
\begin{equation}
\label{eq:HH-robin-assumption-coefficients-aa}
\xi^H A(x) \xi \ge a_{\min} |\xi|^2 >  0 \qquad \forall \xi \in {\mathbb C}^d\setminus \{0\} \quad \forall x \in \Omega. 
\end{equation}
\begin{table}[t]
  \centering
  \begin{tabular}{|c|c|c|}
  \hline
  Bayliss-Gunzburger-Turkel & Engquist-Mayda & Feng                         \\ \hline
  $\alpha = -\frac{1+ik}{2(1+k^2)}$ & $\alpha = \frac{1+ik}{2k^2}$ & $\alpha = - \frac{i}{2k}$ \\ 
  $\beta = \frac{-2k^2 -\frac{3 i k}{2}+\frac{3}{4}}{2 (ik -1)}$ &
  $\beta = ik - \frac{1}{2}$ & 
  $\beta = ik - \frac{1}{2} - \frac{i}{8 k}$ \\ \hline 
  \end{tabular}
\caption{\label{table:2nd-order-ABC} Parameters for $2^{nd}$ -order ABCs 
for $\Gamma = \partial B_1(0) \subset {\mathbb R}^2$; taken from 
\cite[Table~{3.2}]{ihlenburg98}.}
\end{table}
\begin{remark} 
Our analysis covers operators $\TkGammaM$ of the form 
\eqref{eq:form-of-2nd-order-ABC} for smooth $\Gamma$, which is the form of the second order ABCs for 
spheres. It is expected that second order ABC for general $\Gamma$ have a similar structure and $k$-scaling properties.  
\eremk
\end{remark}
We select $\TkGammaP = \alpha \Delta_\Gamma$. Then we have: 
\begin{lemma}[\APs for $2^{nd}$ order ABCs]
\label{lemma:HH-2nd-order-ABC-AP}
Let Assumption~\ref{assumption:smoothness_domain_boundary_and_interface} be valid. 
Let the coefficients $A$, $n$ satisfy \eqref{eq:HH-robin-assumption-coefficients}
and $A$ additionally \eqref{eq:HH-robin-assumption-coefficients-aa}. 
For $\Lkm$, $\Lkp$ of \eqref{eq:Lkm}, \eqref{eq:Lkp} with 
$\TkGammaM$ given by \eqref{eq:form-of-2nd-order-ABC}--\eqref{eq:alpha_dependence_in_second_order_abcs}
and $\TkGammaP = \alpha \Delta_\Gamma$ the following is true for $t = 1$: 
\begin{enumerate}[nosep, start=1, label={(\roman*)},leftmargin=*] 
\item 
\label{item:lemma:HH-2nd-order-ABC-AP-i}
$\bkp$ is bounded uniformly in $k$, and \ref{AP1} holds for some $\sigma$ depending on $\alpha$ in 
\eqref{eq:alpha_dependence_in_second_order_abcs}. 
\item 
\label{item:lemma:HH-2nd-order-ABC-AP-ii}
$\|\Skp(f,g)\|_{1,t,k} \leq C \left[ k^{-1} \|f\|_{0,\Omega} + k^{-1/2} \|g\|_{0,\Gamma}\right]$, and 
\ref{AP2} holds for $\smax =  0$. 
\item 
\label{item:lemma:HH-2nd-order-ABC-AP-iii}
The splittings of $\TkOmegaM - \TkOmegaP$ and $\TkGammaM - \TkGammaP$ of \ref{AP3} are given by
$\ROD u = k^2 (n^2 +1) u$, $\AOD = 0$ and $\RGD u= \beta u$, $\AGD = 0$ and hence satisfy \ref{AP3}
for any fixed $\smax \ge 0$.  
\end{enumerate}
\end{lemma}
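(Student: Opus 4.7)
My plan is to address the three parts in order, with the coercivity and the shift theorem being the only nontrivial steps. I first integrate by parts on the closed surface $\Gamma$ to rewrite
\begin{equation*}
\bkp(u,v) = a(u,v) + k^2(u,v)_\Omega + \alpha (\nabla_\Gamma u, \nabla_\Gamma v)_\Gamma.
\end{equation*}
Continuity in the $\|\cdot\|_{1,1,k}$-norm, uniformly in $k$, will then fall out of the bounds $|a(u,v)|\lesssim |u|_{1,\Omega}|v|_{1,\Omega}$, $k^2\|u\|_0\|v\|_0=(k\|u\|_0)(k\|v\|_0)$, and $|\alpha||u|_{1,\Gamma}|v|_{1,\Gamma}\lesssim(k^{-1/2}|u|_{1,\Gamma})(k^{-1/2}|v|_{1,\Gamma})$, using $|\alpha|\sim 1/k$ and the weights in $\|\cdot\|_{1,1,k}$ dictated by $t=1$.

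For coercivity \ref{AP1} I will choose the rotation $\sigma = \exp(-i\theta_0\operatorname{sgn}(\Im\alpha))$ for a sufficiently small fixed $\theta_0>0$. With (\ref{eq:HH-robin-assumption-coefficients-aa}) the form $a(\cdot,\cdot)$ is a small perturbation of a Hermitian positive form, so $\Re(\sigma a(u,u))\ge(a_{\min}\cos\theta_0-\|A\|_\infty|\sin\theta_0|)|u|_{1,\Omega}^2$ is positive once $\tan\theta_0<a_{\min}/\|A\|_\infty$. The term $\Re(\sigma k^2\|u\|_0^2)=\cos\theta_0\,k^2\|u\|_0^2$ is trivially good, and the critical point is the boundary contribution: by the sign choice of $\sigma$ and by \eqref{eq:alpha_dependence_in_second_order_abcs},
\begin{equation*}
\Re(\sigma\alpha) = -\operatorname{sgn}(\Im\alpha)\sin\theta_0\,\Im\alpha + \cos\theta_0\,\Re\alpha \gtrsim \frac{\theta_0}{k} - O(k^{-2}),
\end{equation*}
which, for $k\ge k_0$ sufficiently large, is bounded below by $c\theta_0/k$. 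This delivers the surface contribution $k^{-1}|u|_{1,\Gamma}^2$ with the correct $k$-weight, and summing the three pieces gives $\Re(\sigma\bkp(u,u))\gtrsim\|u\|_{1,1,k}^2$.

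For part \ref{item:lemma:HH-2nd-order-ABC-AP-ii} the \emph{a priori} bound is an immediate consequence of \ref{item:lemma:HH-2nd-order-ABC-AP-i}: testing with $v=u=\Skp(f,g)$ and estimating $|(f,u)|\le\|f\|_{0,\Omega}\|u\|_{0,\Omega}\le k^{-1}\|f\|_{0,\Omega}\|u\|_{1,1,k}$, and $|\langle g,u\rangle|\lesssim k^{-1/2}\|g\|_{0,\Gamma}\|u\|_{1,1,k}$ via the multiplicative trace inequality $k\|u\|_{0,\Gamma}^2\lesssim\|u\|_{1,1,k}^2$, then absorbing. The genuine work is the $H^2$-shift for $\smax=0$: this is not a standard elliptic regularity statement because the boundary condition $\partial_{n_A}u-\alpha\Delta_\Gamma u=g$ is of Wentzell type, and the surface Laplacian raises the boundary regularity. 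I will therefore invoke the coupled bulk--surface regularity theory developed in Appendix~\ref{section:analytic_regularity_second_order_abcs}, extracting from it the finite Sobolev estimate \eqref{eq_shift_Skp} for $s=0$. This is the main obstacle of the lemma and the reason for the restriction $\smax=0$.

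Finally, part \ref{item:lemma:HH-2nd-order-ABC-AP-iii} is a direct computation. The strong-form operators satisfy $T^\Delta_\Omega = k^2n^2-(-k^2)=k^2(n^2+1)$ and $T^\Delta_\Gamma = (\beta+\alpha\Delta_\Gamma)-\alpha\Delta_\Gamma=\beta$, so it suffices to set $\ROD u=k^2(n^2+1)u$, $\RGD u=\beta u$, and $\AOD=\AGD=0$. The bound \eqref{eq_ROD} then follows from analyticity \eqref{eq:n-analytic} of $n^2$ applied elementwise, and \eqref{eq_RGD} follows from $|\beta|\sim k$ and the trivial bound $\|\beta u\|_{s+1/2,\Gamma}\lesssim k\|u\|_{s+1/2,\Gamma}$, which is subsumed by the right-hand side of \eqref{eq_RGD}.
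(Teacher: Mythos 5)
Your proposal is correct in substance and covers all three items, but it diverges from the paper's proof at the two nontrivial points, in ways worth recording. For the coercivity in \ref{item:lemma:HH-2nd-order-ABC-AP-i}, the paper takes $\sigma=\overline{\alpha}/|\alpha|$, which makes the boundary term exactly $|\alpha|\,|u|_{1,\Gamma}^2\gtrsim k^{-1}|u|_{1,\Gamma}^2$ but multiplies the (real, positive) volume terms by $\Re\alpha/|\alpha|=O(k^{-1})$, possibly of either sign; your small rotation $\sigma=e^{\pm i\theta_0}$ keeps the volume terms coercive with a $k$-independent constant while still extracting $\Re(\sigma\alpha)\gtrsim\sin\theta_0\,|\Im\alpha|-|\Re\alpha|\gtrsim k^{-1}$ from \eqref{eq:alpha_dependence_in_second_order_abcs}, so your choice is arguably the more robust one (note, though, the sign slip in your displayed formula: $\Re(\sigma\alpha)=\cos\theta_0\,\Re\alpha+\operatorname{sgn}(\Im\alpha)\sin\theta_0\,\Im\alpha$, and the rotation must be oriented so that this second term equals $+\sin\theta_0|\Im\alpha|$). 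For the shift theorem in \ref{item:lemma:HH-2nd-order-ABC-AP-ii} you defer entirely to the bulk--surface regularity of Appendix~\ref{section:analytic_regularity_second_order_abcs}, whereas the paper gives a self-contained argument: it reads $\|w\|_{2,\Gamma}$ off the surface PDE $\alpha\Delta_\Gamma w=-g+\partial_{n_A}w$, and controls $\|\partial_{n_A}w\|_{0,\Gamma}$ by comparing $w$ with an auxiliary extension $\tilde w$ solving $-\nabla\cdot(\hat A\nabla\tilde w)=0$ with $\tilde w=w$ on $\Gamma$, before concluding by elliptic regularity for the resulting Neumann problem. Your route is viable --- Lemma~\ref{lemma:H2-regularity} is precisely an $H^2$ estimate for this Wentzell-type condition --- but it is not a one-line citation: you still need the localization and flattening, you must move $k^2w$ to the right-hand side and absorb $k^2\|w\|_{0,\Omega}$ via the a priori bound, and you must match the appendix's normalization (its boundary datum is $\alpha^{1/2}g+G$, so the factor $|\alpha|^{-1/2}\sim k^{1/2}$ is what produces the $k^{s+1/2}\|g\|_{0,\Gamma}$ weight in \eqref{eq_shift_Skp}). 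Parts \ref{item:lemma:HH-2nd-order-ABC-AP-iii} and the a priori bound match the paper's "by inspection'' arguments.
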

\begin{proof}
\emph{Proof of \ref{item:lemma:HH-2nd-order-ABC-AP-i}:}
  The Laplace-Beltrami operator
  $\Delta_\Gamma \colon H^{1}(\Gamma) \rightarrow H^{-1}(\Gamma)$
  is a bounded linear operator
  since $\Gamma$ has no boundary.
  Furthermore, we have
  $-\langle \alpha \Delta_\Gamma u , u \rangle = \alpha \langle \nabla_\Gamma u , \nabla_\Gamma u \rangle$.
Setting $\sigma:= \overline{\alpha}/|\alpha|$ we estimate 
  \begin{equation}
\label{eq:coercivity-2nd-order}
    -\Re ( \sigma \langle \TkGammaP u , u \rangle ) =   \Re (\sigma \alpha) \langle \nabla_\Gamma u, \nabla_\Gamma u \rangle =  
\Re (\sigma \alpha) | u |_{1, \Gamma}^2 {\gtrsim} k^{-1} | u |_{1, \Gamma}^2, 
  \end{equation}
where, in the last step, we employed the assumptions
$\Im \alpha \sim k^{-1}$ and $| \Re \alpha | \lesssim k^{-2}$ so that $|\alpha| \sim 1/k$. 
Since the coefficient $A$ is assumed to be SPD and satisfy \eqref{eq:HH-robin-assumption-coefficients-aa}, 
  the coercivity of $\bkp$ of \ref{AP1} holds. The uniform-in-$k$ boundedness
of $\bkp$ is easily seen. 

\emph{Proof of \ref{item:lemma:HH-2nd-order-ABC-AP-ii}:}
The {\sl a priori} estimate for $\Skp(f,g)$ follows from coercivity and continuity of $\bkp$. 
 The function $w = \Skp(f,g)$ satisfies
  \begin{align}
\label{eq:2nd-order-5}
      - \nabla \cdot (A \nabla w)            &= f - k^2 w \quad &&\text{in } \Omega,
&&&  
      \partial_{n_A} w                           &= g  + \alpha \Delta_\Gamma w\quad       &&\text{on } \Gamma.
  \end{align}
  Note that for $f \in L^2(\Omega)$ and $g \in L^2(\Gamma)$ we have by Lax-Milgram,
  which follows from \ref{item:lemma:HH-2nd-order-ABC-AP-i}, 
   and $|\alpha| \sim k^{-1}$ (cf.\ \eqref{eq:alpha_dependence_in_second_order_abcs})
  \begin{equation}
\label{eq:2nd-order-10}
    \| \nabla w \|_{0, \Omega} + k \| w \|_{0, \Omega} + k^{-1/2} | w |_{1, \Gamma}
    \lesssim
      k^{-1} \| f \|_{0,\Omega} + k^{-1/2} \| g \|_{0, \Gamma}.
  \end{equation}
  The following surface PDE is satisfied
  \begin{equation*}
    \alpha \Delta_\Gamma w =  - g + \partial_{n_A} w  \quad  \text{on } \Gamma,
  \end{equation*}
  which gives, since $\Gamma$ is a smooth, compact manifold without boundary,  
  \begin{equation}
\label{eq:2nd-order-15}
    \| w \|_{2, \Gamma}
    \!\! \stackrel{|\alpha| \sim k^{-1}}{\lesssim} \!\!k \| g \|_{0, \Gamma} + k \| \partial_{n_A} w \|_{0, \Gamma} + \| w \|_{1, \Gamma}
    \!\!\stackrel{\eqref{eq:2nd-order-10}}{\lesssim} \!\! k^{-1/2} \| f \|_{0,\Omega} + k \| g \|_{0, \Gamma} + k \| \partial_{n_A} w \|_{0, \Gamma}.
  \end{equation}
  In order to estimate $ \| \partial_{n_A} w \|_{0, \Gamma}$
we introduce an auxiliary matrix-valued function $\hat A \in C^\infty(\Omega, \operatorname{GL}({\mathbb C}^d))$ that satisfies 
\eqref{eq:HH-robin-assumption-coefficients-a} as well as $\hat A = A$ in a neighborhood of $\Gamma$. 
  We introduce an auxiliary function $\tilde{w} \in H^1(\Omega)$ as the solution of 
  \begin{equation}
\label{eq:problem_for_w_tilde}
    \begin{alignedat}{2}
      - \nabla \cdot (\hat A \tilde{w})  &= 0  \quad  &&\text{in } \Omega, \qquad 
               \tilde{w} &= w  \quad  &&\text{on } \Gamma.
    \end{alignedat}
  \end{equation}
By the smoothness of $\Gamma$ and $\hat A$, the map $w \mapsto \partial_{n_A} \tilde{w}$ maps $H^{1/2}(\Gamma) \rightarrow H^{-1/2}(\Gamma)$
and $H^{3/2}(\Gamma)\rightarrow H^{1/2}(\Gamma)$ so that by interpolation it maps $H^{1}(\Gamma) \rightarrow L^2(\Gamma)$
with the {\sl a priori} estimate 
\begin{equation}
\label{eq:2nd-order-20}
\|\partial_{n_A} \tilde{w}\|_{0,\Gamma} \lesssim \|w\|_{1,\Gamma}. 
\end{equation}
By interior regularity, we have additionally that $\tilde{w} \in C^\infty(\Omega)$ and for each open 
$\Omega' \subset \subset \Omega$ there is $C_{\Omega'} > 0$ with 
\begin{equation}
\label{eq:2nd-order-30}
\|\tilde{w}\|_{2,\Omega'} \leq C_{\Omega'} \|\tilde{w}\|_{1,\Omega} \lesssim \|w\|_{1/2,\Gamma}. 
\end{equation}
Let $\chi \in C^\infty(\overline{\Omega})$ with $\chi \equiv 1$ in a neighborhood of $\Gamma$ and such that 
$A|_{\operatorname{supp} \chi} = \hat A|_{\operatorname{supp} \chi}$. 
  The function $w - \chi \tilde{w} \in H^1_0(\Omega)$ solves
  \begin{equation}
\label{eq:2nd-order-50}
    \begin{alignedat}{2}
      - \nabla \cdot( A \nabla (w - \chi \tilde{w})) &= f -k^2  w  + z \quad  &&\text{in } \Omega,\\
              w - \tilde{w} &= 0  \quad  &&\text{on } \Gamma
    \end{alignedat}
  \end{equation}
for the function $z := \nabla \cdot (A (\chi \tilde{w}))$. Since $A  = \hat A$ near $\Gamma$ and 
therefore $-\nabla \cdot (A \nabla(\chi \tilde{w})) = 0$ near $\Gamma$, we get in view of \eqref{eq:2nd-order-30} 
that $z \in L^2(\Omega)$ with 
\begin{equation} 
\label{eq:2nd-order-40}
\|z\|_{0,\Omega} \lesssim \|w\|_{1/2,\Gamma}. 
\end{equation} 
As discussed in the proof of Lemma~\ref{lemma:HH-robin-AP}, elliptic regularity implies $w - \chi \tilde{w} \in H^2(\Omega\setminus\interface)$ with 
\begin{equation}
\label{eq:2nd-order-60}
\|w-  \chi \tilde{w}\|_{2,\Omega\setminus\interface} \lesssim \|f\|_{0,\Omega} + k^2 \|w\|_{0,\Omega} + \|z\|_{0,\Omega}
\stackrel{\eqref{eq:2nd-order-10},\eqref{eq:2nd-order-40}}{\lesssim} \|f\|_{0,\Omega} + k^{1/2} \|g\|_{0,\Gamma}. 
\end{equation}
Since $\chi \equiv 1$ near $\Gamma$, we conclude 
\begin{align*}
\|\partial_{n_A} w\|_{0,\Gamma}  & \leq 
\|\partial_{n_A} \tilde{w}\|_{0,\Gamma} + \|\partial_{n_A}(w - \chi\tilde{w})\|_{0,\Gamma}  
\lesssim 
\|\partial_{n_A} \tilde{w}\|_{0,\Gamma} + C \|w - \chi\tilde{w}\|_{2,\Omega\setminus\interface}  \\
&
\stackrel{\eqref{eq:2nd-order-20},\eqref{eq:2nd-order-10},\eqref{eq:2nd-order-60}}{\lesssim}\|f\|_{0,\Omega} + k^{1/2} \|g\|_{0,\Gamma}. 
\end{align*}
Inserting this estimate in \eqref{eq:2nd-order-15} yields 
\begin{equation*}
\|\alpha \Delta_\Gamma w\|_{2,\Gamma} \lesssim \|f\|_{0,\Omega}  + k^{1/2} \|g\|_{0,\Gamma}. 
\end{equation*}
Noting that \eqref{eq:2nd-order-5} is a Neumann problem for $w$, we get from elliptic regularity 
for Neumann problems the desired statement \ref{AP2}. 

We remark in passing that the above arguments can be boostrapped to show \ref{AP2}
for $s_{\max} > 0$. We refer to \cite[Rem.~{6.5.7}]{bernkopf21} and in particular to
the bootstrapping argument for the analytic regularity assertion \ref{WP3} in
Appendix~B of \cite{preprint_bernkopf_chaumontfrelet_melenk_2022a} for more details. 

\emph{Proof of \ref{item:lemma:HH-2nd-order-ABC-AP-iii}:} By inspection. 
\end{proof}
\begin{lemma}[\WPs for $2^{nd}$ order ABC]
\label{lemma:HH-2nd-order-ABC-WP}
Let Assumption~\ref{assumption:smoothness_domain_boundary_and_interface} be valid. 
Let the coefficients $A$, $n$ satisfy \eqref{eq:HH-robin-assumption-coefficients}
and $A$ additionally \eqref{eq:HH-robin-assumption-coefficients-aa}. 
For $\Lkm$, $\Lkp$ of \eqref{eq:Lkm}, \eqref{eq:Lkp} with 
$\TkGammaM$ given by \eqref{eq:form-of-2nd-order-ABC}--\eqref{eq:alpha_dependence_in_second_order_abcs}
and $\TkGammaP = \alpha \Delta_\Gamma$ the following is true for $t = 1$
if \ref{WP2} holds:
\begin{enumerate}[nosep, start=1, label={(\roman*)},leftmargin=*]
\item 
\label{item:lemma:HH-2nd-order-ABC-WP-i}
\ref{WP3} holds with $\Can = O(1)$ (uniformly in $k$). 
\item 
\label{item:lemma:HH-2nd-order-ABC-WP-ii}
\ref{WP4} holds with $\ROM u = k^2 (n^2+1) u$, $\AOM  = 0$ and $\RGM = \alpha \Delta_\Gamma u + \beta u$, $\AGM = 0$. 
In particular, for the constants in Assumption~\ref{assumption:everything_is_polynomial_well_posed} we have 
$\CcontM  = O(1)$ and $C^-_{A,k} = C^\Delta_{A, k} = 0$. 
\item 
\label{item:lemma:HH-2nd-order-ABC-WP-iii}
\ref{WP5} holds. 
\end{enumerate}
\end{lemma}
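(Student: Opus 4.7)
My plan is to treat the three assertions in turn. Assertions~\ref{item:lemma:HH-2nd-order-ABC-WP-ii} and \ref{item:lemma:HH-2nd-order-ABC-WP-iii} will be direct verifications that exploit the choice $t=1$ in \eqref{eq:t-k-norm}, and as a byproduct they establish continuity of $\bkm$ (i.e.\ \ref{WP1} with $\CcontM = O(1)$). The analytic regularity statement~\ref{item:lemma:HH-2nd-order-ABC-WP-i} is the main obstacle, and in line with the lemma statement I plan to defer its proof to Appendix~\ref{section:analytic_regularity_second_order_abcs}.

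For~\ref{item:lemma:HH-2nd-order-ABC-WP-iii}, continuity of $a(u,v) = (A\nabla u, \nabla v)$ in $\|\cdot\|_{1,t,k}$ is immediate from the $L^\infty$-bound on $A$. For the splitting in~\ref{item:lemma:HH-2nd-order-ABC-WP-ii}, the trivial choice $\AOM = \AGM = 0$ is admissible only if the remaining $R$-parts are bounded uniformly in $k$ in the $\|\cdot\|_{1,t,k}$-norm. Here the crucial observation is that taking $t=1$ puts the weight $k^{-1}$ on $|u|_{1,\Gamma}^2$ in \eqref{eq:t-k-norm}, which exactly matches the scaling $|\alpha| \sim 1/k$ from~\eqref{eq:alpha_dependence_in_second_order_abcs}. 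An integration by parts gives $\langle \alpha \Delta_\Gamma u , v\rangle = -\alpha \langle \nabla_\Gamma u, \nabla_\Gamma v\rangle$, and hence
\begin{equation*}
|\langle \RGM u , v\rangle|
\lesssim k^{-1} |u|_{1,\Gamma} |v|_{1,\Gamma} + k \|u\|_{0,\Gamma} \|v\|_{0,\Gamma}
\lesssim \|u\|_{1,t,k} \|v\|_{1,t,k},
\end{equation*}
where the bound on the $\beta$-term uses the weighted trace inequality~\eqref{eq_weighted_trace}. The volume part is handled analogously by $|(\ROM u, v)| \lesssim (k\|u\|_{0,\Omega})(k\|v\|_{0,\Omega})$. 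Combined, these estimates yield continuity of $\bkm$ with $\CcontM = O(1)$ and $C^-_{A,k} = 0$.

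The essential work is then~\ref{item:lemma:HH-2nd-order-ABC-WP-i}: for (piecewise) analytic data $f$ and $g$, one must show that $u = \Skm(f,g)$ is (piecewise) analytic with a $k$-independent constant $\Can$. The difficulty compared with the Robin case of Lemma~\ref{lemma:HH-robin-WP} is that $\TkGammaM$ now contains the second-order surface operator $\alpha \Delta_\Gamma$, so the boundary condition cannot be regarded as a pure Neumann datum for the bulk equation. My plan is a bootstrap: starting from the piecewise $H^2$-regularity provided by~\ref{AP2}, reinterpret the boundary relation as a surface Laplace equation
\begin{equation*}
\alpha \Delta_\Gamma u = \partial_{n_A} u - \beta u - g \qquad \text{on}\ \Gamma,
\end{equation*}
and use elliptic regularity on the compact manifold $\Gamma$ to convert one order of bulk regularity of $\partial_{n_A} u$ into two orders of surface regularity of $u|_\Gamma$. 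The improved surface regularity then feeds back into the bulk Neumann problem via the local-chart analytic regularity statements \cite[Props.~{5.5.1},{5.5.3},{5.5.4}]{melenk02} exactly as in the proof of Lemma~\ref{lemma:HH-robin-WP}, yielding one extra bulk order. The hard part will be verifying that this bootstrap closes with constants $\gamma$ and $C$ independent of $k$, which requires careful tracking of the factors $|\alpha|^{-1} \sim k$, $|\beta| \sim k$ and $\max\{p,k\}^p$ at every step to arrive at a bound of the form $\|\nabla^p u\|_{L^2(\Omega\setminus\interface)} \lesssim C\gamma^p \max\{p,k\}^p \CsolveM k^{-1}(M_f + k M_g)$, from which $\Can = O(1)$ follows. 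This accounting is relegated to Appendix~\ref{section:analytic_regularity_second_order_abcs}.
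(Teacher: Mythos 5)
Parts~\ref{item:lemma:HH-2nd-order-ABC-WP-ii} and \ref{item:lemma:HH-2nd-order-ABC-WP-iii} of your argument are fine and match the paper's ``by inspection'': the integration by parts $\langle \alpha\Delta_\Gamma u,v\rangle = -\alpha\langle\nabla_\Gamma u,\nabla_\Gamma v\rangle$ together with $|\alpha|\sim k^{-1}$ and the weight $k^{-1}|u|_{1,\Gamma}^2$ in the $t=1$ norm, plus the trace inequality for the $\beta$-term, is exactly the right bookkeeping.

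The problem is part~\ref{item:lemma:HH-2nd-order-ABC-WP-i}, where you have deferred precisely the step that constitutes the proof, and the mechanism you sketch for it does not close. Count derivatives in your loop: $u\in H^{s+2}(\Omega)$ gives $\partial_{n_A}u\in H^{s+1/2}(\Gamma)$; inverting $\alpha\Delta_\Gamma$ on $\Gamma$ then gives $u|_\Gamma\in H^{s+5/2}(\Gamma)$, hence $\alpha\Delta_\Gamma u\in H^{s+1/2}(\Gamma)$ -- so the Neumann datum $g+\beta u+\alpha\Delta_\Gamma u$ you feed back into the bulk problem lies in $H^{s+1/2}(\Gamma)$, which is exactly where you started, and Neumann regularity returns $u\in H^{s+2}$ with no gain. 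This is the characteristic feature of Ventcel-type conditions: the regularities of $\partial_{n_A}u$ and of $\Delta_\Gamma u$ are exactly balanced in the boundary condition, so neither can be upgraded from the other alone. (The loop can be closed for \emph{finite} Sobolev regularity by returning through the \emph{Dirichlet} problem with volume datum $f+k^2n^2u\in H^{s+1}$ -- this is essentially what the paper does for the $s=0$ shift theorem in Lemma~\ref{lemma:HH-2nd-order-ABC-AP}\ref{item:lemma:HH-2nd-order-ABC-AP-ii}, via the auxiliary harmonic extension $\tilde w$ -- but you invoke the Neumann problem, and in any case such an alternating scheme does not deliver the $C\gamma^p\max\{p,k\}^p$ bounds with $p$- and $k$-uniform constants that \ref{WP3} requires.) What Appendix~\ref{section:analytic_regularity_second_order_abcs} actually does is structurally different: Lemma~\ref{lemma:H2-regularity} proves a \emph{coupled} Caccioppoli-type estimate controlling $\|\nabla^2 u\|_{L^2(B_r)}$ and $|\alpha|^{1/2}\|\nabla_\Gamma^2 u\|_{L^2(\Gamma_r)}$ \emph{simultaneously}, by inserting the difference-quotient test function into the full bulk-plus-surface form and using the sector condition $\alpha\in\mathcal S_\theta$ for coercivity of the combined form (an ingredient absent from your sketch); Theorems~\ref{thm:tangential-derivatives} and \ref{thm:prop.5.5.2} then run the Morrey--Nirenberg induction on tangential derivatives over nested half-balls and recover normal derivatives from the PDE. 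You should replace the alternating bootstrap by this coupled estimate, or at minimum state it as the key lemma rather than relegating the entire mechanism to the appendix.
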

\begin{proof}
\emph{Proof of \ref{item:lemma:HH-2nd-order-ABC-WP-i}:}
See \cite[App.~{B}]{preprint_bernkopf_chaumontfrelet_melenk_2022a} for the proof. 
To give some details, by local changes of variables and the invariance of
the analyticity classes under analytic changes of variables (see Lemma~\ref{lemma:lemma2.6}),
the analysis is reduced to balls or half-balls. For the regularity
of $\Skm(f,g)$ with $f \in \AnaClass{M_f}{\gamma_f}{\partition}$, $g \in \AnaClassbdy{M_g}{\gamma_g}{T}{\Gamma}$ one obtains 
from \cite[Thm.~B.5]{preprint_bernkopf_chaumontfrelet_melenk_2022a} (and the analytic changes of variables) for a tubular neighborhood $T$ of $\Omega$ for 
\begin{align*}
\|\nabla^{p+2} u\|_{L^2(T \cap \Omega)} \lesssim \max(p,k)^{p+2} \gamma^p \left[ k^{-2} M_f + k^{-1} M_g + k^{-1} \|u\|_{1,t,k}
\right] 
\quad \forall p \in {\mathbb N}_0, 
\end{align*}
where $\gamma$ depends on $\gamma_f$, $\gamma_g$, but is independent of $k$. Together with corresponding estimates
in the interior of $\Omega$, this shows \ref{WP3} with $\Can = O(1)$. 

\emph{Proof of \ref{item:lemma:HH-2nd-order-ABC-WP-ii}, \ref{item:lemma:HH-2nd-order-ABC-WP-iii}:} By inspection. 
\end{proof}
\begin{theorem}[$2^{nd}$ order ABC]
\label{thm:2nd-order-ABC}
Let $t = 1$. 
Let Assumption~\ref{assumption:smoothness_domain_boundary_and_interface} 
and \eqref{eq:HH-robin-assumption-coefficients} be valid. For problem \eqref{eq:Lkm} with $\TkGammaM$ given by 
\eqref{eq:form-of-2nd-order-ABC}--\eqref{eq:alpha_dependence_in_second_order_abcs}
assume the polynomial well-posedness of the solution operator $\Skm$ of Assumption~\ref{assumption:everything_is_polynomial_well_posed}. 
In the discretization setting of Assumption~\ref{assumption:quasi_uniform_regular_meshes}, for each $c_2 > 0$ there are constants $c_1$, $C> 0$
independent of $h$, $p$, and $k$ such that under the scale-resolution condition \eqref{eq:scale-resolution} the quasi-optimality
result \eqref{eq:quasi-optimality} holds. 
\end{theorem}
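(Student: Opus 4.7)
The plan is to invoke the abstract discrete stability theorem of Section~\ref{section:galerkin}, which yields precisely the desired quasi-optimality \eqref{eq:quasi-optimality} under the scale-resolution condition \eqref{eq:scale-resolution}, provided that \WP, \AP, Assumption~\ref{assumption:quasi_uniform_regular_meshes}, and Assumption~\ref{assumption:everything_is_polynomial_well_posed} all hold. The mesh regularity and polynomial well-posedness are hypotheses of the present theorem, so all that remains is to verify \WPs and \APs for the specific operators $\TkGammaM u = \beta u + \alpha \Delta_\Gamma u$ and $\TkGammaP = \alpha \Delta_\Gamma$ in the energy norm with $t = 1$.

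First, I would apply Lemma~\ref{lemma:HH-2nd-order-ABC-AP} in its entirety. Items \ref{item:lemma:HH-2nd-order-ABC-AP-i} and \ref{item:lemma:HH-2nd-order-ABC-AP-ii} provide \ref{AP1} (with $\sigma = \overline{\alpha}/|\alpha|$) and \ref{AP2} with $\smax = 0$. Item \ref{item:lemma:HH-2nd-order-ABC-AP-iii} furnishes the trivial splittings $\ROD u = k^2(n^2+1)u$, $\AOD = 0$, $\RGD u = \beta u$, $\AGD = 0$, which satisfy the order-preserving estimates \eqref{eq_ROD}--\eqref{eq_RGD} thanks to $|\beta| \sim k$, and render the analytic part \eqref{eq_A_PLUS} vacuous with $\CAD = 0$.

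Next, I would invoke Lemma~\ref{lemma:HH-2nd-order-ABC-WP}. The analytic regularity needed in \ref{WP3} is established in Appendix~\ref{section:analytic_regularity_second_order_abcs}, delivering $\Can = O(1)$. For \ref{WP4}, the choice $\RGM = \alpha \Delta_\Gamma + \beta\,\id$, $\AGM = 0$ is admissible: since $t = 1$, the $|\cdot|_{t,\Gamma}$-norm dominates $\alpha \Delta_\Gamma$ because $|\alpha| \sim 1/k$, while $\beta\,\id$ is controlled by $k^{1-2t}\|\cdot\|^2_{t,\Gamma} = k^{-1}\|\cdot\|^2_{1,\Gamma}$ together with $|\beta| \sim k$. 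This yields $\CAM = 0$ and $\CcontM = O(1)$. \ref{WP1} and \ref{WP5} then follow by inspection of the $(1,t,k)$-norm.

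With all abstract hypotheses in place and Assumption~\ref{assumption:everything_is_polynomial_well_posed} assumed, the abstract discrete stability theorem in Section~\ref{section:galerkin} applies verbatim. I do not anticipate a substantive obstacle; the real work is packaged in Lemmas~\ref{lemma:HH-2nd-order-ABC-AP}--\ref{lemma:HH-2nd-order-ABC-WP} and in the appendix on analytic regularity. The only point requiring a mild care is the factor $(kh/p)^{1/2-t}$ in Proposition~\ref{prop:interpolation}: with $t = 1$ it becomes $(kh/p)^{-1/2}$, but this is harmless because the competing factor $(h/p)^{m}$ in \eqref{eq_estimate_etaF} (with $m = 1$) supplies the needed $h/p$, so the same bookkeeping used in the proofs of Theorems~\ref{thm:quasi-optimality-robin} and~\ref{thm:quasi-optimality-full-space} goes through unchanged and the result follows.
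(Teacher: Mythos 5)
Your proposal is correct and follows exactly the paper's route: the paper's proof is the one-line combination of Lemma~\ref{lemma:HH-2nd-order-ABC-AP} and Lemma~\ref{lemma:HH-2nd-order-ABC-WP} with Assumption~\ref{assumption:everything_is_polynomial_well_posed}, fed into the abstract discrete stability theorem of Section~\ref{section:galerkin}. Your additional remarks on the $t=1$ bookkeeping and the factor $(kh/p)^{1/2-t}$ are accurate but already handled inside the abstract theorem's proof, so nothing further is needed.
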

\begin{proof}
Combine Lemmas~\ref{lemma:HH-2nd-order-ABC-AP} and \ref{lemma:HH-2nd-order-ABC-WP} with the stability Assumption~\ref{assumption:everything_is_polynomial_well_posed}. 
\end{proof}

\appendix

\section{Dirichlet-to-Neumann maps via Boundary Integral Operators}\label{section:dtn_via_bios}

The main goal of the present section is prove
Lemma~\ref{lemma:properties_dtn_operators}\ref{item:lemma:properties_dtn_operators_splitting_finite_ana_analytic_part}. 
To that end, we rewrite the Dirichlet-to-Neumann operators
$\mathrm{DtN}_k$ and $\mathrm{DtN}_0$
in terms of boundary integral operators.

\subsection{Preliminaries}
Let $\Omega \subset \mathbb{R}^d$,
$d = 2$, $3$, be a bounded Lipschitz domain
with analytic boundary $\Gamma \coloneqq \partial \Omega$.
We denote by $\Omega^+$ the exterior domain,
i.e., $\Omega^+ \coloneqq \mathbb{R}^d \setminus \overline{\Omega}$.
Throughout this section we assume $\Omega^+$ to be non-trapping,
see~\cite[Def.~{1.1}]{baskin-spence-wunsch16}.
Furthermore, we assume that the open ball $B_R$ of radius $R$
around the origin contains $\overline{\Omega}$, i.e., $\overline{\Omega} \subset B_R$.
We set $\Omega_R \coloneqq (\Omega \cup \Omega^+) \cap B_R = B_R \setminus \Gamma$.
Following standard notation we introduce the interior and exterior trace operators
$\gamma_0^{int}$, $\gamma_0^{ext}$ by restricting to $\Gamma$ and the 
$\gamma_1^{int}$ and $\gamma_1^{ext}$ by setting, for sufficiently smooth functions $v$,
we have $\gamma_1^{int} v = \normal \cdot \gamma_0^{int} (\nabla v)$ and 
$\gamma_1^{ext} v = \normal \cdot \gamma_0^{ext} (\nabla v)$, where $\normal$ is the outer normal
vector on $\Gamma$. Furthermore, we denote by $V_k$, $K_k$, $K_k^\prime$ and $D_k$
the single layer, double layer, adjoint double layer
and hypersingular boundary integral operators, see \cite[Sec.~{6.9} and~{7.9}]{steinbach08}.
The single layer and double layer potentials are denoted $\widetilde V_k$ and $\widetilde K_k$. 
Finally, given a coupling parameter
$\eta \in \mathbb{R}\setminus \{ 0 \}$
we introduce the combined field operator $A^\prime_{k, \eta}$ by
\begin{equation*}
  A^\prime_{k, \eta} \coloneqq \frac{1}{2} + K^\prime_k + i \eta V_k.
\end{equation*}
We remind the reader of the exterior Calder{\'o}n identities
\begin{equation*}
  \begin{pmatrix}
    \gamma_0^{ext} u \\
    \gamma_1^{ext} u
  \end{pmatrix}
  =
  \begin{pmatrix}
    \frac{1}{2} + K_k & -V_k \\
    -D_k & \frac{1}{2} - K_k^\prime
  \end{pmatrix}
  \cdot
  \begin{pmatrix}
    \gamma_0^{ext} u \\
    \gamma_1^{ext} u
  \end{pmatrix},
\end{equation*}
Given Dirichlet data $u$, the Dirichlet-to-Neumann
operator $\DtN$ can be expressed for any $k \geq 0$ by a complex
linear combination of the two equations in the Calder{\'o}n identity:
For any $\eta \in \mathbb{R} \setminus \{ 0 \}$ we have
\begin{equation}\label{eq:combined_field_equation}
A^\prime_{k,\eta} (\mathrm{DtN}_k u) = 
  \left( \frac{1}{2} + K^\prime_k + i \eta V_k \right)
  \mathrm{DtN}_k u
  =
  \left(-D_k + i \eta (-\frac{1}{2} + K_k) \right) u.
\end{equation}
Our analysis relies on invertibility of the
combined field operator $A^\prime_{k, \eta}$
as an operator mapping $H^s(\Gamma)$ into itself.
Wavenumber-explicit estimates for 
$\| (A^\prime_{k, \eta})^{-1} \|_{L^2(\Gamma) \leftarrow L^2(\Gamma)}$
are discussed in~\cite[Sec.~{1.4}]{baskin-spence-wunsch16}.  
For non-trapping $\Omega^+ \subset \mathbb{R}^d$, $d = 2$, $3$ it is known that
$\displaystyle 
  \| (A^\prime_{k, \eta})^{-1} \|_{L^2(\Gamma) \leftarrow L^2(\Gamma)}
  \lesssim
  k^{5/4} \left( 1 + \frac{k^{3/4}}{|\eta|} \right)
$
for all $k \geq k_0$ and $\eta \in \mathbb{R}\setminus \{ 0 \}$,
see~\cite[Thm~{1.11}]{spence14}.
This bound can be sharpened assuming $|\eta| \sim k$.
In fact, for non-trapping $\Omega^+ \subset \mathbb{R}^d$, $d = 2, 3$
and $|\eta| \sim k$ there holds
\begin{equation}\label{eq:combined_field_operator_norm_optimal}
  \| (A^\prime_{k, \eta})^{-1} \|_{L^2(\Gamma) \leftarrow L^2(\Gamma)}
  \lesssim
  1
\end{equation}
for all $k \geq k_0$, see~\cite[Thm.~{1.13}]{baskin-spence-wunsch16}.
In Proposition~\ref{proposition:mapping_properties_bios}, we collect 
mapping properties of some boundary integral operators: 
\begin{proposition}\label{proposition:mapping_properties_bios}
  Let $\Omega^+$ be non-trapping, $\Gamma$ be analytic and $\eta \in \mathbb{R} \setminus \{ 0 \}$ fixed.
  If $d = 2$, assume additionally $\mathrm{diam} \, \Omega < 1$.
  Then
  \begin{enumerate}[nosep, start=1, label={(\roman*)},leftmargin=*]
    \item
    \label{proposition:mapping_properties_bios_comined_field_operator_laplacian_without_prime}
    $A_{0,-\eta}
    :=
    \frac{1}{2} + K_0 - i \eta V_0
    \colon H^s(\Gamma) \rightarrow H^s(\Gamma)$
    is boundedly invertible for $s \geq 0$.
    \item
    \label{proposition:mapping_properties_bios_comined_field_operator_laplacian}
    $A^\prime_{0,\eta}
    =
    \frac{1}{2} + K^\prime_0 + i \eta V_0
    \colon H^s(\Gamma) \rightarrow H^s(\Gamma)$
    is boundedly invertible for $s \geq -1$.
    \item
    \label{proposition:mapping_properties_bios_comined_field_operator_helmholtz}
    For $k > 0$ the combined field operator
    $A^\prime_{k,\eta} = \frac{1}{2} + K^\prime_k + i \eta V_k
    \colon H^s(\Gamma) \rightarrow H^s(\Gamma)$
    is boundedly invertible for $s \geq -1$.
\item 
\label{item:proposition:mapping_properties_bios-iv}
  For $k \geq 0$ and $s \ge -1/2$ the following operators are bounded: 
  \begin{equation}\label{proposition:mapping_properties_bios_helmholtz_standard_result}
    \begin{alignedat}{2}
      &V_k         \colon H^{-1/2 + s}(\Gamma) \rightarrow H^{1/2 + s}(\Gamma), \quad \quad \quad
      &&K_k        \colon H^{1/2 + s}(\Gamma) \rightarrow H^{1/2 + s}(\Gamma),   \\
      &K_k^\prime  \colon H^{-1/2 + s}(\Gamma) \rightarrow H^{-1/2 + s}(\Gamma), \quad \quad \quad
      &&D_k        \colon H^{1/2 + s}(\Gamma) \rightarrow H^{-1/2 + s}(\Gamma)
    \end{alignedat}
  \end{equation}
  \item 
\label{item:proposition:mapping_properties_bios-v}
For $k \geq k_0 > 0$ one can decompose 
  \begin{equation}\label{proposition:mapping_properties_bios_helmholtz_splitting}
    \begin{alignedat}{2}
      &V_k - V_k                = S_{V}        + \gamma_0^{int} \widetilde{A}_{V}, \quad \quad
      &&K_k - K_0               = S_{K}        + \gamma_0^{int} \widetilde{A}_{K}, \\
      &K_k^\prime - K_0^\prime  = S_{K^\prime} + \gamma_1^{int} \widetilde{A}_{V}, \quad \quad
      &&D_k - D_0               = S_{D}        - \gamma_1^{int} \widetilde{A}_{K}
    \end{alignedat}
  \end{equation}
  with linear maps
  $\widetilde{A}_{V} \colon H^{-3/2}(\Gamma) \rightarrow C^{\infty}(\overline{\Omega})$
  and $\widetilde{A}_{K} \colon H^{-1/2}(\Gamma) \rightarrow C^{\infty}(\overline{\Omega})$
  and bounded linear operators $S_{V}$, $S_{K}$, $S_{K^\prime}$, and $S_{D}$ having the following mapping properties for $s \geq -1$: 
  \begin{equation}
   \label{proposition:mapping_properties_bios_helmholtz_splitting_mapping_properties}
    \begin{alignedat}{1}
      \| S_{V} u \|_{-1/2 + s', \Gamma}                &\leq C_{s, s^\prime} k^{-(1+s-s^\prime)} \| u \|_{-1/2 + s, \Gamma}, \qquad 1/2 \leq s^\prime \leq s+3, \\
      \| S_{K} u \|_{-1/2 + s', \Gamma}                 &\leq C_{s, s^\prime} k^{-(1+s-s^\prime)} \| u \|_{1/2 + s, \Gamma}, \qquad 1/2 \leq s^\prime \leq s+3, \\
      \| S_{K^\prime} u \|_{-3/2 + s', \Gamma}         &\leq C_{s, s^\prime} k^{-(1+s-s^\prime)} \| u \|_{-1/2 + s, \Gamma}, \qquad 3/2 \leq s^\prime \leq s+3, \\
      \| S_{D} u \|_{-3/2 + s', \Gamma}                 &\leq C_{s, s^\prime} k^{-(1+s-s^\prime)} \| u \|_{1/2 + s, \Gamma}, \qquad 3/2 \leq s^\prime \leq s+3. 
    \end{alignedat}
  \end{equation}
  The operators $\widetilde{A}_{V}$ and $\widetilde{A}_{K}$ have the mapping property
  \begin{align}
\label{eq:mapping_prop_A_tilde_V}
    \widetilde{A}_{V} f
    \in
    \AnaClassmelenk{C_V k \| f \|_{-3/2, \Gamma}}{\gamma_V}{\Omega}
    \qquad \forall f \in H^{-3/2}(\Gamma), \\
\label{eq:mapping_prop_A_tilde_K}
    \widetilde{A}_{K} f
    \in
    \AnaClassmelenk{C_K k \| f \|_{-1/2, \Gamma}}{\gamma_K}{\Omega}
    \qquad \forall f \in H^{-1/2}(\Gamma),
  \end{align}
  with constants $C_V$, $\gamma_V$, $C_K$, $\gamma_K$ independent of $k \geq k_0$.
  For $t \geq 0$ the following mapping properties hold true: 
  \begin{equation}\label{proposition:mapping_properties_bios_helmholtz_splitting_mapping_properties_simple}
    \begin{alignedat}{2}
      &\| S_{V} u \|_{t, \Gamma}                \leq C_{t} k^{-1}  \| u \|_{t, \Gamma}, \quad \quad \quad
      &&\| S_{K} u \|_{t, \Gamma}                \leq C_{t}         \| u \|_{t, \Gamma},  \\
      &\| S_{K^\prime} u \|_{t, \Gamma}         \leq C_{t}         \| u \|_{t, \Gamma}, \quad \quad \quad
      &&\| S_{D} u \|_{t, \Gamma}                \leq C_{t}  k      \| u \|_{t, \Gamma}. 
    \end{alignedat}
  \end{equation}
\end{enumerate}
\end{proposition}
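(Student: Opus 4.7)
The proposition bundles three rather different statements: items (i)--(iii) are Fredholm-type invertibility results for the combined-field operators, (iv) collects standard mapping properties of the Helmholtz and Laplace BIOs on an analytic boundary, and (v) is the wavenumber-explicit splitting that is the real substance of the proposition. My plan is to address the three groups separately, putting most effort into (v).

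For items (i)--(iii) I would combine the Fredholm alternative with a regularity bootstrap. On an analytic $\Gamma$ the operator $V_0$ (resp.\ $V_k$) maps $H^{-1/2+s}\to H^{1/2+s}$ and is therefore compact as $H^s\to H^s$; the operator $\tfrac12+K_0'$ (resp.\ $\tfrac12+K_k'$) differs from the identity by a smoothing-of-order-one operator on analytic $\Gamma$. Hence the combined-field operator is of the form $\tfrac12 I + (\text{compact on }H^s)$. Invertibility on $L^2$ is classical: for (iii) it is the bound~\eqref{eq:combined_field_operator_norm_optimal} from \cite{baskin-spence-wunsch16}; for (i)--(ii) it follows from the equivalence with the exterior Dirichlet / Neumann problems for $-\Delta$, the assumption $\mathrm{diam}\,\Omega<1$ in $d=2$ ensuring that $V_0$ is positive definite so that the indirect BIEs are uniquely solvable. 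Injectivity on $H^s$ for $s>0$ is inherited from $L^2$ via the embedding $H^s(\Gamma)\hookrightarrow L^2(\Gamma)$, and for $s=-1$ it follows by duality using the adjoint on $H^1$. Combined with compactness this yields bounded invertibility throughout the asserted range of $s$. Item (iv) is then a standard collection of mapping bounds on analytic $\Gamma$ (see, e.g., \cite{sauter-schwab11,steinbach08}) obtained by bootstrapping from the $H^{\pm 1/2}$-endpoints using the smoothness of the kernels.

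Item (v) is the heart of the proposition. The idea is to split the Helmholtz potentials $\widetilde V_k-\widetilde V_0$ and $\widetilde K_k-\widetilde K_0$ into a part whose action on $\Gamma$ yields the small-in-$k$ smoothing operators $S_\bullet$, and an \emph{analytic} remainder, whose traces $\gamma_0^{int}$ and $\gamma_1^{int}$ provide the operators $\widetilde A_V$ and $\widetilde A_K$. Two natural routes are available: (a) truncate the Taylor expansion of $G_k-G_0$ in $k|x-y|$, keeping enough terms that the truncation acts with the prescribed $k^{-(1+s-s^\prime)}$ behaviour while the remainder kernel is an entire function of $x-y$ and hence generates an analytic potential on all of $\overline\Omega$; (b) apply a low-pass filter of width $\eta k$ of the type used in Section~\ref{section:abstract_contraction_argument} to the densities, use that the low-pass-filtered potentials extend analytically into $\Omega$ with controlled constants, and let the high-pass residual supply the $k$-smoothing piece. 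In either construction the jump/trace identities
\[
V_k=\gamma_0^{int}\widetilde V_k,\quad K_k'=\gamma_1^{int}\widetilde V_k,\quad K_k=\gamma_0^{int}\widetilde K_k,\quad D_k=\gamma_1^{int}\widetilde K_k
\]
(and likewise at $k=0$) make it transparent that the \emph{same} analytic potential $\widetilde A_V$ appears in both $V_k-V_0$ and $K_k'-K_0'$, while the same $\widetilde A_K$ appears in both $K_k-K_0$ and $D_k-D_0$: only one analytic continuation is performed per layer potential.

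The main obstacle will be the sharp analyticity bounds~\eqref{eq:mapping_prop_A_tilde_V}, \eqref{eq:mapping_prop_A_tilde_K}, which assert $\widetilde A_V f\in\AnaClassmelenk{C_V k\|f\|_{-3/2,\Gamma}}{\gamma_V}{\Omega}$ with $C_V,\gamma_V$ independent of $k$. Differentiating the potential representation, every derivative in $x$ on $e^{ik|x-y|}$ yields a factor of $k$; the class $\AnaClass{\cdot}{\gamma}{\cdot}$ absorbs these factors via the weight $\max(n,k)^n$, which is exactly what is needed to handle both the oscillatory regime $n<k$ and the pure-analyticity regime $n\ge k$. The delicate step is balancing the factorials from the series expansion of $G_k-G_0$ (or from the low-pass cut-off) against the factorials produced by $\nabla^n$, so that the resulting $\gamma_V^n\max(n,k)^n$ bound comes with $C_V$, $\gamma_V$ independent of $k$. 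The two-dimensional case requires the additional step of isolating the common $\log$-singularity of $G_0$ and $G_k$ before the power-series / filter splitting; once the log-piece is subtracted, the same estimates apply to the smooth remainder. The auxiliary bounds~\eqref{proposition:mapping_properties_bios_helmholtz_splitting_mapping_properties_simple} then follow either by specialising~\eqref{proposition:mapping_properties_bios_helmholtz_splitting_mapping_properties} or by direct inspection of the construction of $S_\bullet$.
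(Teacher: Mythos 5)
Your plan is structurally sound and, for the parts it does address, follows essentially the same route as the literature on which the paper relies; the difference is that the paper's proof of this proposition is almost entirely by citation, whereas you attempt to sketch the underlying constructions. For items (i)--(iii) the paper quotes \cite[Lem.~3.5, Lem.~2.14]{melenk12}, \cite[Thm.~2.27]{chandler-wilde-graham-langdon-spence12} and \cite[Sec.~6.1]{baskin-spence-wunsch16}, plus one duality step identifying $A_{0,\eta}'$ as the adjoint of $A_{0,-\eta}$ to cover $s\in[-1,0]$ in (ii); your ``$L^2$-invertibility + compact smoothing perturbation of $\tfrac12 I$ + bootstrap up/duality down'' argument is exactly the content of those citations, and your observation that the four splittings in (v) come from just two potential decompositions via the trace/jump identities correctly reproduces the structure of \eqref{proposition:mapping_properties_bios_helmholtz_splitting}. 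Of your two proposed routes for (v), route (b) (frequency splitting of the densities/kernel) is the one actually used in \cite[Lem.~A.1]{mascotto-melenk-perugia-rieder20} and \cite[Thm.~5.3]{melenk12}, which the paper invokes; route (a) (Taylor truncation of $G_k-G_0$) is not what the cited works do and it is far from clear that it delivers the $\max\{n,k\}^n$ analyticity weights with $k$-independent constants.

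The genuine gap is that the decisive quantitative content of item (v) is announced but not proved. You explicitly defer the two hard points: the wavenumber-uniform analyticity bounds \eqref{eq:mapping_prop_A_tilde_V}--\eqref{eq:mapping_prop_A_tilde_K} (``the main obstacle'') and the balancing of factorials against the $k$-powers (``the delicate step''). These are precisely the estimates for which the proposition exists; without them the splitting is vacuous, since any decomposition into ``something small'' plus ``something analytic'' is trivial unless the constants are controlled uniformly in $k$. Likewise, the $k^{-(1+s-s')}$ bounds in \eqref{proposition:mapping_properties_bios_helmholtz_splitting_mapping_properties}, including the limiting cases $s'=1/2$ and $s'=3/2$ (which the paper obtains by inspecting the proof of the cited lemma together with a multiplicative trace estimate), are asserted rather than derived. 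If you intend a self-contained proof rather than a citation, you must either carry out the frequency-splitting construction of \cite{melenk12} in detail --- in particular the estimate $\|\nabla^n(\text{low-pass potential})\|\lesssim \gamma^n\max\{n,k\}^n$ with $\gamma$ independent of $k$ --- or reduce to \cite[Lem.~A.1]{mascotto-melenk-perugia-rieder20} as the paper does. A secondary, smaller gap: your duality argument for $s=-1$ in item (iii) presupposes invertibility of the non-primed Helmholtz combined-field operator $\tfrac12+K_k-i\eta V_k$ on $H^1(\Gamma)$, which itself requires justification (the paper avoids this by citing \cite[Thm.~2.27]{chandler-wilde-graham-langdon-spence12} directly for $s\in[-1,0]$).
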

\begin{proof}
  For Item~\ref{proposition:mapping_properties_bios_comined_field_operator_laplacian_without_prime}
  see~\cite[Lem.~{3.5}(ii)]{melenk12}.
  For Item~\ref{proposition:mapping_properties_bios_comined_field_operator_laplacian} in the case $s \geq 0$
  see~\cite[Lemma~{3.5}(iv)]{melenk12}.
  We turn to the case $s \in [-1, 0]$.
  Note that the adjoint of $A_{0,-\eta}$ is precisely the operator $A_{0,\eta}^\prime$.
  Furthermore, by Item~\ref{proposition:mapping_properties_bios_comined_field_operator_laplacian_without_prime}
  the operator $A_{0,-\eta} \colon H^t(\Gamma) \rightarrow H^t(\Gamma)$ is boundedly invertible in particular for $t \in [0,1]$.
  Hence, due to the adjoint of $A_{0,-\eta}$ being $A_{0,\eta}^\prime$,
  we find that $A_{0,\eta}^\prime  \colon H^s(\Gamma) \rightarrow H^s(\Gamma)$ is also boundedly invertible for $s \in [-1, 0]$.
  For Item~\ref{proposition:mapping_properties_bios_comined_field_operator_helmholtz}
  see~\cite[Thm.~{2.27}]{chandler-wilde-graham-langdon-spence12}
  in the case $s \in [-1, 0]$ as well as~\cite[Sec.~{6.1}]{baskin-spence-wunsch16}.
  Consequently, by~\cite[Lem.~{2.14}]{melenk12} invertibility holds for any $s \geq 0$.
  The mapping properties \eqref{proposition:mapping_properties_bios_helmholtz_standard_result}
in Item \ref{item:proposition:mapping_properties_bios-iv} are standard, see, e.g., \cite[Rem.~{3.1.18}(c)]{sauter-schwab11}. 
For 
  \eqref{proposition:mapping_properties_bios_helmholtz_splitting}
  and
  \eqref{proposition:mapping_properties_bios_helmholtz_splitting_mapping_properties}
in Item~\ref{item:proposition:mapping_properties_bios-v}  
  see~\cite[Lem.~{A.1}]{mascotto-melenk-perugia-rieder20}
  for the cases $1/2 < s^\prime$ and $3/2 < s^\prime$. 
  The limiting cases $s' = 1/2$ and $s' = 3/2$ follow by inspection of the proof,
  a multiplicative trace estimate, and the estimates for 
the potentials ${\widetilde {\mathcal S}}_{\mathcal V}$, ${\widetilde {\mathcal S}}_{\mathcal K}$ appearing in 
  \cite[Lem.~{A.1}]{mascotto-melenk-perugia-rieder20}. 
  \eqref{eq:mapping_prop_A_tilde_K} is likewise shown in 
  \cite[Lem.~{A.1}]{mascotto-melenk-perugia-rieder20}.
  \eqref{proposition:mapping_properties_bios_helmholtz_splitting_mapping_properties_simple}
  is just a simplification of \eqref{proposition:mapping_properties_bios_helmholtz_splitting_mapping_properties}.
\end{proof}

\subsection{Decomposition of the Dirichlet-to-Neumann map}

Before proceeding with the proof of 
Lemma~\ref{lemma:properties_dtn_operators}\ref{item:lemma:properties_dtn_operators_splitting_finite_ana_analytic_part}
let us introduce the jumps of the trace operators:
\begin{equation}
\label{eq:jump}
  \llbracket u \rrbracket \coloneqq \gamma_0^{ext} u - \gamma_0^{int} u,
  \qquad
  \llbracket \partial_n u \rrbracket \coloneqq \gamma_1^{ext} u - \gamma_1^{int} u.
\end{equation}
For linear operators $\widetilde{A}$ mapping into spaces of piecewise defined functions
we define the operator $\llbracket \widetilde{A} \rrbracket$ and $\llbracket \partial_n \widetilde{A} \rrbracket$
analogously, e.g., $\llbracket \widetilde{A} \rrbracket u \coloneqq \llbracket \widetilde{A} u \rrbracket  $.

We now collect further technical results from \cite{melenk12}.
We closely follow the notation and results of \cite{melenk12}.
As in \cite{melenk12} we assume
\begin{equation}\label{eq:eta-comb-field-estimate}
  C_\eta^{-1} k \leq |\eta| \leq C_\eta k
\end{equation}
for some $C_\eta > 0$ independent of $k$.

In the following Proposition~\ref{prop:lemma_melenk_6.3} we extend the results of
\cite[Lem.~{6.3}]{melenk12} to a wider range of Sobolev spaces.

\begin{proposition}[{\cite[Lemma~{6.3}]{melenk12}}]\label{prop:lemma_melenk_6.3}
  Let $\Omega \subset \mathbb{R}^d$ be a bounded Lipschitz domain with an analytic boundary $\Gamma$.
  Let $q \in (0,1)$.
  Then one can construct operators $L^{neg}_{\Gamma,q}$, $H^{neg}_{\Gamma,q}$ on $H^{-1}(\Gamma)$ with the following properties:
  \begin{enumerate}[nosep, start=1, label={(\roman*)},leftmargin=*]
    \item
    \label{item:lemma63_1}
    $L^{neg}_{\Gamma,q} f + H^{neg}_{\Gamma,q} f = f$ for all $f \in H^{-1}(\Gamma)$.
    \item
    \label{item:lemma63_2}
    For $-1 \leq s^\prime \leq s$ there holds $\|H^{neg}_{\Gamma,q} f\|_{s^\prime, \Gamma} \leq C_{s, s^\prime} (q/k)^{s-s^\prime} \| f \|_{s, \Gamma}$.
    \item
    \label{item:lemma63_3}
    $L^{neg}_{\Gamma,q} f$ is the restriction to $\Gamma$ of a function that is analytic in a tubular neighborhood $T$ of $\Gamma$ and satisfies
    \begin{align*}
      \| \nabla^n L^{neg}_{\Gamma,q} f \|_{0, T} &\leq C_q k^{d/2} \gamma_q^n \max \{ k,n \}^n \|f\|_{-1/2, \Gamma} \qquad \forall n \in \mathbb{N}_0, \\
      \| \nabla^n L^{neg}_{\Gamma,q} f \|_{0, T} &\leq C_q k^{d/2+1} \gamma_q^n \max \{ k,n \}^n \|f\|_{-1, \Gamma} \qquad \forall n \in \mathbb{N}_0.
    \end{align*}
  \end{enumerate}
  Here, $C_{s, s^\prime}$ is independent of $q$, $k$;
  the constants $C_q$, $\gamma_q > 0$ are independent of $k$.
\end{proposition}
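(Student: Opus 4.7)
The plan is to mimic the Fourier-cutoff filter construction of \cite[Lemma~6.3]{melenk12}, but to precede it with a lifting operator that accommodates surface data in the larger space $H^{-1}(\Gamma)$ rather than $L^2(\Gamma)$ or $H^{-1/2}(\Gamma)$. The construction splits naturally into three steps: lift $f$ from $\Gamma$ to a volume function $G$ on a fixed ball $B_R \supset\supset \overline{\Omega}$; apply the full-space volume filters from \cite{melenk-sauter11} to a cutoff version of $G$; and read off the surface splitting by taking the (jump of the) normal derivative.

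First, I would lift $f \in H^{-1}(\Gamma)$ to a function $G$ on $B_R$ via the Laplace single-layer potential $G := \widetilde{V}_0 f$ (equivalently, by solving $-\Delta G + G = 0$ on $B_R \setminus \Gamma$ with $\partial_n G = f$ on $\Gamma$ and homogeneous conditions on $\partial B_R$). Such a lifting is harmonic off $\Gamma$ and satisfies the shift bound $\|G\|_{s+3/2, B_R \setminus \Gamma} \lesssim \|f\|_{s,\Gamma}$ for $s \geq -1$, together with $\llbracket \partial_n G \rrbracket = -f$ on $\Gamma$. Fixing a cutoff $\chi \in C_0^\infty(B_R)$ with $\chi \equiv 1$ in a neighborhood of $\overline{\Omega}$ and applying the full-space filters $L^{\mathbb{R}^d}_{k/q}$, $H^{\mathbb{R}^d}_{k/q}$ of \cite[Section~4.1.1]{melenk-sauter11} to $\chi G$, I then set
\begin{equation*}
L^{neg}_{\Gamma,q} f := -\llbracket \partial_n(L^{\mathbb{R}^d}_{k/q}(\chi G))\rrbracket,
\qquad
H^{neg}_{\Gamma,q} f := f - L^{neg}_{\Gamma,q} f = -\llbracket \partial_n(H^{\mathbb{R}^d}_{k/q}(\chi G))\rrbracket,
\end{equation*}
so property \ref{item:lemma63_1} holds by construction.

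For \ref{item:lemma63_3}, the function $L^{\mathbb{R}^d}_{k/q}(\chi G)$ is entire analytic on $\mathbb{R}^d$ with Fourier support in a ball of radius $O(k/q)$, so Bernstein-type inequalities give $\|\nabla^n L^{\mathbb{R}^d}_{k/q}(\chi G)\|_{0,\mathbb{R}^d} \leq C \gamma_q^n \max\{k,n\}^n \|\chi G\|_{0,\mathbb{R}^d}$; combining this with the $L^2$-bounds $\|\chi G\|_{0,B_R} \lesssim k^{d/2}\|f\|_{-1/2,\Gamma}$ and $\|\chi G\|_{0,B_R} \lesssim k^{d/2+1}\|f\|_{-1,\Gamma}$ (where the extra factor of $k$ in the second estimate is obtained by trading half a derivative of regularity for a weight in $k$ via interpolation) yields the analytic estimates on any fixed tubular neighborhood $T$ of $\Gamma$. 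For \ref{item:lemma63_2}, I would use the Fourier-side bound $\|H^{\mathbb{R}^d}_{k/q} v\|_{r,\mathbb{R}^d} \leq C_{s,r} (q/k)^{s-r}\|v\|_{s,\mathbb{R}^d}$ together with the trace and lifting estimates to obtain $\|H^{neg}_{\Gamma,q} f\|_{s',\Gamma} \lesssim (q/k)^{s-s'}\|G\|_{s+3/2,B_R\setminus\Gamma} \lesssim (q/k)^{s-s'}\|f\|_{s,\Gamma}$, for all $-1 \leq s' \leq s$; the endpoint $s' = -1$ is handled by a standard duality argument against test functions in $H^1(\Gamma)$.

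The main obstacle will be the careful bookkeeping of the $k$-dependent prefactors in \ref{item:lemma63_3}, specifically making sure the $k^{d/2}$ (Bernstein) and the additional $k$ (when going from $H^{-1/2}$- to $H^{-1}$-data) emerge cleanly from the chosen lifting; a secondary technical point is verifying the full range $s' \in [-1,s]$ in \ref{item:lemma63_2} without losing the exact exponent $s-s'$, which will require either a direct construction that is manifestly self-adjoint or an interpolation argument between the case $s' \geq 0$ (already covered by the $L^2$-based construction of \cite[Lemma~6.3]{melenk12}) and the new endpoint $s' = -1$ established by duality.
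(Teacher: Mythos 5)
Your construction fails at its central definition. You set $L^{neg}_{\Gamma,q} f := -\llbracket \partial_n(L^{\mathbb{R}^d}_{k/q}(\chi G))\rrbracket$, and you yourself observe that $L^{\mathbb{R}^d}_{k/q}(\chi G)$ is an entire (band-limited) function on $\mathbb{R}^d$. An entire function has no jump in any derivative across $\Gamma$, so $\llbracket \partial_n(L^{\mathbb{R}^d}_{k/q}(\chi G))\rrbracket \equiv 0$: your decomposition degenerates to $L^{neg}_{\Gamma,q}=0$, $H^{neg}_{\Gamma,q}=\operatorname{id}$, which makes item \ref{item:lemma63_1} and item \ref{item:lemma63_3} vacuously true but item \ref{item:lemma63_2} false (no bound $\|f\|_{s',\Gamma}\le C(q/k)^{s-s'}\|f\|_{s,\Gamma}$ can hold for all $f$). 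A jump-based definition only produces something nonzero if the two sides of $\Gamma$ are filtered separately; the workable variant is the one-sided one used for the surface filters in Proposition~\ref{proposition:high_low_pass_filter_Gamma}: lift $f$ to $G$ on $\Omega$ with interior normal derivative $f$, Stein-extend, filter, and take the \emph{interior trace} of the normal derivative rather than a jump. Even with that repair, two further points need real work: your claim $\|\chi G\|_{0,B_R}\lesssim k^{d/2}\|f\|_{-1/2,\Gamma}$ is unfounded (the lifting bound is $k$-independent; the factor $k^{d/2}$ in item \ref{item:lemma63_3} does not arise this way), and for item \ref{item:lemma63_2} with $s'<1/2$ the trace of $\nabla(H^{\mathbb{R}^d}_{k/q}(EG))$ on $\Gamma$ does not exist classically for $f\in H^{-1}(\Gamma)$, so more than the one-line duality remark is required.

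Independently of the above, the proposal misses what this proposition actually adds. The paper does not rebuild the filters: it cites \cite{melenk12} for the construction, which is spectral on $\Gamma$ (truncation of the eigenfunction expansion of the Laplace--Beltrami operator), and for items \ref{item:lemma63_1}, \ref{item:lemma63_3}, and \ref{item:lemma63_2} in the range $-1\le s'\le s\le 1$. The sole new content is extending item \ref{item:lemma63_2} to $s>1$, which the paper achieves by proving that $u\mapsto((u,\varphi_m))_{m}$ is an isomorphism of $H^{s}(\Gamma)$ onto the weighted sequence space $h^{s}$ for all $s\ge -1$, using elliptic regularity for $\Delta_\Gamma$, induction via powers $\Delta_\Gamma^{n}$, and interpolation. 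Your plan does not engage with this range at all, so even a corrected lifting-based construction would still leave the actual extension unproved.
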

\begin{proof}
  For Items~\ref{item:lemma63_1},~\ref{item:lemma63_2} in the case $-1 \leq s^\prime \leq s \leq 1$ and Item~\ref{item:lemma63_3} 
see \cite[Lem.~{6.3} and Rem.~{6.4}]{melenk12}.
  The crucial extension is the estimate stated in Item~\ref{item:lemma63_2} in the case $-1 \leq s^\prime \leq s$ for $s \geq 1$.
  In the proof of \cite[Lem.~{6.3}]{melenk12} the operators $\HNegGamma$ and $\LNegGamma$ are explicitly constructed.
  We collect the important ingredients of the proof of \cite[Lem.~{6.3}]{melenk12} in the following.
  On the compact manifold $\Gamma$ consider the eigenvalue problem for the Laplace-Beltrami operator
  \begin{equation}\label{eq:laplace_beltrami_eigenvalue_problem}
    -\Delta_\Gamma \varphi = \lambda^2 \varphi \quad \text{on } \Gamma.
  \end{equation}
  There are countably many eigenfunctions $\varphi_m$, $m \in \mathbb{N}_0$,
  with corresponding eigenvalues $\lambda_m^2 \geq 0$,
  which we assume to be sorted in ascending order.
  Without loss of generality, these eigenfunctions are normalized in $L^2(\Gamma)$.
  The functions $(\varphi_m)_{m \in \mathbb{N}_0}$ may be assumed to be an orthonormal basis of $L^2(\Gamma)$
  and an orthogonal basis of $H^1(\Gamma)$.
  With $u_m \coloneqq (u, \varphi_m)$ we have
  \begin{equation}\label{eq:hshdots_tmp0}
    \| u \|_{0, \Gamma}^2 = \sum_{m = 0}^\infty |u_m|^2 \quad \text{ and } \quad
    \| u \|_{1, \Gamma}^2 = \sum_{m = 0}^\infty (1+\lambda_m^2) |u_m|^2.
  \end{equation}
  For $s \in \mathbb{R}$ we introduce the sequence space $h^s$ by
  \begin{equation*}
    h^s \coloneqq \left\{ (u_m)_{m \in \mathbb{N}} \colon \sum_{m = 0}^\infty (1+\lambda_m^2)^s |u_m|^2 < \infty \right\}.
  \end{equation*}
  The mapping $\iota \colon u \mapsto (\, \langle u, \varphi_m\rangle  \, )_{m \in \mathbb{N}_0}$ then provides an isomorphism between
  the Sobolev space $H^s(\Gamma)$ and the sequence space $h^s$ for $s \in [-1,1]$, with corresponding norm equivalence,
  see \cite[Lem.~{C.3}]{melenk12}.
  However, as we will see below $\iota$ is in fact an isomorphism for all $s \geq -1$.
  Inspection of the proof of \cite[Lem.~{6.3}]{melenk12},
  in particular the proof of the estimate for $\HNegGamma$,
  reveals that
  \begin{equation*}
    \|H^{neg}_{\Gamma,q} f\|_{s^\prime, \Gamma} \leq C_{s, s^\prime} (q/k)^{s-s^\prime} \| f \|_{s, \Gamma}
  \end{equation*}
  holds for all $-1 \leq s^\prime \leq s$,
  for which $\iota \colon H^s(\Gamma) \rightarrow h^s$
  and $\iota \colon H^{s^\prime}(\Gamma) \rightarrow h^{s^\prime}$
  are isomorphisms.
  Hence, the proof is complete once we establish that
  $\iota \colon H^s(\Gamma) \rightarrow h^s$ is an isomorphism for all $s > 1$.
  We show the case $s=2$.
  \\
  \textbf{The inclusion $h^2 \xhookrightarrow{} H^2(\Gamma)$}:
  Let $u = \sum_{m = 0}^\infty u_m \varphi_m$ be such that $\sum_{m = 0}^\infty (1+\lambda_m^2)^2 |u_m|^2 < \infty$.
  Let $u^N = \sum_{m = 0}^N u_m \varphi_m$.
  By the above construction, $u^N \rightarrow u$ in $H^1(\Gamma)$ and
  $\| u \|_{1,\Gamma} = \| (u_m)_{m\in \mathbb{N}} \|_{h^1}$.
  Furthermore, we have
  \begin{align*}
    \lVert \Delta_\Gamma (u^N -  u^{M-1}) \rVert_{0, \Gamma}^2
    \!=\! \left\lVert \sum_{m = M}^N u_m \Delta_\Gamma \varphi_m \right\rVert_{0, \Gamma}^2 
\!=\! \left\lVert \sum_{m = M}^N u_m \lambda_m^2 \varphi_m \right\rVert_{0, \Gamma}^2
    \!=\!\! \sum_{m = M}^N |u_m|^2 \lambda_m^4,
  \end{align*}
which converges to zero as $N$, $M \rightarrow \infty$. 
  Here we used~\eqref{eq:laplace_beltrami_eigenvalue_problem},
  the fact that the eigenfunctions are an orthonormal basis of $L^2(\Gamma)$,
  and the assumed convergence $\sum_{m = 0}^\infty (1+\lambda_m^2)^2 |u_m|^2 < \infty$.
  Therefore, $(u^N)_{N \in {\mathbb N}_0}$ is a Cauchy sequence in $H^1(\Gamma, \Delta_\Gamma) = \{u \in H^1(\Gamma) \colon \Delta_\Gamma u \in L^2(\Gamma)\}$,
  endowed with the graph norm.
  Consequently $(u^N)_{N \in {\mathbb N}_0}$ converges in $H^1(\Gamma, \Delta_\Gamma)$.
  Since $\Delta_\Gamma: H^1(\Gamma, \Delta_\Gamma) \to L^2(\Gamma)$ is continuous,
  we conclude $\Delta_\Gamma u = \sum_{m \in \mathbb{N}_0} u_m \Delta_\Gamma \varphi_m = - \sum_{m \in \mathbb{N}_0} u_m \lambda_m^2 \varphi_m$.
  Finally, by elliptic regularity we can now estimate
  \begin{equation}\label{eq:hshdots_tmp1}
    |u|_{2,\Gamma}^2 \lesssim \lVert \Delta_\Gamma u \rVert_{0,\Gamma}^2 = \sum_{m \in \mathbb{N}_0} |f_m|^2 \lambda_m^4 = |(u_m)_{m \in \mathbb{N}_0}|^2_{h^2},
  \end{equation}
and 
$\displaystyle 
    \|u\|_{2,\Gamma}^2 \lesssim \|(u_m)_{m \in \mathbb{N}_0}\|^2_{h^2}
$
  follows by~\eqref{eq:hshdots_tmp1} together with~\eqref{eq:hshdots_tmp0}.
\newline 
  \textbf{The inclusion $H^2(\Gamma) \xhookrightarrow{} h^2$}:
  Let $u \in H^2(\Gamma)$ be given with the representation $u = \sum_{m = 0}^{\infty} u_m \varphi_m$,
  where the sum converges in $H^1(\Gamma)$.
  Since $u \in H^2(\Gamma)$ we have $-\Delta_\Gamma u \eqqcolon f \in L^2(\Gamma)$.
  In the following we express the coefficient $u_m$ in terms of $f_m$.
  Note that
  \begin{equation*}
    \lambda_m^2 u_m = \lambda_m^2 \langle u, \varphi_m\rangle  = \langle  \nabla_\Gamma u, \nabla_\Gamma  \varphi_m\rangle  = \langle  f, \varphi_m\rangle = f_m.
  \end{equation*}
  Hence, we have $\lambda_m^2 u_m = f_m$ and consequently
  \begin{equation*}
    \sum_{m = 0}^\infty \lambda_m^4 |u_m|^2 =  \sum_{m = 0}^\infty |f_m|^2 < \infty.
  \end{equation*}
  Finally, using~(\ref{eq:hshdots_tmp1}) as well as the fact that $\| u \|_{1,\Gamma} = \| (u_m)_{m\in \mathbb{N}_0} \|_{h^1}$,  we find
  \begin{equation*}
    \|(u_m)_{m \in \mathbb{N}_0}\|^2_{h^2}
    = \|(u_m)_{m \in \mathbb{N}_0}\|^2_{h^1} + |(u_m)_{m \in \mathbb{N}_0}|^2_{h^2}
    = \| u \|_{1,\Gamma}^2 + \| \Delta_\Gamma u \|_{0,\Gamma}^2 \lesssim \|u\|_{2,\Gamma}^2.
  \end{equation*}
  This concludes the proof for $s=2$.
  Interpolation between $s=1$ and $s=2$ yields the result for $s \in (1,2)$,
  see \cite[Lem.~{C.3}]{melenk12}.
  Inductively one proceeds for the space $H^{2n}(\Gamma)$ by similar arguments.
  Instead of $\Delta_\Gamma$ one performs the same arguments for $\Delta_\Gamma^n$.
\end{proof}


In the following we will prove an extension of \cite[Thm.~{2.9}]{melenk12}
in Theorem~\ref{thm:extension_of_thm_melenk_2.9} below.
The proof of Theorem~\ref{thm:extension_of_thm_melenk_2.9} relies on a decomposition of
the volume potential $\widetilde{V}_k$,
which we present below for the readers' convenience.
\begin{proposition}[{\cite[Thm.~{5.3}]{melenk12}}]\label{prop:thm_melenk_5.3}
  Let $\Gamma$ be analytic and $q \in (0,1)$.
  Then
  \begin{equation*}
    \widetilde{V}_k = \widetilde{V}_0 + \widetilde{S}_{V,pw} + \widetilde{\mathcal{A}}_{V,pw},
  \end{equation*}
  where the linear operators $\widetilde{S}_{V,pw}$ and $\widetilde{\mathcal{A}}_{V,pw}$ satisfy the following for every $s \geq -1$:
  \begin{enumerate}[nosep, start=1, label={(\roman*)},leftmargin=*]
    \item
    \label{item:thm5.3_1}
    $\widetilde{S}_{V,pw} \colon H^{-1/2+s}(\Gamma) \rightarrow H^{3+s}(\Omega_R)$ with
    \begin{equation*}
      \| \widetilde{S}_{V,pw} \varphi \|_{s^\prime, \Omega_R} \leq C_{s^\prime, s} q^2 (q k^{-1})^{1+s-s^\prime} \| \varphi \|_{-1/2 + s, \Gamma}, \quad 0 \leq s^\prime \leq s+3.
    \end{equation*}
    Here, the constant $C_{s^\prime, s} > 0$ is independent of $q$ and $k\geq k_0$.
    \item
    \label{item:thm5.3_2}
    $\widetilde{\mathcal{A}}_{V,pw} \colon H^{-1/2+s}(\Gamma) \rightarrow H^2(B_R) \cap C^{\infty}(\Omega_R)$ with
    \begin{equation*}
      \| \nabla^n \widetilde{\mathcal{A}}_{V,pw} \varphi \|_{0, \Omega_R} \leq C_q \gamma_q \max\{n+1,k\}^{n+1} \| \varphi \|_{-3/2, \Gamma} \quad \forall n \in \mathbb{N}_0.
    \end{equation*}
    Here, $C_q$, $\gamma_q > 0$ are independent of $k\geq k_0$ but may depend on $q$.
  \end{enumerate}
\end{proposition}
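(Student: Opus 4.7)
\smallskip

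\textbf{Proof proposal.} The strategy is a frequency decomposition of the density driven by the surface filters of Proposition~\ref{prop:lemma_melenk_6.3}. Write $\varphi = L^{neg}_{\Gamma,q}\varphi + H^{neg}_{\Gamma,q}\varphi$ and set
\begin{equation*}
\widetilde{\mathcal{A}}_{V,pw}\varphi := (\widetilde V_k - \widetilde V_0)\, L^{neg}_{\Gamma,q}\varphi,
\qquad
\widetilde{S}_{V,pw}\varphi := (\widetilde V_k - \widetilde V_0)\, H^{neg}_{\Gamma,q}\varphi,
\end{equation*}
so that $\widetilde V_k = \widetilde V_0 + \widetilde{S}_{V,pw} + \widetilde{\mathcal{A}}_{V,pw}$ automatically. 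A useful structural observation is that the normal-derivative jump relation $\llbracket \partial_n \widetilde V_k \phi \rrbracket = -\phi$ holds for every $k \ge 0$ with the same right-hand side; hence the difference $(\widetilde V_k - \widetilde V_0)\phi$ has \emph{no} jump across $\Gamma$, which is what places $\widetilde{\mathcal{A}}_{V,pw}\varphi$ in $H^2(B_R)$ rather than merely $H^2(\Omega_R)$.

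For the bounds on $\widetilde{S}_{V,pw}$ in \ref{item:thm5.3_1}, I would use the mapping properties of the single layer potential together with the kernel smoothing of $G_k - G_0$ (whose singularity at the diagonal is weaker than that of $G_k$ and $G_0$ individually): for any $\tau \in [-3/2, s-1/2]$,
\begin{equation*}
\| (\widetilde V_k - \widetilde V_0)\psi \|_{s',\Omega_R} \le C_{s',\tau}\, k^{\text{something}} \|\psi\|_{\tau,\Gamma},
\end{equation*}
with the appropriate $k$-dependence following by inspection of $G_k - G_0$. Combining this with the smoothing bound $\|H^{neg}_{\Gamma,q}\varphi\|_{\tau,\Gamma} \le C(q/k)^{s-1/2-\tau}\|\varphi\|_{-1/2+s,\Gamma}$ from Proposition~\ref{prop:lemma_melenk_6.3}\ref{item:lemma63_2} and optimizing the intermediate regularity $\tau$ yields exactly the factor $q^2 (q k^{-1})^{1+s-s'}$ claimed in~\ref{item:thm5.3_1} for all $0 \le s' \le s+3$.

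For the analytic estimate~\ref{item:thm5.3_2}, the key is that $L^{neg}_{\Gamma,q}\varphi$ is the trace of a function $\Phi$ that is analytic on a tubular neighborhood $T$ of $\Gamma$ with the explicit growth bounds of Proposition~\ref{prop:lemma_melenk_6.3}\ref{item:lemma63_3}. Using this analytic extension, one can rewrite $\widetilde V_k L^{neg}_{\Gamma,q}\varphi$ and $\widetilde V_0 L^{neg}_{\Gamma,q}\varphi$ by a Stokes/integration-by-parts argument to avoid the diagonal singularity of the Green's functions on $\Gamma$; the difference then satisfies piecewise on $\Omega_R$ a Helmholtz equation with right-hand side involving $k^2 \Phi$ and analytic boundary data. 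Applying the interior and boundary analytic-regularity results \cite[Prop.~{5.5.1}, 5.5.3}]{melenk02} to this elliptic problem, while carefully tracking how the analyticity constants of $\Phi$ (which include the $k^{d/2+1}$ factor in Proposition~\ref{prop:lemma_melenk_6.3}\ref{item:lemma63_3} when starting from $\|\varphi\|_{-3/2,\Gamma}$) propagate, delivers the desired bound $\|\nabla^n \widetilde{\mathcal{A}}_{V,pw}\varphi\|_{0,\Omega_R} \le C_q \gamma_q^n \max\{n+1,k\}^{n+1}\|\varphi\|_{-3/2,\Gamma}$.

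The main obstacle is the precise $k$-bookkeeping in~\ref{item:thm5.3_2}: keeping the analytic radius $\gamma_q$ independent of $k$ and matching the exponent $\max\{n+1,k\}^{n+1}$ requires that the $k$-power from Proposition~\ref{prop:lemma_melenk_6.3}\ref{item:lemma63_3} combines correctly with the $k$-power arising from the right-hand side $k^2 \Phi$ of the Helmholtz equation. The full technical details of this bookkeeping are carried out in \cite[Thm.~{5.3}]{melenk12}, which we cite directly.
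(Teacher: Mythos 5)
First, note that the paper does not actually prove this proposition: it is quoted verbatim from \cite[Thm.~5.3]{melenk12} ``for the readers' convenience'', so the comparison is with the cited proof. Your construction is genuinely different from that one. In \cite{melenk12} one exploits that $\widetilde V_k\varphi$ and $\widetilde V_0\varphi$ have identical jumps across $\Gamma$ (your jump observation, which is correct and is indeed what places the difference in $H^2(B_R)$), so that $w:=\widetilde V_k\varphi-\widetilde V_0\varphi$ solves $-\Delta w=k^2\widetilde V_k\varphi$ in $B_R$ with no jumps; the decomposition is then obtained by applying \emph{volume} frequency filters (threshold $\sim k/q$) to the right-hand side $k^2\widetilde V_k\varphi$ and inverting the Laplacian, which supplies two extra orders of smoothing and, crucially, negative-norm filter estimates without any floor on the Sobolev index. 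You instead apply the \emph{surface} filters $L^{neg}_{\Gamma,q},H^{neg}_{\Gamma,q}$ to the density $\varphi$.

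This is where your argument has a genuine gap, in item \ref{item:thm5.3_1}. The target bound carries $q^{3+s-s'}$ total powers of $q$ (writing $q^2(qk^{-1})^{1+s-s'}=q^{3+s-s'}k^{s'-s-1}$). In your splitting the only possible source of $q$ is $H^{neg}_{\Gamma,q}$, whose smoothing estimate (Proposition~\ref{prop:lemma_melenk_6.3}\ref{item:lemma63_2}) reads $\|H^{neg}_{\Gamma,q}\varphi\|_{\tau,\Gamma}\leq C(q/k)^{(s-1/2)-\tau}\|\varphi\|_{-1/2+s,\Gamma}$ and is only available for $\tau\geq-1$. To harvest $q^{3+s-s'}$ you must take $\tau=s'-7/2$, which violates $\tau\geq-1$ whenever $s'<5/2$. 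Concretely, for $s=s'=0$ the claim is $\|\widetilde S_{V,pw}\varphi\|_{0,\Omega_R}\lesssim q^3k^{-1}\|\varphi\|_{-1/2,\Gamma}$, while your route gives at best $\|(\widetilde V_k-\widetilde V_0)H^{neg}_{\Gamma,q}\varphi\|_{0,\Omega_R}\lesssim k^a(q/k)^{1/2}\|\varphi\|_{-1/2,\Gamma}$ with $a\geq1$ (the leading term of $G_k-G_0$ is $ik/(4\pi)$ in $d=3$, so no negative power of $k$ is available from the difference kernel); this is neither small in $q$ nor decaying in $k$. These missing powers of $q$ are not cosmetic: they are exactly what is consumed in Steps~2--3 of the proof of Theorem~\ref{thm:extension_of_thm_melenk_2.9} to make $R_{A'}$ a contraction. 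A secondary mismatch: item \ref{item:thm5.3_2} controls the analytic part by $\|\varphi\|_{-3/2,\Gamma}$, whereas Proposition~\ref{prop:lemma_melenk_6.3}\ref{item:lemma63_3} only bounds $L^{neg}_{\Gamma,q}\varphi$ in terms of $\|\varphi\|_{-1/2,\Gamma}$ and $\|\varphi\|_{-1,\Gamma}$, so your analytic estimate would also need an extension of the filter result below $H^{-1}(\Gamma)$.
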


\begin{theorem}[Extension of {\cite[Thm.~{2.9}]{melenk12}}]\label{thm:extension_of_thm_melenk_2.9}
  Let $\Gamma$ be analytic, and let $s \geq 0$.
  Fix $q \in (0,1)$.
  Then the operator $A^\prime_{k, \eta}$ can be written in the form
  \begin{equation*}
    A^\prime_{k, \eta} = A_{0,1}^\prime + R_{A^\prime} + k \llbracket \widetilde{\mathcal{A}}_{1} \rrbracket + \llbracket \partial_n \widetilde{\mathcal{A}}_{2} \rrbracket,
  \end{equation*}
  where the linear operator $R_{A^\prime}$ satisfies
  \begin{subequations}\label{eq:Rmapping}
  \begin{align}
    \| R_{A^\prime} u \|_{s+1, \Gamma} &\leq C k \| u \|_{s, \Gamma},   \label{eq:R_splus1s} \\
    \| R_{A^\prime} u \|_{s, \Gamma}   &\leq C k \| u \|_{s-1, \Gamma}, \label{eq:R_ssminus1}\\
    \| R_{A^\prime} u \|_{s, \Gamma} &\leq q \| u \|_{s, \Gamma},       \label{eq:R_ss}\\
    \| R_{A^\prime} u \|_{s-1, \Gamma} &\leq q \| u \|_{s-1, \Gamma},   \label{eq:R_sminus1sminus1}
  \end{align}
  \end{subequations}
  and the linear operators $\widetilde{\mathcal{A}}_{1}$, $\widetilde{\mathcal{A}}_{2} \colon H^{-1}(\Gamma) \rightarrow C^{\infty}(T)$ satisfy
  \begin{subequations}
  \begin{alignat}{2}
    \widetilde{\mathcal{A}}_{1} \varphi  &\in \AnaClassmelenk{C_q C_{1,\varphi}}{\gamma_q}{T},
    \quad \quad
    C_{1,\varphi} &&= k \| \varphi \|_{-3/2, \Gamma} + k^{d/2} \| \varphi \|_{-1, \Gamma}, \\
    \widetilde{\mathcal{A}}_{2} \varphi  &\in \AnaClassmelenk{C_q C_{2,\varphi}}{\gamma_q}{T},
    \quad \quad
    C_{2,\varphi} &&= k \| \varphi \|_{-3/2, \Gamma}.
  \end{alignat}
  \end{subequations}
  The constant $C$ and the tubular neighborhood $T$ of $\Gamma$ are independent of $k \geq k_0$ and $q$;
  the constants $C_q$, $\gamma_q > 0$ are independent of $k \geq k_0$ (but may depend of $q$).
\end{theorem}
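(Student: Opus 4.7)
The strategy, following the line of \cite[Thm.~{2.9}]{melenk12}, combines three ingredients assembled earlier in this appendix: (i) the jump relations $V_k = \gamma_0^{int}\widetilde V_k$ and $\tfrac12 + K_k' = \gamma_1^{int}\widetilde V_k$ for the single-layer potential; (ii) the volume-potential splitting $\widetilde V_k = \widetilde V_0 + \widetilde S_{V,pw} + \widetilde{\mathcal A}_{V,pw}$ of Proposition~\ref{prop:thm_melenk_5.3}, which already isolates an analytic piece defined piecewise on $B_R\setminus\Gamma$; and (iii) the boundary low/high pass decomposition $\id = \LNegGamma + \HNegGamma$ from Proposition~\ref{prop:lemma_melenk_6.3}. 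The main extension compared to \cite[Thm.~{2.9}]{melenk12} is that the admissible Sobolev exponent is now arbitrary $s \ge 0$, which is made possible by the extension of Proposition~\ref{prop:lemma_melenk_6.3}\ref{item:lemma63_2} to this wider range.

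I would first write
$$A'_{k,\eta} - A'_{0,1} = (K_k' - K_0') + i(V_k - V_0) + i(\eta - 1) V_k$$
and substitute the boundary integral splittings of Proposition~\ref{proposition:mapping_properties_bios}\ref{item:proposition:mapping_properties_bios-v}. This produces Sobolev-regular remainders $S_{K'} + i S_V + i(\eta - 1) V_k$, each of order $-1$ with amplitude $O(k)$, together with an analytic-trace contribution $(\gamma_1^{int} + i\gamma_0^{int})\widetilde A_V$. The remainder pieces, combined with \eqref{proposition:mapping_properties_bios_helmholtz_splitting_mapping_properties_simple}, already deliver the order-reducing estimates \eqref{eq:R_splus1s}--\eqref{eq:R_ssminus1}. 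To upgrade these to the smallness estimates \eqref{eq:R_ss}--\eqref{eq:R_sminus1sminus1}, I would apply all remainder operators to the high-pass component $\HNegGamma u$: the key gain $\|\HNegGamma u\|_{s-1,\Gamma} \le C(q/k)\|u\|_{s,\Gamma}$ of Proposition~\ref{prop:lemma_melenk_6.3}\ref{item:lemma63_2} combines with the $O(k)$ amplitude and the one-derivative gain to produce operators of size $q$ in both $H^s$ and $H^{s-1}$. All contributions assembled in this step are absorbed into $R_{A'}$.

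For the low-pass input $\LNegGamma u$, Proposition~\ref{prop:lemma_melenk_6.3}\ref{item:lemma63_3} provides an analytic extension on a tubular neighborhood $T$ of $\Gamma$. For this input I would instead use the potential-level identity $A'_{k,\eta} - A'_{0,1} = \gamma_1^{int}(\widetilde V_k - \widetilde V_0) + i\gamma_0^{int}(\widetilde V_k - \widetilde V_0) + i(\eta - 1) V_k$ and substitute the splitting of Proposition~\ref{prop:thm_melenk_5.3}. The Sobolev-regular piece $\widetilde S_{V,pw}$ (together with the action of $i(\eta - 1) V_k$ on the low-pass data) is again absorbed into $R_{A'}$, with the required smallness coming from the norms carried by the low-pass lifting. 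The analytic volume potential $\widetilde{\mathcal A}_{V,pw}$, piecewise analytic on $B_R\setminus \Gamma$ and satisfying the bounds of Proposition~\ref{prop:thm_melenk_5.3}\ref{item:thm5.3_2}, provides $\widetilde{\mathcal A}_1$ and $\widetilde{\mathcal A}_2$: the contribution carrying the factor $i\eta\sim ik$ is assigned to $k\llbracket \widetilde{\mathcal A}_1\rrbracket$, while the contribution carrying $\gamma_1^{int}$ (and hence a normal derivative across $\Gamma$) is assigned to $\llbracket \partial_n \widetilde{\mathcal A}_2\rrbracket$. The sharp bounds on $C_{1,\varphi}$ (in particular its term $k^{d/2}\|\varphi\|_{-1,\Gamma}$) and on $C_{2,\varphi}$ then follow by chaining the analyticity estimates of Proposition~\ref{prop:lemma_melenk_6.3}\ref{item:lemma63_3} with those of Proposition~\ref{prop:thm_melenk_5.3}\ref{item:thm5.3_2}.

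The main obstacle is the careful bookkeeping required to route each piece produced by the expansion either to $R_{A'}$, where it must simultaneously satisfy both the order-reducing bounds \eqref{eq:R_splus1s}--\eqref{eq:R_ssminus1} and the smallness bounds \eqref{eq:R_ss}--\eqref{eq:R_sminus1sminus1}, or to the analytic terms $\widetilde{\mathcal A}_i$, where the precise distribution of $k$-powers in $C_{1,\varphi}$ versus $C_{2,\varphi}$ must be tracked. In particular, one must keep track of which pieces carry the factor $\eta \sim k$ (they feed $k\llbracket\widetilde{\mathcal A}_1\rrbracket$) and which arise from the normal trace $\gamma_1^{int}$ (they feed $\llbracket \partial_n \widetilde{\mathcal A}_2\rrbracket$), while simultaneously ensuring that the smallness constant $q$ is independent of $s$ and $k$.
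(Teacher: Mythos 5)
Your overall architecture (the volume-potential splitting of Proposition~\ref{prop:thm_melenk_5.3} combined with the boundary filters of Proposition~\ref{prop:lemma_melenk_6.3}, the $\eta$-weighted analytic pieces feeding $k \llbracket \widetilde{\mathcal{A}}_{1} \rrbracket$ and the $\gamma_1^{int}$ pieces feeding $\llbracket \partial_n \widetilde{\mathcal{A}}_{2} \rrbracket$) is the right one, but there is a genuine gap in \emph{where} you apply the filter. You split the input, $u = \LNegGamma u + \HNegGamma u$, and send the finite-regularity remainders applied to $\HNegGamma u$ into $R_{A^\prime}$; that half is sound, since an order-$(-1)$ operator of amplitude $O(k)$ composed with $\HNegGamma$ (which gains a factor $q/k$ per derivative) is indeed $O(q)$ at fixed Sobolev order. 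The problem is the low-pass input: the terms $S_{K^\prime}(\LNegGamma u)$, $i S_V(\LNegGamma u)$, $i(\eta-1)V_k(\LNegGamma u)$ and $\gamma_j^{int}\widetilde S_{V,pw}(\LNegGamma u)$ cannot be ``absorbed into $R_{A^\prime}$ with the required smallness coming from the norms carried by the low-pass lifting,'' because $\LNegGamma$ has no smallness at all --- $\|\LNegGamma u\|_{s,\Gamma}$ is comparable to $\|u\|_{s,\Gamma}$; only the high-pass filter gains powers of $q/k$, and only when measured in weaker norms. Nor can these terms be routed into $\widetilde{\mathcal{A}}_1$, $\widetilde{\mathcal{A}}_2$: that would require an ``analytic in, analytic out'' property for $S_{K^\prime}$, $S_V$, $V_0$, $V_k$ and $\widetilde S_{V,pw}$ with $k$-explicit analyticity-class constants, which the cited propositions do not provide ($\widetilde S_{V,pw}$ and the $S_\bullet$ operators are only finite-order Sobolev smoothers, with constants $C_{s^\prime,s}$ not controlled uniformly in the Sobolev indices, so no analyticity class can be extracted; the kernel of $V_k-V_0$ is not analytic at the diagonal). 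This structural mismatch also shows up in the constants: the term $k^{d/2}\|\varphi\|_{-1,\Gamma}$ in $C_{1,\varphi}$ has no natural source in your scheme.

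The fix --- and the paper's actual argument --- is to filter the \emph{output} of the finite-regularity remainders rather than the input. Starting from $A^\prime_{k,\eta} = A^\prime_{0,1} + \gamma_1^{int}(\widetilde S_{V,pw} + \widetilde{\mathcal{A}}_{V,pw}) + i(\eta-1)\gamma_0^{int}\widetilde V_0 + i\eta\, \gamma_0^{int}(\widetilde S_{V,pw} + \widetilde{\mathcal{A}}_{V,pw})$, one sets $R_{A^\prime} := \HNegGamma\bigl(\gamma_1^{int}\widetilde S_{V,pw} + i\eta\,\gamma_0^{int}\widetilde S_{V,pw} + i(\eta-1)V_0\bigr)$. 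The smallness \eqref{eq:R_ss}--\eqref{eq:R_sminus1sminus1} then follows by composing the gain $(q/k)$ of $\HNegGamma$ with the one-order smoothing and $O(k)$ amplitude of the remainders, and \eqref{eq:R_splus1s}--\eqref{eq:R_ssminus1} from the uniform boundedness of $\HNegGamma$. The complementary piece $\LNegGamma(\cdots)$ is analytic by Proposition~\ref{prop:lemma_melenk_6.3}\ref{item:lemma63_3} \emph{irrespective of the frequency content of $u$} --- this is exactly what rescues the low-frequency contributions your scheme cannot place --- and it is folded, together with $i\eta\widetilde{\mathcal{A}}_{V,pw}$, into $\widetilde{\mathcal{A}}_1 = -k^{-1}\chi_\Omega\bigl(i\eta\widetilde{\mathcal{A}}_{V,pw} + \LNegGamma(\cdots)\bigr)$, while $\widetilde{\mathcal{A}}_2 = -\chi_\Omega\widetilde{\mathcal{A}}_{V,pw}$; the second estimate in Proposition~\ref{prop:lemma_melenk_6.3}\ref{item:lemma63_3} is precisely the source of the $k^{d/2}\|\varphi\|_{-1,\Gamma}$ term in $C_{1,\varphi}$.
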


\begin{proof}
  We perform a similar splitting as in the proof of~\cite[Thm.~{2.9}]{melenk12}.
  The starting point of our analysis is the decomposition
  \begin{equation*}
    A^\prime_{k, \eta}
    = \frac{1}{2} + K_0^\prime + \gamma_1^{int}(\widetilde{S}_{V,pw} + \widetilde{\mathcal{A}}_{V,pw}) + i \eta \gamma_0^{int}(\widetilde{V}_0 + \widetilde{S}_{V,pw} + \widetilde{\mathcal{A}}_{V,pw}),
  \end{equation*}
  with $\widetilde{S}_{V,pw}$ and $\widetilde{\mathcal{A}}_{V,pw}$ as in Proposition~\ref{prop:thm_melenk_5.3},
  see~\cite[Eq.~{(6.4)}]{melenk12}.
  Adding and subtracting $i V_0$ and noting $V_0 = \gamma_0^{int} \widetilde{V}_0$ we find
  \begin{align*}
    A^\prime_{k, \eta}
    &= \frac{1}{2} + K_0^\prime + i V_0 + \gamma_1^{int}(\widetilde{S}_{V,pw} + \widetilde{\mathcal{A}}_{V,pw}) + i (\eta - 1) \gamma_0^{int} \widetilde{V}_0 + i \eta \gamma_0^{int}(\widetilde{S}_{V,pw} + \widetilde{\mathcal{A}}_{V,pw}) \\
    &= A_{0,1}^\prime + \gamma_1^{int}(\widetilde{S}_{V,pw} + \widetilde{\mathcal{A}}_{V,pw}) + i (\eta - 1) \gamma_0^{int} \widetilde{V}_0 + i \eta \gamma_0^{int}(\widetilde{S}_{V,pw} + \widetilde{\mathcal{A}}_{V,pw}). 
  \end{align*}
  Using the filters $H^{neg}_{\Gamma, q}$ and $L^{neg}_{\Gamma, q}$ of Proposition~\ref{prop:lemma_melenk_6.3} we define
  \begin{subequations}\label{splitting_operator_definition}
  	\begin{alignat}{1}
      R_{A^\prime}
      &= H^{neg}_{\Gamma, q}\left( \gamma_1^{int} \widetilde{S}_{V,pw} + i \eta \gamma_0^{int} \widetilde{S}_{V,pw} + i (\eta - 1) V_0 \right), \label{splitting_operator_definition-1}\\
      \nonumber
      \widetilde{\mathcal{A}}_1
      &= - k^{-1} \chi_\Omega \left( i \eta \widetilde{\mathcal{A}}_{V,pw} + L^{neg}_{\Gamma, q} \left( \gamma_1^{int} \widetilde{S}_{V,pw}  + i \eta \gamma_0^{int} \widetilde{S}_{V,pw} + i (\eta - 1) V_0 \right) \right), \\
      \nonumber
      \widetilde{\mathcal{A}}_2
      &= - \chi_\Omega \widetilde{\mathcal{A}}_{V,pw}.
  	\end{alignat}
  \end{subequations}
  The mapping properties of $\widetilde{\mathcal{A}}_1$ and $\widetilde{\mathcal{A}}_2$
  stay the same as in~\cite[Thm.~{2.9}]{melenk12}.
  We are left with the mapping properties of $R_{A^\prime}$.
  In the following the parameter $q$ appearing in
  Proposition~\ref{prop:lemma_melenk_6.3} and~\ref{prop:thm_melenk_5.3} is
  still at our disposal\footnote{Do not confuse this $q$ with the one appearing in the statement of the present theorem.}.
  We fix it at the end of the proof to ensure the estimates~\eqref{eq:R_ss} and~\eqref{eq:R_sminus1sminus1}.

  \textbf{Step 1}:
  We estimate the term $i (\eta - 1) \HNegGamma V_0$ in various norms.
  We heavily use the estimates for $\HNegGamma$ and $V_0$
  given in Proposition~\ref{prop:lemma_melenk_6.3} and~\eqref{proposition:mapping_properties_bios_helmholtz_splitting} in
  Proposition~\ref{proposition:mapping_properties_bios}. 
  First estimating $\eta$,
  then using the properties of $\HNegGamma$ in Proposition~\ref{prop:lemma_melenk_6.3}
  and finally the mapping properties of $V_0$ we find
  \begin{alignat*}{3}
    \| i (\eta - 1) \HNegGamma V_0 u \|_{s+1, \Gamma}
    &\leq C k \| \HNegGamma V_0 u\|_{s+1, \Gamma}
    &&\leq C k \| V_0 u \|_{s+1, \Gamma}
    &&\leq C k \| u \|_{s, \Gamma}, \\
    \| i (\eta - 1) \HNegGamma V_0 u \|_{s, \Gamma}
    &\leq C k \| \HNegGamma V_0 u\|_{s, \Gamma}
    &&\leq C k \| V_0 u \|_{s, \Gamma}
    &&\leq C k \| u \|_{s-1, \Gamma}, \\
    \| i (\eta - 1) \HNegGamma V_0 u \|_{s, \Gamma}
    &\leq C k \| \HNegGamma V_0 u\|_{s, \Gamma}
    &&\leq C k (q/k) \| V_0 u \|_{s+1, \Gamma}
    &&\leq C q \| u \|_{s, \Gamma}, \\
    \| i (\eta - 1) \HNegGamma V_0 u \|_{s-1, \Gamma}
    &\leq C k \| \HNegGamma V_0 u\|_{s-1, \Gamma}
    &&\leq C k (q/k) \| V_0 u \|_{s, \Gamma}
    &&\leq C q \| u \|_{s-1, \Gamma}.
  \end{alignat*}

  In the Steps 2 and 3 below we will again heavily use the properties of
  $\HNegGamma$
  given in Proposition~\ref{prop:lemma_melenk_6.3}.
  Furthermore, we often apply the results of
  Proposition~\ref{prop:thm_melenk_5.3},
  especially Item~\ref{item:thm5.3_1}.
  Below, we will write certain exponents in a nonsimplified way 
  to indicate the corresponding choices of Sobolev exponents
  when applying Proposition~\ref{prop:thm_melenk_5.3}.

  \textbf{Step 2}:
 We claim:  
  \begin{align}
\label{eq:thm:extension_of_thm_melenk_2.9-200}
    \| \HNegGamma \gamma_1^{int} \widetilde{S}_{V,pw} u \|_{s+1, \Gamma}
    &\leq C q k \| u \|_{s, \Gamma}, \\
\label{eq:thm:extension_of_thm_melenk_2.9-300}
    \| \HNegGamma \gamma_1^{int} \widetilde{S}_{V,pw} u \|_{s, \Gamma}
    &\leq  C qk \| u \|_{s-1, \Gamma}, \\
\label{eq:thm:extension_of_thm_melenk_2.9-400}
    \| \HNegGamma \gamma_1^{int} \widetilde{S}_{V,pw} u \|_{s, \Gamma}
    &\leq  C q^2 \| u \|_{s, \Gamma},\\
\label{eq:thm:extension_of_thm_melenk_2.9-500}
    \| \HNegGamma \gamma_1^{int} \widetilde{S}_{V,pw} u \|_{s-1, \Gamma}
    &\leq  C q^2 \|u\|_{s-1,\Gamma}.
  \end{align}
To see (\ref{eq:thm:extension_of_thm_melenk_2.9-200}), we calculate 
  \begin{align*}
    \| \HNegGamma \gamma_1^{int} \widetilde{S}_{V,pw} u \|_{s+1, \Gamma}
    &\leq  C\| \gamma_1^{int} \widetilde{S}_{V,pw} u \|_{s+1, \Gamma}
    \leq  C\| \widetilde{S}_{V,pw} u \|_{s+5/2, \Omega} \\
    &\leq  C q^2 (qk^{-1})^{1+(s+1/2)-(s+5/2)} \| u \|_{s, \Gamma}
    = C q k \| u \|_{s, \Gamma}.
  \end{align*}
By a similar calculation, we obtain 
(\ref{eq:thm:extension_of_thm_melenk_2.9-400}): 
  \begin{align*}
    \| \HNegGamma \gamma_1^{int} \widetilde{S}_{V,pw} u \|_{s, \Gamma}
    \leq  C q/k \| \gamma_1^{int} \widetilde{S}_{V,pw} u \|_{s+1, \Gamma}
    \leq  C q^2 \| u \|_{s, \Gamma}.
  \end{align*}
  In the case $s \in [0,1/2)$, we perform a multiplicative trace inequality and find
  \begin{align*}
    \| & \HNegGamma  \gamma_1^{int}  \widetilde{S}_{V,pw} u \|_{s-1, \Gamma}
    \leq  C (q/k)^{-s+1} \| \gamma_1^{int} \widetilde{S}_{V,pw} u \|_{0, \Gamma} \\
    &\leq  C (q/k)^{-s+1} \| \widetilde{S}_{V,pw} u \|_{1, \Omega}^{1/2} \| \widetilde{S}_{V,pw} u \|_{2, \Omega}^{1/2} \\
    &\leq  C (q/k)^{-s+1} \left[ q^2 (qk^{-1})^{1+(s-1/2)-1} \right]^{1/2} \left[ q^2 (qk^{-1})^{1+(s-1/2)-2} \right]^{1/2} \|u\|_{s-1,\Gamma} \\
    &=  C q^2 \|u\|_{s-1,\Gamma}.
  \end{align*}
  In the case $s \geq 1/2$ we perform a standard trace estimate and find
  \begin{align}
\nonumber 
    \| \HNegGamma \gamma_1^{int} \widetilde{S}_{V,pw} u \|_{s-1, \Gamma}
    &\leq  C q/k \| \gamma_1^{int} \widetilde{S}_{V,pw} u \|_{s, \Gamma}
    \leq  C q/k \| \widetilde{S}_{V,pw} u \|_{s+3/2, \Omega} \\
\nonumber 
    &\leq  C q/k q^2 (qk^{-1})^{1+(s-1/2)-(s+3/2)}  \|u\|_{s-1,\Gamma} \\
\label{eq:thm:extension_of_thm_melenk_2.9-100}
    &=  C q^2 \|u\|_{s-1,\Gamma}.
  \end{align}
The two previous estimates show (\ref{eq:thm:extension_of_thm_melenk_2.9-500}). 
Analogously to (\ref{eq:thm:extension_of_thm_melenk_2.9-100}), we find (\ref{eq:thm:extension_of_thm_melenk_2.9-300}): 
  \begin{align*}
    \| \HNegGamma \gamma_1^{int} \widetilde{S}_{V,pw} u \|_{s, \Gamma}
    \leq C \| \gamma_1^{int} \widetilde{S}_{V,pw} u \|_{s, \Gamma}
    \leq  C qk \| u \|_{s-1, \Gamma}.
  \end{align*}
  \textbf{Step 3}:
We claim: 
  \begin{align}
\label{eq:thm:extension_of_thm_melenk_2.9-600}
    \| \eta \HNegGamma \gamma_0^{int} \widetilde{S}_{V,pw} u \|_{s+1, \Gamma}
    &\leq C q^2 k \| u \|_{s, \Gamma}, \\
\label{eq:thm:extension_of_thm_melenk_2.9-700}
    \| \eta \HNegGamma \gamma_0^{int} \widetilde{S}_{V,pw} u \|_{s, \Gamma}
    &\leq  C q^2 k \| u \|_{s-1, \Gamma}, \\
\label{eq:thm:extension_of_thm_melenk_2.9-800}
    \| \eta \HNegGamma \gamma_0^{int} \widetilde{S}_{V,pw} u \|_{s, \Gamma}
    &\leq  C q^3 \| u \|_{s, \Gamma},\\
\label{eq:thm:extension_of_thm_melenk_2.9-900}
    \| \eta \HNegGamma \gamma_0^{int} \widetilde{S}_{V,pw} u \|_{s-1, \Gamma}
    &\leq  C q^3 \|u\|_{s-1,\Gamma}.
  \end{align}
For (\ref{eq:thm:extension_of_thm_melenk_2.9-600}) we estimate 
  \begin{align*}
    \| \eta \HNegGamma \gamma_0^{int} \widetilde{S}_{V,pw} u \|_{s+1, \Gamma}
    &\leq  C k \| \gamma_0^{int} \widetilde{S}_{V,pw} u \|_{s+1, \Gamma}
     \leq  C k \| \widetilde{S}_{V,pw} u \|_{s+3/2, \Omega} \\
    &\leq  C k q^2 (qk^{-1})^{1+(s+1/2)-(s+3/2)} \| u \|_{s, \Gamma}
    = C q^2 k \| u \|_{s, \Gamma}.
  \end{align*}
By a similar calculation, we show (\ref{eq:thm:extension_of_thm_melenk_2.9-700}):  
  \begin{align*}
    \| \eta \HNegGamma \gamma_0^{int} \widetilde{S}_{V,pw} u \|_{s, \Gamma}
    \leq C k q/k \| \gamma_0^{int} \widetilde{S}_{V,pw} u \|_{s+1, \Gamma}
    \leq  C q^3 \| u \|_{s, \Gamma}.
  \end{align*}
  In the case $s \in [0,1/2)$, we perform a multiplicative trace inequality and find
  \begin{align*}
    \| \eta & \HNegGamma \gamma_0^{int}  \widetilde{S}_{V,pw} u \|_{s-1, \Gamma}
    \leq  C k (q/k)^{-s+1} \| \gamma_0^{int} \widetilde{S}_{V,pw} u \|_{0, \Gamma} \\
    &\leq  C k (q/k)^{-s+1} \| \widetilde{S}_{V,pw} u \|_{0, \Omega}^{1/2} \| \widetilde{S}_{V,pw} u \|_{1, \Omega}^{1/2} \\
    &\leq  C k (q/k)^{-s+1} \left[ q^2 (qk^{-1})^{1+(s-1/2)-0} \right]^{1/2} \left[ q^2 (qk^{-1})^{1+(s-1/2)-1} \right]^{1/2} \|u\|_{s-1,\Gamma} \\
    &=  C q^3 \|u\|_{s-1,\Gamma}.
  \end{align*}
  In the case $s \geq 1/2$ we perform a standard trace estimate and find
  \begin{align}
\nonumber 
    \| \eta \HNegGamma \gamma_0^{int} \widetilde{S}_{V,pw} u \|_{s-1, \Gamma}
    &\leq  C k q/k \| \gamma_0^{int} \widetilde{S}_{V,pw} u \|_{s, \Gamma}
    \leq  C k q/k \| \widetilde{S}_{V,pw} u \|_{s+1/2, \Omega} \\
\nonumber 
    &\leq  C k q/k q^2 (qk^{-1})^{1+(s-1/2)-(s+1/2)}  \|u\|_{s-1,\Gamma} \\
\label{eq:thm:extension_of_thm_melenk_2.9-1000}
    &=  C q^3 \|u\|_{s-1,\Gamma}.
  \end{align}
The two previous estimates show (\ref{eq:thm:extension_of_thm_melenk_2.9-900}).  
Analogously to (\ref{eq:thm:extension_of_thm_melenk_2.9-1000}), we assert 
(\ref{eq:thm:extension_of_thm_melenk_2.9-800}):  
  \begin{align*}
    \| \eta \HNegGamma \gamma_0^{int} \widetilde{S}_{V,pw} u \|_{s, \Gamma}
    \leq  k \| \gamma_0^{int} \widetilde{S}_{V,pw} u \|_{s, \Gamma}
    \leq  C q^2 k \| u \|_{s-1, \Gamma}.
  \end{align*}
  \textbf{Step 4}:
  The definition of the operator $R_{A^\prime}$ in~\eqref{splitting_operator_definition-1},
  the triangle inequality,
  and appropriate choice of $q$ yields mapping properties of $R_{A^\prime}$ as stated in~\eqref{eq:Rmapping}.
\end{proof}

Finally, a simple application of \cite[Cor.~{7.5}]{melenk12}
for non-trapping $\Omega^+$ with analytic boundary is
the following: 
\begin{lemma}\label{lemma:comb-field-ana-rhs}
  Let $\Omega^+$ be non-trapping (\cite[Def.~{1.1}]{baskin-spence-wunsch16}).
  Let $\Gamma$ be analytic,
  $T$ be a tubular neighborhood of $\Gamma$
  and $C_{g_1}$, $C_{g_2}$, $\gamma_g >0$.
  Then there exist constants $C$, $\gamma > 0$ independent of $k \geq k_0$
  such that for all $g_1 \in \AnaClass{C_{g_1}}{\gamma_g}{T}$,
  $g_2 \in \AnaClass{C_{g_2}}{\gamma_g}{T}$
  the solution $\varphi \in L^2(\Gamma)$ of
  \begin{equation*}
    A_{k,\eta}^\prime \varphi = k \llbracket g_1 \rrbracket + \llbracket \partial_n g_2 \rrbracket
  \end{equation*}
  satisfies
  \begin{equation*}
    \varphi = - \llbracket \partial_n v \rrbracket,
    \qquad
    v \in \AnaClassmelenk{C k^{5/2} (C_{g_1} + C_{g_2}) }{\gamma}{\Omega_R}.
  \end{equation*}
\end{lemma}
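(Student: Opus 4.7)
The plan is to obtain this lemma as an essentially immediate corollary of \cite[Cor.~{7.5}]{melenk12}. That result is tailored to the combined field integral equation $A^\prime_{k,\eta}\varphi = k\llbracket g_1\rrbracket + \llbracket\partial_n g_2\rrbracket$ on an analytic, non-trapping boundary and yields exactly a representation of $\varphi$ as the jump of the normal derivative of a function analytic on $\Omega_R$, together with explicit $k$-dependence of the analyticity constants. The task is therefore to verify the hypotheses and to track constants.

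First, I would verify the hypotheses required to invoke the corollary: the non-trapping assumption on $\Omega^+$ and the analyticity of $\Gamma$ are available; the invertibility of $A^\prime_{k,\eta}$ on $L^2(\Gamma)$ and the uniform-in-$k$ bound $\|(A^\prime_{k,\eta})^{-1}\|_{L^2(\Gamma)\leftarrow L^2(\Gamma)} \lesssim 1$ are provided by \eqref{eq:combined_field_operator_norm_optimal}; finally, $g_1 \in \AnaClassmelenk{C_{g_1}}{\gamma_g}{T}$ and $g_2 \in \AnaClassmelenk{C_{g_2}}{\gamma_g}{T}$ place the data precisely in the analyticity classes treated in the cited corollary.

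Second, I would apply \cite[Cor.~{7.5}]{melenk12} to produce the analytic lift $v \in \AnaClassmelenk{M}{\gamma}{\Omega_R}$ satisfying $\varphi = -\llbracket\partial_n v\rrbracket$, and track the scaling. The prefactor $Ck^{5/2}(C_{g_1}+C_{g_2})$ is a composition of three contributions: (i) a trace/extension estimate bounds the $L^2(\Gamma)$-norm of the right-hand side by $O(k)\,(C_{g_1}+C_{g_2})$; (ii) the uniform bound on $(A^\prime_{k,\eta})^{-1}$ transfers this to $\|\varphi\|_{0,\Gamma}$ with an $O(1)$ loss; and (iii) lifting the $L^2(\Gamma)$ density $\varphi$ to an analytic function on the volume $\Omega_R$ costs at most an additional $k^{d/2}$ factor (with $d\in\{2,3\}$), in the spirit of Proposition~\ref{prop:lemma_melenk_6.3}\ref{item:lemma63_3}. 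The analyticity exponent $\gamma$ is determined by $\gamma_g$, the tubular neighborhood $T$, and the geometry of $\Gamma$, and is independent of $k$.

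The main obstacle is essentially notational bookkeeping: matching the conventions of \cite{melenk12} (in particular the particular form of the analyticity classes and the normalization of the combined field operator) to the formulation adopted here, and verifying that the $k$-powers compose as claimed. There is no new analytic ingredient beyond what is already packaged in \cite[Cor.~{7.5}]{melenk12} together with \eqref{eq:combined_field_operator_norm_optimal}.
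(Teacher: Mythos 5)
Your proposal is correct and follows essentially the same route as the paper: the paper's proof is a direct application of \cite[Cor.~{7.5}]{melenk12} (with $s_A=0$), using the bounded invertibility of $A^\prime_{k,\eta}$ on $L^2(\Gamma)$ from Proposition~\ref{proposition:mapping_properties_bios}\ref{proposition:mapping_properties_bios_comined_field_operator_helmholtz} together with the uniform bound \eqref{eq:combined_field_operator_norm_optimal}. Your heuristic accounting of the $k^{5/2}$ prefactor is a reasonable gloss, but note that the exponent is what \cite[Cor.~{7.5}]{melenk12} delivers directly once the $O(1)$ inverse bound is inserted, so no separate trace/lifting bookkeeping is actually needed.
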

\begin{proof}
  We apply \cite[Cor.~{7.5}]{melenk12}
  with $s_A = 0$.
  By 
  Proposition~\ref{proposition:mapping_properties_bios}\ref{proposition:mapping_properties_bios_comined_field_operator_helmholtz}
  the operator $A_{\eta,k}^\prime \colon L^2(\Gamma) \rightarrow L^2(\Gamma)$
  is boundedly invertible.
  The result follows immediately from \cite[Cor.~{7.5}]{melenk12}
  together with the bound~\eqref{eq:combined_field_operator_norm_optimal}.
\end{proof}

\begin{proof}
  \emph{Proof of Lemma~\ref{lemma:properties_dtn_operators}\ref{item:lemma:properties_dtn_operators_splitting_finite_ana_analytic_part}:} 

  \textbf{Step 1}:
  We derive a splitting of $(A_{k,\eta}^\prime)^{-1}$ similar to the results of \cite[Thm.~{2.11}]{melenk12}.
  Fix $\hat{q} \in (0,1)$.
  Let
  \begin{equation*}
    q \coloneqq \hat{q} \min\left\{1 , \frac{1}{\|( A_{0,1}^\prime )^{-1}\|_{H^{s}(\Gamma) \leftarrow H^{s}(\Gamma)} },
     \frac{1}{\|( A_{0,1}^\prime )^{-1}\|_{H^{s-1}(\Gamma) \leftarrow H^{s-1}(\Gamma)} }\right\}.
  \end{equation*}
  Note that by Proposition~\ref{proposition:mapping_properties_bios} the operator $A_{0,1}^\prime \colon H^{t}(\Gamma) \rightarrow H^{t}(\Gamma)$
  is boundedly invertible for $t \geq -1$ and therefore $q$ is well defined and $q \in (0,1)$.
  Theorem~\ref{thm:extension_of_thm_melenk_2.9} applied with this $q$ gives a decomposition
  \begin{equation*}
    A^\prime_{k, \eta} = A_{0,1}^\prime + R + \llbracket \mathcal{A} \rrbracket,
  \end{equation*}
  with $R = R_{A^\prime}$ and $\mathcal{A} =  k \widetilde{\mathcal{A}}_{1} + \partial_n \widetilde{\mathcal{A}}_{2}$,
  as in Theorem~\ref{thm:extension_of_thm_melenk_2.9}.
  By construction
  \begin{equation}\label{eq:geom-series-estimates}
    \| ( A_{0,1}^\prime )^{-1} R \|_{H^{s}(\Gamma) \leftarrow H^{s}(\Gamma)} \leq \hat{q} \quad \text{and} \quad
    \| ( A_{0,1}^\prime )^{-1} R \|_{H^{s-1}(\Gamma) \leftarrow H^{s-1}(\Gamma)} \leq \hat{q}.
  \end{equation}
  Hence, $A_{0,1}^\prime + R $ is boundedly invertible by a geometric series argument, since
  \begin{equation}\label{eq:formula-A-plus-R-inverse}
    (A_{0,1}^\prime + R )^{-1} =  ( I + ( A_{0,1}^\prime )^{-1} R )^{-1} ( A_{0,1}^\prime )^{-1}
  \end{equation}
  with the norm estimates
  \begin{subequations}
    \begin{align}
      \|(A_{0,1}^\prime + R )^{-1} \|_{H^{s}(\Gamma) \leftarrow H^{s}(\Gamma)} &\leq (1-\hat{q})^{-1} \| ( A_{0,1}^\prime )^{-1} \|_{H^{s}(\Gamma) \leftarrow H^{s}(\Gamma)},  \label{eq:AplusRinvnormss} \\
      \|(A_{0,1}^\prime + R )^{-1} \|_{H^{s-1}(\Gamma) \leftarrow H^{s-1}(\Gamma)} &\leq (1-\hat{q})^{-1} \| ( A_{0,1}^\prime )^{-1} \|_{H^{s-1}(\Gamma) \leftarrow H^{s-1}(\Gamma)}. \label{eq:AplusRinvnormsminus1sminus1}
    \end{align}
  \end{subequations}
  By Proposition~\ref{proposition:mapping_properties_bios} the operator
  $A^\prime_{k, \eta} \colon H^{t}(\Gamma) \rightarrow H^{t}(\Gamma)$
  is boundedly invertible for $t \geq -1$.
  We may decompose $(A^\prime_{k, \eta})^{-1}$ as follows
  \begin{equation}
\label{eq:combined-field-operator-inverse-splitting}
    (A^\prime_{k, \eta})^{-1} = ( A_{0,1}^\prime + R )^{-1} + Q, 
\qquad Q = - (A^\prime_{k, \eta})^{-1} \llbracket \mathcal{A} \rrbracket ( A_{0,1}^\prime + R )^{-1},
  \end{equation}
  since
  \begin{align*}
    I &= (A^\prime_{k, \eta}) (A^\prime_{k, \eta})^{-1}
    = (A^\prime_{k, \eta}) ( A_{0,1}^\prime + R )^{-1} + (A^\prime_{k, \eta})Q \\
    &= ( A_{0,1}^\prime + R + \llbracket \mathcal{A} \rrbracket) ( A_{0,1}^\prime + R )^{-1} + (A^\prime_{k, \eta})Q \\
    &= I + \llbracket \mathcal{A} \rrbracket ( A_{0,1}^\prime + R )^{-1} + (A^\prime_{k, \eta})Q.
  \end{align*}

  \textbf{Step 2}
  We rewrite the difference $\mathrm{DtN}_k - \mathrm{DtN}_0$ using the combined field operators of \eqref{eq:combined_field_equation}. 
  Using $\eta$ as in~\eqref{eq:eta-comb-field-estimate} for $\mathrm{DtN}_k$ and $\eta = 1$ for $\mathrm{DtN}_0$ we find
  \begin{equation}\label{eq:dtn_k_dtn_0_without_splitting}
    \mathrm{DtN}_k - \mathrm{DtN}_0
    =
    (A_{k,\eta}^\prime)^{-1}
    \left[ - D_k + i \eta \left(-\nicefrac{1}{2} + K_k\right) \right]
    -
    (A_{0,1}^\prime)^{-1}
    \left[ - D_0 + i \left(-\nicefrac{1}{2} + K_0\right) \right].
  \end{equation}
  Adding and subtracting $D_0$ and $K_0$ in \eqref{eq:dtn_k_dtn_0_without_splitting},
  employing the splitting of $D_k - D_0$ and $K_k - K_0$ given in
  \eqref{proposition:mapping_properties_bios_helmholtz_splitting}
  in Proposition~\ref{proposition:mapping_properties_bios},
  and applying the splitting of $(A_{k,1}^\prime)^{-1}$ in
  \eqref{eq:combined-field-operator-inverse-splitting}
  we find
  \begin{align*}
    \mathrm{DtN}_k - \mathrm{DtN}_0
    &=  - (A_{k,\eta}^\prime)^{-1} \left[ D_k - D_0 \right]
        - (A_{k,\eta}^\prime)^{-1} D_0 \\
    & \quad
        + i \eta (A_{k,\eta}^\prime)^{-1} \left[ K_k - K_0 \right]
        + i \eta (A_{k,\eta}^\prime)^{-1} \left[-\nicefrac{1}{2} + K_0 \right] \\
    & \quad
       + (A_{0,1}^\prime)^{-1}  D_0
       - i (A_{0,1}^\prime)^{-1}   \left[  -\nicefrac{1}{2} + K_0  \right] \\
    & = (A_{0,1}^\prime)^{-1} D_0  -( A_{0,1}^\prime + R )^{-1} D_0 - Q D_0  \\
    &\quad  - ( A_{0,1}^\prime + R )^{-1} S_D - Q S_D + (A_{k,\eta}^\prime)^{-1} \gamma_1^{int} \widetilde{A}_K \\
    &\quad + i \eta  ( A_{0,1}^\prime + R )^{-1} S_K + i \eta Q S_K + i \eta (A_{k,\eta}^\prime)^{-1} \gamma_0^{int} \widetilde{A}_K  \\
    &\quad + i \eta ( A_{0,1}^\prime + R )^{-1} \left[ -\nicefrac{1}{2} + K_0  \right] + i \eta Q \left[ -\nicefrac{1}{2} + K_0  \right] \\
    &\quad - i (A_{0,1}^\prime)^{-1} \left[ -\nicefrac{1}{2} + K_0  \right] \\
    &= \mathrm{FSO} + \mathrm{ASO}
  \end{align*}
  with the Finite Shift Operators ($\mathrm{FSO}$) and the Analytic Shift Operators ($\mathrm{ASO}$) given by
  \begin{align*}
    \mathrm{FSO} &\coloneqq
      (A_{0,1}^\prime)^{-1} D_0  -( A_{0,1}^\prime + R )^{-1} D_0 
- ( A_{0,1}^\prime + R )^{-1} S_D
      + i \eta  ( A_{0,1}^\prime + R )^{-1} S_K 
\\ &\quad 
+ i \eta ( A_{0,1}^\prime + R )^{-1} \left[ -\nicefrac{1}{2} + K_0  \right] 
- i (A_{0,1}^\prime)^{-1} \left[ -\nicefrac{1}{2} + K_0  \right], \\
    \mathrm{ASO} &\coloneqq
      - Q D_0 - Q S_D + (A_{k,\eta}^\prime)^{-1} \gamma_1^{int} \widetilde{A}_K \\
      &\quad + i \eta Q S_K + i \eta (A_{k,\eta}^\prime)^{-1} \gamma_0^{int} \widetilde{A}_K
      + i \eta Q \left[ -\nicefrac{1}{2} + K_0  \right].
  \end{align*}

  \textbf{Step 3 (Analysis of $\mathrm{FSO}$):}
  We claim 
  \begin{equation}\label{eq:assertion-fos}
    \mathrm{FSO} = k B,
  \end{equation}
  where $B: H^s(\Gamma) \rightarrow H^s(\Gamma)$ is a bounded linear operator with 
  $\| B u \|_{s,\Gamma} \lesssim \| u \|_{s,\Gamma}$, as asserted in the present lemma.
  Using the mapping properties of $( A_{0,1}^\prime + R)^{-1}$
  in~\eqref{eq:AplusRinvnormss} as well as \eqref{proposition:mapping_properties_bios_helmholtz_standard_result},
  \eqref{proposition:mapping_properties_bios_helmholtz_splitting_mapping_properties_simple}
  and Proposition~\ref{proposition:mapping_properties_bios}\ref{proposition:mapping_properties_bios_comined_field_operator_laplacian}
we find
  \begin{align*}
    \| ( A_{0,1}^\prime + R )^{-1} S_D u \|_{s, \Gamma}
      &\lesssim \| S_D u \|_{s, \Gamma}
      \lesssim k \| u \|_{s, \Gamma},   \\
    k \| ( A_{0,1}^\prime + R )^{-1} S_K u \|_{s, \Gamma}
      &\lesssim k \| S_K u \|_{s, \Gamma}
      \lesssim k \| u \|_{s, \Gamma},   \\
    k \| ( A_{0,1}^\prime + R )^{-1} \left[ -\nicefrac{1}{2} + K_0  \right] u \|_{s, \Gamma}
      &\lesssim  k \| \left[ -\nicefrac{1}{2} + K_0  \right] u \|_{s, \Gamma}
      \lesssim k \| u \|_{s, \Gamma},  \\
    \| (A_{0,1}^\prime)^{-1} \left[ -\nicefrac{1}{2} + K_0  \right] u \|_{s, \Gamma}
      &\lesssim \| \left[ -\nicefrac{1}{2} + K_0  \right] u  \|_{s, \Gamma}
      \lesssim   \| u \|_{s, \Gamma}.
  \end{align*}
The assertion \eqref{eq:assertion-fos} therefore follows once we have shown 
  \begin{align*}
    \| (A_{0,1}^\prime)^{-1} D_0  -( A_{0,1}^\prime + R )^{-1} D_0 u  \|_{s, \Gamma}
      \lesssim k \| u \|_{s, \Gamma} .
  \end{align*}
To that end, we write using~\eqref{eq:formula-A-plus-R-inverse} 
  \begin{align*}
    &(A_{0,1}^\prime)^{-1} D_0  - (A_{0,1}^\prime + R)^{-1} D_0
    =  (A_{0,1}^\prime)^{-1} D_0  - (I + (A_{0,1}^\prime)^{-1} R)^{-1} (A_{0,1}^\prime)^{-1} D_0  \\
    &=  \left[ I  - (I + (A_{0,1}^\prime)^{-1} R)^{-1} \right] (A_{0,1}^\prime)^{-1} D_0  
    =  - \left[ \sum_{n=1}^\infty (-1)^n ((A_{0,1}^\prime)^{-1} R)^n \right] (A_{0,1}^\prime)^{-1} D_0.
  \end{align*}
  Applying the previous calculations,
  a geometric series argument with~\eqref{eq:geom-series-estimates},
  the mapping properties of $(A_{0,1}^\prime)^{-1}$ 
  in Proposition~\ref{proposition:mapping_properties_bios}\ref{proposition:mapping_properties_bios_comined_field_operator_laplacian}, 
  the estimate
  $ \| R u \|_{s, \Gamma} \lesssim k \| u \|_{s-1, \Gamma}$
  given by Theorem~\ref{thm:extension_of_thm_melenk_2.9},
  again the mapping properties of $(A_{0,1}^\prime)^{-1}$, 
  and finally the mapping properties of $D_0$ given
  in~\eqref{proposition:mapping_properties_bios_helmholtz_standard_result}
  in Proposition~\ref{proposition:mapping_properties_bios}, we find
  \begin{align*}
    \| (A_{0,1}^\prime)^{-1} D_0  - & (A_{0,1}^\prime + R)^{-1} D_0 u \|_{s, \Gamma} \\
    &= \Bigl\| \Bigl[ \sum_{n=1}^\infty (-1)^n ((A_{0,1}^\prime)^{-1} R)^{n-1} \Bigr] ((A_{0,1}^\prime)^{-1} R) (A_{0,1}^\prime)^{-1} D_0 u \Bigr\|_{s, \Gamma}  \\
    &\leq  \frac{1}{1-\hat{q}} \| (A_{0,1}^\prime)^{-1} R (A_{0,1}^\prime)^{-1} D_0 u \|_{s, \Gamma}
    \lesssim \| R (A_{0,1}^\prime)^{-1} D_0 u \|_{s, \Gamma}  \\
    &\lesssim k \| (A_{0,1}^\prime)^{-1} D_0 u \|_{s-1, \Gamma}
    \lesssim k \| D_0 u \|_{s-1, \Gamma}
    \lesssim k \| u \|_{s, \Gamma}.
  \end{align*}
  Hence, the assertion in~\eqref{eq:assertion-fos} follows, which concludes the analysis of the finite shift operators $\mathrm{FSO}$.

  \textbf{Step 4 (Analysis of Analytic Shift Operators $\mathrm{ASO}$):}
  We have
  \begin{align*}
    \mathrm{ASO} &=
      - Q D_0 - Q \left[ S_D - i \eta S_K - i \eta \left( -\nicefrac{1}{2} + K_0  \right) \right]
+ (A_{k,\eta}^\prime)^{-1} [ i \eta \gamma_0^{int} \widetilde{A}_K + \gamma_1^{int} \widetilde{A}_K ].
  \end{align*}

  \textbf{Step 4a (Analysis of $- Q D_0$):}
  With the definition of $Q$ in \eqref{eq:combined-field-operator-inverse-splitting} we have for $f \in H^s(\Gamma)$
  \begin{align*}
    - Q D_0 f
    &= (A_{k,\eta}^\prime)^{-1} \llbracket \mathcal{A} \rrbracket (A_{0,1}^\prime + R)^{-1} D_0 f \\
    &= (A_{k,\eta}^\prime)^{-1}
    \left\{ k \llbracket \mathcal{A}_1 \rrbracket (A_{0,1}^\prime + R)^{-1} D_0 f + \llbracket \partial_n \mathcal{A}_2 \rrbracket (A_{0,1}^\prime + R)^{-1} D_0 f \right\}.
  \end{align*}
  In order to apply Lemma~\ref{lemma:comb-field-ana-rhs},
  we use the mapping properties of $\mathcal{A}_1$ and $\mathcal{A}_2$
  given in Theorem~\ref{thm:extension_of_thm_melenk_2.9}
  and estimate
  \begin{align*}
    k \|(A_{0,1}^\prime + R)^{-1} D_0 &  f \|_{-3/2, \Gamma} + k^{d/2} \| (A_{0,1}^\prime + R)^{-1} D_0 f \|_{-1, \Gamma}  \\
    &\lesssim
    k^{d/2} \| (A_{0,1}^\prime + R)^{-1} D_0 f \|_{s-1, \Gamma} 
    \lesssim
    k^{d/2} \| D_0 f \|_{s-1, \Gamma} 
    \lesssim
    k^{d/2} \| f \|_{s, \Gamma},
  \end{align*}
  where we used the trivial embedding $H^{s-1}(\Gamma) \subset  H^{-1}(\Gamma) \subset H^{-3/2}(\Gamma)$,
  the fact that $k+k^{d/2}\lesssim k^{d/2}$,
  the mapping property~\eqref{eq:AplusRinvnormsminus1sminus1}, 
  and finally the mapping properties of $D_0$ given
  in~\eqref{proposition:mapping_properties_bios_helmholtz_standard_result}. 
  Hence, for the tubular neighborhood $T$
  given in Theorem~\ref{thm:extension_of_thm_melenk_2.9}
  we find
  \begin{alignat*}{2}
    \mathcal{A}_1 (A_{0,1}^\prime + R)^{-1} D_0 f  &\in \AnaClassmelenk{C_1 k^{d/2} \| f \|_{s, \Gamma}}{\gamma_1}{T}, \\
    \mathcal{A}_2 (A_{0,1}^\prime + R)^{-1} D_0 f  &\in \AnaClassmelenk{C_1 k       \| f \|_{s, \Gamma}}{\gamma_1}{T},
  \end{alignat*}
  for constants $C_1$, $\gamma_1 > 0$ independent of $k$.
  Lemma~\ref{lemma:comb-field-ana-rhs} is applicable and 
  yields, for constants $\widetilde{C}_1$, $\widetilde{\gamma}_1 > 0$ independent of $k$, the representation 
  \begin{equation}\label{eq:rep1}
    - Q D_0 f = \llbracket \partial_n v^1_f \rrbracket,
    \qquad
    v^1_f \in \AnaClass{\widetilde{C}_1 k^{5/2 + d/2} \| f \|_{s, \Gamma}}{\widetilde{\gamma}_1}{\Omega_R}.
  \end{equation}

  \textbf{Step 4b (analysis of
    $- Q \left[ S_D - i \eta S_K - i \eta \left( -\nicefrac{1}{2} + K_0  \right) \right]$):}
  We estimate
  \begin{align*}
    k^{d/2} \| (A_{0,1}^\prime + R)^{-1} & \left[ S_D - i \eta S_K - i \eta \left( -\nicefrac{1}{2} + K_0  \right) \right] f \|_{-1, \Gamma} \\
    &\lesssim
    k^{d/2} \| (A_{0,1}^\prime + R)^{-1} \left[ S_D - i \eta S_K - i \eta \left( -\nicefrac{1}{2} + K_0  \right) \right] f \|_{s, \Gamma} \\
    &\lesssim
    k^{d/2} \| \left[ S_D - i \eta S_K - i \eta \left( -\nicefrac{1}{2} + K_0  \right] \right) f \|_{s, \Gamma} \\
    &\lesssim
    k^{d/2+1} \| f \|_{s, \Gamma},
  \end{align*}
  where we first use the trivial embedding $H^{s}(\Gamma) \subset H^{-1}(\Gamma)$,
  the mapping property~\eqref{eq:AplusRinvnormss},
  the mapping properties of $S_D$, $S_K$, and $K_0$ given 
  in~\eqref{proposition:mapping_properties_bios_helmholtz_splitting_mapping_properties_simple}
  and~\eqref{proposition:mapping_properties_bios_helmholtz_standard_result}
  as well as $|\eta| \lesssim k$.
  Proceeding as in Step~{4a} we find the representation
  \begin{equation}\label{eq:rep2}
    - Q \left[ S_D - i \eta S_K - i \eta \left( -\nicefrac{1}{2} + K_0  \right) \right] = \llbracket \partial_n v^2_f \rrbracket,
    \quad
    v^2_f \in \AnaClass{\widetilde{C}_2 k^{5/2 + d/2 + 1} \| f \|_{s, \Gamma}}{\widetilde{\gamma}_2}{\Omega_R}
  \end{equation}
  to hold true, for constants $\widetilde{C}_2$, $\widetilde{\gamma}_2 > 0$ independent of $k$.

  \textbf{Step 4c (analysis of 
  $  (A_{k,\eta}^\prime)^{-1} [ i \eta \gamma_0^{int} \widetilde{A}_K + \gamma_1^{int} \widetilde{A}_K ]$):}
  For $f \in H^s(\Gamma)$ the mapping properties of $\widetilde{A}_K$ imply 
  $\widetilde{A}_{K} f
  \in
  \AnaClassmelenk{C_K k \| f \|_{-1/2, \Gamma}}{\gamma_K}{\Omega}$,
  see~\eqref{eq:mapping_prop_A_tilde_K}. 
  Upon extending $\widetilde{A}_{K} f$ by zero outside of $\Omega$,
  we find Lemma~\ref{lemma:comb-field-ana-rhs} to be applicable,
  which yields 
  \begin{equation}\label{eq:rep3}
    (A_{k,\eta}^\prime)^{-1} [ i \eta \gamma_0^{int} \widetilde{A}_K + \gamma_1^{int} \widetilde{A}_K ] f = \llbracket \partial_n v^3_f \rrbracket,
    \qquad
    v^3_f \in \AnaClassmelenk{\widetilde{C}_3 k^{5/2+1} \| f \|_{-1/2, \Gamma}}{\widetilde{\gamma}_3}{\Omega_R},
  \end{equation}
  with constants $\widetilde{C}_3$, $\widetilde{\gamma}_3 > 0$ independent of $k$.

  \textbf{Step 5}:
  Collecting the representations~\eqref{eq:rep1},~\eqref{eq:rep2}, and~\eqref{eq:rep3} gives 
  \begin{equation*}
    \mathrm{ASO} = \llbracket \partial_n \widetilde{A} \rrbracket,
    \qquad
    \widetilde{A} u \in \AnaClassmelenk{C k^{7/2+d/2} \| u \|_{s, \Gamma}}{ \gamma}{\Omega_R}
  \end{equation*}
  with $\widetilde{A}$ as in the assertions of the present lemma.
  Hence, the splitting
  \begin{equation*}
    \mathrm{DtN}_k - \mathrm{DtN}_0 = kB + \llbracket \partial_n \widetilde{A} \rrbracket
  \end{equation*}
  with $B$ and $\widetilde{A}$ as asserted, holds true.
  This concludes the proof.
\end{proof}

\textbf{Acknowledgement}:
MB and JMM gratefully acknowledge financial support by the Austrian Science Fund (FWF) through 
the doctoral school \textit{Dissipation and dispersion in nonlinear PDEs} (grant W1245) and the SFB \textit{Taming Complexity in Partial Differential Systems} 
(grant SFB F65, \href{https://doi.org/10.55776/F65}{DOI:10.55776/F65}).

\bibliographystyle{alpha}
\bibliography{literature.bib}

\end{document}